\documentclass[a4paper,12pt]{article} 

\usepackage{graphicx}

\usepackage{amssymb,latexsym}
\usepackage{bm, amsmath, amssymb, amsthm}
\usepackage[right=3.0cm, left=3.0cm, top=3.0cm, bottom=3.0cm]{geometry}

\newtheorem{theorem}{Theorem}[section]
\newtheorem{proposition}[theorem]{Proposition}
\newtheorem{corollary}[theorem]{Corollary} 
\newtheorem{lemma}[theorem]{Lemma}

\theoremstyle{definition}

\newtheorem{example}[theorem]{Example}
\newtheorem{remark}[theorem]{Remark}

\newcommand\eps{\varepsilon}

\newcommand\mD{{\cal D}}

\newcommand\mF{{\cal F}}

\newcommand\mH{{\cal H}}

\newcommand\mV{{\cal V}}

\newcommand\mN{{\cal N}}

\newcommand\mA{{\cal A}}

\newcommand\Riem{\operatorname{Riem}}

\newcommand\dt{\partial_t}
\newcommand\ddt{\partial^2_{tt}}

\newcommand\ee{{\cal E}}

\begin{document}

\title{Non-canonical variations of Riemannian submersions with totally geodesic fibers}

\author{ Tomasz Zawadzki\footnote{Faculty of Mathematics and Computer Science, University of \L\'{o}d\'{z}, ul. Banacha 22, 90-238 \L\'{o}d\'{z}, Poland;
e-mail: {\tt tomasz.zawadzki@wmii.uni.lodz.pl}  }}

\date{}

\maketitle 

\begin{abstract}
Using variations of Riemannian metric that preserve a given Riemannian submersion, keep its fibers totally geodesic and the metric restricted to the fibers fixed, but change the horizontal distribution, we examine changes of sectional curvatures in horizontal and vertical directions. 
We obtain conditions, in terms of a 1-form defining a variation, to locally make  all sectional curvatures positive on the product of a manifold with positive curvature and a circle, while preserving the Riemannian submersion with geodesic fibers defined by the projection from the product. 
We examine conditions for obtaining weak contact metric structures from K-contact structures.
We demonstrate existence of fat Riemannian submersions with totally geodesic fibers and vertizontal (i.e., spanned by a horizontal and a vertical vector) curvatures non-constant along a fiber. 
For a Riemannian submersion defined by an isometric group action, with totally geodesic fibers of dimension higher than one, we find variations that preserve the isometric action, while changing the horizontal distribution and its curvatures.

\noindent
\textbf{Keywords}: Riemannian submersion, distribution, variation, curvature, homogeneous submersion, contact metric structure

\noindent
\textbf{Mathematics Subject Classifications (2020)} 
53C12; 
53C15
\end{abstract}

\section{Introduction}\label{sec1}

Deformations of metric are a useful tool in Riemannian geometry and find many applications, e.g., in 
modifying curvature of a manifold
\cite{Bourgignon, PetersenWilhelm2, Wilking, Ziller}, searching for critical points of functionals \cite{Besse}, and distinguishing families of metrics that share some properties \cite{Cheeger, Goertsches2008}. 
Deformations defining one-parameter families of metric 
are called variations.

Many interesting examples of Riemannian metrics, particularly those of positive or non-negative curvature, are related to Riemannian submersions with totally geodesic fibers \cite{Wilking, Ziller}.  
The most common approach to deforming metric while preserving a Riemannian submersion with totally geodesic fibers is to change the metric only on the fibers, keeping the horizontal distribution and the metric on it fixed  \cite{PetersenWilhelm}.
Particular methods to do that 
include canonical variations \cite{Besse}, where metric on fibers is rescaled by a constant, 
and Cheeger deformations  \cite{Cheeger}, that use a bi-invariant metric on a Lie group, whose isometric action defines a Riemannian submersion.

In this paper 
a different, and, in a way, complementary kind of deformations are examined: the ones that preserve the metric on fibers, while changing the horizontal distribution itself and the metric on it, in order to keep it isometric to the one on the image of the submersion. 
As proved in \cite{TZarxiv}, certain families of such deformations keep the fibers totally geodesic - they are defined by vector fields, whose restrictions to fibers are Killing.
These vector fields, paired with horizontal lifts of linearly independent vectors from the base of
submersion, locally describe parameters controlling the ODE system, whose solution is a deformed metric. 

We examine these particular deformations in 
detail. After fixing notation in Section \ref{secnotation}, and recalling from \cite{TZarxiv} necessary technical results about variations preserving Riemannian submersions with totally geodesic fibers in Section \ref{sectionvarpreserving},
we investigate in Section \ref{secvariationsofcurvature} how these variations change horizontal and vertizontal \cite{ZillerFatness} curvatures, i.e., sectional curvatures in directions naturally related to the submersion.
As the variations 
preserve the Riemannian submersion with totally geodesic fibers
by design, known formulas for such submersions \cite{Gray, O'Neill, Tondeur} can be readily applied.
Apart from making some computations easier, this approach allows a complete description 
of all non-vanishing derivatives of these curvatures with respect to the parameter of variation, in case when
vector fields defining the variation do not change with the parameter,
which for general variations of metric is more difficult to achieve \cite{Bourgignon}.

These results are then applied in Section \ref{sectioncirclebundle}, where we examine 
deformations of metric on a circle bundle, with unit Killing vector field $U$, which defines a Riemannian submersion with fibers being its flowlines. In this case, variations can be described 
by a single basic $1$-form, whose properties determine derivatives of sectional curvatures.
In particular, we prove that the second derivative of all vertizontal
curvatures is positive where the $1$-form has non-degenerate exterior differential.

In Section \ref{subsectrivialbundles}, we prove that 
a product metric on $N \times S^1$ can be deformed 
to make all sectional 
curvatures positive on fibers over points where $N$ has positive curvature, differential of the $1$-form is non-degenerate and its covariant derivative satisfies a certain condition. 
This illustrates difficulties with 
increasing all
curvatures at every point of a compact manifold, but may help to construct metrics with curvatures positive outside particular sets.

Having a complete description of all possible deformations preserving a 
Riemannian submersion, its geodesic fibers and the metric on every fiber,
we can apply it to other geometric structures that may coexist with such submersion.
In Section \ref{subsecKcontact}, we recover some known results about deformations of $K$-contact and Sasaki structures \cite{Boyer, Goertsches2008} defined by closed and basic forms, which are in fact induced by diffeomorphisms of the domain of the submersion.
Then, in Section \ref{subsetweakcontact}, we examine conditions to obtain weak contact metric structures \cite{RovenskiAMPA} through variations preserving Riemannian submersion on a $K$-contact or Sasaki manifold.

To define variations preserving Riemannian submersion with totally geodesic fibers, in general we use vector fields, which restrict to Killing fields on fibers.
If 
fibers have dimension higher than one, as considered in Section \ref{sechighdim}, such fields are not necessarily Killing fields on the manifold. 
Using this observation, in Section \ref{subsecfatbundles}, we 
construct fat bundles with vertizontal curvatures non-constant along fibers. This construction can be in particular applied to 
the Hopf fibration of $S^{4n+3}$ by $3$-spheres (Example \ref{exampleHopf}).

In Section \ref{subsechomogenous}, we 
examine a more general
setting of an isometric Lie group action on a manifold, with principal orbits and non-integrable horizontal distribution.
The action defines so called   
fundamental vector fields, which are Killing on the manifold, and correspond to right
invariant vector fields on the group; but  
vertical parts of Lie brackets
of basic fields also restrict to Killing fields on fibers, and correspond to left-invariant vector fields on the group \cite{GromollWalschap}. Variations
induced by each of these families of vector fields have different properties, the second
family is defined only locally, but 
preserves the original isometric action on the manifold - 
yielding new families of metrics with  
non-constant vertizontal curvature, preserving the structure of homogeneous Riemannian submersion.

On the other hand, for isometric action of a non-abelian group, variations defined by fundamental fields make the fundamental fields stop being Killing. In Section \ref{subsecsu2actions}, in a particular case of a
$3$-Sasaki manifold, we prove that if those variations preserve an almost contact metric $3$-structure \cite{Blair}, then they are induced by diffeomorphisms of the domain.

\section{Notation and definitions} \label{secnotation}

In what follows, $M$ and $N$ are smooth, connected and oriented manifolds, and all functions and tensor fields on them are always assumed to be smooth.

For a manifold $M$, $\mathfrak{X}_M$ denotes the set of vector fields on $M$. For a submersion $\pi : M \rightarrow N$, let $\mathfrak{X}_{\mV}$ denote the set of vertical vector fields, i.e., vector fields tangent to the fibers of $\pi$. The vertical distribution, spanned by all vertical fields, will be denoted by $\mV$. For a distribution $\mD$ we write $\mD_x = \mD \cap T_x M$.

Let $(M,g_t)$ be a Riemannian manifold and let $\pi : (M,g_t) \rightarrow (N, g_N)$ be a submersion. We say that $X \in \mathfrak{X}_M$ is a $g_t$-horizontal vector field if $g_t(X, \xi)=0$ for all $\xi \in \mathfrak{X}_{\mV}$; we write then $X \in \mathfrak{X}_{\mH(t)}$ and we denote the $g_t$-horizontal distribution, spanned by all $g_t$-horizontal fields, by $\mH(t)$.

A submersion $\pi : (M,g_t) \rightarrow (N, g_N)$ is called a Riemannian submersion if for all $X,Y \in \mathfrak{X}_{\mH(t)}$ we have
\[
g_t(X,Y) = g_N (\pi_* X, \pi_* Y).
\]

We will use the notation $\dim M = n+p$ and $\dim N = p$, then $\dim \mV = n$.
We say that a vector field $X \in \mathfrak{X}_M$ is 
projectable if $\pi_* X$ is well defined; $X$ is 
projectable if and only if $[X, V] \in \mathfrak{X}_{\mV}$ for all $V \in \mathfrak{X}_{\mV}$. A projectable and $g_t$-horizontal vector field is called a $g_t$-basic vector field \cite{O'Neill}.

For every $W \in \mathfrak{X}_N$ there exists a unique $g_t$-basic lift of $W$, i.e., a unique $g_t$-horizontal vector field on $M$ whose image in $\pi_*$ is $W$ \cite{O'Neill}. 
A $1$-form $\omega$ on $M$ is called basic if for all $V \in \mathfrak{X}_{\mV}$ we have $\iota_V \omega=0$ and $\iota_V {\rm d} \omega=0$, then there exists a $1$-form $\theta$ on $N$ such that $\omega=\pi^* \theta$  \cite{Tondeur}. 
For all $x \in N$, let $\mF_x = \pi^{-1}(\{x\})$ be the fiber of $\pi$ over $x$.

Let $\pi : (M, g_0 ) \rightarrow (N,g_N)$ be a Riemannian submersion. 
By $\Riem(M, {\mV}, g_0)$ we denote the set of all Riemannian metrics $g$ on $M$, such that $g(X,Y) = g_0(X,Y)$ for all $X,Y \in \mathfrak{X}_{{\mV}}$. We will consider one-parameter families of metrics $\{ g_t , t \in (-\epsilon, \epsilon) \}$ 
in $\Riem(M, {\mV}, g_0)$, 
which are called variations of $g_0$, and are always assumed to smoothly depend on the parameter $t$. The first and the second derivative with respect to $t$ will be denoted by $\dt$ and $\ddt$, respectively. For every variation $g_t$, let $B_t = \dt g_t$ and let $B_t^\sharp$ be the tensor field defined by $g_t(B_t^\sharp X, Y) = B_t(X,Y)$ for all $X,Y \in \mathfrak{X}_M$. We say that a vector field $X \in \mathfrak{X}_M$ is bounded on $(M,g)$ if $g(X,X)$ is a bounded function on $M$. For $W \in \mathfrak{X}_N$, we will denote by $\pi^* W$ the $g_0$-basic lift of $W$, 
we have $\pi_* (\pi^* W) =W$. 

We say that a variation $\{ g_t , t \in (-\epsilon, \epsilon) \subset \Riem(M, {\mV}, g_0)$ preserves a Riemannian submersion $\pi$ if $\pi : (M,g_t) \rightarrow (N,g_N)$ is a Riemannian submersion for all $t \in (-\epsilon, \epsilon)$.

Let $P_{\mV}^t$ and $P_{\mH}^t $ denote the $g_t$-orthogonal projections onto the distributions $\mV$ and $\mH(t)$, respectively. We shall consider sets of linearly independent
vector fields $\{ E_1, \ldots, E_n , \allowbreak \ee_{n+1} , \ldots \ee_{n+p} \}$ defined on an open subset of $M$, where $E_a \in \mathfrak{X}_{\mV}$ for all $a \in \{1, \ldots, n\}$; we will call them local adapted frames and denote such a frame shortly by $\{E_a, \ee_i\}$. If vector fields in a frame $\{E_a, \ee_i\}$ are $g$-orthonormal, we will call such frame $g$-orthonormal. For indices we use the convention $a,b,c \in \{1, \ldots, n\}$, $i,j,k \in \{n+1, \ldots, n+p\}$ and $\mu, \nu \in \{1, \ldots, n+p\}$.

Let $\nabla^t$ be the Levi-Civita connection of $g_t$, let $R_t(X,Y)Z = \nabla^t_X \nabla^t_Y Z - \nabla^t_Y \nabla^t_X Z - \nabla^t_{[X,Y]}Z$ be its curvature tensor, and let $\nabla^N$ be the Levi-Civita connection of $g_N$. Let $\sec_{(M,g_t)}(X,Y)$ denote the sectional curvature of the plane spanned by $X$ and $Y$ on $(M,g_t)$; if $X,Y \in \mH(t)$, we call such sectional curvature horizontal; if $X \in \mH(t)$ and $Y \in \mV$, we call it vertizontal \cite{ZillerFatness}.  
We say that the fibers of $\pi : (M, g_t) \rightarrow (N, g_N)$ 
are totally geodesic if for all $X,Y \in \mathfrak{X}_{\mV}$ we have $P_{\mH}^t  \nabla^t_X Y =0$. For a Riemannian submersion $\pi : (M, g_t) \rightarrow (N, g_N)$ the geometry of distributions $\mV$ and $\mH(t)$ is described by the following tensor \cite{O'Neill}
\begin{equation} \label{definitionAtensor}
\mA^t_X Y = P_{\mV}^t \nabla^t_{ P_{\mH}^t X} P_{\mH}^t Y + P_{\mH}^t \nabla^t_{ P_{\mH}^t X } P_{\mV}^t Y,
\end{equation}
for all $X,Y \in \mathfrak{X}_M$.
For a vector field $X$ on $(M,g_t)$, by $X^\flat$ we denote its dual $1$-form, i.e., $X^\flat (Y) = g_t(X,Y)$ for all $Y \in \mathfrak{X}_M$. The inner product of tensors defined by $g_t$ restricted to $\mH(t)$ will be denoted by $\langle \cdot , \cdot \rangle_{\mH(t)}$, in particular for $1$-forms $\omega, \eta$ and a local adapted frame $\{E_a, \ee_i\}$ we have
\[
\langle \omega , \eta \rangle_{\mH(t)} = \sum\nolimits_{i=n+1}^{n+p} \omega(\ee_i) \eta(\ee_i).
\]
Similarly, we denote $\langle \omega , \eta \rangle_N = \sum\nolimits_{i=1}^{p} \omega(e_i) \eta(e_i)$ for a local $g_N$-orthonormal basis $\{e_1 ,\ldots ,e_p\}$ on $N$.

\section{Variations preserving a Riemannian submersion with totally geodesic fibers} \label{sectionvarpreserving} 

In this section we recall some necessary technical results from \cite{TZarxiv}, formulating them in a way that will fit further results and providing their shortened proofs for readers' convenience. 
For more details about varying extrinsic geometry of fibers while preserving a Riemannian submersion, and applications to some variational problems, see \cite{TZarxiv}.

\begin{theorem} \label{corbsharpglobal}
Let  $\pi : (M, g_0) \rightarrow (N, g_N)$ be a Riemannian submersion.
For every pair of sets: $\{ V_{n+1} , \ldots , V_{n+p} \}$ of vertical vector fields, 
bounded 
on $(M,g_0)$, and $\{ W_{n+1} , \ldots , W_{n+p} \}$ of linearly independent vector fields, 
bounded 
on $(N,g_N)$, there exists a unique 
variation $\{ g_t, t \in (-\epsilon, \epsilon) \} \subset \Riem(M, \mV , g_0)$ that preserves the Riemannian submersion $\pi$, such that for every $X \in \mathfrak{X}_{\mV}$
\begin{equation} \label{bsharpinv}
B_t^\sharp X = \sum\nolimits_{i=n+1}^{n+p} g_0(V_i , X) P_{\mH}^t  \pi^* W_i ,
\end{equation}
where $\pi^* W_i$ is the unique $g_0$-horizontal lift of $W_i$, for all $i \in \{n+1, \ldots , n+p\}$.
Moreover, if for all $x \in N$ restrictions of vector fields $V_i$ to fiber $\mF_x$ are Killing vector fields on $(\mF_x , g_0 \vert_{\mF_x})$, then the fibers of $\pi : (M ,g_t ) \rightarrow (N,g_N)$ are totally geodesic for all $t \in (-\epsilon , \epsilon)$.
\end{theorem}
\begin{proof}
Let $\{ E_a , \ee_i \}$ be a local adapted 
$g_0$-orthonormal frame on an open set $U' \subset M$, and let $U$ be a relatively compact, open set, such that $\overline{U} \subset U'$. 
Then there exist functions $\{ \lambda_{ai}  : U' 
 \rightarrow \mathbb{R} \}$ such that
\begin{equation} \label{deflambdaai}
\sum\nolimits_{i=n+1}^{n+p} g_0(V_i , E_a) \pi^*(W_i) = \sum\nolimits_{i=n+1}^{n+p} \lambda_{ai} \ee_i .
\end{equation}
For these $\{\lambda_{ai}\}$, let
$\{ g_{\mu \nu}(t) , \mu , \nu \in \{1 , \ldots, n+p \} \}$
be the solution of the following system of 
equations in $U$:
\begin{eqnarray} \label{bsharp1}
&& g_{ab} = \delta_{ab} , \\ \label{bsharp2}
&& \dt g_{ai} = \sum\nolimits_{j=n+1}^{n+p} \left( \lambda_{aj} g_{ij} - \sum\nolimits_{b=1}^n \lambda_{aj} g_{jb} g_{bi}  \right) \\
\label{bsharp3}
&& \dt g_{ij} = \sum\nolimits_{a=1}^n \sum\nolimits_{k=n+1}^{n+p} \big(
 g_{ik} g_{ja} \lambda_{ak} + g_{jk} g_{ia} \lambda_{ak} \nonumber \\
&& - \sum\nolimits_{c=1}^n \left( g_{jc} g_{ia} g_{kc} \lambda_{ak} + g_{ic} g_{ja} g_{kc} \lambda_{ak} \right) \big),
\end{eqnarray}
with $g_{\mu \nu}(0) = \delta_{\mu \nu}$, 
for all $a,b \in \{1, \ldots, n\}$ and all $i,j \in \{ n+1 , \ldots, n+p \}$.

Tensor field defined by $g_t(E_a, E_b) = g_{ab}(t)$, $g_t(E_a, \ee_i) = g_{ai}(t)$ and $g_t(\ee_i, \ee_j) = g_{ij}(t)$ 
does not depend on the choice of adapted $g_0$-orthonormal frame $\{E_a, \ee_i\}$.  
Indeed, any two such frames $\{E_a, \ee_i\}$ and $\{ E'_c , \ee'_k \}$ are related by a $g_0$-orthogonal transformation preserving each of distributions $\mV$ and $\mH(0)$, so $\ee'_k = \sum\nolimits_{i=n+1}^{n+p} M_{ik} \ee_i$, $E'_c = \sum\nolimits_{a=1}^{n} M_{a c} E_a$, with $M_{\mu \nu}$ being coefficients of a matrix $M \in O(n) \times O(p)$. 
Hence, e.g., 
$g'_{kc} = \sum\nolimits_{i=n+1}^{n+p} \sum\nolimits_{a=1}^{n} M_{ik} M_{ac} g_{ai}$. 
Functions $\{ \lambda_{ai} \}$ defined by \eqref{deflambdaai} 
transform with the frame change as follows
\begin{equation} \label{lambdatransform}
\lambda'_{ck} = \sum\nolimits_{i=n+1}^{n+p} \sum\nolimits_{a=1}^{n} \lambda_{ai} M_{ik} M_{ac}. 
\end{equation} 
Comparing the system 
\eqref{bsharp1}-\eqref{bsharp3} 
written 
in frames $\{E_a, \ee_i\}$ and $\{ E'_c , \ee'_k \}$, we get $\dt g'_{kc} = \sum\nolimits_{i=n+1}^{n+p} \sum\nolimits_{a=1}^{n} \dt (M_{ik} M_{ac} g_{ai})$ and $\dt g'_{kl} = \sum\nolimits_{i=n+1}^{n+p} \sum\nolimits_{j=n+1}^{n+p} \dt ( M_{ik} M_{jl} g_{ij})$, so from the uniqueness of solution of ODE system, we have $g'_{kc}(t) = \sum\nolimits_{i=n+1}^{n+p} \sum\nolimits_{a=1}^{n} M_{ik} M_{ac} g_{ai}(t)$ and $g'_{kl}(t) = \sum\nolimits_{i=n+1}^{n+p} \sum\nolimits_{j=n+1}^{n+p} M_{ik} M_{jl} g_{ij}(t)$ for all $t$, for which the solution exists.
Hence, solution of \eqref{bsharp1}-\eqref{bsharp3} defines a tensor field $g_t$ on $U$. 

Since $g_0$ is a metric tensor, $g_t$ will also be a metric tensor for small enough values of $t$. 
Equation \eqref{bsharp2} 
defines the coefficients $\{b_{ai}\}$ of $B_t = \dt g_t$. 
From \eqref{bsharp1} we obtain that $B_t^\sharp E_a$ are $g_t$-horizontal, hence
\begin{equation} \label{Bsharptildelambda}
B^\sharp_t E_a = \sum\nolimits_{i=n+1}^{n+p} \left(  {\tilde \lambda}_{ai}(t) \ee_i - \sum\nolimits_{c=1}^n g_{ic}(t) {\tilde \lambda}_{ai}(t) E_c \right)
\end{equation}
for some $\{ \tilde \lambda_{ai} : U \times \mathbb{R} \rightarrow \mathbb{R} \}$. But obtaining $b_{ai} = B_t(E_a, \ee_i) = g_t(B^\sharp_t E_a , \ee_i)$ from \eqref{Bsharptildelambda} and comparing with \eqref{bsharp2}, we get for all $a \in \{1, \ldots ,n \}$ and all $i \in \{n+1 , \ldots, n+p\}$
\begin{equation} \label{lambdaunique}
\sum\nolimits_{j=n+1}^{n+p} ( {\tilde \lambda}_{aj}(t) - \lambda_{aj} ) ( g_{ij}(t) - \sum\nolimits_{b=1}^n g_{jb}(t) g_{bi}(t) ) = 0 .
\end{equation}
There exists $\epsilon' >0$ such that for all $t \in (-\epsilon', \epsilon')$
we have 
$\det \mathcal{G}(t) \neq 0$ where $\mathcal{G}(t)$ is the matrix with entries 
$\mathcal{G}(t)_{ij} = g_{ij}(t) - \sum\nolimits_{b=1}^n g_{jb}(t) g_{bi}(t)$, because $\mathcal{G}(0)$ is identity matrix, so
from \eqref{lambdaunique} it follows that ${\tilde \lambda}_{ai} = \lambda_{ai}$. Hence, 
\begin{equation} \label{bsharplambda} 
B^\sharp_t E_a = \sum\nolimits_{i=n+1}^{n+p}  \lambda_{ai} \left(  \ee_i - \sum\nolimits_{c=1}^n g_{ic}(t) E_c \right)
\end{equation} 
holds and, by \eqref{deflambdaai}, yields \eqref{bsharpinv}.

From \eqref{bsharp1} it follows that $g_t \in \Riem(U, \mV , g_0)$, i.e.,
\begin{equation} \label{BVVzero}
B_t(U,V) = 0 \quad {\rm for \; all } \; U,V \in \mathfrak{X}_{\mV} .
\end{equation} 
From \eqref{bsharp3} it follows that the variation $g_t$ preserves the Riemannian submersion $\pi : (U , g_0) \rightarrow (\pi(U), g_N)$. Indeed, using $P_{\mV}^t Z + P_{\mH}^t Z = Z$ and $\pi_* P_{\mV}^t  Z =0$ for all $Z \in \mathfrak{X}_M$ and all $t \in (-\epsilon, \epsilon)$, we obtain that 
$\pi : (M, g_t) \rightarrow (N, g_N)$ is a Riemannian submersion if and only if for all $X,Y \in \mathfrak{X}_M$ we have
\begin{eqnarray*}
g_t(P_{\mH}^t X, P_{\mH}^t Y) &=& g_N (\pi_* P_{\mH}^t  X, \pi_*  P_{\mH}^t  Y ) = g_N (\pi_* X, \pi_* Y ) \\ 
&=& g_N(\pi_* P_{\mH}^0  X, \pi_* P_{\mH}^0  Y) = g_0(P_{\mH}^0 X, P_{\mH}^0 Y). 
\end{eqnarray*}
The above holds for $\{ g_t, t \in (-\epsilon, \epsilon) \} \subset \Riem(M, \mV , g_0)$ if and only if for all $x \in M$, $X,Y \in T_xM$ and every $g_0$-orthonormal -- and hence $g_t$-orthonormal for all $t \in  (-\epsilon , \epsilon)$, because $\{ g_t, t \in (-\epsilon, \epsilon) \} \subset \Riem(M, \mV , g_0)$ -- basis 
$\{E_1 ,  \ldots , E_n\}$ of ${\mV}_x$ we have
\begin{eqnarray} \label{BXY}
0 &=&  \dt g_t( P_{\mH}^t  X, P_{\mH}^t Y ) = \dt \big( g_t (X - \sum\nolimits_{a=1}^n g_t(X, E_a)E_a \, , Y- \sum\nolimits_{a=1}^n g_t(Y, E_b)E_b  ) \big) \nonumber \\
&=& B_t(X,Y) - \dt \sum\nolimits_{a=1}^n g_t(X,E_a) g_t (E_a,Y) - \dt \sum\nolimits_{a=1}^n g_t(Y, E_b) g_t (E_b, X) \nonumber \\
&& + \dt \sum\nolimits_{a,b=1}^n g_t(X, E_a) g_t(E_b , Y) g_t(E_a, E_b)  \nonumber \\
&=& B_t(X,Y) - \sum\nolimits_{a=1}^n B_t(X,E_a) g_t(Y,E_a) - \sum\nolimits_{a=1}^n B_t(Y,E_a) g_t(X,E_a),
\end{eqnarray}
where we used $g_t(E_a, E_b) = g_0(E_a, E_b) = \delta_{ab}$. 
Equations \eqref{BVVzero} and \eqref{BXY} in the local frame $\{E_a, \ee_i\}$ are equivalent to the system
\begin{eqnarray*}
&& g_{ab} = \delta_{ab} , \\ \label{bsharp2a}
&& \dt g_{ai} = b_{ai}(t), \\
\label{bsharp3a}
&& \dt g_{ij} = \sum\nolimits_{a=1}^n \left( b_{ai}(t) g_{aj}(t) +  b_{aj}(t) g_{ai}(t) \right),
\end{eqnarray*}
which coincides with \eqref{bsharp1}-\eqref{bsharp3} with $b_{ai}$ given by the right hand side of \eqref{bsharp2}.

Now we prove that there exists $\epsilon>0$ such that at every $x \in M$ the solution of \eqref{bsharp1}-\eqref{bsharp3} exists for all $t \in (-\epsilon, \epsilon)$ for every adapted orthonormal frame at $x$. Indeed, \eqref{bsharp1}-\eqref{bsharp3} are equations for coefficients of $g_t$ in a $g_0$-orthonormal frame, so the initial condition for these equations for all $x \in M$ is the same: $g_{ai}(0)=0$ and $g_{ij}(0)=\delta_{ij}$ for $a \in \{1, \ldots, n\}$ and $i,j \in \{n+1, \ldots, n+p\}$. Since $\{ V_i, W_i \}$ are bounded, from \eqref{deflambdaai} it follows that there exists a compact set $K \subset \mathbb{R}^{np}$, such that $\{ \lambda_{ai} \} \in K$ for all $x \in M$,
and for all local $g_0$-orthonormal frames $\{E_a, \ee_i\}$. 
For every $\{ \lambda_{ai} \} \in K$ there exist $\epsilon' >0$ and an open neighbourhood $\mN \subset \mathbb{R}^{np}$ such that for all $\{ \lambda_{ai} \} \in \mN$ the solution of \eqref{bsharp1}-\eqref{bsharp3} with initial condition $g_{ai}(0)=0 , g_{ij}(0)=\delta_{ij}$ exists and, since the solutions depend smoothly on parameters $\{ \lambda_{ai} \}$ of the equations, 
$\det ( g_{ij}(t) - \sum\nolimits_{b=1}^n g_{jb}(t) g_{bi}(t) ) \neq 0$ for all $t \in (-\epsilon', \epsilon')$. Covering $K$ with these neighbourhoods and finding the smallest $\epsilon'$ for some finite subcover, we obtain an interval $(-\epsilon, \epsilon)$ on which $g_t$ is defined for all $x \in M$ as the metric whose components in a local frame $\{E_a , \ee_i \}$ are the solution of \eqref{bsharp1}-\eqref{bsharp3} with $\{ \lambda_{ai} \}$ defined by \eqref{deflambdaai}.

The condition for keeping the fibers of $\pi$ totally geodesic on $(M,g_t)$ follows from the next two formulas, \eqref{dtnablaXYZ} and \eqref{dtPhdtnablaXYforVi}, which we will prove as lemmas further below. For a variation $g_t \in \Riem(M, \mV, g_0)$ preserving the Riemannian submersion $\pi$ we have for all $X,Y \in \mathfrak{X}_{\mV}$ and $Z \in \mathfrak{X}_{\mH(t)}$
\begin{eqnarray} \label{dtnablaXYZ}
2 g_t (\dt \nabla^t_X Y , Z) 
&=& g_t([X, B_t^\sharp Y] , Z) 
 + g_t( [Y, B_t^\sharp X ], Z) \nonumber\\
&& - B_t( Z, P_{\mV}^0 ( \nabla^0_{X} {Y} + \nabla^0_{Y} {X}  ) )
\end{eqnarray}
and for a variation $g_t \in \Riem(M, \mV, g_0)$ satisfying \eqref{bsharpinv}, we have for all $X,Y \in \mathfrak{X}_{\mV}$
\begin{eqnarray} \label{dtPhdtnablaXYforVi}
&& 2 P_{\mH}^t \dt \nabla^t_{X} Y = \sum\nolimits_{i=n+1}^{n+p} \big( g_0(\nabla^0_X V_i , Y) + g_0(\nabla^0_Y V_i , X) \big) P_{\mH}^t  \pi^* W_i . 
\end{eqnarray}
Hence, if all $V_i$ are vertical fields, whose restrictions to every fiber $\mF_x$ are Killing fields on $(\mF_x, g_0 \vert_{\mF_x})$, we obtain $P_{\mH}^t \dt \nabla^t_{X} Y =0$ for all $X,Y \in \mathfrak{X}_{\mV}$. 

From $g_t \in \Riem(M, \mV, g_0)$ and Koszul formula for $X,Y,Z \in \mathfrak{X}_{\mV}$ we obtain $\dt P_\mV^t \nabla^t_X Y =0$ (see \eqref{dtnablaXYZproof4} below for the proof of this statement), and hence
\begin{eqnarray*}
\dt g_t( P_{\mH}^t \nabla^t_{X} Y ,  P_{\mH}^t \nabla^t_{X} Y ) &=& B_t( P_{\mH}^t \nabla^t_{X} Y , P_{\mH}^t \nabla^t_{X} Y ) + 2g_t( \dt P_{\mH}^t \nabla^t_{X} Y , P_{\mH}^t \nabla^t_{X} Y ) \\
&=& 2g_t( P_{\mH}^t \dt \nabla^t_{X} Y , P_{\mH}^t \nabla^t_{X} Y ) - 2g_t( P_{\mH}^t \dt P_{\mV}^t \nabla^t_{X} Y , P_{\mH}^t \nabla^t_{X} Y ) \\
&=& 2g_t( P_{\mH}^t \dt \nabla^t_{X} Y , P_{\mH}^t \nabla^t_{X} Y ) .
\end{eqnarray*}
Hence, if $P_{\mH}^t \dt \nabla^t_{X} Y =0$ and $P_{\mH}^0 \nabla^0_{X} Y=0$, then $P_{\mH}^t \nabla^t_{X} Y=0$ and so fibers remain totally geodesic. 
\end{proof}

\begin{lemma}
For a variation $g_t \in \Riem(M, \mV, g_0)$ preserving the Riemannian submersion $\pi$ 
for all $X,Y \in \mathfrak{X}_{\mV}$ and $Z \in \mathfrak{X}_{\mH(t)}$ \eqref{dtnablaXYZ} holds.
\end{lemma}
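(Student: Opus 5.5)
The plan is to start from the classical first-variation formula for the Levi-Civita connection, and then use the two structural facts available for our variations: $B_t$ vanishes on $\mV\times\mV$ by \eqref{BVVzero}, and by \eqref{BXY} it vanishes on $\mH(t)\times\mH(t)$. Differentiating the Koszul formula in $t$ gives, for all vector fields,
\[
2 g_t(\dt \nabla^t_X Y, Z) = (\nabla^t_X B_t)(Y,Z) + (\nabla^t_Y B_t)(X,Z) - (\nabla^t_Z B_t)(X,Y).
\]
I would take this as the starting identity, with $X,Y\in\mathfrak{X}_{\mV}$ and $Z\in\mathfrak{X}_{\mH(t)}$.

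The first step is to record the consequences of the two vanishing properties. From \eqref{BVVzero}, $B_t^\sharp V$ is $g_t$-horizontal for every vertical $V$. From \eqref{BXY}, taking $X,Y$ $g_t$-horizontal, we get $B_t(Z,Z')=0$ for all $Z,Z'\in\mH(t)$. Hence $B_t(W,Z)=B_t(P^t_{\mV}W,Z)$ for $Z$ horizontal.

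The second step expands the three covariant derivatives. Since $B_t(X,Y)=0$, we have $(\nabla_Z B)(X,Y)=-B(\nabla_Z X,Y)-B(X,\nabla_Z Y)$. Writing $B(Y,Z)=g_t(B^\sharp Y,Z)$ and using metric compatibility gives
\[
(\nabla_X B)(Y,Z)=g_t(\nabla_X B^\sharp Y,Z)-B(\nabla_X Y,Z),
\]
and symmetrically for $X\leftrightarrow Y$. Collecting terms, the right-hand side becomes
\[
g_t(\nabla_X B^\sharp Y,Z)+g_t(B^\sharp Y,\nabla_Z X)+(X\leftrightarrow Y)-B(\nabla_XY+\nabla_YX,Z).
\]

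The third step converts covariant derivatives into brackets via $\nabla_X B^\sharp Y=[X,B^\sharp Y]+\nabla_{B^\sharp Y}X$. It then remains to show
\[
g_t(\nabla_{H}X,Z)+g_t(\nabla_Z X,H)=0
\]
for $H=B^\sharp Y$ horizontal and $X$ vertical. This is the step I expect to be the crux, and I would handle it with O'Neill's tensor \eqref{definitionAtensor}. Compatibility of the metric gives $g_t(\nabla_H X,Z)=-g_t(X,\nabla_H Z)=-g_t(X,\mA^t_H Z)$, and likewise the second term equals $-g_t(X,\mA^t_Z H)$. Since $\mA^t$ is alternating on horizontal vectors, its vertical part being $\tfrac12 P^t_{\mV}[H,Z]$, the sum vanishes. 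This uses only that $\pi:(M,g_t)\to(N,g_N)$ is a Riemannian submersion, which holds because the variation preserves $\pi$.

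Finally, by the first step, $B_t(\nabla^t_XY+\nabla^t_YX,Z)=B_t(Z,P^t_{\mV}(\nabla^t_XY+\nabla^t_YX))$. The Koszul formula for $X,Y,W$ vertical involves only $g_t|_{\mV}=g_0|_{\mV}$ and vertical brackets, so $g_t(\nabla^t_XY,W)$ is independent of $t$. Hence $P^t_{\mV}\nabla^t_XY=P^0_{\mV}\nabla^0_XY$, which is also the statement $\dt P^t_\mV\nabla^t_XY=0$ used in the theorem. Substituting this yields \eqref{dtnablaXYZ}.
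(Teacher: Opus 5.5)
Your proof is correct and follows essentially the same route as the paper. You differentiate the Koszul formula using $B_t(X,Y)=0$ for vertical $X,Y$, convert to Lie brackets, cancel the four leftover terms by the Riemannian-submersion property (your alternating-$\mA^t$ argument is the paper's identity $g_t(X,\nabla^t_{H}Z+\nabla^t_Z H)=0$ for horizontal $H,Z$), and handle the last term via $B_t^\sharp Z\in\mV$ together with the $t$-independence of $P^t_{\mV}\nabla^t_XY$; the only cosmetic difference is that you start from the packaged first-variation formula in terms of $\nabla^t B_t$ rather than expanding the Koszul formula with brackets directly.
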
 
\begin{proof} 
From the Koszul formula and $B_t(X,Y)=0$, due to \eqref{BVVzero}, we obtain
\begin{eqnarray} \label{dtnablaXYZproof1}
&& 2 g_t (\dt \nabla^t_X Y , Z) 
= 2\dt (g_t (\nabla^t_X Y , Z))  - 2B_t(\nabla^t_X Y , Z) \nonumber \\
&&= X (B_t(Y,Z)) + Y (B_t(X,Z)) - Z (B_t(X,Y)) \nonumber\\
&&\quad + B_t ( [X,Y] , Z ) + B_t([Z,X],Y)  + B_t([Z,Y],X) - 2B_t( \nabla^t_X Y , Z ) \nonumber\\
&&= X (g_t(B_t^\sharp Y,Z)) + Y (g_t(B_t^\sharp X,Z)) + B_t ( \nabla^t_X Y - \nabla^t_Y X , Z ) \nonumber\\
&&\quad + B_t ( \nabla^t_Z X - \nabla^t_X Z , Y ) + B_t ( \nabla^t_Z Y - \nabla^t_Y Z , X ) - 2B_t( \nabla^t_X Y , Z )\nonumber \\
&&= g_t( \nabla^t_X B_t^\sharp Y,Z ) + g_t( \nabla^t_X Z , B_t^\sharp Y ) + g_t( \nabla^t_Y B_t^\sharp X,Z )  + g_t( \nabla^t_Y Z , B_t^\sharp X ) \nonumber\\
&&\quad -B_t( \nabla^t_X Y + \nabla^t_Y X , Z)  + g_t ( \nabla^t_Z X - \nabla^t_X Z , B_t^\sharp Y ) + g_t ( \nabla^t_Z Y - \nabla^t_Y Z , B_t^\sharp X ) \nonumber\\
&&= g_t( [ X ,  B_t^\sharp Y ] ,Z ) + g_t( \nabla^t_{B_t^\sharp Y} X , Z )
+ g_t( [ Y ,  B_t^\sharp X ] ,Z ) + g_t( \nabla^t_{B_t^\sharp X} Y , Z ) \nonumber\\
&&\quad  -g_t( \nabla^t_X Y + \nabla^t_Y X , B_t^\sharp Z)  + g_t ( \nabla^t_Z X , B_t^\sharp Y ) + g_t ( \nabla^t_Z Y , B_t^\sharp X ).
\end{eqnarray}
Using $B_t^\sharp X , B_t^\sharp Y \in \mathfrak{X}_{\mH(t)}$ and $\pi : (M,g_t) \rightarrow (N,g_N)$ being a Riemannian submersion for all $t \in (-\epsilon, \epsilon)$, we obtain 
\begin{eqnarray} \label{dtnablaXYZproof2}
&& g_t( \nabla^t_{B_t^\sharp Y} X , Z ) + g_t( \nabla^t_{B_t^\sharp X} Y , Z ) + g_t ( \nabla^t_Z X , B_t^\sharp Y ) + g_t ( \nabla^t_Z Y , B_t^\sharp X ) \nonumber\\
&& = - g_t(  X , \nabla^t_{B_t^\sharp Y} Z ) - g_t(  Y , \nabla^t_{B_t^\sharp X} Z ) - g_t (  X , \nabla^t_Z B_t^\sharp Y ) - g_t (  Y , \nabla^t_Z B_t^\sharp X )  \nonumber\\
&&= - g_t(  X , \nabla^t_{B_t^\sharp Y} Z + \nabla^t_Z B_t^\sharp Y ) - g_t(  Y , \nabla^t_{B_t^\sharp X} Z + \nabla^t_Z B_t^\sharp X ) =0.
\end{eqnarray}
We have $Z \in \mathfrak{X}_{\mH(t)}$ and hence, by \eqref{BXY}, $B_t^\sharp Z \in \mathfrak{\mV}$. We obtain
\begin{eqnarray} \label{dtnablaXYZproof3}
g_t( \nabla^t_X Y + \nabla^t_Y X , B_t^\sharp Z) &=& g_t( P_{\mV}^t( \nabla^t_X Y + \nabla^t_Y X ) , B_t^\sharp Z) \nonumber\\
&=& B_t( P_{\mV}^t( \nabla^t_X Y + \nabla^t_Y X ) , Z) .
\end{eqnarray}
Using a local vertical orthonormal frame $\{E_a\}$,  $g_t \in \Riem(M,\mV,g_0)$ and the Koszul formula it follows that
\begin{eqnarray} \label{dtnablaXYZproof4}
&& P_{\mV}^t( \nabla^t_X Y + \nabla^t_Y X ) = 
\sum\nolimits_{a=1}^n g_t( \nabla^t_X Y + \nabla^t_Y X , E_a ) E_a = \nonumber\\
&&\sum\nolimits_{a=1}^n g_0( \nabla^0_X Y + \nabla^0_Y X , E_a ) E_a =
P_{\mV}^0( \nabla^0_X Y + \nabla^0_Y X ).
\end{eqnarray}
From \eqref{dtnablaXYZproof1}-\eqref{dtnablaXYZproof4} we obtain \eqref{dtnablaXYZ}.
\end{proof}

\begin{lemma}
For a variation $g_t \in \Riem(M, \mV, g_0)$ satisfying \eqref{bsharpinv}, 
for all $X,Y \in \mathfrak{X}_{\mV}$ \eqref{dtPhdtnablaXYforVi} holds.
\end{lemma}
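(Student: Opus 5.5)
The plan is to start from the identity \eqref{dtnablaXYZ} of the previous lemma and substitute the explicit form \eqref{bsharpinv} of $B_t^\sharp$ on vertical fields. Since both sides of \eqref{dtPhdtnablaXYforVi} are $g_t$-horizontal, it suffices to pair them with an arbitrary $Z \in \mathfrak{X}_{\mH(t)}$. The variation satisfying \eqref{bsharpinv} is the one built in Theorem \ref{corbsharpglobal}, so it preserves $\pi$ and \eqref{dtnablaXYZ} applies. We may also take $Z$ to be the $g_t$-basic lift of a field on $N$; this is enough, because both sides are tensorial in $Z$ and such lifts span $\mH(t)$ pointwise.

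The first step is to expand the bracket terms. Write $B_t^\sharp Y = \sum_i g_0(V_i,Y) H_i$ with $H_i = P_{\mH}^t \pi^* W_i$; this $H_i$ is the $g_t$-basic lift of $W_i$, since $\pi_* H_i = W_i$ and $H_i$ is $g_t$-horizontal. Then
\[
[X, B_t^\sharp Y] = \sum\nolimits_i X(g_0(V_i,Y)) H_i + \sum\nolimits_i g_0(V_i,Y)[X,H_i].
\]
Because $H_i$ is projectable and $X$ is vertical, $[X,H_i]$ is vertical and drops out when paired with $Z$. This leaves $\sum_i X(g_0(V_i,Y)) g_t(H_i,Z)$, and by the Koszul formula for $g_0$,
\[
X(g_0(V_i,Y)) = g_0(\nabla^0_X V_i, Y) + g_0(V_i, \nabla^0_X Y).
\]
The term with $Y$ and $X$ swapped is handled in the same way.

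The second step is to treat the last term of \eqref{dtnablaXYZ}. Since $P_{\mV}^0(\nabla^0_X Y + \nabla^0_Y X)$ is vertical, \eqref{bsharpinv} gives
\[
B_t\big(Z, P_{\mV}^0(\nabla^0_X Y+\nabla^0_Y X)\big) = \sum\nolimits_i g_0\big(V_i, P_{\mV}^0(\nabla^0_X Y + \nabla^0_Y X)\big)\, g_t(H_i,Z).
\]
Because $V_i$ is vertical, $g_0(V_i, P_{\mV}^0 \cdot) = g_0(V_i,\cdot)$. This term therefore cancels exactly the $g_0(V_i,\nabla^0_X Y)+g_0(V_i,\nabla^0_Y X)$ contributions from the first step. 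What remains is
\[
2g_t(\dt \nabla^t_X Y, Z) = \sum\nolimits_i \big(g_0(\nabla^0_X V_i,Y)+g_0(\nabla^0_Y V_i,X)\big)\, g_t(H_i,Z),
\]
which is \eqref{dtPhdtnablaXYforVi} after taking the horizontal projection.

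The main point needing care is justifying that $[X,H_i]$ is vertical. This uses $P_{\mH}^t\pi^*W_i$ being projectable, with the same image $W_i$ for every $t$, because $P^t_{\mV}$ only removes vertical parts. A minor secondary point is the tensoriality in $Z$, which permits the basic choice. I expect no real obstacle beyond keeping track of the cancellation.
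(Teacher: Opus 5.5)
Your proposal is correct and follows essentially the same route as the paper: substitute \eqref{bsharpinv} into \eqref{dtnablaXYZ} and use that $P_{\mH}^t\pi^*W_i$ is projectable, so $[X,P_{\mH}^t\pi^*W_i]$ is vertical (the paper's \eqref{lieXP2piW}). The $g_0(V_i,\nabla^0_XY+\nabla^0_YX)$ terms then cancel; the identity you use there is metric compatibility of $\nabla^0$ rather than the Koszul formula, and restricting to basic $Z$ is unnecessary, but neither point affects the argument.
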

\begin{proof} 
For all $X \in \mathfrak{X}_{\mV}$ and all $i \in \{n+1,\ldots,n+p\}$, since $\pi^* W_i$ is projectable, we have $P_{\mH}^t [X,  \pi^* W_i ]=0$ and since $\mV$ is integrable, we have $P_{\mH}^t [X, P_{\mV}^t  \pi^* W_i ]=0$, so
\begin{equation} \label{lieXP2piW}
P_{\mH}^t  [X, P_{\mH}^t  \pi^* W_i] = P_{\mH}^t [X,  \pi^* W_i ] - P_{\mH}^t [X, P_{\mV}^t  \pi^* W_i ] = 0.
\end{equation}
From \eqref{dtnablaXYZ} we obtain 
\[
2 P_{\mH}^t \dt \nabla^t_{X} Y = P_{\mH}^t \left( [X, B_t^\sharp Y] + [Y, B_t^\sharp X ] - B_t^\sharp P_{\mV}^0 ( \nabla^0_{X} {Y} + \nabla^0_{Y} {X}  ) \right).
\]
Using \eqref{bsharpinv} in the above, by \eqref{lieXP2piW} we obtain 
for all $X,Y \in \mathfrak{X}_{\mV}$
\begin{eqnarray} 
&& 2 P_{\mH}^t \dt \nabla^t_{X} Y = P_{\mH}^t  \big( 
[X,  \sum\nolimits_{i=n+1}^{n+p} g_0(V_i , Y) P_{\mH}^t  \pi^* W_i]  \nonumber \\
&&\quad + [Y,  \sum\nolimits_{i=n+1}^{n+p} g_0(V_i , X) P_{\mH}^t  \pi^* W_i ] - B_t^\sharp P_{\mV}^0 (\nabla^0_{X}Y +  \nabla^0_{Y}X ) \big) \nonumber\\
&&= \sum\nolimits_{i=n+1}^{n+p} X(  g_0(V_i , Y) ) P_{\mH}^t  \pi^* W_i  + \sum\nolimits_{i=n+1}^{n+p} Y(  g_0(V_i , X) ) P_{\mH}^t  \pi^* W_i \nonumber\\
&&\quad - \sum\nolimits_{i=n+1}^{n+p} g_0(V_i , \nabla^0_{X}Y +  \nabla^0_{Y}X) P_{\mH}^t  \pi^* W_i \nonumber\\
&&= \sum\nolimits_{i=n+1}^{n+p} \big( g_0(\nabla^0_X V_i , Y) + g_0(\nabla^0_Y V_i , X) \big) P_{\mH}^t  \pi^* W_i . \nonumber 
\end{eqnarray}
\end{proof} 

\begin{lemma} 
Let $\{ g_t, t \in (-\epsilon, \epsilon) \} \subset \Riem(M, \mV, g_0)$ be a 
variation such that $\pi : (M, g_t) \rightarrow (N, g_N)$ is a Riemannian submersion for all $t \in (-\epsilon, \epsilon)$ and \eqref{bsharpinv} holds, where $\{ W_{n+1}, \ldots , W_{n+p} \}$ are $g_N$-orthonormal. Then for all $t \in (-\epsilon, \epsilon)$ the vector fields $\{ P_{\mH}^t \pi^* W_{n+1} , \ldots , P_{\mH}^t \pi^* W_{n+p} \}$ are $g_t$-orthonormal and $g_t$-horizontal, 
and
\begin{equation} \label{dtPHWi}
\dt P_{\mH}^t \pi^* W_i = -V_i 
\end{equation}
for all $i \in \{ n+1 , \ldots, n+p\}$.
\end{lemma}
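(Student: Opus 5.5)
The first part, orthonormality and horizontality, follows directly from the definitions. $P_{\mH}^t \pi^* W_i$ is $g_t$-horizontal by construction, and $\pi_* P_{\mH}^t \pi^* W_i = \pi_* \pi^* W_i = W_i$ because $\pi_* P_{\mV}^t =0$. Since $\pi$ is a Riemannian submersion for $g_t$,
\[
g_t(P_{\mH}^t \pi^* W_i , P_{\mH}^t \pi^* W_j) = g_N(W_i,W_j)=\delta_{ij}.
\]

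For \eqref{dtPHWi}, I would fix a local vertical $g_0$-orthonormal frame $\{E_a\}$. Since $g_t\in\Riem(M,\mV,g_0)$, this frame is also $g_t$-orthonormal, so
\[
P_{\mH}^t \pi^* W_i = \pi^* W_i - \sum\nolimits_{a=1}^n g_t(\pi^*W_i, E_a) E_a .
\]
Neither $\pi^* W_i$ (the $g_0$-lift) nor $E_a$ depends on $t$, so
\[
\dt P_{\mH}^t \pi^* W_i = -\sum\nolimits_a B_t(\pi^* W_i, E_a)E_a = -\sum\nolimits_a g_t(B_t^\sharp E_a, \pi^* W_i) E_a .
\]
Next I insert \eqref{bsharpinv}: $B_t^\sharp E_a=\sum_j g_0(V_j,E_a) P_{\mH}^t\pi^*W_j$. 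Because $B_t^\sharp E_a$ is $g_t$-horizontal, we have $g_t(B_t^\sharp E_a,\pi^*W_i)=g_t(B_t^\sharp E_a,P_{\mH}^t\pi^*W_i)$. By the orthonormality from the first step this equals $g_0(V_i,E_a)$. Summing over $a$ gives $-\sum_a g_0(V_i,E_a)E_a=-V_i$, since $V_i$ is vertical.

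The only delicate point is bookkeeping: the frame $\{E_a\}$ must be chosen independent of $t$, which is legitimate because the vertical metric is fixed. We must also use the symmetry $B_t(X,Y)=g_t(B_t^\sharp X,Y)=g_t(X,B_t^\sharp Y)$ together with the horizontality of $B_t^\sharp E_a$ to replace $\pi^*W_i$ by its $g_t$-horizontal part. No ODE argument is required; everything is pointwise.
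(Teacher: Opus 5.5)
Your proof is correct and follows essentially the same route as the paper. The paper also writes $P_{\mV}^t X=\sum_a g_t(X,E_a)E_a$ in a $t$-independent vertical frame, differentiates to get $B_t(X,E_a)=g_t(B_t^\sharp E_a,X)$, and inserts \eqref{bsharpinv} together with the $g_t$-orthonormality of the $P_{\mH}^t\pi^*W_j$. The only cosmetic difference is that the paper first derives a general formula for $\dt P_{\mV}^t X$, allowing $t$-dependent $X$, and then specializes to $X=\pi^*W_i$.
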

\begin{proof}
The first statement follows from 
\begin{equation} \label{gNWiWj}
g_t(P_{\mH}^t \pi^* W_i , P_{\mH}^t \pi^* W_j ) = g_N(W_i, W_j) = \delta_{ij}.
\end{equation}
To prove the last one, we have for all $X \in \mathfrak{X}_M$:
\begin{eqnarray} \label{dtPVX}
\dt P_{\mV}^t X &=& \dt \sum\nolimits_{a=1}^{n}  g_t(X , E_a) E_a = \sum\nolimits_{a=1}^{n} \big( B_t(X , E_a) E_a + g_t(\dt X , E_a) E_a \big) \nonumber \\
&=& \sum\nolimits_{a=1}^{n} g_t( B_t^\sharp E_a , X) E_a + P_{\mV}^t (\dt X) \nonumber\\
&=& \sum\nolimits_{a=1}^{n} \sum\nolimits_{i=n+1}^{n+p} g_0(V_i , E_a) g_t( P_{\mH}^t \pi^* W_i , X) E_a + P_{\mV}^t (\dt X)  \nonumber\\
&=& \sum\nolimits_{i=n+1}^{n+p} g_t( P_{\mH}^t \pi^* W_i , X) V_i + P_{\mV}^t (\dt X) .
\end{eqnarray}
In particular,
\begin{eqnarray*}
\dt P_{\mH}^t \pi^* W_i &=& - \dt P_{\mV}^t \pi^* W_i
= -\sum\nolimits_{j=n+1}^{n+p} g_t( P_{\mH}^t \pi^* W_i , \pi^* W_j) V_j \\
&=& - \sum\nolimits_{i=n+1}^{n+p} g_N( W_i , W_j)  V_j = -V_i.
\end{eqnarray*}
\end{proof}

\begin{remark} \label{remVit}
We considered $V_i$ independent of $t$ in Theorem \ref{corbsharpglobal}, to ensure existence of solutions of the system \eqref{bsharp1}-\eqref{bsharp3} in a fixed interval $(-\epsilon, \epsilon)$ at all points of $M$. However, if we assume that there exists a variation $\{ g_t, t \in (-\epsilon, \epsilon) \} \subset \Riem(M, \mV , g_0)$ that preserves the Riemannian submersion $\pi$, then it satisfies \eqref{bsharpinv}, with some $t$-dependent vertical vector fields $V_i(t)$. Moreover, fibers of $\pi$ remain totally geodesic for all $\{ g_t, t \in (-\epsilon, \epsilon) \}$ if and only if restrictions of all $V_i(t)$ to every fiber $\mF_x$ are Killing fields on $(\mF_x, g_0 \vert_{\mF_x})$, for all $t \in (-\epsilon, \epsilon)$. Therefore, some further formulas will be obtained in the more general setting, where \eqref{bsharpinv} is satisfied with $t$-dependent vector fields $V_i(t)$.
\end{remark}


\section{Variational formulas for curvatures} \label{secvariationsofcurvature}

Let $\{ g_t, t \in (-\epsilon, \epsilon) \} \subset \Riem(M, \mV, g_0)$ be a 
variation such that $\pi : (M, g_t) \rightarrow (N, g_N)$ is a Riemannian submersion for all $t \in (-\epsilon, \epsilon)$, with totally geodesic fibers.
For $X,Y \in \mH(t)$ and $U \in \mV$, from \eqref{definitionAtensor} we obtain $\mA^t_X Y = \frac{1}{2}P_{\mV}^t [X,Y]$ and $g_t( \mA^t_X U , Y) = - g_t( \mA^t_X Y , U  )$, we also have the following curvature formulas
\cite{O'Neill}
\begin{eqnarray} \label{secAXY}
&& \sec_{(M,g_t)}(X,Y) = \sec_{(N, g_N)}(\pi_* X, \pi_* Y) - 3g_t( \mA^t_X Y ,  \mA^t_X Y) ,
\\
\label{secAXU}
&& \sec_{(M,g_t)}(X,U) = g_t( \mA^t_X U ,  \mA^t_X U) .
\end{eqnarray}

\begin{lemma} \label{lemmaAWiWjU}
Let $\{ g_t, t \in (-\epsilon, \epsilon) \} \subset \Riem(M, \mV, g_0)$ be a 
variation such that $\pi : (M, g_t) \rightarrow (N, g_N)$ is a Riemannian submersion for all $t \in (-\epsilon, \epsilon)$ and \eqref{bsharpinv} holds, where 
$\{W_{n+1}, \ldots , W_{n+p}\}$ are $g_N$-orthonormal and 
$\{V_{n+1}, \ldots , V_{n+p}\}$ are bounded vertical vector fields whose restrictions to every fiber $\mF_x$ are Killing fields on $(\mF_x ,g_0 \vert_{\mF_x})$. Let the fibers of $\pi : (M, g_0) \rightarrow (N, g_N)$ be totally geodesic. 
Then 
\begin{eqnarray} \label{dtAWiWj}
&& 2\dt \mA_{P_{\mH}^t W_i}  P_{\mH}^t W_j  = \sum\nolimits_{k=n+1}^{n+p} g_N([  W_i ,  W_j  ] , W_k ) V_k \nonumber\\
&&\quad - P_{\mV}^t \sum\nolimits_{a=1}^n ([ V_i ,  P_{\mH}^t W_j  ] + [ P_{\mH}^t W_i ,  V_j  ] ) 
\end{eqnarray}
and for all 
$U, U_1, U_2 \in \mathfrak{X}_{\mV}$ we have
\begin{eqnarray} \label{dtAWiU}
&& 2\dt \mA_{P_{\mH}^t W_i} U  =  -\sum\nolimits_{j,k=n+1}^{n+p} g_N([  W_i ,  W_j  ] , W_k ) g_0( V_k , U)  P_{\mH}^t W_j \nonumber\\
&&\quad + \sum\nolimits_{j=n+1}^{n+p} g_t( [ V_i ,  P_{\mH}^t W_j  ] + [ P_{\mH}^t W_i ,  V_j  ] , U )  P_{\mH}^t W_j \nonumber\\
&&\quad + \sum\nolimits_{j=n+1}^{n+p} g_t( [ P_{\mH}^t W_i , P_{\mH}^t W_j ] , U) V_j ,
\end{eqnarray}
and
\begin{eqnarray} \label{dtAWiU1WjU2}
&& 4 \dt g_t( \mA^t_{P_{\mH}^t W_i } U_1 , \mA^t_{P_{\mH}^t W_l} U_2 ) \nonumber\\
&&= \sum\nolimits_{j=n+1}^{n+p} g_t( P_{\mV}^t [P_{\mH}^t W_l, P_{\mH}^t W_j] ,U_2 ) \cdot \big( 
\sum\nolimits_{k=n+1}^{n+p} g_N([W_i, W_j] , W_k)g_0( V_k, U_1 ) \nonumber\\
&&\quad - g_t( [ V_i , P_{\mH}^t W_j ] + [ P_{\mH}^t W_i, V_j ] , U_1 ) \big) \nonumber\\
&&\quad + \sum\nolimits_{j=n+1}^{n+p} g_t( P_{\mV}^t [P_{\mH}^t W_i, P_{\mH}^t W_j] ,U_1) \cdot \big( 
\sum\nolimits_{k=n+1}^{n+p} g_N([W_l, W_j] , W_k)g_0( V_l, U_2 ) \nonumber\\
&&\quad - g_t( [ V_l , P_{\mH}^t W_j ] + [ P_{\mH}^t W_l, V_j ] , U_2 ) \big) .
\end{eqnarray}
\end{lemma}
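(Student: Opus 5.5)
The plan is to reduce everything to the single formula \eqref{dtAWiWj}. Write $H_i = P_{\mH}^t \pi^* W_i$ (the statement abbreviates this as $P_{\mH}^t W_i$). By the preceding lemma the $H_i$ are $g_t$-orthonormal, $g_t$-horizontal, and satisfy \eqref{dtPHWi}, i.e. $\dt H_i = -V_i$. Since the $V_i$ restrict to Killing fields on fibers, Theorem \ref{corbsharpglobal} (together with Remark \ref{remVit}) shows that the fibers stay totally geodesic for all $t$. Hence O'Neill's identities recalled at the start of this section hold for every $g_t$: $\mA^t_X Y = \frac12 P_{\mV}^t[X,Y]$ for horizontal $X,Y$, and $g_t(\mA^t_X U, Y) = -g_t(\mA^t_X Y, U)$.

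\textbf{Step 1: formula \eqref{dtAWiWj}.} Differentiate $2\mA^t_{H_i}H_j = P_{\mV}^t [H_i,H_j]$ in $t$, using \eqref{dtPVX} with $X = [H_i,H_j]$. This gives
\[
\textstyle\sum_k g_t(H_k,[H_i,H_j])\,V_k + P_{\mV}^t\,\dt [H_i,H_j], \qquad \dt[H_i,H_j] = -[V_i,H_j]-[H_i,V_j].
\]
It remains to identify $g_t(H_k,[H_i,H_j])$ with $g_N([W_i,W_j],W_k)$. Each $H_i$ differs from the projectable field $\pi^*W_i$ by a vertical field, so $H_i$ is $\pi$-related to $W_i$. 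Hence $[H_i,H_j]$ is $\pi$-related to $[W_i,W_j]$. Because $H_k$ is $g_t$-horizontal and $\pi$ is a Riemannian submersion for $g_t$, we get $g_t(H_k,[H_i,H_j]) = g_N(W_k,[W_i,W_j])$. This yields \eqref{dtAWiWj}; the stray $\sum_a$ in the statement is harmless notation.

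\textbf{Step 2: formula \eqref{dtAWiU}.} Fix $U$ vertical and independent of $t$. Since $\mA^t_{H_i}U$ is horizontal and $\{H_j\}$ is a $g_t$-orthonormal horizontal frame, the antisymmetry identity gives
\[
\textstyle \mA^t_{H_i}U = -\sum_j g_t(\mA^t_{H_i}H_j,U)\,H_j .
\]
Differentiate this product. Because $\mA^t_{H_i}H_j$ and $U$ are vertical and $B_t$ vanishes on $\mV\times\mV$ by \eqref{BVVzero}, we have $\dt g_t(\mA^t_{H_i}H_j,U) = g_0(\dt \mA^t_{H_i}H_j, U)$, which is evaluated by Step 1. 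The other term is $-\sum_j g_t(\mA^t_{H_i}H_j,U)\,\dt H_j = \sum_j g_t(\mA^t_{H_i}H_j,U)\,V_j = \frac12\sum_j g_t([H_i,H_j],U)\,V_j$. Multiplying by $2$ and inserting \eqref{dtAWiWj} produces exactly the three sums in \eqref{dtAWiU}.

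\textbf{Step 3: formula \eqref{dtAWiU1WjU2}.} Expand in the orthonormal frame:
\[
\textstyle g_t(\mA^t_{H_i}U_1,\mA^t_{H_l}U_2) = \sum_j g_t(\mA^t_{H_i}H_j,U_1)\,g_t(\mA^t_{H_l}H_j,U_2),
\]
and each factor equals $\frac12 g_t(P_{\mV}^t[\cdot,\cdot],U_\alpha)$. Apply the Leibniz rule, again replacing $\dt g_t(\text{vertical},U_\alpha)$ by $g_0(\dt\,\cdot\,,U_\alpha)$, and substitute Step 1 into each factor. The resulting two terms, multiplied by $4$, are the two lines of \eqref{dtAWiU1WjU2}. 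Note that the $g_0(V_l,U_2)$ inside the $k$-sum should read $g_0(V_k,U_2)$; I would flag this as a misprint.

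\textbf{Main obstacle.} The only delicate point is Step 1. One must justify that $\dt$ passes correctly through $P_{\mV}^t$, which is exactly the content of \eqref{dtPVX}, and that the bracket term projects to $g_N([W_i,W_j],W_k)$, which needs $H_i$ to be $\pi$-related to $W_i$ for every $t$. One must also keep $U$, $U_1$, $U_2$ independent of $t$ so that no extra terms appear. Once Step 1 is established, Steps 2 and 3 are bookkeeping with the orthonormal frame $\{H_j\}$.
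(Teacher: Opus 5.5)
Your proposal is correct and follows essentially the same route as the paper. You identify $g_t([H_i,H_j],H_k)=g_N([W_i,W_j],W_k)$ from projectability of $H_i=P_{\mH}^t\pi^*W_i$, differentiate $P_{\mV}^t[H_i,H_j]$ via \eqref{dtPVX} and $\dt H_i=-V_i$, and obtain \eqref{dtAWiU} and \eqref{dtAWiU1WjU2} by expanding in the $g_t$-orthonormal frame $\{H_j\}$ with \eqref{BVVzero}. The only difference is cosmetic: you differentiate the vertical vector directly, whereas the paper first differentiates the scalar as in \eqref{dtAWiWjU}. You are also right that the $\sum_a$ in \eqref{dtAWiWj} is vestigial and that $g_0(V_l,U_2)$ in \eqref{dtAWiU1WjU2} should read $g_0(V_k,U_2)$.
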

\begin{proof} 
For all $i \in \{n+1, \ldots, n+p\}$, from \eqref{lieXP2piW} it follows that $P_{\mH}^t \pi^* W_i$ is 
projectable, so $P_{\mH}^t \pi^* W_i$ is the $g_t$-horizontal lift of $W_i$. By the Jacobi identity, the Lie bracket of projectable fields is also projectable. Hence, 
\begin{eqnarray} \label{liebracketWiWj}
&&g_t( [ P_{\mH}^t \pi^* W_i , P_{\mH}^t \pi^* W_j  ] , P_{\mH}^t \pi^* W_k ) 
= g_N( \pi_* [ P_{\mH}^t \pi^* W_i , P_{\mH}^t \pi^* W_j  ] , W_k ) \nonumber \\
&&=g_N( [ \pi_* P_{\mH}^t \pi^* W_i , \pi_* P_{\mH}^t \pi^* W_j  ] , W_k ) = g_N( [ W_i , W_j ] , W_k ) .
\end{eqnarray}

For all $U \in \mathfrak{X}_{\mV}$ we have, by 
\eqref{BVVzero}, \eqref{dtPVX} and \eqref{liebracketWiWj}
\begin{eqnarray} \label{dtAWiWjU}
&& 2\dt g_t( \mA^t_{ P_{\mH}^t \pi^* W_i }  P_{\mH}^t \pi^* W_j , U ) =
\dt g_t (  P_{\mV}^t [ P_{\mH}^t \pi^* W_i , P_{\mH}^t \pi^* W_j  ] , U ) \nonumber \\
&&= g_t ( \dt P_{\mV}^t [ P_{\mH}^t \pi^* W_i , P_{\mH}^t \pi^* W_j  ] , U ) \nonumber \\
&&= \sum\nolimits_{k=n+1}^{n+p} g_t( [ P_{\mH}^t \pi^* W_i , P_{\mH}^t \pi^* W_j  ] , P_{\mH}^t \pi^* W_k ) g_t( V_k , U ) \nonumber \\
&& \quad - g_t( [V_i , P_{\mH}^t \pi^* W_j  ] + [ P_{\mH}^t \pi^* W_i , V_j ] , U ) \nonumber \\ 
&&= \sum\nolimits_{k=n+1}^{n+p} g_N( [ W_i , W_j ] , W_k ) g_0( V_k , U ) \nonumber \\
&& \quad - g_t( [V_i , P_{\mH}^t \pi^* W_j  ] + [ P_{\mH}^t \pi^* W_i , V_j ] , U ) .
\end{eqnarray}
Using \eqref{liebracketWiWj} and \eqref{dtAWiWjU} we obtain \eqref{dtAWiWj} as
\begin{eqnarray*}
&& 2\dt \mA_{P_{\mH}^t W_i}  P_{\mH}^t W_j  = \dt P_{\mV}^t [ P_{\mH}^t W_i ,  P_{\mH}^t W_j  ] \\
&& = \dt \sum\nolimits_{a=1}^n g_t( [ P_{\mH}^t W_i ,  P_{\mH}^t W_j  ] , E_a ) E_a \\
&&= \sum\nolimits_{a=1}^n B_t([ P_{\mH}^t W_i ,  P_{\mH}^t W_j  ] , E_a ) E_a \\
&&\quad - \sum\nolimits_{a=1}^n g_t([ V_i ,  P_{\mH}^t W_j  ] + [ P_{\mH}^t W_i ,  V_j  ] , E_a ) E_a \\
&&= \sum\nolimits_{a=1}^n ([ P_{\mH}^t W_i ,  P_{\mH}^t W_j  ] , B_t^\sharp E_a ) E_a \\
&&\quad - \sum\nolimits_{a=1}^n g_t([ V_i ,  P_{\mH}^t W_j  ] + [ P_{\mH}^t W_i ,  V_j  ] , E_a ) E_a \\
&&= \sum\nolimits_{a=1}^n \sum\nolimits_{k=n+1}^{n+p} ([ P_{\mH}^t W_i ,  P_{\mH}^t W_j  ] , P_{\mH}^t W_k )g_0(V_k, E_a) E_a\\
&&\quad - \sum\nolimits_{a=1}^n g_t([ V_i ,  P_{\mH}^t W_j  ] + [ P_{\mH}^t W_i ,  V_j  ] , E_a ) E_a \\
&&= \sum\nolimits_{k=n+1}^{n+p} g_N([  W_i ,  W_j  ] , W_k ) V_k \\
&&\quad - P_{\mV}^t \sum\nolimits_{a=1}^n ([ V_i ,  P_{\mH}^t W_j  ] + [ P_{\mH}^t W_i ,  V_j  ] ) .
\end{eqnarray*}
Similarly, \eqref{dtAWiU} follows from
\begin{eqnarray*}
&& 2\dt \mA_{P_{\mH}^t W_i} U  = 2\dt \sum\nolimits_{j=n+1}^{n+p} g_t( \mA_{P_{\mH}^t W_i} U , P_{\mH}^t W_j)  P_{\mH}^t W_j \\
&&= -2 \dt \sum\nolimits_{j=n+1}^{n+p} g_t( \mA_{P_{\mH}^t W_i} P_{\mH}^t W_j , U)  P_{\mH}^t W_j \\
&&= -\sum\nolimits_{j,k=n+1}^{n+p} g_N([  W_i ,  W_j  ] , W_k ) g_0( V_k , U)  P_{\mH}^t W_j \\
&&\quad + \sum\nolimits_{j=n+1}^{n+p} g_t( [ V_i ,  P_{\mH}^t W_j  ] + [ P_{\mH}^t W_i ,  V_j  ] , U )  P_{\mH}^t W_j \\
&&\quad + 2\sum\nolimits_{j=n+1}^{n+p} g_t( \mA_{P_{\mH}^t W_i} P_{\mH}^t W_j , U) V_j \\
&&= -\sum\nolimits_{j,k=n+1}^{n+p} g_N([  W_i ,  W_j  ] , W_k ) g_0( V_k , U)  P_{\mH}^t W_j \\
&&\quad + \sum\nolimits_{j=n+1}^{n+p} g_t( [ V_i ,  P_{\mH}^t W_j  ] + [ P_{\mH}^t W_i ,  V_j  ] , U )  P_{\mH}^t W_j \\
&&\quad + \sum\nolimits_{j=n+1}^{n+p} g_t( [ P_{\mH}^t W_i , P_{\mH}^t W_j ] , U) V_j .
\end{eqnarray*}
and \eqref{dtAWiU1WjU2} follows from 
\begin{eqnarray*}
&& 4 \dt g_t( \mA^t_{P_{\mH}^t W_i } U_1 , \mA^t_{P_{\mH}^t W_l} U_2 ) \\
&&  = \dt \sum\nolimits_{j=n+1}^{n+p} g_t( P_{\mV}^t [P_{\mH}^t W_i, P_{\mH}^t W_j] , U_1 ) g_t( P_{\mV}^t [P_{\mH}^t W_l, P_{\mH}^t W_j] ,U_2 ) ,
\end{eqnarray*}
\eqref{dtAWiWjU} and \eqref{BVVzero}.
\end{proof}

\begin{theorem} \label{thd2tsecXUgeq0}
Let $\{ g_t, t \in (-\epsilon, \epsilon) \} \subset \Riem(M, \mV, g_0)$ be a 
variation such that $\pi : (M, g_t) \rightarrow (N, g_N)$ is a Riemannian submersion for all $t \in (-\epsilon, \epsilon)$ and \eqref{bsharpinv} holds, where 
$\{W_{n+1}, \ldots , W_{n+p}\}$ are $g_N$-orthonormal and 
$\{V_{n+1}, \ldots , V_{n+p}\}$ are bounded vertical vector fields whose restrictions to every fiber $\mF_x$ are Killing fields on $(\mF_x ,g_0 \vert_{\mF_x})$. Let the fibers of $\pi : (M, g_0) \rightarrow (N, g_N)$ be totally geodesic. 
Then for all $U \in \mathfrak{X}_{\mV}$ we have
\begin{eqnarray} \label{dtsecXURSGF}
\dt \sec_M (P_{\mH}^t \pi^* W_i,U) &=& \frac{1}{2} \sum\nolimits_{j=n+1}^{n+p} g_t( [P_{\mH}^t \pi^* W_i, P_{\mH}^t \pi^* W_j], U) \cdot \nonumber\\
&& \cdot \big( 
\sum\nolimits_{k=n+1}^{n+p} g_N( [ W_i , W_j ] , W_k ) g_0( V_k , U ) \nonumber\\
&&  - g_t( [V_i , P_{\mH}^t \pi^* W_j  ] + [ P_{\mH}^t \pi^* W_i , V_j ] , U ) 
\big) .
\end{eqnarray}
and
\begin{eqnarray} \label{ddtsecXURSGFdtV}
&& \ddt \sec_M( P_{\mH}^t \pi^* W_i , U) \nonumber\\ 
&&=\frac{1}{2} 
\sum\nolimits_{j=n+1}^{n+p} 
\big( 
\sum\nolimits_{l=n+1}^{n+p} g_N( [ W_i , W_j ] , W_l ) g_0( V_l , U ) 
- g_t( [V_i , P_{\mH}^t \pi^* W_j  ] \nonumber\\
&& + [ P_{\mH}^t \pi^* W_i , V_j ] , U ) 
\big)^2 
+ \frac{1}{2} \sum\nolimits_{j=n+1}^{n+p} g_t( [P_{\mH}^t \pi^* W_i, P_{\mH}^t \pi^* W_j], U) \cdot \nonumber \\
&& \cdot \big( \sum\nolimits_{l=n+1}^{n+p} g_N( [ W_i , W_j ] , W_l ) g_0( \dt V_l , U ) - g_t( [\dt V_i , P_{\mH}^t \pi^* W_j  ] , U) \nonumber \\
&& - g_t( [ P_{\mH}^t \pi^* W_i , \dt V_j  ] , U) + 2 g_t([V_i, V_j] ,U) \big) .
\end{eqnarray}
\end{theorem}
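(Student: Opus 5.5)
The plan is to write the vertizontal curvature through \eqref{secAXU} and differentiate in $t$ using Lemma \ref{lemmaAWiWjU}. Write $X_i = P_{\mH}^t \pi^* W_i$. These fields are $g_t$-orthonormal by \eqref{gNWiWj}, and since fibers stay totally geodesic (Theorem \ref{corbsharpglobal}) we have
\[
\sec_M(X_i,U) = \frac{g_t(\mA^t_{X_i}U,\mA^t_{X_i}U)}{g_t(U,U)} .
\]
Because $g_t\in\Riem(M,\mV,g_0)$, the denominator is $t$-independent. For the first derivative I will take $U$ to be unit (or carry the constant factor $g_0(U,U)$ along) and apply \eqref{dtAWiU1WjU2} with $l=i$, $U_1=U_2=U$. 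Both summands in that formula coincide, so $4\,\dt$ equals twice one sum; dividing gives the factor $\frac12$. The remaining task is to recognise $g_t(P_{\mV}^t[X_i,X_j],U)=g_t([X_i,X_j],U)$, which yields \eqref{dtsecXURSGF}.

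Equivalently, I can expand $4g_t(\mA^t_{X_i}U,\mA^t_{X_i}U)=\sum_j g_t([X_i,X_j],U)^2$. This uses $g_t(\mA^t_{X_i}U,X_j)=-g_t(\mA^t_{X_i}X_j,U)=-\frac12 g_t([X_i,X_j],U)$. Then
\[
\sec_M(X_i,U)=\tfrac14\sum\nolimits_j \phi_j^2, \qquad \phi_j := g_t([X_i,X_j],U),
\]
and \eqref{dtAWiWjU} gives
\[
\dt\phi_j = \sum\nolimits_k g_N([W_i,W_j],W_k)g_0(V_k,U) - g_t([V_i,X_j]+[X_i,V_j],U) =: \psi_j .
\]
So $\dt \sec = \frac12\sum_j\phi_j\psi_j$, which is \eqref{dtsecXURSGF}. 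This scalar formulation is the one I will use for the second derivative as well. Per Remark \ref{remVit}, I allow $V_i=V_i(t)$, since \eqref{dtAWiWjU} was derived pointwise in $t$ from \eqref{dtPVX} and \eqref{dtPHWi}, which hold for $t$-dependent $V_i$.

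For the second derivative, $\ddt\sec = \frac12\sum_j(\psi_j^2+\phi_j\dt\psi_j)$, and the first term is exactly the square in \eqref{ddtsecXURSGFdtV}. It remains to compute $\dt\psi_j$. The first part of $\psi_j$ only involves $g_0$, $g_N$ and $V_k$, so its derivative is $\sum_l g_N([W_i,W_j],W_l)g_0(\dt V_l,U)$. For the second part I differentiate $g_t([V_i,X_j],U)$. Since $[V_i,X_j]$ is vertical ($X_j$ is projectable), and $B_t$ vanishes on $\mV\times\mV$ by \eqref{BVVzero}, the metric derivative drops out. What remains is
\[
g_t([\dt V_i,X_j],U)+g_t([V_i,\dt X_j],U) = g_t([\dt V_i,X_j],U)-g_t([V_i,V_j],U),
\]
using \eqref{dtPHWi}. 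Symmetrically, $\dt g_t([X_i,V_j],U)=g_t([X_i,\dt V_j],U)-g_t([V_i,V_j],U)$, where $[-V_i,V_j]$ contributes $-[V_i,V_j]$. Therefore the bracket-term derivative is $g_t([\dt V_i,X_j],U)+g_t([X_i,\dt V_j],U)-2g_t([V_i,V_j],U)$. With the minus sign in $\psi_j$ this produces exactly the last factor in \eqref{ddtsecXURSGFdtV}.

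The main obstacle is the bookkeeping in this last step. First, one must check that every bracket entering the metric derivative is vertical, so that $B_t$ contributes nothing; this needs $[V_i,X_j]\in\mV$, which holds since $X_j$ is the $g_t$-basic lift by \eqref{lieXP2piW}. Second, the sign of $[V_i,V_j]$ coming from $\dt X_j=-V_j$ must be tracked correctly. I will also verify that \eqref{dtPHWi} still holds with $t$-dependent $V_i$, which follows from the derivation of \eqref{dtPVX}.
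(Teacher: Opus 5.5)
Your proposal is correct and follows essentially the paper's proof. The first derivative comes from \eqref{secAXU} combined with Lemma \ref{lemmaAWiWjU}: the paper differentiates the vector $\mA^t_{P_{\mH}^t \pi^* W_i} U$ via \eqref{dtAWiU}, while you use the equivalent scalar expansion $\frac14\sum_j g_t([P_{\mH}^t \pi^* W_i, P_{\mH}^t \pi^* W_j],U)^2$ together with \eqref{dtAWiWjU}. The second derivative is obtained exactly as in the paper's \eqref{dtgtViPHWjU}, using verticality of $[V_i,P_{\mH}^t \pi^* W_j]$, \eqref{BVVzero} and $\dt P_{\mH}^t \pi^* W_j=-V_j$ from \eqref{dtPHWi}, with the $-2g_t([V_i,V_j],U)$ sign tracked correctly.
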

\begin{proof}
We note that $\mA^t_{P_{\mH}^t W_i} P_{\mH}^t W_j \in \mV$ and $\mA^t_{P_{\mH}^t W_i} U \in \mH(t)$, so from \eqref{BVVzero} and \eqref{BXY} it follows that $B_t( \mA^t_{P_{\mH}^t W_i} P_{\mH}^t W_j , \mA^t_{P_{\mH}^t W_i} P_{\mH}^t W_j ) =0$ and $B_t( \mA^t_{P_{\mH}^t W_i} U , \mA^t_{P_{\mH}^t W_i} U ) =0$.
From the above, \eqref{secAXU} and \eqref{dtAWiU} 
we obtain \eqref{dtsecXURSGF}.

Using \eqref{lieXP2piW}, \eqref{BVVzero} and \eqref{dtPHWi} we obtain
\begin{eqnarray} \label{dtgtViPHWjU}
&& \dt g_t( [V_i , P_{\mH}^t \pi^* W_j  ] , U) \nonumber\\
&& = B_t ( [V_i , P_{\mH}^t \pi^* W_j  ] , U ) + g_t([\dt V_i , P_{\mH}^t \pi^* W_j  ] , U ) -  g_t( [V_i , V_j  ] , U) \nonumber\\
&& = g_t([\dt V_i , P_{\mH}^t \pi^* W_j  ] , U ) -  g_t( [V_i , V_j  ] , U) .
\end{eqnarray}
Using \eqref{dtgtViPHWjU} in \eqref{dtsecXURSGF}, we obtain \eqref{ddtsecXURSGFdtV}.
\end{proof}

\begin{theorem} \label{thd2tTgeq0}
Let $\{ g_t, t \in (-\epsilon, \epsilon) \} \subset \Riem(M, \mV, g_0)$ be a 
variation such that $\pi : (M, g_t) \rightarrow (N, g_N)$ is a Riemannian submersion for all $t \in (-\epsilon, \epsilon)$ and \eqref{bsharpinv} holds, where $\{ W_{n+1}, \ldots , W_{n+p} \}$ are $g_N$-orthonormal vector fields on $N$. Let $\sec_M(X,Y)$ be the sectional curvature of the plane field 
spanned by linearly independent $X,Y \in \mathfrak{X}_M$.
Then
\begin{eqnarray} \label{dtnormTWiWjsec}
&& \dt \sec_M( P_{\mH}^t \pi^* W_i , P_{\mH}^t \pi^* W_j ) \nonumber \\
&&= -\frac{3}{2} \sum\nolimits_{k=n+1}^{n+p} g_N( [ W_i , W_j ] , W_k ) g_t( V_k ,   P_{\mV}^t [ P_{\mH}^t \pi^* W_i , P_{\mH}^t \pi^* W_j  ] ) \nonumber\\
&& \quad + \frac{3}{2}  g_t( [V_i , P_{\mH}^t \pi^* W_j  ] + [ P_{\mH}^t \pi^* W_i , V_j ] , P_{\mV}^t [ P_{\mH}^t \pi^* W_i , P_{\mH}^t \pi^* W_j  ] ) .
\end{eqnarray}
and
\begin{eqnarray} \label{d2tnormTWiWjsec}
&& \ddt \sec_M( P_{\mH}^t \pi^* W_i , P_{\mH}^t \pi^* W_j ) \nonumber \\
&&= -\frac{3}{2} \sum\nolimits_{k=n+1}^{n+p} g_N( [ W_i , W_j ] , W_k ) g_t( \dt V_k ,  P_{\mV}^t [ P_{\mH}^t \pi^* W_i , P_{\mH}^t \pi^* W_j  ] ) \nonumber \\
&&\quad - \frac{3}{2} \sum\nolimits_{k,l=n+1}^{n+p} g_N( [ W_i , W_j ] , W_k )  g_N( [ W_i , W_j ] , W_l ) g_t( V_k , V_l ) \nonumber\\
&&\quad +3 \sum\nolimits_{k=n+1}^{n+p} g_N( [ W_i , W_j ] , W_k ) g_t( V_k , [ V_i , P_{\mH}^t \pi^* W_j  ] + [ P_{\mH}^t \pi^* W_i , V_j ] ) \nonumber\\
&&\quad + \frac{3}{2} g_t( [\dt V_i , P_{\mH}^t \pi^* W_j  ] + [ P_{\mH}^t \pi^* W_i , \dt V_j ] , P_{\mV}^t [ P_{\mH}^t \pi^* W_i , P_{\mH}^t \pi^* W_j  ] )\nonumber\\
&&\quad - 3 g_t( [ V_i , V_j  ]  , P_{\mV}^t [ P_{\mH}^t \pi^* W_i , P_{\mH}^t \pi^* W_j  ] ) \nonumber\\
&&\quad - \frac{3}{2} g_t( [ V_i , P_{\mH}^t \pi^* W_j  ] + [ P_{\mH}^t \pi^* W_i ,  V_j ] , [ V_i , P_{\mH}^t \pi^* W_j ] + [ P_{\mH}^t \pi^* W_i ,  V_j ] ) .
\end{eqnarray}
\end{theorem}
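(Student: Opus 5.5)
Write $X_i = P_{\mH}^t \pi^* W_i$ and $Z = P_{\mV}^t [X_i , X_j]$, so that $Z = 2\mA^t_{X_i} X_j$. The plan is to start from O'Neill's formula \eqref{secAXY}, which does not require totally geodesic fibers. Since $\{X_i\}$ are $g_t$-orthonormal by \eqref{gNWiWj}, and $\pi_* X_i = W_i$ does not depend on $t$, the base term $\sec_N(W_i,W_j)$ is constant in $t$. We therefore get
\[
\sec_M(X_i , X_j) = \sec_N(W_i,W_j) - \tfrac{3}{4} g_t( Z , Z ),
\]
and it suffices to differentiate $g_t(Z,Z)$ twice in $t$. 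Because $Z$ is vertical, \eqref{BVVzero} gives $B_t(Z,Z)=0$, so
\[
\dt g_t(Z,Z) = 2 g_t(\dt Z , Z).
\]
For $\dt Z$ we use the computation \eqref{dtAWiWjU}: combining \eqref{dtPVX} with \eqref{dtPHWi} and \eqref{liebracketWiWj} yields
\[
\dt Z = \sum\nolimits_k g_N([W_i,W_j],W_k) V_k - P_{\mV}^t\big([V_i , X_j] + [X_i , V_j]\big).
\]
This step uses only \eqref{bsharpinv} and that the variation preserves the submersion, not totally geodesic fibers, which matches the hypotheses. Substituting gives \eqref{dtnormTWiWjsec} immediately, with coefficient $-\tfrac34 \cdot 2 = -\tfrac32$.

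For the second derivative, differentiate $-\tfrac32 g_t(\dt Z, Z)$ once more, using Remark \ref{remVit} to let $V_i=V_i(t)$:
\[
\dt g_t(\dt Z,Z) = g_t(\ddt Z , Z) + g_t(\dt Z,\dt Z),
\]
with no $B_t$ term since both vectors are vertical. The term $g_t(\dt Z,\dt Z)$ expands as a square. Its $V_k$--$V_l$ part yields the $-\tfrac32\sum_{k,l}$ term. The cross term yields $+3\sum_k g_N(\cdot)\,g_t(V_k,[V_i,X_j]+[X_i,V_j])$; here one must replace $P_{\mV}^t$ by the full bracket inside $g_t(V_k,\cdot)$, which is harmless since $V_k$ is vertical. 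The remaining part yields $-\tfrac32 g_t(P_{\mV}^t(\cdots),P_{\mV}^t(\cdots))$.

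For $g_t(\ddt Z,Z)$ we need $g_t(\dt(P_{\mV}^t Y),Z)$ for $Y=[V_i,X_j]+[X_i,V_j]$. Formula \eqref{dtPVX} shows that the vertical part is $\sum_k g_t(X_k,Y)V_k + P_{\mV}^t \dt Y$. Then \eqref{dtPHWi} gives
\[
\dt Y = [\dt V_i,X_j] - [V_i,V_j] + [X_i,\dt V_j] - [V_i,V_j].
\]
This produces the $\dt V$ terms and the $-3g_t([V_i,V_j],Z)$ term after multiplying by $-\tfrac32\cdot(-1)$ and accounting for signs. Differentiating the first piece $\sum_k g_N(\cdot)V_k$ gives the $\dt V_k$ term.

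The main obstacle is the leftover contribution $\sum_k g_t(X_k, Y)\, g_t(V_k,Z)$ coming from \eqref{dtPVX}, together with the mismatch between $|P_{\mV}^t Y|^2$ and the stated $g_t(Y,Y)$. Writing $g_t(Y,Y) = |P_{\mV}^t Y|^2 + |P_{\mH}^t Y|^2$, these must cancel, so one has to check that they combine with the horizontal components $g_t(X_k,Y)$. The first thing to try is expressing $g_t(X_k,[V_i,X_j])$ via \eqref{lieXP2piW}, which shows $[V_i,X_j]$ is vertical when $V_i$ is vertical: $P_{\mH}^t[V,X_j]=0$. Hence $Y$ is vertical, $g_t(X_k,Y)=0$, the leftover term vanishes, and $P_{\mV}^t Y = Y$, so all the projections can be dropped. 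Collecting terms then gives \eqref{d2tnormTWiWjsec}.
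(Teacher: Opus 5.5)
Your proposal is correct and takes the paper's route: O'Neill's formula \eqref{secAXY} for the $g_t$-orthonormal horizontal lifts, combined with the variation of $P_{\mV}^t[P_{\mH}^t\pi^*W_i,P_{\mH}^t\pi^*W_j]$ from \eqref{dtAWiWj}, then differentiated once more using \eqref{dtPVX} and \eqref{dtPHWi}. Your observation that $[V_i,P_{\mH}^t\pi^*W_j]+[P_{\mH}^t\pi^*W_i,V_j]$ is vertical by \eqref{lieXP2piW} is what justifies dropping the projections $P_{\mV}^t$ in the stated formula, a step the paper uses without comment.
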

\begin{proof} 
The proof is similar to that of Theorem \ref{thd2tsecXUgeq0} and follows from \eqref{dtAWiWj}.
\end{proof} 

For variations $g_t$ preserving Riemannian submersion with totally geodesic fibers, if vector fields $V_i$ are independent of $t$, we can compute all non-vanishing derivatives of curvatures in vertical and horizontal directions.

\begin{theorem} \label{thconstVidtsec}
Let $\{ g_t, t \in (-\epsilon, \epsilon) \} \subset \Riem(M, \mV, g_0)$ be a 
variation such that $\pi : (M, g_t) \rightarrow (N, g_N)$ is a Riemannian submersion for all $t \in (-\epsilon, \epsilon)$ and \eqref{bsharpinv} holds, where 
$\{W_{n+1}, \ldots , W_{n+p}\}$ are $g_N$-orthonormal and 
$\{V_{n+1}, \ldots , V_{n+p}\}$ are bounded vertical vector fields whose restrictions to every fiber $\mF_x$ are Killing fields on $(\mF_x ,g_0 \vert_{\mF_x})$. Let the fibers of $\pi : (M, g_0) \rightarrow (N, g_N)$ be totally geodesic. Suppose that $\dt V_i = 0$ for all $i,j \in \{n+1, \ldots, n+p\}$. 
Then for all $U \in \mathfrak{X}_{\mV}$ we have
\begin{eqnarray} \label{ddtsecXURSGF}
&& \ddt \sec_M( P_{\mH}^t \pi^* W_i , U) \nonumber\\ 
&&=\frac{1}{2} 
\sum\nolimits_{j=n+1}^{n+p} 
\big( 
\sum\nolimits_{l=n+1}^{n+p} g_N( [ W_i , W_j ] , W_l ) g_0( V_l , U ) 
- g_t( [V_i , P_{\mH}^t \pi^* W_j  ] \nonumber\\
&& + [ P_{\mH}^t \pi^* W_i , V_j ] , U ) 
\big)^2 
+ \sum\nolimits_{j=n+1}^{n+p} g_t( [P_{\mH}^t \pi^* W_i, P_{\mH}^t \pi^* W_j], U) \cdot g_t([V_i, V_j] ,U) .  \nonumber\\
\end{eqnarray}
\begin{eqnarray} \label{d3tsecXURSGF}
&& \frac{\partial^3}{\partial t^3} \sec_M( P_{\mH}^t \pi^* W_i , U) \nonumber\\ 
&&= 
3 \sum\nolimits_{j=n+1}^{n+p} 
\big( 
\sum\nolimits_{l=n+1}^{n+p} g_N( [ W_i , W_j ] , W_l ) g_0( V_l , U ) 
- g_t( [V_i , P_{\mH}^t \pi^* W_j  ] \nonumber\\
&& + [ P_{\mH}^t \pi^* W_i , V_j ] , U ) 
\big) \cdot g_t( [V_i , V_j  ] , U ) ,
\end{eqnarray}
\begin{eqnarray} \label{d4tsecXURSGF}
\frac{\partial^4}{\partial t^4} \sec_M( P_{\mH}^t \pi^* W_i , U) 
= 
6 \sum\nolimits_{j=n+1}^{n+p} g_t( [V_i , V_j  ] , U )^2 
\end{eqnarray}
and $\frac{\partial^5}{\partial t^5} \sec_M( P_{\mH}^t \pi^* W_i , U) =0$.

Moreover,
\begin{eqnarray} \label{d2tnormTWiWjsecVconst}
&& \ddt \sec_M( P_{\mH}^t \pi^* W_i , P_{\mH}^t \pi^* W_j ) \nonumber \\
&&= 
- \frac{3}{2} \sum\nolimits_{k,l=n+1}^{n+p} g_N( [ W_i , W_j ] , W_k )  g_N( [ W_i , W_j ] , W_l ) g_0( V_k , V_l ) \nonumber\\
&&\quad +3 \sum\nolimits_{k=n+1}^{n+p} g_N( [ W_i , W_j ] , W_k ) g_t( V_k , [ V_i , P_{\mH}^t \pi^* W_j  ] + [ P_{\mH}^t \pi^* W_i , V_j ] ) \nonumber\\
&&\quad - 3 g_t( [ V_i , V_j  ]  , P_{\mV}^t [ P_{\mH}^t \pi^* W_i , P_{\mH}^t \pi^* W_j  ] ) \nonumber\\
&&\quad - \frac{3}{2} g_t( [ V_i , P_{\mH}^t \pi^* W_j  ] + [ P_{\mH}^t \pi^* W_i ,  V_j ] , [ V_i , P_{\mH}^t \pi^* W_j ] + [ P_{\mH}^t \pi^* W_i ,  V_j ] ) ,
\end{eqnarray}
\begin{eqnarray} \label{d3tnormTWiWjsec1}
&& \frac{\partial^3 }{\partial t^3} \sec_M( P_{\mH}^t \pi^* W_i , P_{\mH}^t \pi^* W_j ) \nonumber \\
&&=
-9 \sum\nolimits_{k=n+1}^{n+p} g_N( [ W_i , W_j ] , W_k ) g_0( V_k , [ V_i , V_j  ] ) \nonumber\\
&& \quad + 9g_t( [V_i , P_{\mH}^t \pi^* W_j  ] + [ P_{\mH}^t \pi^* W_i , V_j ] , [ V_i , V_j  ] ) ,
\end{eqnarray}
\begin{eqnarray} \label{d4tnormTWiWjsec1}
\frac{\partial^4 }{\partial t^4} \sec_M( P_{\mH}^t \pi^* W_i , P_{\mH}^t \pi^* W_j ) = - 18 g_t( [V_i , V_j  ] , [ V_i , V_j  ] ) 
\end{eqnarray}
and $\frac{\partial^5 }{\partial t^5} \sec_M( P_{\mH}^t \pi^* W_i , P_{\mH}^t \pi^* W_j )=0$.

We also have
\begin{eqnarray} \label{d2tAWiU1AWlU2}
&& 4 \ddt g_t( \mA^t_{P_{\mH}^t W_i } U_1 , \mA^t_{P_{\mH}^t W_l} U_2 ) \nonumber\\
&&=
2 \sum\nolimits_{j=n+1}^{n+p} \big( 
\sum\nolimits_{k=n+1}^{n+p} g_N([W_l, W_j] , W_k)g_0( V_k, U_2 ) \nonumber\\
&&\quad - g_t( [ V_l , P_{\mH}^t W_j ] + [ P_{\mH}^t W_l, V_j ] , U_2 ) \big)
 \cdot \big( 
\sum\nolimits_{k=n+1}^{n+p} g_N([W_i, W_j] , W_k)g_0( V_k, U_1 ) \nonumber\\
&&\quad - g_t( [ V_i , P_{\mH}^t W_j ] + [ P_{\mH}^t W_i, V_j ] , U_1 ) \big) \nonumber\\
&&\quad + 
2 \sum\nolimits_{j=n+1}^{n+p} g_t( P_{\mV}^t [P_{\mH}^t W_l, P_{\mH}^t W_j] ,U_2 ) \cdot g_t( [ V_i , V_j ] , U_1 ) \nonumber\\
&&\quad + 
2 \sum\nolimits_{j=n+1}^{n+p} g_t( P_{\mV}^t [P_{\mH}^t W_i, P_{\mH}^t W_j] ,U_1 ) \cdot g_t( [ V_l , V_j ] , U_2 )  ,
\end{eqnarray}
\begin{eqnarray} \label{d3tAWiU1AWlU2}
&& 4 \frac{\partial^3}{\partial t^3} g_t( \mA^t_{P_{\mH}^t W_i } U_1 , \mA^t_{P_{\mH}^t W_l} U_2 ) \nonumber\\
&&= 4 \sum\nolimits_{j=n+1}^{n+p} g_t([V_l, V_j], U_2)
 \cdot \big( 
\sum\nolimits_{k=n+1}^{n+p} g_N([W_i, W_j] , W_k)g_0( V_k, U_1 ) \nonumber\\
&&\quad - g_t( [ V_i , P_{\mH}^t W_j ] + [ P_{\mH}^t W_i, V_j ] , U_1 ) \big) \nonumber\\
&&\quad + 4 \sum\nolimits_{j=n+1}^{n+p} g_t( [ V_i , V_j ] , U_1 ) \big( 
\sum\nolimits_{k=n+1}^{n+p} g_N([W_l, W_j] , W_k)g_0( V_k, U_2 ) \nonumber\\
&&\quad - g_t( [ V_l , P_{\mH}^t W_j ] + [ P_{\mH}^t W_l, V_j ] , U_2 ) \big)
\nonumber\\
&&\quad + 
2 \sum\nolimits_{j=n+1}^{n+p}  
\big(  \sum\nolimits_{k=n+1}^{n+p} g_N([W_l, W_j] , W_k)g_0( V_l, U_2 ) \nonumber\\
&&\quad - g_t( [ V_l , P_{\mH}^t W_j ] + [ P_{\mH}^t W_l, V_j ] , U_2 )  \big)
\cdot g_t( [ V_i , V_j ] , U_1 ) \nonumber \\
%
%
&&\quad + 
2 \sum\nolimits_{j=n+1}^{n+p}  
\big(  \sum\nolimits_{k=n+1}^{n+p} g_N([W_i, W_j] , W_k)g_0( V_l, U_1 ) \nonumber\\
&&\quad - g_t( [ V_i , P_{\mH}^t W_j ] + [ P_{\mH}^t W_i, V_j ] , U_1 )  \big)
\cdot g_t( [ V_l , V_j ] , U_2 ) .
\end{eqnarray}
and
\begin{eqnarray} \label{d4tAWiU1AWlU2}
&& 4 \frac{\partial^4}{\partial t^4} g_t( \mA^t_{P_{\mH}^t W_i } U_1 , \mA^t_{P_{\mH}^t W_l} U_2 ) \nonumber\\
%
&&= 24 \sum\nolimits_{j=n+1}^{n+p} g_t([V_l, V_j], U_2) g_t( [ V_i , V_j ]  , U_1 ) .
\end{eqnarray}
and $\frac{\partial^5}{\partial t^5} g_t( \mA^t_{P_{\mH}^t W_i } U_1 , \mA^t_{P_{\mH}^t W_l} U_2 ) =0$.
\end{theorem}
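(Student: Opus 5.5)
The plan is to differentiate repeatedly the first-derivative formulas \eqref{dtsecXURSGF} and \eqref{dtnormTWiWjsec}, and the formula \eqref{dtAWiU1WjU2}, using a small set of "elementary derivatives" of the building blocks. Since $\dt V_i=0$, everything reduces to differentiating the following quantities.

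\begin{itemize}
\item[(i)] $\alpha_{ij}(U):=g_t(P_{\mV}^t[P_{\mH}^t\pi^*W_i,P_{\mH}^t\pi^*W_j],U)$. By \eqref{dtAWiWjU},
\[
\dt\alpha_{ij}(U)=\beta_{ij}(U),
\]
where $\beta_{ij}(U):=\sum_k g_N([W_i,W_j],W_k)g_0(V_k,U)-g_t([V_i,P_{\mH}^t\pi^*W_j]+[P_{\mH}^t\pi^*W_i,V_j],U)$.
\item[(ii)] By \eqref{dtgtViPHWjU} with $\dt V_i=0$ we have $\dt g_t([V_i,P_{\mH}^t\pi^*W_j],U)=-g_t([V_i,V_j],U)$. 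The bracket $[P_{\mH}^t\pi^*W_i,V_j]$ is treated the same way, using \eqref{dtPHWi} and \eqref{BVVzero} (brackets of vertical fields being vertical). This gives
\[
\dt\beta_{ij}(U)=2g_t([V_i,V_j],U).
\]
\item[(iii)] $g_t([V_i,V_j],U)=g_0([V_i,V_j],U)$ is independent of $t$, since all entries are vertical and $g_t\in\Riem(M,\mV,g_0)$.
\end{itemize}

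These three rules form a nilpotent chain $\alpha\to\beta\to 2\gamma\to0$, with $\gamma_{ij}(U)=g_0([V_i,V_j],U)$.

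For the vertizontal curvature, \eqref{secAXU} gives $\sec_M(P_{\mH}^t\pi^*W_i,U)=\tfrac14\sum_j\alpha_{ij}(U)^2$ for unit $U$, and \eqref{dtsecXURSGF} reads $\tfrac12\sum_j\alpha_{ij}\beta_{ij}$. Differentiating with the chain rules yields the successive derivatives:
\begin{itemize}
\item second: $\tfrac12\sum(\beta^2+2\alpha\gamma)$, which is \eqref{ddtsecXURSGF};
\item third: $\tfrac12\sum(2\beta\cdot2\gamma+2\beta\gamma)=3\sum\beta\gamma$, which is \eqref{d3tsecXURSGF};
\item fourth: $6\sum\gamma^2$, which is \eqref{d4tsecXURSGF};
\item fifth: zero.
\end{itemize}
The identical bookkeeping applied to $4g_t(\mA^t_{W_i}U_1,\mA^t_{W_l}U_2)=\sum_j\alpha_{lj}(U_2)\alpha_{ij}(U_1)$ gives \eqref{d2tAWiU1AWlU2}, \eqref{d3tAWiU1AWlU2}, \eqref{d4tAWiU1AWlU2} and the vanishing fifth derivative. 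This works because the expression is bilinear in the $\alpha$'s, so it is a polynomial of degree $4$ in $t$.

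For horizontal curvature I use \eqref{secAXY} together with \eqref{liebracketWiWj}. The term $\sec_N$ is constant in $t$, so only $-\tfrac34|P_{\mV}^t[P_{\mH}^t\pi^*W_i,P_{\mH}^t\pi^*W_j]|^2$ matters. By \eqref{dtAWiWj} the vector $Z_t:=P_{\mV}^t[\cdot,\cdot]$ satisfies
\[
\dt Z_t=\sum_k c_k V_k-P_{\mV}^t([V_i,\cdot]+[\cdot,V_j]),\qquad c_k=g_N([W_i,W_j],W_k).
\]
To avoid differentiating $P_{\mV}^t$ directly, I pair with a fixed $g_0$-orthonormal vertical frame $E_a$. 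Then $Z_t=\sum_a\alpha_{ij}(E_a)E_a$, and everything reduces to the scalar chain above. Hence $|Z_t|^2=\sum_a\alpha(E_a)^2$ and $\sec=\sec_N-\tfrac34\sum_a\alpha_a^2$. The derivatives follow:
\begin{itemize}
\item second: $-\tfrac32\sum_a(\beta_a^2+2\alpha_a\gamma_a)$;
\item third: $-9\sum_a\beta_a\gamma_a$;
\item fourth: $-18\sum_a\gamma_a^2=-18|[V_i,V_j]|^2$;
\item fifth: zero.
\end{itemize}
Expanding $\sum_a\beta_a^2$ with $\beta_a=g_0(\sum_kc_kV_k-Y,E_a)$, where $Y=[V_i,P_{\mH}^t\pi^*W_j]+[P_{\mH}^t\pi^*W_i,V_j]$, produces the three terms of \eqref{d2tnormTWiWjsecVconst} ($c c\,g_0(V_k,V_l)$, the cross term, and $|P_{\mV}^tY|^2$). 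This agrees with setting $\dt V=0$ in \eqref{d2tnormTWiWjsec}.

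The main obstacle is step (ii), which requires care with the $t$-dependence of $P_{\mH}^t\pi^*W_i$ inside brackets. One uses that $\dt$ commutes with the Lie bracket of fields, and that $\dt P_{\mH}^t\pi^*W_j=-V_j$ by \eqref{dtPHWi}. This gives $\dt[V_i,P_{\mH}^t\pi^*W_j]=-[V_i,V_j]$ and $\dt[P_{\mH}^t\pi^*W_i,V_j]=-[V_i,V_j]$. The metric derivative term $B_t(\cdot,U)$ contributes only through the horizontal component of the bracket paired against vertical $U$. Because $B_t^\sharp U$ is $g_t$-horizontal and the expressions are evaluated on vertical $U$, this requires checking that $B_t(X,U)=g_t(X,B_t^\sharp U)$. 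In \eqref{dtgtViPHWjU} the paper drops this term, and I would verify the same for both brackets. The remaining work is to expand $|P_{\mV}^tY|^2$ versus $g_t(Y,Y)$, where the horizontal part of $Y$ vanishes by \eqref{lieXP2piW}. The rest is routine differentiation of polynomials.
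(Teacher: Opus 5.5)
Your proposal is correct and follows essentially the paper's route. The paper likewise gets each successive derivative by differentiating the previous one with \eqref{dtPHWi}, \eqref{dtgtViPHWjU}, $\dt V_i=0$ and $B_t([V_i,V_j],U)=0$ from \eqref{BVVzero}; your chain $\alpha\to\beta\to2\gamma\to0$, with $\alpha_{ij}(U)$ quadratic in $t$, is a tidy packaging of the same bookkeeping. The $B_t$-term you flag in step (ii) does drop for both brackets: $[V_i,P_{\mH}^t\pi^*W_j]$ and $[P_{\mH}^t\pi^*W_i,V_j]$ are vertical by \eqref{lieXP2piW}, and $B_t$ vanishes on vertical pairs by \eqref{BVVzero}.
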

\begin{proof}
Using 
\eqref{dtPHWi}, 
\eqref{dtgtViPHWjU} and $\dt V_i =0$, from \eqref{ddtsecXURSGF} we obtain \eqref{d3tsecXURSGF}. 
Since $\mV$ is integrable, from \eqref{BVVzero} we obtain 
\begin{equation} \label{BtViVjbracket}
B_t([V_i, V_j], U) = 0.
\end{equation}
Using \eqref{BtViVjbracket} in \eqref{d3tsecXURSGF} we obtain \eqref{d4tsecXURSGF}, which does not depend on $t$ by $\dt V_i =0$ and \eqref{BtViVjbracket}.
The proof of other formulas is similar.
\end{proof}

Note that our variations are defined by vector fields $W_i \in \mathfrak{X}_N$ and vertical fields $V_i \in \mathfrak{X}_{\mV}$ paired with them. The formulas for variations of sectional curvatures obtained 
in Theorem \ref{thconstVidtsec} are for directions defined by those vector fields $W_i$, but 
we can obtain similar formulas for arbitrary horizontal directions.

\begin{lemma} \label{lemmaKZU}
Let $\{ g_t, t \in (-\epsilon, \epsilon) \} \subset \Riem(M, \mV, g_0)$ be a 
variation such that $\pi : (M, g_t) \rightarrow (N, g_N)$ is a Riemannian submersion for all $t \in (-\epsilon, \epsilon)$ and \eqref{bsharpinv} holds, where $\{ W_{n+1}, \ldots , W_{n+p} \}$ are $g_N$-orthonormal vector fields on $N$. 

Let $\{ Z_{n+1}, \ldots , Z_{n+p} \}$ be $g_N$-orthonormal vector fields on $N$ such that we have $Z_i  = \sum\nolimits_{j=n+1}^{n+p} a_{ij} W_j$ and $\sum\nolimits_{j,k=n+1}^{n+p} a_{ij}a_{jk} = \delta_{ik}$  for all $i,k \in \{n+1, \ldots, n+p\}$ . 
Then variation $g_t$ satisfies
\begin{equation} \label{bsharpinvalt}
B_t^\sharp X = \sum\nolimits_{i=n+1}^{n+p} g_0(\xi_i , X) P_{\mH}^t  \pi^* Z_i ,
\end{equation}
for all $X \in \mV$, where $\xi_i =  \sum\nolimits_{j=n+1}^{n+p} a_{ij} V_j$.
Moreover, if $\dt V_i =0$ for all $i \in \{n+1,\ldots, n+p\}$, then
\begin{eqnarray} \label{dtsecZU}
&& \dt \sec_{(M,g_t)}(P_{\mH}^t \pi^* Z_i, U) = 2\sum\nolimits_{j,k,l=n+1}^{n+p} a_{ik}a_{il}
g_t([ P_{\mH}^t \pi^* W_k , P_{\mH}^t \pi^* W_j ] , U) \nonumber\\
&&\cdot \big(
\sum\nolimits_{m=n+1}^{n+p} g_N( [ W_l , W_j ] , W_m  )g_0(V_m ,U)  - g_t( [V_l , P_{\mH}^t \pi^* W_j ] + [P_{\mH}^t \pi^* W_l, V_j ] , U )
 \big). \nonumber\\
\end{eqnarray}
and
\begin{eqnarray} \label{ddtsecZU}
&&\ddt \sec_{(M,g_t)}(P_{\mH}^t \pi^* Z_i, U) = 2\sum\nolimits_{j,k,l=n+1}^{n+p} a_{ik}a_{il} \nonumber\\
&& \cdot
 \big(
\sum\nolimits_{m=n+1}^{n+p} g_N( [ W_l , W_j ] , W_m  )g_0(V_m ,U) - g_t( [V_l , P_{\mH}^t \pi^* W_j ] + [P_{\mH}^t \pi^* W_l, V_j ] , U )
 \big) \nonumber\\
&& \cdot  \big(
\sum\nolimits_{m=n+1}^{n+p} g_N( [ W_k , W_j ] , W_m  )g_0(V_m ,U)  - g_t( [V_k , P_{\mH}^t \pi^* W_j ] + [P_{\mH}^t \pi^* W_k , V_j ] , U )
 \big) \nonumber\\
&&\quad + 4\sum\nolimits_{j,k,l=n+1}^{n+p} a_{ik}a_{il} g_t( [P_{\mH}^t \pi^* W_k , P_{\mH}^t \pi^* W_j] , U ) g_0([V_l , V_j] , U).
\end{eqnarray}
Similarly,
\begin{eqnarray} \label{dtsecZiZj}
&& \dt \sec_{(M,g_t)}(P_{\mH}^t \pi^* Z_i,  P_{\mH}^t \pi^* Z_j) = \sum\nolimits_{k,l,r,s=n+1}^{n+p} a_{ik}a_{jl}a_{ir}a_{js} \nonumber\\
&& \cdot \big( 
-\frac{3}{2} \sum\nolimits_{q=n+1}^{n+p} g_N( [ W_k , W_l ] , W_q ) g_t(V_q , [ P_{\mH}^t \pi^* W_r , P_{\mH}^t \pi^* W_s ]) \nonumber\\
&& + \frac{3}{2} g_t( [ V_k , P_{\mH}^t \pi^* W_l ] + [P_{\mH}^t \pi^* W_k, V_l] , P_{\mV}^t [ P_{\mH}^t \pi^* W_r , P_{\mH}^t \pi^* W_s ] )
\big)
\end{eqnarray}
and
\begin{eqnarray} \label{ddtsecZiZj}
&& \ddt \sec_{(M,g_t)}(P_{\mH}^t \pi^* Z_i, P_{\mH}^t \pi^* Z_j) = \sum\nolimits_{k,l,r,s=n+1}^{n+p} a_{ik}a_{jl}a_{ir}a_{js} \nonumber\\
&& \cdot \big( 
-\frac{3}{2} \sum\nolimits_{q=n+1}^{n+p} g_N( [ W_k , W_l ] , W_q ) g_t(\dt V_q , [ P_{\mH}^t \pi^* W_r , P_{\mH}^t \pi^* W_s ]) \nonumber\\
&&
+ \frac{3}{2} g_t( [ \dt V_k , P_{\mH}^t \pi^* W_l ] + [P_{\mH}^t \pi^* W_k,  \dt V_l] , P_{\mV}^t [ P_{\mH}^t \pi^* W_r , P_{\mH}^t \pi^* W_s ] ) \nonumber\\
&&
-3 g_t( [V_k , V_l] , [  P_{\mH}^t \pi^* W_r , P_{\mH}^t \pi^* W_s ] ) \nonumber\\
&&
-\frac{3}{2} \sum\nolimits_{q,m=n+1}^{n+p} g_N( [ W_k , W_l ] , W_q ) 
g_N( [ W_r , W_s ] , W_m ) g_0(V_q, V_m) \nonumber\\
&&
+3 \sum\nolimits_{q=n+1}^{n+p} g_N( [ W_k , W_l ] , W_q ) g_t( V_q , [V_r, P_{\mH}^t \pi^* W_s ] + [P_{\mH}^t \pi^* W_r , V_s] ) \nonumber\\
&&
- \frac{3}{2} g_t( [ V_k , P_{\mH}^t \pi^* W_l ] + [P_{\mH}^t \pi^* W_k, V_l] , [ V_r , P_{\mH}^t \pi^* W_s ] + [P_{\mH}^t \pi^* W_r, V_s] )
\big) .
\end{eqnarray}

\end{lemma}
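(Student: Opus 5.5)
The first claim, \eqref{bsharpinvalt}, is pure linear algebra. I will read the hypothesis on $(a_{ij})$ as saying that the matrix is $g_N$-orthogonal: its rows are orthonormal because the $Z_i$ and the $W_j$ are both $g_N$-orthonormal. Then $\sum_i a_{ij}a_{ik}=\delta_{jk}$. Since $P_{\mH}^t\pi^*$ is $C^\infty$-linear in the base field, $P_{\mH}^t\pi^*Z_i=\sum_j a_{ij}P_{\mH}^t\pi^*W_j$. Substituting this and $\xi_i=\sum_k a_{ik}V_k$ gives
\[
\sum_i g_0(\xi_i,X)P_{\mH}^t\pi^*Z_i=\sum_{j,k}\Big(\sum_i a_{ik}a_{ij}\Big)g_0(V_k,X)P_{\mH}^t\pi^*W_j=\sum_j g_0(V_j,X)P_{\mH}^t\pi^*W_j ,
\]
and the right-hand side is $B_t^\sharp X$ by \eqref{bsharpinv}. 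The same computation shows that $\dt P_{\mH}^t\pi^*Z_i=-\xi_i$, consistent with \eqref{dtPHWi}.

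For the curvature formulas I will not reapply Theorems \ref{thd2tsecXUgeq0} and \ref{thd2tTgeq0} in the new frame $\{Z_i,\xi_i\}$. The reason is that $a_{ij}$ are functions on $N$, so brackets such as $[\xi_l,P_{\mH}^t\pi^*Z_j]$ would pick up derivative terms. Instead I will expand by multilinearity of $\mA^t$. By \eqref{secAXU},
\[
\sec_{(M,g_t)}(P_{\mH}^t\pi^*Z_i,U)=\sum_{k,l}a_{ik}a_{il}\,g_t\big(\mA^t_{P_{\mH}^t W_k}U,\mA^t_{P_{\mH}^t W_l}U\big),
\]
with unit $U$. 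I then differentiate termwise using \eqref{dtAWiU1WjU2} with $U_1=U_2=U$. This yields two symmetric sums which, after relabelling $k\leftrightarrow l$, combine into the single sum \eqref{dtsecZU}. I expect the factor $2$ there (against $1/4$ from \eqref{dtAWiU1WjU2}) to come from the paper's normalisation $2\mA=P_{\mV}[\cdot,\cdot]$, and I will check this carefully.

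For \eqref{ddtsecZU} I will differentiate once more, assuming $\dt V_i=0$. The derivative of $g_t([P_{\mH}^t\pi^*W_k,P_{\mH}^t\pi^*W_j],U)$ is the $(k,j)$ bracket term already found in \eqref{dtAWiWjU}, which produces the product term. The derivative of the second factor is computed by \eqref{dtgtViPHWjU}: it equals $2g_t([V_l,V_j],U)$, using $\dt V=0$ and $B_t(\cdot,U)=0$ on vertical vectors. Here $g_t=g_0$ on $\mV$ and $[V_l,V_j]$ is vertical. The horizontal formulas \eqref{dtsecZiZj} and \eqref{ddtsecZiZj} follow in the same way from \eqref{secAXY}. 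I expand $g_t(\mA^t_{Z_i}Z_j,\mA^t_{Z_i}Z_j)$ as $\sum a_{ik}a_{jl}a_{ir}a_{js}\,g_t(\mA^t_{W_k}W_l,\mA^t_{W_r}W_s)$ and differentiate using \eqref{dtAWiWjU}, \eqref{dtgtViPHWjU} and \eqref{dtPHWi}, exactly as in Theorem \ref{thd2tTgeq0}. The base term $\sec_N$ is independent of $t$.

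The main obstacle is bookkeeping rather than ideas: keeping the symmetric sums straight and matching the numerical constants and the placement of $P_{\mV}^t$. In the horizontal case, the mixed term $g_t(V_q,[V_r,\cdot]+[\cdot,V_s])$ arises twice, and the two occurrences must be combined by the $(k,l)\leftrightarrow(r,s)$ symmetry of the fourfold coefficient.
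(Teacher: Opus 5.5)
Your route is the one the paper itself indicates at the end of its proof: substitute $Z_i=\sum_j a_{ij}W_j$, $\xi_i=\sum_j a_{ij}V_j$ into Lemma~\ref{lemmaAWiWjU} and \eqref{secAXU}, \eqref{secAXY}. Most of your proposal is sound:
\begin{itemize}
\item Reading the coefficient condition as orthogonality, $\sum_i a_{ij}a_{ik}=\delta_{jk}$, is exactly what the paper's own computation uses.
\item Substituting $P_{\mH}^t\pi^*Z_i=\sum_j a_{ij}P_{\mH}^t\pi^*W_j$ directly into \eqref{bsharpinv} proves \eqref{bsharpinvalt} more cleanly than the paper's detour through \eqref{deflambdaai} and uniqueness for \eqref{bsharp1}--\eqref{bsharp3}.
\item Your claim that the second factor differentiates to $2g_0([V_l,V_j],U)$ is correct.
\item The horizontal formulas \eqref{dtsecZiZj} and \eqref{ddtsecZiZj} come out of your computation with exactly the printed constants.
\end{itemize}

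The step that fails is the one you flagged: the factor $2$ in \eqref{dtsecZU} will not appear, and no normalisation can produce it. Write $W_k^t=P_{\mH}^t\pi^*W_k$. The convention $\mA^t_XY=\frac12P_{\mV}^t[X,Y]$ is already used in \eqref{dtAWiU1WjU2}: there $\mA^t_{W_k^t}U=-\frac12\sum_j g_t([W_k^t,W_j^t],U)\,W_j^t$, and the square of this $\frac12$ is the $4$ on its left-hand side. Now sum \eqref{dtAWiU1WjU2}, with $U_1=U_2=U$, against $a_{ik}a_{il}$ and merge the two symmetric sums. This gives
\[
4\,\dt \sec_{(M,g_t)}(P_{\mH}^t \pi^* Z_i,U)=2\sum\nolimits_{j,k,l=n+1}^{n+p} a_{ik}a_{il}\, g_t([W_k^t,W_j^t],U)\, C_{lj},
\]
where $C_{lj}$ is the bracketed factor in \eqref{dtsecZU}. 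So the correct coefficient is $\frac12$, not $2$. Differentiating once more then gives coefficients $\frac12$ and $1$ in \eqref{ddtsecZU}, instead of $2$ and $4$.

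In other words, the printed \eqref{dtsecZU} and \eqref{ddtsecZU} are the formulas for $4\,\dt\sec$ and $4\,\ddt\sec$. You can confirm this in two ways.
\begin{itemize}
\item At $a_{ij}=\delta_{ij}$ they contradict \eqref{dtsecXURSGF} and \eqref{ddtsecXURSGF}, which carry $\frac12$ and $1$. These are the very formulas the paper's proof cites.
\item Take $N\times S^1$ with $g_t=\pi^*g_N+({\rm d}\theta+t\,\pi^*\alpha)^2$, so that $\alpha_t=\alpha$ in \eqref{Bteta}. Using the paper's convention $2{\rm d}\omega(X,Y)=X\omega(Y)-Y\omega(X)-\omega([X,Y])$, one gets $\sec_{(M,g_t)}(P_{\mH}^t\pi^*Z,U)=t^2\|\iota_Z{\rm d}\alpha\|_N^2$ for unit $Z$. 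Its second derivative is $2\|\iota_Z{\rm d}\alpha\|_N^2$, which matches the coefficient $\frac12$ and is a quarter of what the printed \eqref{ddtsecZU} gives.
\end{itemize}

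So you cannot prove these two identities as stated. Carry out your computation as planned and state the corrected constants. The same factor $4$ must also be removed downstream; for example, \eqref{d2tsecZUdform} becomes $2\|\iota_Z{\rm d}\alpha_0\|^2_N$.
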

\begin{proof}
Equation \eqref{bsharpinvalt} follows from 
\begin{eqnarray*}
\sum\nolimits_{i=n+1}^{n+p} g_0(\xi_i , E_a) \pi^*(Z_i)
&=& \sum\nolimits_{i=n+1}^{n+p} \sum\nolimits_{j,k=n+1}^{n+p} a_{ik} a_{ij} g_0(V_j , E_a) \pi^*(W_k) \nonumber\\
&=& \sum\nolimits_{j,k=n+1}^{n+p} \delta_{jk} g_0(V_j , E_a) \pi^*(W_k) \nonumber\\
&=& \sum\nolimits_{i=n+1}^{n+p} g_0(V_i , E_a) \pi^*(W_i),
\end{eqnarray*}
\eqref{deflambdaai} and uniqueness of the solution of the system \eqref{bsharp1}-\eqref{bsharp3}. Equations \eqref{dtsecZU}, \eqref{ddtsecZU} follow from relations between $Z_i, \xi_i$ and $W_i, V_i$, and equations \eqref{dtsecXURSGF}, \eqref{d2tnormTWiWjsec}. Equations \eqref{dtsecZiZj} and \eqref{ddtsecZiZj} are obtained similarly. We note that we can also use $Z_i  = \sum\nolimits_{j=n+1}^{n+p} a_{ij} W_j$ and $\xi_i =  \sum\nolimits_{j=n+1}^{n+p} a_{ij} V_j$ in Lemma \ref{lemmaAWiWjU} and curvature formulas \eqref{secAXU} and \eqref{secAXY} to get the same results.
\end{proof} 

\begin{theorem} \label{thKillingremains}
Let $\{ g_t, t \in (-\epsilon, \epsilon) \} \subset \Riem(M, \mV, g_0)$ be a 
variation such that $\pi : (M, g_t) \rightarrow (N, g_N)$ is a Riemannian submersion for all $t \in (-\epsilon, \epsilon)$ and \eqref{bsharpinv} holds. Let $K$ be a vertical Killing vector field on $(M,g_0)$, then $K$ is a Killing vector field for all $\{ g_t, t \in (-\epsilon, \epsilon) \}$ if and only if for all $\{ V_{n+1} , \ldots , V_{n+p} \}$ in \eqref{bsharpinv} we have $[V_i , K] =0$.
\end{theorem}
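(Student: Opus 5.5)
Since $K$ is Killing for $g_0$, and $K$ is Killing for all $g_t$ if and only if $\dt(\mathcal{L}_K g_t)=0$ for all $t$, the plan is to reduce the statement to a condition on $\mathcal{L}_K B_t$. First, $\mathcal{L}_K$ commutes with $\dt$, because $K$ does not depend on $t$. So $\mathcal{L}_K g_t \equiv 0$ holds for all $t$ if and only if $\mathcal{L}_K B_t = 0$ for all $t$ (given $\mathcal{L}_K g_0=0$). Moreover, $\mathcal{L}_K g_t=0$ means the flow $\phi_s$ of $K$ consists of $g_t$-isometries.

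For the sufficiency direction, I would argue by uniqueness. Suppose $[V_i,K]=0$ for all $i$. Since $K$ is vertical, its flow $\phi_s$ preserves each fiber, so $\pi\circ\phi_s=\pi$. Hence $\phi_s$ preserves $\mV$ and fixes each $g_0$-basic lift, since $\phi_s$ is a $g_0$-isometry preserving $\mV$, hence $\mH(0)$, and $\pi_*\phi_{s*}\pi^*W_i = W_i$. Then $\phi_s^* g_t$ is again a variation in $\Riem(M,\mV,g_0)$, because $\phi_s$ is a $g_0$-isometry preserving $\mV$. It preserves the Riemannian submersion, because $\pi\circ\phi_s=\pi$. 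It also satisfies \eqref{bsharpinv} with $V_i$ replaced by $\phi_s^*V_i = V_i$, using $[V_i,K]=0$. By the uniqueness part of Theorem \ref{corbsharpglobal}, we get $\phi_s^*g_t = g_t$, so $K$ is $g_t$-Killing. Equivalently, one can see this at the level of the ODE system \eqref{bsharp1}--\eqref{bsharp3}: in an adapted frame transported by $\phi_s$, the coefficients $\lambda_{ai}$ in \eqref{deflambdaai} are $\phi_s$-invariant exactly when $g_0(V_i,E_a)$ is. If $V_i$ depend on $t$ (Remark \ref{remVit}), the same argument applies with the ODE in $t$ driven by $t$-dependent data.

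For the necessity direction, assume $\mathcal{L}_K g_t=0$ for all $t$, so $\mathcal{L}_K B_t=0$. For $X\in\mathfrak{X}_{\mV}$ we have $[K,X]\in\mathfrak{X}_{\mV}$. Then $\mathcal{L}_K(B_t^\sharp X) = B_t^\sharp [K,X]$, because $B_t^\sharp$ is built from $B_t$ and $g_t$, both of which are $K$-invariant. Moreover, $K$ commutes with the projections $P^t_{\mH},P^t_{\mV}$, since its flow preserves $\mV$ and $g_t$. It also preserves $\pi^*W_i$, because $[K,\pi^*W_i]$ is vertical, $g_0$-horizontal, and hence zero. Therefore $[K,P^t_{\mH}\pi^*W_i]=0$. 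Substituting \eqref{bsharpinv} gives
\[
\sum\nolimits_i \big(K g_0(V_i,X) - g_0(V_i,[K,X])\big) P^t_{\mH}\pi^*W_i = 0 .
\]
Since $K$ is $g_0$-Killing, the coefficient equals $g_0([K,V_i],X)$. Linear independence of the $P^t_{\mH}\pi^*W_i$ then gives $g_0([K,V_i],X)=0$ for all vertical $X$. Since $[K,V_i]$ is vertical, we conclude $[V_i,K]=0$.

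I expect the main obstacle to be justifying carefully that $\mathcal{L}_K$ commutes with the projections $P^t_{\mH}$ and $P^t_{\mV}$, and that $[K,\pi^*W_i]=0$. In the sufficiency direction, the corresponding delicate point is confirming that $\phi_s^*g_t$ satisfies \eqref{bsharpinv} with the same $W_i$, which requires $\phi_{s*}\pi^*W_i=\pi^*W_i$. Both points rest on $K$ being vertical and $g_0$-Killing, so that $\phi_s$ covers the identity on $N$ and preserves $\mH(0)$.
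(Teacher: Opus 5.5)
Your proof is correct, but it takes a different route from the paper. The paper's argument is purely infinitesimal and needs neither the flow of $K$ nor the uniqueness part of Theorem \ref{corbsharpglobal}. It splits $\mathcal{L}_K g_t$ into components. The vertical--vertical part vanishes because $g_t=g_0$ on $\mV$ and $K$ is $g_0$-Killing. The horizontal--horizontal part vanishes for every $t$, since $K$ is vertical and $\pi$ is a Riemannian submersion for $g_t$ (evaluate on $g_t$-basic fields). For the mixed part, the paper uses $B_t=0$ on $\mV$, the verticality of $[K,P_{\mH}^t\pi^*W_i]$, and \eqref{dtPHWi} to compute
\[
\dt\big((\mathcal{L}_K g_t)(P_{\mH}^t\pi^*W_i,U)\big)=g_t([K,V_i],U),
\]
which gives both implications at once.

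Your two halves are both sound. For sufficiency, you show that the data of \eqref{bsharpinv} are invariant under the flow $\phi_s$, so $\phi_s^*g_t=g_t$ by ODE uniqueness. For necessity, you use the equivariance of $B_t^\sharp$ under a $g_t$-Killing field together with $[K,P_{\mH}^t\pi^*W_i]=0$. The points you flag as delicate go through as you indicate: $\phi_s$ fixes $\pi^*W_i$ and commutes with $P_{\mH}^t$.

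What your approach buys is conceptual generality. The same uniqueness argument shows that any $g_0$-isometry covering the identity of $N$ and preserving the $V_i$ remains an isometry of every $g_t$, including discrete ones that are not flows. What the paper's computation buys is a quantitative formula for how far $K$ is from being $g_t$-Killing when $[K,V_i]\neq 0$. That formula is what is reused in Remark \ref{remd2tLKg} and in \eqref{LEbgtWjEa}, and your argument does not provide it.
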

\begin{proof}
Since $K$ is a Killing 
field on $(M,g_0)$ and 
$\{ g_t, t \in (-\epsilon, \epsilon) \} \subset \Riem(M, \mV, g_0)$,  
we have for all $t \in (-\epsilon, \epsilon)$ and all vertical fields $U_1, U_2$ 
\[
(\mathcal{L}_K g_t)(U_1, U_2) =  (\mathcal{L}_K g_0)(U_1, U_2)= 0.
\]
Since $K$ is vertical and 
$\pi : (M, g_t) \rightarrow (N, g_N)$ is a Riemannian submersion for all $t \in (-\epsilon, \epsilon)$, we have for all $t \in (-\epsilon, \epsilon)$ and all $g_t$-horizontal fields $Z_1, Z_2$:
\[
(\mathcal{L}_K g_t)(Z_1, Z_2) =0.
\]

Let $i \in \{n+1, \ldots , n+p\}$ and let $U$ be a vertical vector field. Then, using \eqref{bsharpinv}, integrability of $\mathcal{V}$,  \eqref{BVVzero} and \eqref{dtPHWi} we obtain
\begin{eqnarray} \label{dtLKg}
&& \dt ( (\mathcal{L}_K g_t)(P_{\mH}^t \pi^* W_i , U) ) =
\dt \big(  K (g_t( P_{\mH}^t \pi^* W_i , U )) - g_t([K , P_{\mH}^t \pi^* W_i ] , U) \nonumber \\
&& - g_t([K,U] , P_{\mH}^t \pi^* W_i )  \big) \nonumber \\
&& = - B_t ([K , P_{\mH}^t \pi^* W_i ] , U) - g_t ([K , \dt P_{\mH}^t \pi^* W_i ] , U) \nonumber \\
&&= g_t ([K , V_i ] , U).
\end{eqnarray}
\end{proof}

\begin{remark} \label{remd2tLKg}
From \eqref{dtLKg}, integrability of $\mV$ and \eqref{BVVzero}, it follows that if $\dt V_i= 0$ for all $i \in \{n+1 , \ldots, n+p\}$, then $\ddt (\mathcal{L}_K g_t) =0$.
\end{remark}

For local properties of variations, we can use their parametrization by sets of vector fields
$\{W_{n+1}, \ldots , W_{n+p}\}$, which are $g_N$-orthonormal and 
$\{V_{n+1}(t), \ldots , V_{n+p}(t)\}$, which are vertical. However, such vector fields $W_i$ usually cannot be defined on entire manifold $N$. 
With globally defined, $g_0$-orthonormal vertical Killing fields (e.g., fundamental fields induced by an isometric group action considered in Section \ref{subsechomogenous}), we can instead describe variations preserving a Riemannian submersion with totally geodesic fibers in terms of $1$-forms on $M$.

\begin{theorem} \label{thBomega}
Let $\{ g_t, t \in (-\epsilon, \epsilon) \} \subset \Riem(M, \mV, g_0)$ be a 
variation such that $\pi : (M, g_t) \rightarrow (N, g_N)$ is a Riemannian submersion with totally geodesic fibers for all $t \in (-\epsilon, \epsilon)$. Let the fibers of $\pi : (M, g_0) \rightarrow (N, g_N)$ be 
spanned by orthonormal vertical Killing vector fields $E_1, \ldots,  E_n$ on $(M,g_0)$. Then there exist $1$-forms $\{ \omega^1_t, \ldots , \omega^n_t \}$ on $M$ such that for all $X \in \mathfrak{X}_M$ and for all $a \in \{1, \ldots, n\}$ we have $\omega^a_t(X) = \omega^a_t( P_{\mH}^t X)$ and
\begin{equation} \label{Btomega}
B_t(E_a, X) = \omega^a_t( X ) .
\end{equation}
Moreover, for any $g_N$-orthonormal vector fields $\{W_{n+1}, \ldots , W_{n+p}\}$ and vertical fields equation \eqref{bsharpinv} holds with
\begin{equation} \label{Viomega}
V_i = \sum\nolimits_{a=1}^n \omega^a_t(\pi^* W_i) E_a .
\end{equation}
\end{theorem}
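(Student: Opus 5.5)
The natural definition is $\omega^a_t(X) := B_t(E_a, P_{\mH}^t X)$. I will then show that this agrees with $B_t(E_a,X)$ for every $X$. The first key step is the observation, already used in the proof of Theorem \ref{corbsharpglobal}, that $B_t(U_1,U_2)=0$ for all vertical $U_1,U_2$; this is \eqref{BVVzero}, and it holds because $g_t \in \Riem(M,\mV,g_0)$. Decomposing $X = P_{\mV}^t X + P_{\mH}^t X$ then gives
\[
B_t(E_a,X) = B_t(E_a,P_{\mV}^t X) + B_t(E_a,P_{\mH}^t X) = \omega^a_t(X),
\]
which is \eqref{Btomega}. Since $P_{\mH}^t$ is idempotent, we also get $\omega^a_t(P_{\mH}^t X) = \omega^a_t(X)$. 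Smoothness of $\omega^a_t$ is automatic, because $E_a$, $B_t$ and $P_{\mH}^t$ are all smooth and globally defined. So the forms exist globally on $M$, and this is exactly where the global Killing frame $\{E_a\}$ is used.

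For \eqref{Viomega} I will use Remark \ref{remVit}: any variation preserving $\pi$ satisfies \eqref{bsharpinv} with some vertical fields $V_i(t)$. I will identify these fields directly. By \eqref{BXY}, $B_t^\sharp$ maps $\mV$ into $\mH(t)$, and by \eqref{gNWiWj} the fields $P_{\mH}^t\pi^*W_i$ form a $g_t$-orthonormal frame of $\mH(t)$. Hence for vertical $X$,
\[
B_t^\sharp X = \sum\nolimits_i g_t(B_t^\sharp X, P_{\mH}^t \pi^* W_i)\, P_{\mH}^t \pi^* W_i = \sum\nolimits_i B_t(X, P_{\mH}^t\pi^*W_i)\, P_{\mH}^t \pi^* W_i.
\]
Writing $X = \sum_a g_0(X,E_a)E_a$ (the frame is $g_0$-orthonormal) gives $B_t(X,P_{\mH}^t\pi^*W_i) = \sum_a g_0(X,E_a)\,\omega^a_t(P_{\mH}^t \pi^*W_i) = g_0(V_i,X)$, with $V_i := \sum_a \omega^a_t(P_{\mH}^t\pi^*W_i)E_a$. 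Finally, $\pi^*W_i - P_{\mH}^t\pi^*W_i$ is vertical, so the first part gives $\omega^a_t(P_{\mH}^t\pi^*W_i) = \omega^a_t(\pi^*W_i)$. This yields \eqref{Viomega}, and \eqref{bsharpinv} holds with these $V_i$.

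The argument is essentially bookkeeping; the one point needing care is to work with $P_{\mH}^t\pi^*W_i$, which is the $g_t$-orthonormal frame of $\mH(t)$, rather than with $\pi^*W_i$. The switch to $\pi^*W_i$ in \eqref{Viomega} is harmless only because $\omega^a_t$ kills vertical vectors, and that is precisely the role of \eqref{BVVzero}. The total geodesy and Killing hypotheses are not needed for this statement itself. Through Remark \ref{remVit} they only ensure that the $V_i$ obtained here restrict to Killing fields on the fibers, which is consistent with the fibers remaining totally geodesic.
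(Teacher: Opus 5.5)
Your proof is correct and follows the paper's argument essentially step for step. Both use \eqref{BVVzero} to see that $B_t(E_a,\cdot)$ kills vertical vectors, then expand $B_t^\sharp U$ in the $g_t$-orthonormal horizontal frame $\{P_{\mH}^t \pi^* W_i\}$ and compare the result with \eqref{bsharpinv}. One small point: horizontality of $B_t^\sharp U$ for vertical $U$ follows directly from \eqref{BVVzero}, since $g_t(B_t^\sharp U, V) = B_t(U,V) = 0$ for all vertical $V$, so the appeal to \eqref{BXY} is unnecessary. You are also slightly more explicit than the paper in justifying $\omega^a_t(P_{\mH}^t\pi^*W_i)=\omega^a_t(\pi^*W_i)$.
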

\begin{proof}
Since $\{ g_t, t \in (-\epsilon, \epsilon) \} \subset \Riem(M, \mV, g_0)$, 
we have $B_t(E_a,U) =0$ for all vertical $U$. It follows that for all $X \in \mathfrak{X}_M$ we have $B_t(E_a, X) = B_t(E_a, P_{\mH}^t X)$. On the other hand, by linearity of $B_t$, there exists a $1$-form $\omega^a_t$ on $M$ such that $\omega^a_t(X) = B_t(E_a , X) = \omega^a_t(P_{\mH}^t X)$. Let $\{W_{n+1} , \ldots, W_{n+p}\}$ be a local $g_N$-orthonormal frame.
Then for all vertical $U$
\begin{eqnarray} \label{Bsharpomega}
B_t^\sharp U &=& \sum\nolimits_{i=n+1}^{n+p} g_t(B_t^\sharp U, P_{\mH}^t \pi^* W_i) P_{\mH}^t \pi^* W_i \nonumber\\
&=&\sum\nolimits_{i=n+1}^{n+p} B_t(U, P_{\mH}^t \pi^* W_i) P_{\mH}^t \pi^* W_i \nonumber\\
&=&
\sum\nolimits_{a=1}^n \sum\nolimits_{i=n+1}^{n+p} g_0(E_a, U) B_t(E_a, P_{\mH}^t \pi^* W_i) P_{\mH}^t \pi^* W_i \nonumber\\
&=& \sum\nolimits_{i=n+1}^{n+p} g_0(U ,  \sum\nolimits_{a=1}^n \omega^a_t(\pi^* W_i) E_a  ) P_{\mH}^t \pi^* W_i .
\end{eqnarray}
Comparing \eqref{Bsharpomega} and \eqref{bsharpinv}, we obtain \eqref{Viomega}.
\end{proof}

We note that while $\omega^a_t(X) = \omega^a_t( P_{\mH}^t X)$, the forms $\omega^a_t$ in Theorem \ref{thBomega} do not need to be basic for $n>1$ (example of such variations will be given in Section \ref{subsechomogenous}).

We will say that a variation $g_t$ is induced by diffeomorphisms of $M$, if $g_t = f_t^* g_0$ for a one-parameter family of diffeomorphisms $f_t : M \rightarrow M$. 
A sufficient condition for a variation $g_t$ not to be induced by diffeomorphisms of a closed (i.e., compact, without boundary) manifold $M$ is  $\delta_{0} B_0 =0$ \cite{Besse}, where for all $X \in \mathfrak{X}_M$
\begin{equation} \label{defdeltaB0}
\delta_{0} B_0(X) = \sum\nolimits_{a=1}^n (\nabla^0_{E_a} B_0 )(E_a,X) + \sum\nolimits_{i=n+1}^{n+p} (\nabla^0_{\pi^* W_i} B_0 )(\pi^* W_i,X),
\end{equation}
for any $g_N$-orthonormal local basis $\{ W_{n+1}, \ldots, W_{n+p}\}$ and $g_0$-orthonormal basis of the fiber $\{ E_1, \ldots , E_n \}$; for any $1$-form $\omega$ on $N$, let us also define $\delta_N \omega$ by the following formula: 
\[
\delta_N \omega = \sum\nolimits_{i=n+1}^{n+p} (\nabla^N_{W_i} \omega)(W_i).
\]
For variations satisfying \eqref{Btomega} with basic forms $\omega^a_0 = \pi^* \alpha^a_0$, we have the following formulation of condition $\delta_{0} B_0 =0$.  
\begin{proposition} \label{propnondiffeo}
Let $\{ g_t, t \in (-\epsilon, \epsilon) \} \subset \Riem(M, \mV, g_0)$ be a 
variation such that $\pi : (M, g_t) \rightarrow (N, g_N)$ is a Riemannian submersion for all $t \in (-\epsilon, \epsilon)$ with totally geodesic fibers spanned by $g_0$-orthonormal vertical fields $\{ E_1, \ldots, E_n \}$, which are Killing on $(M, g_0)$. 
If \eqref{Btomega} holds with $\omega^a_0 = \pi^* \alpha^a_0$, where $\alpha^a_0$ is a $1$-form on $N$ for all $a \in \{1, \ldots, n\}$, then $\delta_0 B_0=0$ is equivalent to
\begin{eqnarray} \label{deltaomegaa}
&& \sum\nolimits_{a=1}^n \omega^a_0(\mA^0_X E_a) =0, \\ \label{deltaNeta}
&& \delta_N \alpha^a_t =0
\end{eqnarray}
for all $a \in \{1, \ldots, n\}$ and all $X \in \mH(0)$.
\end{proposition}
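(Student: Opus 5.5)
We compute $\delta_0 B_0(X)$ directly from \eqref{defdeltaB0}, using the adapted $g_0$-orthonormal frame $\{E_a, \pi^* W_i\}$, where the $E_a$ are the given vertical Killing fields and the $W_i$ form a local $g_N$-orthonormal frame. We then split the test vector $X$ into its vertical and horizontal parts. We record the structure of $B_0$ first. By \eqref{BVVzero} we have $B_0(E_a,E_b)=0$. By \eqref{Btomega}, $B_0(E_a,Y)=\omega^a_0(Y)=\alpha^a_0(\pi_*Y)$ for horizontal $Y$. By \eqref{BXY} at $t=0$, $B_0(Y,Z)=0$ for horizontal $Y,Z$, since $g_0(Y,E_a)=0$. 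Hence
\[
B_0 = \sum\nolimits_{a=1}^n \big(E_a^\flat \otimes \omega^a_0 + \omega^a_0 \otimes E_a^\flat\big),
\]
with $\flat$ taken with respect to $g_0$.

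The covariant derivatives are expanded via the Leibniz rule. Since each $E_a$ is Killing, $\nabla^0 E_a^\flat$ is skew-symmetric. Since the fibers are totally geodesic and the $E_a$ span them, O'Neill's identities give $\nabla^0_{E_b}E_a$ vertical, $P_\mH^0\nabla^0_{Y}E_a=\mA^0_Y E_a$, and $P_\mV^0\nabla^0_Y E_a$ for horizontal $Y$. Moreover, $\omega^a_0$ is basic, so $\nabla^0_U \omega^a_0$ for vertical $U$ is controlled by $\mA^0$: $(\nabla^0_U\omega^a_0)(Y) = -\omega^a_0(\nabla^0_U Y)$, and $P_\mH^0\nabla^0_U Y=\mA^0_Y U$ for basic $Y$. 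For horizontal $Y,Z$ we have $(\nabla^0_Y\omega^a_0)(Z)=(\nabla^N_{\pi_*Y}\alpha^a_0)(\pi_*Z)$ by O'Neill's $\nabla^0_YZ=\pi^*\nabla^N+\mA^0_YZ$, together with $\omega^a_0(\mA^0_YZ)=0$.

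First take $X=E_b$ vertical. The vertical-trace terms $(\nabla^0_{E_c}B_0)(E_c,E_b)$ involve $\omega^a_0$ evaluated on vertical vectors, together with $\omega^a_0(\nabla^0_{E_c}E_c)$ and $\omega^a_0(\nabla_{E_c}E_b)$, and all of these vanish. The horizontal-trace terms give $\sum_i (\nabla^0_{\pi^*W_i}\omega^b_0)(\pi^*W_i) = \delta_N\alpha^b_0$, plus terms of the form $\omega^a_0(\pi^*W_i)\,g_0(\nabla^0_{\pi^*W_i}E_a,E_b)$ and $\omega^a_0(\nabla^0_{\pi^*W_i}\pi^*W_i)$. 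The latter vanish because $\nabla^0_{\pi^*W_i}\pi^*W_i$ is horizontal (as $\mA^0_YY=0$) and projects onto $\nabla^N_{W_i}W_i$; that term then combines with the frame correction in $\delta_N$. I expect the remaining $g_0(\nabla^0_{\pi^*W_i}E_a,E_b)$ terms to need care. For Killing $E_a$, $E_b$ spanning the fibers, the vertical part $P_\mV^0\nabla_Y E_a$ can be nonzero, so this step is the main obstacle. My plan is to use the skew-symmetry of $(a,b)\mapsto g_0(\nabla_Y E_a,E_b)$ together with orthonormality of the $E_a$. This yields a term $\sum_{a,i}\omega^a_0(\pi^*W_i)\,g_0(\nabla_{\pi^*W_i}E_a,E_b)$, which should cancel against the matching term coming from $\nabla_{\pi^*W_i}E_b^\flat$ applied in the other slot. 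This gives $\delta_0B_0(E_b)=\delta_N\alpha^b_0$, i.e. \eqref{deltaNeta}. The proposition writes $\alpha^a_t$; I read it at $t=0$.

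Next take $X$ horizontal. The terms $(\nabla^0_{\pi^*W_i}B_0)(\pi^*W_i,X)$ reduce to $-\sum_a\omega^a_0(\pi^*W_i)\,g_0(E_a,\nabla_{\pi^*W_i}X)-\dots$, i.e. contractions of $\omega^a_0$ with $g_0(\mA^0_{\pi^*W_i}X,E_a)$. The vertical-trace terms $(\nabla^0_{E_a}B_0)(E_a,X)$ give $E_a(\omega^a_0(X))-\omega^a_0(\nabla_{E_a}X)-B_0(\nabla_{E_a}E_a,X)$. Here $\nabla_{E_a}E_a=0$ because the $E_a$ are Killing with unit length, and $P_\mH\nabla_{E_a}X=\mA^0_XE_a$ for basic $X$. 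Collecting these, using $g_0(\mA^0_YX,E_a)=-g_0(\mA^0_YE_a,X)$ and the antisymmetry of $\mA^0$ on horizontal vectors, the total should be a multiple of $\sum_a\omega^a_0(\mA^0_XE_a)$, giving \eqref{deltaomegaa}. Since $\delta_0B_0$ vanishes if and only if it vanishes on both vertical and horizontal $X$, the equivalence follows.
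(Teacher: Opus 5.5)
Your overall route is the paper's: evaluate $\delta_0 B_0$ on vertical $E_b$ and on basic horizontal $X$. The horizontal half is fine, but do fix the coefficient rather than saying ``a multiple''. The vertical trace gives $-\sum_a\omega^a_0(\mA^0_XE_a)$, and the horizontal trace gives the same amount again rather than cancelling it, so the total is $-2\sum_a\omega^a_0(\mA^0_XE_a)$. This nonzero coefficient is what makes the equivalence work.

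The gap is in the vertical case, at exactly the step you flag. First, $\omega^b_0(\nabla^0_{\pi^*W_i}\pi^*W_i)$ does not vanish: it is the frame-correction part of $\delta_N\alpha^b_0$ and already sits inside $(\nabla^0_{\pi^*W_i}\omega^b_0)(\pi^*W_i)$. After that bookkeeping you get
\[
\delta_0B_0(E_b)=\delta_N\alpha^b_0\circ\pi+\sum\nolimits_{a,i}\omega^a_0(\pi^*W_i)\,g_0(\nabla^0_{\pi^*W_i}E_a,E_b).
\]
There is no second term for the sum to cancel against. In the expansion of $\nabla^0B_0$ with $B_0=\sum_a(E_a^\flat\otimes\omega^a_0+\omega^a_0\otimes E_a^\flat)$, the only other term containing $\nabla^0E^\flat$ is $(\nabla^0_{\pi^*W_i}E_a^\flat)(\pi^*W_i)\,\omega^a_0(E_b)$. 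It is zero because $\omega^a_0(E_b)=0$. Skew-symmetry of $S_{ab}(Y)=g_0(\nabla^0_YE_a,E_b)$ in $(a,b)$, which comes from orthonormality, cannot make $\sum_{a,i}\omega^a_0(\pi^*W_i)S_{ab}(\pi^*W_i)$ vanish. Already for $n=2$ it is an arbitrary pairing of $\omega^1_0$ with $S_{12}$. So the mechanism you propose fails.

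The missing idea is that your premise is false. For a vertical Killing field with totally geodesic fibers, $P_{\mV}^0\nabla^0_YE_a=0$ for every horizontal $Y$. You get this from the skew-symmetry of $\nabla^0E_a^\flat$ in its two slots, not in the frame indices. For vertical $U$,
\[
g_0(\nabla^0_YE_a,U)=-g_0(\nabla^0_UE_a,Y)=0,
\]
since $\nabla^0_UE_a$ is vertical, as you yourself noted. This is exactly how the paper disposes of $B_0(\pi^*W_i,\nabla^0_{\pi^*W_i}E_b)$. With this fact the extra sum vanishes identically, so $\delta_0B_0(E_b)=\delta_N\alpha^b_0\circ\pi$ and the rest of your argument goes through. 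Your reading of $\alpha^a_t$ as $\alpha^a_0$ agrees with what the paper's proof actually establishes.
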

\begin{proof}
Let $\{ W_{n+1}, \ldots, W_{n+p}\}$ be a local $g_N$-orthonormal basis. 
First we consider \eqref{defdeltaB0} for $X=E_b$.
From $\mV$ being integrable and totally geodesic follows 
\begin{equation} \label{PHnablaEaEb}
P_{\mH}^0 \nabla^0_{E_a}E_b=0 
\end{equation}
and hence, by \eqref{BVVzero}, 
$\sum\nolimits_{a=1}^n (\nabla^0_{E_a} B_0)(E_a,E_b)=0$ for all $a,b \in \{1, \ldots,n\}$.
On the other hand, 
\begin{eqnarray}
&& \sum\nolimits_{i=n+1}^{n+p} (\nabla^0_{\pi^* W_i} B_0 )(\pi^* W_i, E_b) 
=  \sum\nolimits_{i=n+1}^{n+p} \big( \pi^* W_i( B_0 (\pi^* W_i , E_b)  ) \nonumber\\
&& - 
B_0(\nabla^0_{ \pi^* W_i } \pi^* W_i , E_b) - B_0( \pi^* W_i , \nabla^0_{ \pi^* W_i } E_b) \big) \nonumber\\
&&= \sum\nolimits_{i=n+1}^{n+p} \big( \pi^* W_i( \pi^* \alpha^b_0(\pi^* W_i ) )
- \pi^* \alpha^b_0(\nabla^0_{ \pi^* W_i } \pi^* W_i) \big)  \nonumber\\
&&= \delta_N \pi^* \alpha^b_0, \nonumber
\end{eqnarray}
where we used $B_0( \pi^* W_i , \nabla^0_{ \pi^* W_i } E_b) = B_0( \pi^* W_i , P_{\mV}^0 \nabla^0_{ \pi^* W_i } E_b)$ by \eqref{BVVzero}, and for all $U \in \mathfrak{X}_{\mV}$:
\[
g_0( \nabla^0_{ \pi^* W_i } E_b , U ) = (\mathcal{L}_{E_b}g_0)( \pi^* W_i , U) - g_0( \nabla^0_{ U } E_b , \pi^* W_i ) =0 
\]
by $E_b$ being Killing and \eqref{PHnablaEaEb}. 
Hence, $\delta_0 B_0(X)=0$ for vertical $X$ yields \eqref{deltaNeta}.

Now we examine \eqref{defdeltaB0} for a projectable $X \in \mathfrak{X}_{\mH(0)}$. We have
\begin{eqnarray} \label{deltaB01}
&& \sum\nolimits_{a=1}^n (\nabla^0_{E_a} B_0)(E_a,X)
= \sum\nolimits_{a=1}^n \big( E_a (\omega^a_0(X)) \nonumber\\
&& - B_0(\nabla^0_{E_a} E_a , X) - \omega^a_0(\nabla^0_{E_a}X) \big) =  - \sum\nolimits_{a=1}^n  \omega^a_0(\nabla^0_{E_a}X) ,
\end{eqnarray}
because $\omega^a_0(X)$ is constant along fiber for projectable $X$ and basic $\omega^a_0$, and by \eqref{BVVzero} we have $B_0(\nabla^0_{E_a} E_a , X) = B_0( P_{\mV}^0 \nabla^0_{E_a} E_a , X)$, and for all $U \in \mathfrak{X}_{\mV}$ we have 
\[
g_0( \nabla^0_{E_a} E_a , U ) = (\mathcal{L}_{E_a}g_0)(E_a, U) - g_0( \nabla^0_{U } E_a , E_a ) = - \frac{1}{2} U(g_0(E_a,E_a)) =0, 
\]
by $E_a$ being a unit Killing field.
On the other hand,
\begin{eqnarray}
&& \sum\nolimits_{i=n+1}^{n+p} (\nabla^0_{\pi^* W_i} B_0 )(\pi^* W_i, X) 
=  \sum\nolimits_{i=n+1}^{n+p} \big( \pi^* W_i( B_0 (\pi^* W_i , X)  ) \nonumber\\
&& - 
B_0(\nabla^0_{ \pi^* W_i } \pi^* W_i , X) - B_0( \pi^* W_i , \nabla^0_{ \pi^* W_i } X) \big). \nonumber
\end{eqnarray}
By \eqref{BXY} and $X, \pi^* W_i \in \mathfrak{X}_{\mH(0)}$ we obtain $ B_0 (\pi^* W_i , X)=0$ and
\begin{eqnarray}
&& \sum\nolimits_{i=n+1}^{n+p} (\nabla^0_{\pi^* W_i} B_0 )(\pi^* W_i, X)  \nonumber\\
&&=  \sum\nolimits_{i=n+1}^{n+p} \big(  - B_0(P_{\mV}^0  \nabla^0_{ \pi^* W_i } \pi^* W_i , X) - B_0( \pi^* W_i , P_{\mV}^0  \nabla^0_{ \pi^* W_i } X) \big). \nonumber
\end{eqnarray}
Since $\pi$ is a Riemannian submersion, $\mH(0)$ is totally geodesic and so $P_{\mV}^0  \nabla^0_{ \pi^* W_i } \pi^* W_i =0$. Hence
\begin{eqnarray}
&&\sum\nolimits_{i=n+1}^{n+p} (\nabla^0_{\pi^* W_i} B_0 )(\pi^* W_i, X)  \nonumber\\
&& = -B_0( \pi^* W_i , P_{\mV}^0  \nabla^0_{ \pi^* W_i } X) = 
-\sum\nolimits_{a=1}^n g_0(E_a, \nabla^0_{ \pi^* W_i } X) B_0(E_a, \pi^* W_i) \nonumber\\
&&= -\sum\nolimits_{a=1}^n g_0(E_a, \mA^0_{ \pi^* W_i } X) \omega^a_0( \pi^* W_i) 
= \sum\nolimits_{a=1}^n g_0(E_a, \mA^0_{X} \pi^* W_i) \omega^a_0( \pi^* W_i) 
\nonumber\\
&&= -\sum\nolimits_{a=1}^n g_0(\mA^0_{X} E_a, \pi^* W_i) \omega^a_0( \pi^* W_i) = -\sum\nolimits_{a=1}^n \omega^a_0 ( \mA^0_{X} E_a ). \nonumber
\end{eqnarray}
Hence, for projectable $X \in \mathfrak{X}_{\mH(0)}$ we have
\begin{eqnarray}
&& \delta_0 B_0( X ) =  - \sum\nolimits_{a=1}^n ( \omega^a_0(\nabla^0_{E_a}X)
+ \omega^a_0 ( \mA^0_{X} E_a ) ) = -2 \sum\nolimits_{a=1}^n \omega^a_0 ( \mA^0_{X} E_a ), \nonumber 
\end{eqnarray}
because $[E_a, X] \in \mathfrak{X}_\mV$ and hence
\begin{eqnarray}
&& \omega^a_0(\nabla^0_{E_a}X) = \omega^a_0(P_{\mH}^0 \nabla^0_{E_a}X) = \omega^a_0(P_{\mH}^0 \nabla^0_{X} E_a ) + \omega^a_0(P_{\mH}^0 [E_a,X]) \nonumber \\
&& = \omega^a_0(P_{\mH}^0 \nabla^0_{X} E_a ) = \omega^a_0(\mA^0_{X} E_a ) . \nonumber
\end{eqnarray}
It follows that $\delta_0 B_0(X)=0$ for horizontal $X$ yields \eqref{deltaomegaa}.
\end{proof}


\section{Circle bundles} \label{sectioncirclebundle}

For a circle bundle with $n=1$, that defines a Riemannian submersion $\pi : (M,g_0) \rightarrow (N,g_N)$, there is only one (up to rescaling) Killing vector field along the fibers. It follows that the only $1$-form $\omega_t$ in \eqref{Viomega} is then a pullback of a form 
from $N$.

\begin{theorem} \label{thS1bundleetaform}
Let $\{ g_t, t \in (-\epsilon, \epsilon) \} \subset \Riem(M, \mV, g_0)$ be a 
variation such that $\pi : (M, g_t) \rightarrow (N, g_N)$ is a Riemannian submersion with totally geodesic fibers for all $t \in (-\epsilon, \epsilon)$. Let the fibers of $\pi : (M, g_0) \rightarrow (N, g_N)$ be 
spanned by a unit vertical field $U$, which is Killing on $(M,g_0)$. Then there exists a $1$-form $\alpha_t$ on $N$ such that for all $X \in \mathfrak{X}_M$ we have
\begin{equation} \label{Bteta}
B_t(U, X) = \alpha_t( \pi_* X ) = \pi^* \alpha_t( X ) .
\end{equation}
Moreover, for any $g_N$-orthonormal vector fields $\{W_{n+1}, \ldots , W_{n+p}\}$ 
equation \eqref{bsharpinv} holds with
\begin{equation} \label{Vieta}
V_i = ( \alpha_t(W_i) \circ \pi) \cdot U 
\end{equation}
and we have
\begin{equation} \label{dtAXYeta}
\dt g_t(\mA^t_{ P_{\mH}^t \pi^* W_l } P_{\mH}^t \pi^* W_j , U ) = - {\rm d} \alpha_t(W_l , W_j) .
\end{equation}
\end{theorem}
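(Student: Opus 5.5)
The plan is to apply Theorem \ref{thBomega} with $n=1$ and $E_1=U$. It produces a $1$-form $\omega_t$ on $M$ with $B_t(U,X)=\omega_t(X)=\omega_t(P_{\mH}^t X)$. From \eqref{bsharpinv} and \eqref{Viomega} we also get $V_i=\omega_t(\pi^*W_i)U$. The main task is to show that $\omega_t$ is basic, i.e. $\omega_t=\pi^*\alpha_t$. Since $\iota_U\omega_t=B_t(U,U)=0$ by \eqref{BVVzero}, it is enough to show that $\omega_t(X)$ is constant along fibers for every projectable $X$. Equivalently, $U(\omega_t(X))=0$ for projectable $X$, because then $\iota_U{\rm d}\omega_t(X)=U(\omega_t(X))-X(\omega_t(U))-\omega_t([U,X])=0$, using $[U,X]\in\mathfrak{X}_\mV$.

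To get this fiber-constancy, I will use the totally geodesic condition via Remark \ref{remVit}. Since fibers stay totally geodesic, the restrictions of $V_i(t)=f_i U$, with $f_i=\omega_t(\pi^*W_i)$, to each fiber must be Killing on $(\mF_x,g_0\vert_{\mF_x})$. A fiber is one-dimensional and $U$ is a unit Killing field along it. For $fU$ with $g_0(U,U)=1$, the Killing equation on a one-dimensional fiber reduces to $g_0(\nabla^0_U(fU),U)=U(f)=0$. Hence $f_i$ is constant on fibers. This is the same as saying, via \eqref{dtPhdtnablaXYforVi}, that $g_0(\nabla^0_UV_i,U)=0$. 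Since $\omega_t(\pi^*W_i)=\omega_t(P^t_{\mH}\pi^*W_i)$ and any projectable field is, up to vertical parts, a $C^\infty(N)$-combination of the $\pi^*W_i$, $\omega_t$ is fiber-constant on projectable fields. Thus $\omega_t$ descends to a $1$-form $\alpha_t$ on $N$ with $\alpha_t(W_i)\circ\pi=\omega_t(\pi^*W_i)$. Independence from the local frame $\{W_i\}$ is clear, since the definition is frame-free. This gives \eqref{Bteta} and \eqref{Vieta}.

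For \eqref{dtAXYeta}, I will insert $V_i=(\alpha_t(W_i)\circ\pi)U$ into \eqref{dtAWiWjU}, with $\mA^t_XY=\frac12P_\mV^t[X,Y]$ and the vertical test field $U$. The first term gives $\sum_k g_N([W_l,W_j],W_k)\alpha_t(W_k)=\alpha_t([W_l,W_j])$, using $g_0(U,U)=1$. For the brackets, write $h_i=\alpha_t(W_i)\circ\pi$ and $X_j=P_{\mH}^t\pi^*W_j$. Then $[h_lU,X_j]=h_l[U,X_j]-X_j(h_l)U$. Here $g_t([U,X_j],U)=0$, because $U$ is Killing for $g_0$ with $[U,X_j]$ vertical. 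Note that $g_t$ and $g_0$ agree on $\mV$, and $g_0([U,X_j],U)=g_0(\nabla_UX_j,U)-g_0(\nabla_{X_j}U,U)$. Both vanish, using totally geodesic fibers for the first and the unit Killing property for the second; for $g_t$ the same holds since the fibers remain totally geodesic. Hence $g_t([V_l,X_j]+[X_l,V_j],U)=-W_j(\alpha_t(W_l))+W_l(\alpha_t(W_j))$. Combining the terms gives $2\dt g_t(\mA^t X_l,X_j,U)=\alpha_t([W_l,W_j])-W_l\alpha_t(W_j)+W_j\alpha_t(W_l)=-2\,{\rm d}\alpha_t(W_l,W_j)$, which is \eqref{dtAXYeta}.

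I expect the main obstacle to be the careful verification that $g_t([U,X_j],U)=0$ for all $t$, together with the correct normalization convention for ${\rm d}\alpha$ that yields the stated constant. The convention needed is ${\rm d}\alpha(X,Y)=\frac12(X\alpha(Y)-Y\alpha(X)-\alpha([X,Y]))$. If that convention is not the paper's, the factor should be checked against \eqref{dtAWiWjU}.
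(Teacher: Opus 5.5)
Your argument is essentially the paper's. You apply Theorem \ref{thBomega} with $n=1$, get basicness of $\omega_t$ from $B_t(U,U)=0$ together with the fiberwise Killing condition of Remark \ref{remVit} (on a one-dimensional fiber this forces $U(\omega_t(\pi^*W_i))=0$), and then substitute $V_i=(\alpha_t(W_i)\circ\pi)U$ into \eqref{dtAWiWjU}, using the paper's convention $2{\rm d}\alpha(X,Y)=X\alpha(Y)-Y\alpha(X)-\alpha([X,Y])$. The one point to tidy is $g_t([U,X_j],U)=0$, which the paper drops without comment. Prove it in $g_t$, where $X_j$ is horizontal, $P_{\mH}^t\nabla^t_UU=0$ and $g_t(U,U)=1$, rather than in $g_0$: there $X_j$ is not horizontal, so your totally-geodesic justification of $g_0(\nabla^0_UX_j,U)=0$ does not apply as written.
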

\begin{proof}
For one-dimensional fibers, from \eqref{Viomega} we obtain $V_i = \omega_t(W_i) U$ for some $1$-form $\omega_t$. 
We will show that $\omega_t$ is a basic form, i.e., $\iota_U \omega_t =0$ and $\iota_U {\rm d} \omega_t =0$. The first condition follows from $\omega_t(U) = B_t(U,U) =0$; 
for the second, we have for all $i \in \{ n+1 , \ldots , n+p \}$:
\[
2{\rm d}\omega_t( U, \pi^* W_i) = U( \omega_t(\pi^* W_i) ) - \pi^* W_i (\omega_t (U)) - \omega_t( [U , \pi^* W_i] ) = U( \omega_t(\pi^* W_i) ),
\]
because $[U , \pi^* W_i] \in \mathfrak{X}_{\mV}$. But since variation $\{ g_t, t \in (-\epsilon, \epsilon) \} \subset \Riem(M, \mV, g_0)$ keeps the fibers geodesics, $\{ V_{n+1} , \ldots, V_{n+p} \}$ must be Killing fields when restricted to a fiber by Remark \ref{remVit}, and we have 
\[
U ( g_0(V_i , U) ) = \frac{1}{2} (\mathcal{L}_{V_i} g_0)(U,U) +g_0(V_i, \nabla^0_U U) =0,
\]
and hence $U( \omega_t (\pi^* W_i) ) =0$. It follows that $\omega_t$ is a basic form on $M$, and hence there exists a $1$-form $\alpha_t$ on $N$ such that $\omega_t = \pi^* \alpha_t$.

We obtain \eqref{dtAXYeta} from \eqref{dtAWiWjU} and the following computation 
\begin{eqnarray*} 
&& \sum\nolimits_{m=n+1}^{n+p} g_N( [ W_l , W_j ] , W_m  )g_0(V_m ,U)  - g_t( [V_l , P_{\mH}^t \pi^* W_j ] + [P_{\mH}^t \pi^* W_l, V_j ] , U ) \nonumber \\
&& = \sum\nolimits_{m=n+1}^{n+p} g_N( [ W_l , W_j ] , W_m  ) \alpha_t(W_m) +
P_{\mH}^t \pi^* W_j ( \alpha_t( W_l ) ) - P_{\mH}^t \pi^* W_l ( \alpha_t( W_j )) ) \nonumber\\
&&= \alpha_t( [ W_l , W_j ] ) - P_{\mH}^t \pi^* W_j ( \pi^* \alpha_t( \pi^* W_l ) ) - P_{\mH}^t \pi^* W_l (  \pi^* \alpha_t( \pi^* W_j ) ) \nonumber\\
&&= \pi^*\alpha_t( [ P_{\mH}^t \pi^* W_l , P_{\mH}^t \pi^* W_j ] ) - P_{\mH}^t \pi^* W_j ( \pi^* \alpha_t( P_{\mH}^t \pi^* W_l ) ) \nonumber\\
&& - P_{\mH}^t \pi^* W_l (  \pi^* \alpha_t( P_{\mH}^t \pi^* W_j ) ) \nonumber \\
&&= - 2 {\rm d} \pi^* \alpha_t( P_{\mH}^t \pi^* W_l , P_{\mH}^t \pi^* W_j ) = - 2 {\rm d} \alpha_t(W_l , W_j) .
\end{eqnarray*}
\end{proof}

For a Riemannian submersion $\pi :(M,g_t) \rightarrow (N,g_N)$, with one-dimensional geodesic fibers spanned by a unit vector field $U$, let $\mathfrak{X}_{P(t)}$ denote the set of projectable vector fields with fiberwise constant norm on $(M,g_t)$. In other words, $X \in \mathfrak{X}_{P(t)}$ if and only if $[X, U] \in \mathfrak{X}_{\mV}$ and $U(g_t(X,X))=0$. 
Vector fields from $\mathfrak{X}_{P(t)}$ are linear combinations of vector fields from an adapted frame $\{U, P_{\mH}^t \pi^* W_{n+1}, \ldots , P_{\mH}^t \pi^*  W_{n+p} \}$ with coefficients constant 
along fibers of $\pi$, where $\{  W_{n+1}, \ldots , W_{n+p} \}$ form a local $g_N$-orthonormal frame on $(N, g_N)$.
For all $x \in M$ every vector $X \in T_x M$ can be locally extended to a vector field from $\mathfrak{X}_{P(t)}$. 

\begin{lemma} \label{lemnablaXY}
Let $\pi :(M,g_t) \rightarrow (N,g_N)$ be a Riemannian submersion with one-dimensional geodesic fibers spanned by a unit vector field $U$. 
Then for all $X, Y ,Z \in \mathfrak{X}_{P(t)}$ we have $U (g_t( \nabla^t_{X} Y , Z) ) =0$.
\end{lemma}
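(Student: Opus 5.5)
The plan is to show two facts: that $U$ is a Killing field of $g_t$, and that $U$ commutes with every field in $\mathfrak{X}_{P(t)}$. The statement then follows from the invariance of the Levi-Civita connection under the flow of a Killing field.

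\emph{Step 1: $U$ is Killing on $(M,g_t)$.} I will check $(\mathcal{L}_U g_t)=0$ on the adapted frame $\{U, H_i\}$ with $H_i = P_{\mH}^t\pi^*W_i$.
\begin{itemize}
\item On $(U,U)$ it vanishes because $g_t(U,U)=1$ and $[U,U]=0$.
\item On $(U,H_i)$ we have $(\mathcal{L}_U g_t)(U,H_i)=g_t(\nabla^t_U U,H_i)+g_t(\nabla^t_{H_i}U,U)$. The first term vanishes because the fibers, i.e. the flowlines of $U$, are geodesics. The second equals $\frac12 H_i(g_t(U,U))=0$.
\item On $(H_i,H_j)$ we use that $H_i$ is basic (projectable by \eqref{lieXP2piW}), so $[U,H_i]$ is vertical. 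Then $(\mathcal{L}_U g_t)(H_i,H_j)=U(g_t(H_i,H_j))=U(\delta_{ij})=0$, since $\pi$ is a Riemannian submersion and \eqref{gNWiWj} holds.
\end{itemize}

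\emph{Step 2: $[U,X]=0$ for $X\in\mathfrak{X}_{P(t)}$.} Write $X=fU+\sum_i c_i H_i$ with $f,c_i$ constant along the fibers, as in the description of $\mathfrak{X}_{P(t)}$ above. It therefore suffices to show $[U,H_i]=0$.
\begin{itemize}
\item The bracket $[U,H_i]$ is vertical, so it is a multiple of $U$.
\item Its $U$-component is $g_t([U,H_i],U)=g_t(\nabla^t_U H_i,U)-g_t(\nabla^t_{H_i}U,U)$.
\item The first term equals $-g_t(H_i,\nabla^t_U U)=0$, again by geodesic fibers. The second is $\frac12 H_i(g_t(U,U))=0$.
\end{itemize}
Hence $[U,H_i]=0$, and $[U,X]=U(f)U+\sum_i U(c_i)H_i=0$.

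\emph{Step 3: conclusion.} A Killing field is an infinitesimal affine transformation, so $\mathcal{L}_U\nabla^t=0$, which gives
\[
[U,\nabla^t_XY]=\nabla^t_{[U,X]}Y+\nabla^t_X[U,Y]=0 .
\]
Therefore
\[
U(g_t(\nabla^t_XY,Z))=(\mathcal{L}_Ug_t)(\nabla^t_XY,Z)+g_t([U,\nabla^t_XY],Z)+g_t(\nabla^t_XY,[U,Z])=0 .
\]
If one prefers to avoid invoking $\mathcal{L}_U\nabla^t=0$, one can apply $U$ directly to the Koszul formula for $2g_t(\nabla^t_XY,Z)$. Each term has the form $U(A(g_t(B,C)))$ or $U(g_t([A,B],C))$ with $A,B,C\in\mathfrak{X}_{P(t)}$. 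Since $U$ commutes with these fields and $U(g_t(B,C))=(\mathcal{L}_Ug_t)(B,C)=0$, every such term vanishes.

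The only delicate step is Step 2. It is where the hypothesis that the coefficients are fiberwise constant enters, and where the geodesic-fiber and unit-length conditions are used to kill the $U$-component of $[U,H_i]$.
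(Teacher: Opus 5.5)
Your proof is correct, and it takes a different route from the paper.

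\textbf{The paper's route.} It works in the adapted frame $\{U, P_{\mH}^t\pi^*W_i\}$ and treats the components of $g_t(\nabla^t_XY,Z)$ case by case.
\begin{itemize}
\item The purely horizontal component is $g_N(\nabla^N_XY,Z)\circ\pi$, so it is constant along fibers.
\item The mixed components are reduced, using $P_{\mH}^t\nabla^t_U P_{\mH}^t\pi^*X=\mA^t_{P_{\mH}^t\pi^*X}U$, to the single quantity $g_t(\mA^t_{P_{\mH}^t\pi^*X}P_{\mH}^t\pi^*Y,U)$.
\item The $U$-derivative of that quantity is expanded through $\nabla^t_U\mA^t$. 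The first term is killed by the identity $g_t((\nabla^t_U\mA^t)_XY,U)=0$, quoted from Tondeur's book. The two remaining terms, $g_t(\mA^t_{\mA^t_XU}Y,U)$ and $g_t(\mA^t_X\mA^t_YU,U)$, cancel.
\end{itemize}

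\textbf{Your route.} You isolate the symmetry behind the statement. $U$ is Killing for $g_t$ and commutes with every field in $\mathfrak{X}_{P(t)}$. Hence its flow preserves the metric and the fields, and therefore every connection coefficient $g_t(\nabla^t_XY,Z)$.

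\textbf{Comparison.}
\begin{itemize}
\item Your argument is self-contained. It proves directly the Killing property of $U$, which the paper cites from Tondeur elsewhere. It also proves the exact commutation $[U,P_{\mH}^t\pi^*W_i]=0$, and it needs no external O'Neill-tensor identity.
\item It handles all component combinations at once, with no case split.
\item It gives a slightly stronger conclusion: $U(g_t(\nabla^t_XY,Z))=0$ for any fields commuting with $U$.
\item The paper's computation stays inside the O'Neill-tensor calculus used throughout the section. It yields the explicit relations $P_{\mH}^t\nabla^t_UP_{\mH}^t\pi^*X=\mA^t_{P_{\mH}^t\pi^*X}U$ and $U(g_t(\mA^t_XY,U))=0$ in that language, which is how the lemma is consumed in Proposition~\ref{propAUprojectable}. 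The cost is reliance on a cited identity whose content is essentially the Killing property you prove by hand.
\end{itemize}
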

\begin{proof}
From fibers being geodesics follows 
\[
U(g_t(\nabla^t_{U} P_{\mH}^t \pi^*X , U)) = - U( g_t(\nabla^t_{U} U, P_{\mH}^t \pi^*X)) =0.
\]
From $\pi$ being Riemannian submersion we obtain $U(g_t( \nabla^t_{P_{\mH}^t \pi^*X } P_{\mH}^t \pi^*Y , P_{\mH}^t \pi^*Z ) ) = U( g_N(\nabla^N_X Y ,Z) \circ \pi) =0$. From $P_{\mH}^t [ P_{\mH}^t \pi^*X, U] = 0$ follows 
\[
P_{\mH}^t \nabla^t_{U} P_{\mH}^t \pi^* X = P_{\mH}^t \nabla^t_{ P_{\mH}^t \pi^* X }U = \mA^t_{ P_{\mH}^t \pi^* X }U,
\]
and hence
\[
U(g_t( \nabla^t_U { P_{\mH}^t \pi^* X } , P_{\mH}^t \pi^* Y )  ) = U(g_t( \nabla^t_{P_{\mH}^t \pi^* X } U , P_{\mH}^t \pi^* Y )  ) = - U(g_t( \nabla^t_{P_{\mH}^t \pi^* X } P_{\mH}^t \pi^* Y  , U)).
\]
For all Riemannian submersions with one-dimensional, totally geodesic fibers we have by \cite{Tondeur} (last formula on page 52):
\begin{equation} \label{TondeurnablaUAXYU}
g_t( ( \nabla^t_{ U } \mA^t )_{ P_{\mH}^t \pi^* X } P_{\mH}^t \pi^* Y , U )=0.
\end{equation}
Using \eqref{TondeurnablaUAXYU} we obtain
\begin{eqnarray} \label{UgnablaXYU}
&& U(\mA^t_{P_{\mH}^t \pi^*X} P_{\mH}^t \pi^*Y ,U) =  U(\nabla^t_{ P_{\mH}^t \pi^*X  } P_{\mH}^t \pi^*Y , U) = g_t( (\nabla^t_{U} \mA^t)_{ P_{\mH}^t \pi^*X} P_{\mH}^t \pi^*Y ,U ) \nonumber\\
&&+ g_t(\mA^t_{ \nabla^t_{U} P_{\mH}^t \pi^*X} P_{\mH}^t \pi^*Y ,U  )  + g_t(\mA^t_{ P_{\mH}^t \pi^*X} \nabla^t_{U} P_{\mH}^t \pi^*Y ,U  ) \nonumber\\
&& = g_t(\mA^t_{ P_{\mH}^t \nabla^t_{U} P_{\mH}^t \pi^*X} P_{\mH}^t \pi^*Y ,U  ) + g_t(\mA^t_{ P_{\mH}^t \pi^*X} P_{\mH}^t \nabla^t_{U} P_{\mH}^t \pi^*Y ,U  ) \nonumber\\
&& = g_t(\mA^t_{ \mA^t_{P_{\mH}^t \pi^*X } U } P_{\mH}^t \pi^*Y ,U  ) + g_t(\mA^t_{P_{\mH}^t \pi^*X}  \mA^t_{P_{\mH}^t \pi^*Y} U  ,U  ) \nonumber\\
&& = \sum\nolimits_{i=n+1}^{n+p} g_t(\mA^t_{ P_{\mH}^t \pi^* W_i } P_{\mH}^t \pi^*Y ,U  ) g_t(\mA^t_{P_{\mH}^t \pi^*X } U, P_{\mH}^t \pi^* W_i) \nonumber\\
&&\quad + g_t(\mA^t_{P_{\mH}^t \pi^*X}  \mA^t_{P_{\mH}^t \pi^*Y} U  ,U  ) \nonumber\\
&& = \sum\nolimits_{i=n+1}^{n+p} g_t(\mA^t_{P_{\mH}^t \pi^*Y} U , P_{\mH}^t \pi^* W_i ) g_t(\mA^t_{P_{\mH}^t \pi^*X } U, P_{\mH}^t \pi^* W_i) \nonumber\\
&&\quad - g_t(\mA^t_{P_{\mH}^t \pi^*Y} U  , \mA^t_{P_{\mH}^t \pi^*X}  U  ) \nonumber\\
&& = g_t(\mA^t_{P_{\mH}^t \pi^*Y} U, \mA^t_{P_{\mH}^t \pi^*X } U ) - g_t(\mA^t_{P_{\mH}^t \pi^*Y} U  , \mA^t_{P_{\mH}^t \pi^*X}  U  ) =0.
\end{eqnarray}
It follows that for $X,Y,Z \in \mathfrak{X}_{P(t)}$ we have $U(g_t(\nabla^t_X Y,Z))=0$.
\end{proof}

\begin{proposition} \label{propAUprojectable}
Let $\{ g_t, t \in (-\epsilon, \epsilon) \} \subset \Riem(M, \mV, g_0)$ be a 
variation such that $\pi : (M, g_t) \rightarrow (N, g_N)$ is a Riemannian submersion with totally geodesic fibers for all $t \in (-\epsilon, \epsilon)$. Let the fibers of $\pi : (M, g_0) \rightarrow (N, g_N)$ be 
spanned by a unit vertical field $U$ which is Killing on $(M,g_0)$. Let $Z \in \mathfrak{X}_N$. 
Then $\mA^t_{\pi^* Z} U$ is projectable for all $t \in (-\epsilon, \epsilon)$.
\end{proposition}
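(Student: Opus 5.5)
The plan is to reduce projectability of $\mA^t_{\pi^* Z} U$ to fiberwise constancy of its coefficients in a $g_t$-basic frame, and then to obtain that constancy from the computation \eqref{UgnablaXYU} in the proof of Lemma \ref{lemnablaXY}.

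First, by the definition \eqref{definitionAtensor}, $\mA^t_X$ depends only on $P_{\mH}^t X$, so $\mA^t_{\pi^* Z} U = \mA^t_{P_{\mH}^t \pi^* Z} U$. By \eqref{lieXP2piW}, $P_{\mH}^t \pi^* Z$ is projectable, and it is therefore the $g_t$-basic lift of $Z$. Since $g_t \in \Riem(M,\mV,g_0)$, the field $U$ is still $g_t$-unit and spans the fibers. The fibers are $g_t$-geodesic by assumption, so the hypotheses of Lemma \ref{lemnablaXY} hold for every $t$.

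Next, work locally with a $g_N$-orthonormal frame $\{W_{n+1},\ldots,W_{n+p}\}$ on $N$ and the $g_t$-basic fields $H_j = P_{\mH}^t \pi^* W_j$, which are $g_t$-orthonormal by \eqref{gNWiWj}. Since $\mA^t_{H_Z} U$ is $g_t$-horizontal, where $H_Z = P_{\mH}^t\pi^*Z$, we can write
\[
\mA^t_{H_Z} U = \sum\nolimits_{j=n+1}^{n+p} f_j H_j, \qquad f_j = g_t(\mA^t_{H_Z} U, H_j) = - g_t(\mA^t_{H_Z} H_j, U).
\]
A linear combination of basic fields is projectable exactly when the coefficients are constant along the fibers. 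Indeed, $[U, f_j H_j] = U(f_j) H_j + f_j [U,H_j]$, and $[U,H_j]$ is vertical, so the bracket is vertical iff $U(f_j)=0$ for all $j$. It therefore suffices to show $U(f_j)=0$.

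This is exactly the identity established in \eqref{UgnablaXYU}. Taking $X = Z$ and $Y = W_j$ there gives $U\big(g_t(\mA^t_{H_Z} H_j, U)\big) = 0$. That identity rests on \eqref{TondeurnablaUAXYU}, on $P_{\mH}^t \nabla^t_U H = \mA^t_H U$ for basic $H$, and on the skew-symmetry of $\mA^t$, all of which hold for any Riemannian submersion with one-dimensional totally geodesic fibers spanned by a unit field.

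The only delicate point is this last identity. If one prefers not to rely on the Tondeur formula, I would instead argue directly from Killing-ness. Because $f_j = -\tfrac12 g_t([H_Z,H_j],U)$, it would suffice to know that $U$ is $g_t$-Killing and commutes with basic fields up to vertical terms. For this one would use Theorem \ref{thKillingremains}: by \eqref{Vieta} the $V_i$ are multiples of $U$ with coefficients constant along fibers, so $[V_i,U]=0$ and $U$ remains Killing. Then $U(g_t([H_Z,H_j],U)) = g_t([U,[H_Z,H_j]],U)$, and the Jacobi identity together with $[U,H]=0$ for basic $H$ (from $U$ being a unit Killing field with geodesic flowlines) finishes the argument.
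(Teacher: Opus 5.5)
Your proposal is correct and is essentially the paper's proof: expanding $\mA^t_{P_{\mH}^t \pi^* Z} U$ in the $g_t$-basic frame $\{P_{\mH}^t \pi^* W_j\}$ and using Lemma \ref{lemnablaXY} (precisely the identity \eqref{UgnablaXYU}) to show the coefficients are constant along the fibers is exactly the paper's argument. Your fallback is also valid and avoids the Tondeur formula \eqref{TondeurnablaUAXYU}: it uses that $U$ remains $g_t$-Killing, that $[U,H]=0$ for every $g_t$-basic $H$ (which only needs $U$ unit with geodesic flowlines), and the Jacobi identity.
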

\begin{proof}
$\mA^t_{\pi^* Z} U = \mA^t_{P_{\mH}^t \pi^* Z} U \in \mathfrak{X}_{\mH(t)}$ is a linear combination of local $g_t$-basic fields, with coefficients constant along the fibers by Lemma \ref{lemnablaXY}, so $[U, \mA^t_{P_{\mH}^t \pi^* Z} U] \in \mathfrak{X}_{\mV}$. 
\end{proof}

\begin{proposition} \label{propS1oversymplectic}
Let $\{ g_t, t \in (-\epsilon, \epsilon) \} \subset \Riem(M, \mV, g_0)$ be a 
variation such that $\pi : (M, g_t) \rightarrow (N, g_N)$ is a Riemannian submersion for all $t \in (-\epsilon, \epsilon)$ with geodesic fibers spanned by a unit vertical field $U$.
Then for all $Z \in \mathfrak{X}_{N}$ we have
\begin{eqnarray} \label{dtsecZUdform}
\dt \sec_{(M,g_t)}(P_{\mH}^t \pi^*Z, U) &=& 
-4 \langle \iota_{ P_{\mH}^t \pi^* Z } {\rm d} U^\flat , \iota_{ P_{\mH}^t \pi^* Z } \pi^* {\rm d} \alpha_t \rangle_{\mH(t)},
\end{eqnarray}
and
\begin{eqnarray} \label{d2tsecZUdform}
\ddt \sec_{(M,g_t)}(P_{\mH}^t \pi^* Z, U) = 8 \| \iota_{Z} {\rm d}\alpha_0 \|^2_N  .
\end{eqnarray}
\end{proposition}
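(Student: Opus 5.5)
The plan is to reduce both identities to the O'Neill formula \eqref{secAXU} and to the variation formula \eqref{dtAXYeta} of Theorem \ref{thS1bundleetaform}. The key link is the identity relating $\mA^t$ to ${\rm d}U^\flat$. Here $U^\flat$ is the $g_t$-dual of $U$, so it depends on $t$ and vanishes on $\mH(t)$. For $g_t$-horizontal $X,Y$ we have $\mA^t_X Y = \frac12 P_{\mV}^t[X,Y]$, and the paper's convention $2{\rm d}\omega(X,Y)=X\omega(Y)-Y\omega(X)-\omega([X,Y])$ gives
\[
g_t(\mA^t_X Y,U)=\tfrac12 g_t([X,Y],U)=-{\rm d}U^\flat(X,Y).
\]
Combining this with $g_t(\mA^t_X U,Y)=-g_t(\mA^t_X Y,U)$, and writing $X=P_{\mH}^t\pi^*Z$ with $Z$ of unit length (or normalizing otherwise), we get $\mA^t_X U=\sum_j {\rm d}U^\flat(X,P_{\mH}^t\pi^*W_j)\,P_{\mH}^t\pi^*W_j$. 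Hence $\sec_{(M,g_t)}(X,U)=\|\iota_X{\rm d}U^\flat\|^2_{\mH(t)}$.

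For the first derivative I differentiate $\sum_j g_t(\mA^t_X P_{\mH}^t\pi^*W_j,U)^2$. I expand $Z=\sum_l z_l W_l$, where the coefficients $z_l\circ\pi$ are constant in $t$ and along fibers. Then $P_{\mH}^t\pi^*Z=\sum_l z_l P_{\mH}^t\pi^*W_l$, and by bilinearity \eqref{dtAXYeta} gives $\dt g_t(\mA^t_{X}P_{\mH}^t\pi^*W_j,U)=-{\rm d}\alpha_t(Z,W_j)$. Since the frame $\{P_{\mH}^t\pi^*W_j\}$ stays $g_t$-orthonormal (by the lemma containing \eqref{dtPHWi}) and $B_t$ vanishes on $\mV\times\mV$ and on $\mH(t)\times\mH(t)$, no extra terms arise from differentiating the frame or the metric. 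This yields $\dt\sec=2\sum_j g_t(\mA^t_XP_{\mH}^t\pi^*W_j,U)\cdot(-{\rm d}\alpha_t(Z,W_j))$. I then rewrite $g_t(\mA^t_X P_{\mH}^t\pi^*W_j,U)$ via ${\rm d}U^\flat$ and ${\rm d}\alpha_t(Z,W_j)$ via $\pi^*{\rm d}\alpha_t(X,P_{\mH}^t\pi^*W_j)$; the latter holds because $\pi_*X=Z$ and $\pi^*$ commutes with ${\rm d}$. This produces the inner product $\langle\iota_X{\rm d}U^\flat,\iota_X\pi^*{\rm d}\alpha_t\rangle_{\mH(t)}$. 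As a cross-check, the same expression should follow from \eqref{dtsecXURSGF} using $V_i=(\alpha_t(W_i)\circ\pi)U$; the bracket in \eqref{dtsecXURSGF} reduces to $-2{\rm d}\alpha_t(W_i,W_j)$, exactly as in the proof of Theorem \ref{thS1bundleetaform}.

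The step I expect to be the main obstacle is matching the normalization constant $-4$. This depends on the factor $2$ in the paper's convention for ${\rm d}$ and on whether the norm $\langle\cdot,\cdot\rangle_{\mH(t)}$ of $2$-forms carries additional factors. I will fix this by evaluating both sides on the Hopf-type example, or simply by tracking the conventions carefully in the two identities above.

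For the second derivative I differentiate once more, using the form \eqref{ddtsecXURSGFdtV} (or Lemma \ref{lemmaKZU}). In the circle case every $V_i$ is a basic multiple of $U$, so $[V_i,V_j]=0$ and that term drops. I treat the variation as the one generated by a fixed form $\alpha_t=\alpha_0$, i.e.\ $\dt V_i=0$ as in Theorem \ref{thconstVidtsec}; this is how I read the appearance of $\alpha_0$ in \eqref{d2tsecZUdform}. Then only the squared term survives, and it equals $\sum_j(-2{\rm d}\alpha_0(Z,W_j))^2$ up to the same normalization factor. This gives a $t$-independent multiple of $\|\iota_Z{\rm d}\alpha_0\|^2_N$, namely $8\|\iota_Z{\rm d}\alpha_0\|_N^2$ once the constants from the first step are fixed.
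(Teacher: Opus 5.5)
Your method is essentially the paper's: O'Neill's formula \eqref{secAXU} combined with \eqref{dtAXYeta}. For the second derivative you assume $\dt\alpha_t=0$, and the paper uses the same assumption implicitly when it invokes the $\dt V_i=0$ part of Lemma~\ref{lemmaKZU}. You are right that this assumption is needed: in general an extra term $2\langle\iota_X{\rm d}U^\flat,\iota_X\pi^*{\rm d}(\dt\alpha_t)\rangle_{\mH(t)}$ appears. Using bilinearity in $Z$ instead of the frame change of Lemma~\ref{lemmaKZU} is only a cosmetic difference.

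The gap is the step you postpone: you cannot ``fix'' the constants to $-4$ and $8$. No convention is left to adjust. The paper fixes $2{\rm d}\omega(X,Y)=X\omega(Y)-Y\omega(X)-\omega([X,Y])$, and it defines $\langle\cdot,\cdot\rangle_{\mH(t)}$ and $\langle\cdot,\cdot\rangle_N$ on $1$-forms with no extra factors. With these conventions, your own two identities give, for unit $Z$ and $X=P_{\mH}^t\pi^*Z$,
\[
\dt\sec_{(M,g_t)}(X,U)=2\sum\nolimits_j{\rm d}U^\flat(X,P_{\mH}^t\pi^*W_j)\,{\rm d}\alpha_t(Z,W_j)=2\langle\iota_X{\rm d}U^\flat,\iota_X\pi^*{\rm d}\alpha_t\rangle_{\mH(t)},
\]
and $\ddt\sec_{(M,g_t)}(X,U)=2\|\iota_Z{\rm d}\alpha_0\|^2_N$. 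The coefficients are therefore $+2$ and $2$, and these are the correct values.

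You can check this directly. On $\mathbb{R}^2\times S^1$ take $g_t=dx^2+dy^2+(ds+\beta+t\alpha)^2$, where $\beta,\alpha$ are $1$-forms on $\mathbb{R}^2$. Then $\alpha$ is the form in \eqref{Bteta}, $U^\flat=ds+\beta+t\alpha$, and $\sec(X,U)=\|\iota_Z({\rm d}\beta+t\,{\rm d}\alpha)\|^2_N$ exactly.
\begin{itemize}
\item For $\beta=0$, $\alpha=x\,dy$ and $Z=\partial_x$, this gives $\sec=t^2/4$, while $\|\iota_Z{\rm d}\alpha\|^2_N=1/4$.
\item For $\beta=\alpha=x\,dy$, it gives $\sec=(1+t)^2/4$. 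Its derivative at $t=0$ is $+1/2$, while the inner product equals $1/4$.
\end{itemize}

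The paper reaches $-4$ and $8$ only through bookkeeping slips:
\begin{itemize}
\item the factor $2$ in \eqref{dtsecZU} should be $\frac12$, as comparison with \eqref{dtsecXURSGF} shows;
\item $g_t([X,Y],U)=-2{\rm d}U^\flat(X,Y)$, not $+2{\rm d}U^\flat(X,Y)$;
\item $2\cdot2\cdot(-2)$ is recorded as $-4$.
\end{itemize}

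So state and prove the identities with coefficients $+2$ and $2$, under the explicit hypothesis $\dt\alpha_t=0$ for the second one. Do not promise to recover the printed constants, because the statement as printed is false. The later use in Corollary~\ref{corS1oversymplectic} needs only positivity of the second derivative, so it is unaffected.
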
 
\begin{proof}
We note that since $U$ is unit and $\pi :(M,g_0) \rightarrow (N,g_N)$ is a Riemannian submersion, $U$ is a Killing field on $(M, g_0)$ \cite{Tondeur}. 
Let $\{W_{n+1}, \ldots , W_{n+p}\}$ be a local $g_N$-orthonormal frame and let vertical fields $\{V_{n+1}, \ldots , V_{n+p}\}$ be as in \eqref{bsharpinv}.
Let $\alpha_t$ be a $1$-form on $N$, such that 
$V_i = (\alpha_t(W_i) \circ \pi) \cdot  U = \pi^*\alpha_t (\pi^* W_i) \cdot  U$ for all $i \in \{n+1, \ldots, n+p\}$. Then $U$ remains a Killing field on $(M,g_t)$ by Theorem \ref{thKillingremains}, and we have
\begin{eqnarray*}
g_t( [ U , P_{\mH}^t \pi^* W_j ] , U) &=& (\mathcal{L}_U g_t)( P_{\mH}^t \pi^* W_j, U) - U( g_t(P_{\mH}^t \pi^* W_j  , U) ) \\
&& - g_t([U,U], P_{\mH}^t \pi^* W_j) =0.
\end{eqnarray*}
Without loss of generality, we can assume that $Z=Z_i$ is one of vector fields of a local orthonormal system $\{Z_{n+1}, \ldots, Z_{n+p}\} \subset \mathfrak{X}_N$, then
from \eqref{dtAXYeta} 
and Lemma \ref{lemmaKZU} we obtain 
\begin{eqnarray*} 
\dt \sec_{(M,g_t)}(P_{\mH}^t \pi^* Z_i, U) &=& 2\sum\nolimits_{j,k,l=n+1}^{n+p} a_{ik}a_{il} \cdot 
2 {\rm d} U^\flat(P_{\mH}^t \pi^* W_k , P_{\mH}^t \pi^* W_j ) \nonumber\\
&&\cdot \big(  - 2 {\rm d} \pi^* \alpha_t( P_{\mH}^t \pi^* W_l , P_{\mH}^t \pi^* W_j ) 
 \big)  \nonumber\\
&&= -4 \sum\nolimits_{k,l=n+1}^{n+p} a_{ik}a_{il} \langle \iota_{ P_{\mH}^t \pi^* W_k } {\rm d} U^\flat , \iota_{ P_{\mH}^t \pi^* W_l } \pi^* {\rm d} \alpha_t \rangle_{\mH(t)} \nonumber\\
&&= -4 \langle \iota_{ P_{\mH}^t \pi^* Z_i } {\rm d} U^\flat , \iota_{ P_{\mH}^t \pi^* Z_i } \pi^* {\rm d} \alpha_t \rangle_{\mH(t)},
\end{eqnarray*}
and
\begin{eqnarray*} 
\ddt \sec_{(M,g_t)}(P_{\mH}^t \pi^* Z_i, U) 
%
%
&& = 2\sum\nolimits_{j,k,l=n+1}^{n+p} a_{ik}a_{il} \cdot 2 {\rm d}\pi^*\alpha_t ( P_{\mH}^t \pi^* W_l , P_{\mH}^t \pi^* W_j ) \nonumber\\
&& \quad \cdot 2 {\rm d}\pi^*\alpha_t ( P_{\mH}^t \pi^* W_k , P_{\mH}^t \pi^* W_j ) \nonumber\\
&&= 8 \sum\nolimits_{k,l=n+1}^{n+p} a_{ik}a_{il} \langle \iota_{ P_{\mH}^t \pi^* W_l } {\rm d}\pi^*\alpha_t , \iota_{ P_{\mH}^t \pi^* W_k } {\rm d}\pi^*\alpha_t  \rangle_{\mH(t)} \nonumber\\
&&= 8 \langle \iota_{ P_{\mH}^t \pi^* Z_i } {\rm d}\pi^*\alpha_t , \iota_{ P_{\mH}^t \pi^* Z_i } {\rm d}\pi^*\alpha_t  \rangle_{\mH(t)} .
\end{eqnarray*}
%
\end{proof}

\subsection{Trivial bundles} \label{subsectrivialbundles}

A trivial bundle $\pi : (N \times S^1,g_0) \rightarrow (N, g_N)$ with the product metric $g_0 = g_N + g_{1}$, where $g_1$ is a metric on $S^1$, has vanishing vertizontal curvatures. In this section we examine conditions to increase them, while preserving the Riemannian submersion defined by 
the canonical projection from the Cartesian product.

\begin{corollary} \label{corS1oversymplectic}
Let $\{ g_t, t \in (-\epsilon, \epsilon) \} \subset \Riem(M, \mV, g_0)$ be a variation of the product metric $g_0$ on $M = N \times S^1$ preserving the Riemannian submersion $\pi : (M, g_0) \rightarrow (N,g_N)$, which is canonical projection from $N \times S^1$ onto $N$, and keeping fibers of $\pi$ geodesic. Let $x \in N$, then there exists $t_x >0$ such that all vertizontal curvatures of $(M,g_t)$ at every $y \in \pi^{-1}(\{x\})$ are positive for all $t \in (0, t_x)$ if and only if
${\rm d} \alpha_0$ in \eqref{Bteta} is non-degenerate at $x$.
\end{corollary}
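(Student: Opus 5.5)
The plan is to Taylor-expand in $t$ the vertizontal curvature of a plane spanned by a $g_t$-horizontal unit vector and $U$, using Proposition \ref{propS1oversymplectic}. For the product metric $g_0$ the horizontal distribution $\mH(0) = TN$ is integrable, so $\mA^0 = 0$. Then ${\rm d}U^\flat$ vanishes on $\mH(0)$, and \eqref{secAXU} gives $\sec_{(M,g_0)}(X,U)=0$. Every vertizontal plane at $y$ is spanned by $U$ and some unit $P_{\mH}^t \pi^* Z$, where $Z\in T_xN$ is $g_N$-unit; locally we extend $Z$ to a unit vector field on $N$. By \eqref{dtsecZUdform} at $t=0$ the first derivative is $-4\langle \iota_Z {\rm d}U^\flat, \iota_Z \pi^*{\rm d}\alpha_0\rangle = 0$. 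By \eqref{d2tsecZUdform} the second derivative is $8\|\iota_Z {\rm d}\alpha_0\|^2_N$. Hence
\[
\sec_{(M,g_t)}(P_{\mH}^t\pi^*Z,U) = 4t^2 \|\iota_Z {\rm d}\alpha_0\|_N^2 + O(t^3).
\]

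\emph{Sufficiency.} If ${\rm d}\alpha_0$ is non-degenerate at $x$, then $c=\min_{|Z|=1}\|\iota_Z{\rm d}\alpha_0\|^2>0$ at $x$, since the unit sphere in $T_xN$ is compact. We also need the $O(t^3)$ remainder to be uniform in $Z$ and in $y$ along the fiber. For the uniformity in $y$, I would use that $\alpha_t$ is a form on $N$ and that, by Lemma \ref{lemnablaXY} (with $U$ staying Killing by Theorem \ref{thKillingremains}), all the relevant quantities are constant along the fiber. The fiber is compact in any case. For the uniformity in $Z$, the third derivative is continuous on the compact set $\{|Z|=1\}\times[0,\epsilon/2]$, so Taylor's theorem gives $|{\rm remainder}|\le Ct^3$. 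Choosing $t_x < c\cdot 4/C$ makes every vertizontal curvature positive for $t\in(0,t_x)$.

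\emph{Necessity.} If ${\rm d}\alpha_0$ is degenerate at $x$, pick a unit $Z_0$ with $\iota_{Z_0}{\rm d}\alpha_0=0$. Then the first and second derivatives of $\sec(P_{\mH}^t\pi^*Z_0,U)$ vanish at $t=0$, and positivity for all small $t>0$ is not forced. To make this an honest obstruction I must show failure of positivity, which is the main obstacle. My first attempt is to inspect \eqref{ddtsecXURSGFdtV}: the $\ddt$ term is a square plus a term multiplied by $g_t([P_{\mH}^t\pi^*Z_0,\cdot],U)$, i.e.\ by $\mA^t$. The idea is to show that $\mA^t_{Z_0}U = O(t^2)$ forces $\sec = \|\mA^t_{Z_0}U\|^2$ to lie in higher order. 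This does not immediately contradict positivity. So I would read ``if and only if'' as a statement about the order-two criterion, namely that positivity is detected by the leading term $4t^2\|\iota_Z{\rm d}\alpha_0\|^2$.

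Alternatively, I would argue via \eqref{dtAXYeta}: $\dt g_t(\mA^t_{W_l}W_j,U) = -{\rm d}\alpha_t(W_l,W_j)$ gives $\mA^t_{Z_0}U = -t\,\iota_{Z_0}{\rm d}\alpha_0 + O(t^2)$. Degeneracy then makes the vertizontal curvature $O(t^4)$ in direction $Z_0$, and I would try to show it vanishes identically when $\alpha_t$ is chosen in the natural way with $\dt V_i=0$. In that case Theorem \ref{thconstVidtsec} applies with $[V_i,V_j]=0$ for one-dimensional fibers, so all higher derivatives vanish beyond the second, and $\sec = 4t^2\|\iota_{Z_0}{\rm d}\alpha_0\|^2 = 0$, which is not positive.
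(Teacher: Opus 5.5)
Your proof is correct once you state $\dt\alpha_t=0$ (equivalently $\dt V_i=0$) as a hypothesis. With it, your necessity argument is exactly the paper's. Since $V_i=(\alpha_0(W_i)\circ\pi)\,U$ gives $[V_i,V_j]=0$, formula \eqref{d3tsecXURSGF} shows $\partial_t^3\sec_{(M,g_t)}(P_{\mH}^t\pi^*Z,U)\equiv 0$. So the curvature is the exact quadratic $\tfrac{t^2}{2}\ddt\sec|_{t=0}$, and both directions follow at once.

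The paper uses this hypothesis silently when it invokes \eqref{d3tsecXURSGF}. Your hesitation is justified, because without it the ``only if'' direction is false. Take a variation $g_s$ with constant $\alpha$ and ${\rm d}\alpha$ non-degenerate at $x$. Then $h_t=g_{t^2/2}$ is an admissible variation with $\dt h_t(U,\cdot)=t\,\pi^*\alpha$, so its form is $t\alpha$ and vanishes at $t=0$. Yet its vertizontal curvatures over $x$ are those of $g_{t^2/2}$, hence positive for small $t\neq 0$.

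So do not phrase $\dt V_i=0$ as a ``natural choice'' of $\alpha_t$: in the ``only if'' direction the variation is given, so this must be an assumption. Also delete the sentence reinterpreting ``if and only if'' as an order-two criterion, which proves nothing.

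Where you genuinely differ is sufficiency. You use the second-order Taylor expansion with a remainder bound that is uniform over the compact set of unit $Z\in T_xN$ and along the fiber. The paper instead reads off positivity from the exact quadratic formula, where no uniformity issue arises. Your route costs the compactness estimate but is more general: it also works for $t$-dependent $\alpha_t$. At $t=0$ the extra $\dt V_i$-terms in \eqref{ddtsecXURSGFdtV} are multiplied by $g_0([\pi^*W_i,\pi^*W_j],U)=0$, so the leading coefficient is still a positive multiple of $\|\iota_Z{\rm d}\alpha_0\|^2_N$.
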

\begin{proof}
For a product metric $g_0$ we have $\sec_{(M,g_0)}( \pi^* Z , U)=0$, where $U$ is a unit vertical vector field.
From integrability of $\mH(0)$ and \eqref{dtsecZUdform} we obtain 
\[
\dt \sec_{(M,g_t)}( P_{\mH}^t \pi^* Z , U) \vert_{t=0} =0.
\] 
By \eqref{d3tsecXURSGF}, \eqref{Vieta} and $\alpha_t$ being basic, we obtain 
$\frac{\partial^3}{\partial t^3} \sec_M( P_{\mH}^t \pi^* Z , U)=0$. Hence, there exists $t_x>0$ such that $\sec_{(M,g_t)}( P_{\mH}^t \pi^* Z , U)>0$ for all $t \in (0,t_x)$ if and only if $\ddt \sec_{(M,g_t)}(P_{\mH}^t \pi^* Z, U) \vert_{t=0} = \| \iota_{Z} {\rm d}\alpha_0 \|^2_N >0$. We have $\| \iota_{Z} {\rm d}\alpha_0 \|^2_N > 0$ for every $Z \in T_xN$ if and only if ${\rm d}\alpha_0$ is non-degenerate at $x$.
\end{proof}

In particular, we can increase all vertizontal curvatures on a trivial circle bundle if there exists a bounded and exact symplectic form on $N$ (hence, $N$ cannot be a closed manifold). For a contact (possibly closed) manifold $N$, we can increase vertizontal curvatures on $N \times S^1$ in all directions except those that project to the Reeb field on $N$.

\begin{corollary} \label{corS1overcontact} 
If there exists a contact form on $N$ with Reeb field $\xi$, then there exists a variation $\{ g_t, t \in (-\epsilon, \epsilon) \} \subset \Riem(M, \mV, g_0)$ of the product metric $g_0$ on $N \times S^1$ preserving Riemannian submersion $\pi : (N \times S^1, g_0) \rightarrow (N,g_N)$ and keeping fibers geodesic, with 
$\sec_M( P_{\mH}^t \pi^* Z , U)>0$ for all $Z \in \mathfrak{X}_{N}$ linearly independent of $\xi$ and all $t \in (-\epsilon, 0) \cup (0, \epsilon)$, where $U$ is a unit vertical vector field.
\end{corollary}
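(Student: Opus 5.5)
The plan is to construct the variation explicitly via Theorem \ref{corbsharpglobal}, with the $1$-form taken to be the contact form, and then read off the sign of the vertizontal curvatures from the $t$-derivatives computed in Theorem \ref{thconstVidtsec} and Proposition \ref{propS1oversymplectic}.

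\textbf{Construction.} Let $\eta$ be the contact form on $N$ with Reeb field $\xi$, and set $\alpha = \eta$. Locally, choose a $g_N$-orthonormal frame $\{W_i\}$ and put $V_i = (\eta(W_i)\circ\pi)\, U$, independent of $t$. The global object is the $1$-form $\pi^*\eta$, so by Theorem \ref{thBomega} the construction does not depend on the frame. By the frame-independence argument in the proof of Theorem \ref{corbsharpglobal}, these local data glue to a global variation.

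For the existence interval to be uniform we need boundedness. If $N$ is not compact, I would rescale $g_N$ or $\eta$ so that $\eta$ is bounded, or else simply assume $\eta$ bounded. On a closed $N$ this is automatic.

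The fields $V_i$ are constant multiples of $U$ along each fiber, and $U$ restricted to a fiber is Killing. Hence the fibers stay totally geodesic by Theorem \ref{corbsharpglobal}, and \eqref{Bteta} holds with $\alpha_t = \eta$ for all $t$, because $\dt V_i = 0$.

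\textbf{Derivatives at $t=0$.} Take $Z$ at a point $x$. The horizontal distribution $\mH(0)$ of the product is integrable, so ${\rm d}U^\flat$ vanishes on $\mH(0)$, and \eqref{dtsecZUdform} gives zero first derivative at $t=0$. The initial value is also zero, since the product metric has vanishing vertizontal curvature. By \eqref{d2tsecZUdform}, the second derivative is $8\|\iota_Z {\rm d}\eta\|^2_N$.

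The third derivative vanishes by \eqref{d3tsecXURSGF}, because $[V_i,V_j]=0$ when all $V_i$ are multiples of $U$ with fiber-constant coefficients. The same fact makes the fourth derivative \eqref{d4tsecXURSGF} vanish, and by Theorem \ref{thconstVidtsec} all higher derivatives vanish too. Therefore the function is exactly a quadratic polynomial in $t$:
\[
\sec_M(P_{\mH}^t\pi^*Z,U) = 4t^2\|\iota_Z{\rm d}\eta\|^2_N .
\]
This is positive for every $t\neq 0$ if and only if $\iota_Z {\rm d}\eta \neq 0$.

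\textbf{Nondegeneracy.} It remains to check that $\iota_Z{\rm d}\eta\ne0$ whenever $Z$ is not a multiple of $\xi$. The contact condition says ${\rm d}\eta$ is nondegenerate on $\ker\eta$ and its kernel is spanned by $\xi$. So $\iota_Z{\rm d}\eta=0$ forces $Z\in\mathbb{R}\xi$, which is the required statement pointwise.

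\textbf{Main obstacle.} The delicate step is global: getting a single $\epsilon$ and making sure the variation is defined on all of $N\times S^1$ with the required boundedness of $V_i$. I would handle this through the compactness argument in Theorem \ref{corbsharpglobal}, using a bounded contact form. If $N$ is noncompact, I would either work locally or replace $g_N$ by a metric adapted to $\eta$, for instance a contact metric for which $\eta$ has unit norm.
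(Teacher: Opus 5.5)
Your argument is correct and follows the paper's proof: take $\alpha_t\equiv\eta$, use that the value and the first derivative vanish at $t=0$ (product metric, integrable $\mH(0)$), that the second derivative is the constant $8\|\iota_Z{\rm d}\eta\|^2_N$, and that the third derivative vanishes because $[V_i,V_j]=0$, then conclude from $\ker{\rm d}\eta=\mathbb{R}\xi$. Your closed form $\sec_M(P_{\mH}^t\pi^*Z,U)=4t^2\|\iota_Z{\rm d}\eta\|^2_N$ is what justifies positivity on the whole punctured interval, a step the paper leaves implicit.

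Your ``main obstacle'' does not arise, and you should drop the proposed remedies. Multiplying $\eta$ by a non-constant function changes the Reeb direction, and replacing $g_N$ changes the given product metric. Instead, since $\dt V_i=0$, \eqref{dtPHWi} integrates to $P_{\mH}^t\pi^*W=\pi^*W-t\,(\eta(W)\circ\pi)\,U$. So $g_t$ is the explicit metric that keeps $g_0$ on $\mV$ and makes these lifts orthonormal and orthogonal to $U$. It is defined for all $t\in\mathbb{R}$, with no boundedness of $\eta$ needed.
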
 
\begin{proof}
The proof is analogous to that of Corollary \ref{corS1oversymplectic}, only now as $\alpha_t=\alpha_0$ we take the contact form on $N$. Since $\| \iota_Z {\rm d} \alpha_0 \|_N >0$ for all $Z$ linearly independent of the Reeb field $\xi$ on $N$, the proof follows from \eqref{d2tsecZUdform}.
\end{proof}

Conditions for making all sectional curvatures positive on fibers of $\pi : N \times S^1 \rightarrow N$, while keeping it a Riemannian submersion with geodesic fibers, are the following.

\begin{theorem} \label{thcurvatureincrease}
Let $\{ g_t, t \in (-\epsilon, \epsilon) \} \subset \Riem(M, \mV, g_0)$ be a 
variation such that $\pi : (M, g_t) \rightarrow (N, g_N)$ is a Riemannian submersion for all $t \in (-\epsilon, \epsilon)$ with geodesic fibers spanned by a unit vertical field $U$, which is Killing on $(M, g_0)$.
Let $x \in N$ be such that all sectional curvatures of $(N,g_N)$ at $x$ are positive.
Then there exists $t_x>0$ such that all sectional curvatures of $(M,g_t)$ at every $y \in \pi^{-1}(\{x\})$ are positive for all $t \in (0, t_x)$ if and only if $\alpha_t$ in Theorem \ref{thS1bundleetaform} satisfies the following conditions: ${\rm d}\alpha_0$ is non-degenerate at $x$, and there exists $\tau_x>0$ such that $( \nabla^N_X \alpha_t)(X,Y)=0$ for all $X,Y \in T_x N$ and all $t \in (0,\tau_x)$.
\end{theorem}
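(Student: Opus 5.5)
The plan is to reduce positivity of all sectional curvatures on $\pi^{-1}(\{x\})$ to a $2\times 2$ quadratic form, using O'Neill's formulas for totally geodesic one-dimensional fibers. Every $2$-plane $\sigma\subset T_yM$ meets the horizontal hyperplane $\mH(t)_y$ nontrivially. So $\sigma$ is spanned by a $g_t$-unit horizontal $Y$ and by $\cos\theta\, X+\sin\theta\, U$, with $X$ horizontal, unit and $g_t$-orthogonal to $Y$. Expanding the curvature tensor,
\[
\sec_{(M,g_t)}(\sigma)=\cos^2\theta\, K_t(X,Y)+2\sin\theta\cos\theta\, R_t(X,Y,Y,U)+\sin^2\theta\, K_t(Y,U).
\]
Here $K_t(X,Y)$ is given by \eqref{secAXY}, $K_t(Y,U)$ by \eqref{secAXU}, and by O'Neill the mixed term is $R_t(X,Y,Y,U)=g_t((\nabla^t_Y\mA^t)_XY,U)$. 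Positivity for all $\theta$ is then equivalent to $K_t(X,Y)>0$, $K_t(Y,U)>0$ and
\[
R_t(X,Y,Y,U)^2<K_t(X,Y)\,K_t(Y,U)
\]
for all orthonormal horizontal $X,Y$ at $y$.

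First I expand each coefficient in $t$. I take $g_0$ to be the product metric, as in this subsection, so $\mA^0=0$. Then $K_t(X,Y)=\sec_N(\pi_*X,\pi_*Y)+O(t^2)$, which is positive for small $t$ by the hypothesis at $x$; by \eqref{d2tsecZUdform} and Corollary \ref{corS1oversymplectic}, $K_t(Y,U)=4t^2\|\iota_{\pi_*Y}{\rm d}\alpha_0\|_N^2+O(t^4)$. Hence non-degeneracy of ${\rm d}\alpha_0$ at $x$ is necessary, since it is already necessary for the vertizontal planes ($\theta=\pi/2$). Using compactness of the Grassmannian at the fiber (and Lemma \ref{lemnablaXY}, which makes everything constant along the fiber), it is also sufficient for the two diagonal terms.

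Next I compute the mixed term. By \eqref{dtAXYeta}, $g_t(\mA^t_{X}Y,U)$ is $-t\,{\rm d}\alpha$ to first order, with $\alpha_t$ basic. So $R_t(X,Y,Y,U)$ should be expressible through $(\nabla^N_{Y}{\rm d}\alpha_s)(X,Y)$ integrated in $s$, plus corrections of order $t^2$ involving $\mA^t\mA^t$. I would use Lemma \ref{lemnablaXY} and Proposition \ref{propAUprojectable} to push $\nabla^t$ down to $\nabla^N$. The aim is to show that the leading part of $R_t(X,Y,Y,U)$ is $t$ times a linear expression in $\nabla^N\alpha_t$ evaluated at $(\pi_*X,\pi_*Y)$ and $(\pi_*Y,\pi_*Y)$. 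This expression should reduce, after symmetrization over the choice of $X,Y$, to $(\nabla^N_{Y}\alpha_t)(Y,X)$.

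The main obstacle is matching orders in the inequality. The right-hand side is $O(t^2)$, with constant $4\sec_N\|\iota_Y{\rm d}\alpha_0\|^2$, while the left-hand side is $t^2c(X,Y)^2+\dots$. A naive comparison yields only a strict inequality between constants, not the vanishing condition in the statement. To obtain the stated equivalence, I would show that the cubic and higher terms are controlled by Theorem \ref{thconstVidtsec} (only derivatives up to order four survive, and the vertizontal third derivative vanishes since $[V_i,V_j]=0$). I would then use freedom in the choice of $\alpha_t$ for $t\in(0,\tau_x)$ to rescale and show that any nonzero $(\nabla^N_X\alpha_t)(X,Y)$ can be made to dominate. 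Failing that, I would reexamine whether the mixed term is in fact of order $t$ relative to a vertizontal curvature that is only $O(t^2)$, which would force $c\equiv 0$ for small $t$. I expect this step to require the most care, and I would verify it by computing $\dt R_t(X,Y,Y,U)$ directly from \eqref{dtAWiU} at $t=0$.
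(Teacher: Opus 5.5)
Your reduction is the same as the paper's. Every plane over $x$ is spanned by a horizontal $X$ and $\cos\theta\,Y+\sin\theta\,U$, the curvature is the quadratic form \eqref{secXYcosUsin}, and positivity means positive diagonal entries together with $(\text{mixed term})^2<\sec_{(M,g_t)}(X,Y)\,\sec_{(M,g_t)}(X,U)$.

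This gives the ``if'' direction, and that is all the paper's proof establishes. By \eqref{dtnablaAXYZ} and \eqref{nablaNAXYZU}, the $t$-derivative of the mixed term $g_t(R_t(X,Y)X,U)$ over $x$ is, up to sign, $(\nabla^N_X{\rm d}\alpha_t)(X,Y)$. (The condition in the statement has to be read with ${\rm d}\alpha_t$, not $\alpha_t$.) Under the hypothesis the mixed term therefore keeps its value $0$ from the product metric. The remaining diagonal terms are handled by non-degeneracy of ${\rm d}\alpha_0$, $\sec_{(N,g_N)}>0$ and compactness. The paper simply assumes the vanishing condition and writes down the resulting expansion; it gives no argument for necessity.

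The gap is the step you flagged, and neither of your proposed repairs can close it.
\begin{itemize}
\item The form $\alpha_t$ is fixed by the given variation through \eqref{Bteta}, so there is nothing to choose.
\item Both sides of the discriminant inequality are quadratic in $\alpha$, so rescaling $\alpha$ or $t$ changes nothing.
\item The order count does not force the mixed term to vanish. A mixed term of order $t$ has square of order $t^2$, which is exactly the order of $\sec_{(M,g_t)}(X,Y)\,\sec_{(M,g_t)}(X,U)$. Computing its $t$-derivative at $t=0$ only returns the coefficient $(\nabla^N_X{\rm d}\alpha_0)(X,Y)$.
\end{itemize}

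In fact your order count refutes the ``only if'' half. For $\alpha_t\equiv\alpha$, all sectional curvatures over $x$ are positive for small $t>0$ if and only if ${\rm d}\alpha$ is non-degenerate at $x$ and
\[
\big((\nabla^N_X{\rm d}\alpha)(X,Y)\big)^2<\kappa\,\|\iota_X{\rm d}\alpha\|_N^2\,\sec_{(N,g_N)}(X,Y)
\]
for all orthonormal $X,Y\in T_xN$, with $\kappa>0$ a normalization constant.

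Here is a concrete counterexample. Let $N$ be a round spherical cap around $x$, let $s$ be the unit-speed coordinate on $S^1$, and set $g_t=\pi^*g_N+({\rm d}s+t\,\pi^*\alpha)^2$ on $N\times S^1$. Choose $\alpha$ with ${\rm d}\alpha=f\,{\rm vol}_N$, where $f(x)=1$ and $0<|{\rm d}f_x|$ is small. Then $(\nabla^N_X{\rm d}\alpha)(X,Y)=\pm X(f)\neq0$ for suitable $X$, yet every sectional curvature over $x$ is positive for small $t>0$.

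Your argument that non-degeneracy of ${\rm d}\alpha_0$ is necessary has a similar problem. Its $O(t^4)$ remainder uses $\dt\alpha_t=0$, as in Theorem \ref{thconstVidtsec}. Take $\alpha_t=t\beta$ with ${\rm d}\beta={\rm vol}_N$ near $x$: then ${\rm d}\alpha_0=0$, but the curvature over $x$ is still positive, with vertizontal curvatures of order $t^4$.

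So the stated equivalence cannot be proved as written. What your approach does prove is sufficiency, and the correct characterization is the inequality above.
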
 
\begin{proof} 
Since $\mH(t)$ is a codimension one distribution for all $t \in (-\epsilon, \epsilon)$, for all $x \in M$ every plane in $T_x M$ is spanned by vectors $X$ and $Y \cos \theta + U \sin \theta$, where $\theta \in [0,2\pi)$, $X,Y \in \mH(t)_x$ are $g_t$-orthonormal, and $U \in \mV_x$ is unit. It follows that
\begin{eqnarray} \label{secXYcosUsin}
\sec_{(M, g_t)}(X, Y\cos \theta + U \sin \theta)  &=& \sec_{(M, g_t)}(X,U)  \sin^2 \theta + \sec_{(M, g_t)}(X,Y) \cos^2 \theta \nonumber\\
&& - 2 g_t( R_t(X,Y)X ,U) \sin \theta \cos \theta,
\end{eqnarray}
where $R_t(X,Y)X = \nabla^t_X \nabla^t_Y X - \nabla^t_Y \nabla^t_X X - \nabla^t_{[X,Y]}X$. Since $\pi : (M,g_t) \rightarrow (N,g_N)$ is a Riemannian submersion with totally geodesic fibers, we have for $X,Y \in \mH(t)$ and $U \in \mV$ \cite{Tondeur}:
\begin{equation} \label{RXYXUandA}
g_t( R_t(X,Y)X ,U) = -g_t( ( \nabla^t_{ X } \mA )_{ X } Y , U) .
\end{equation}
We obtain from \eqref{dtsecZiZj} and \eqref{ddtsecZiZj}, respectively:
\begin{eqnarray} \label{dtsecZiZjetat}
\dt \sec_{(M,g_t)}(P_{\mH}^t \pi^* Z_i, P_{\mH}^t \pi^* Z_j) = 3 g_t( U , [ P_{\mH}^t \pi^* Z_i , P_{\mH}^t \pi^* Z_j ] ) {\rm d}\alpha_t( Z_i , Z_j )
\end{eqnarray}
and
\begin{eqnarray} \label{ddtsecZiZjetat}
\ddt \sec_{(M,g_t)}(P_{\mH}^t \pi^* Z_i, P_{\mH}^t \pi^* Z_j) &=& 3 g_t( U , [ P_{\mH}^t \pi^* Z_i , P_{\mH}^t \pi^* Z_j ])  {\rm d} \dt \alpha_t( Z_i , Z_j )  \nonumber\\
&&
- 6 ( {\rm d}\alpha_t( Z_i , Z_j ) )^2.
\end{eqnarray}
Also, from 
\eqref{dtAXYeta} it follows that for all $g_N$-orthonormal $Z_i, Z_j$
\[
\dt g_t( \mA_{P_{\mH}^t \pi^* Z_i} P_{\mH}^t \pi^* Z_j , U) = - {\rm d} \alpha_t(Z_i, Z_j) .
\]
We have \cite{O'Neill} 
\begin{equation} \label{nablaXYZprojects}
\pi_* P_{\mH}^t \nabla^t_{ P_{\mH}^t \pi^* Z_i } P_{\mH}^t \pi^* Z_j =  (\nabla^N_{ \pi_* P_{\mH}^t \pi^* Z_i }  \pi_* P_{\mH}^t \pi^* Z_j) \circ \pi = ( \nabla^N_{ Z_i } Z_j ) \circ \pi .
\end{equation} 
Hence,
$\dt g_t( \nabla^t_{ P_{\mH}^t \pi^* Z_i } P_{\mH}^t \pi^* Z_j , P_{\mH}^t \pi^* Z_k ) = \dt g_N( \nabla^N_{ Z_i } Z_j ,  Z_k  ) =0$ and using 
\[
g_t(  \mA^t_{ P_{\mH}^t \pi^* Z_j } P_{\mH}^t \pi^* Z_k , \nabla^t_{ P_{\mH}^t \pi^* Z_i } U ) =0,
\]
with \eqref{dtPHWi}, \eqref{Vieta}, 
and \eqref{dtAXYeta}, we obtain
\begin{eqnarray} \label{dtnablaAXYZ}
&& \dt g_t( ( \nabla^t_{ P_{\mH}^t \pi^* Z_i } \mA^t )_{ P_{\mH}^t \pi^* Z_j } P_{\mH}^t \pi^* Z_k , U ) \nonumber\\
&& = P_{\mH}^t \pi^* Z_i ( \dt g_t(  \mA^t_{ P_{\mH}^t \pi^* Z_j } P_{\mH}^t \pi^* Z_k , U )  ) + (\dt P_{\mH}^t \pi^* Z_i)( g_t( \mA^t_{ P_{\mH}^t \pi^* Z_j } P_{\mH}^t \pi^* Z_k ,U ) ) \nonumber\\
&& - \sum\nolimits_{l=n+1}^{n+p} g_t( \nabla^t_{ P_{\mH}^t \pi^* Z_i } P_{\mH}^t \pi^* Z_j , P_{\mH}^t \pi^* Z_l  ) \dt g_t( \mA^t_{ P_{\mH}^t \pi^* Z_l } P_{\mH}^t \pi^* Z_k , U  ) \nonumber\\
&& - \sum\nolimits_{l=n+1}^{n+p}  g_t( \mA^t_{ P_{\mH}^t \pi^* Z_l } P_{\mH}^t \pi^* Z_k , U  ) \dt  g_t( \nabla^t_{ P_{\mH}^t \pi^* Z_i } P_{\mH}^t \pi^* Z_j , P_{\mH}^t \pi^* Z_l  ) \nonumber\\
&& - \sum\nolimits_{l=n+1}^{n+p} g_t( \nabla^t_{ P_{\mH}^t \pi^* Z_i } P_{\mH}^t \pi^* Z_k , P_{\mH}^t \pi^* Z_l  ) \dt g_t( \mA^t_{ P_{\mH}^t \pi^* Z_j } P_{\mH}^t \pi^* Z_l , U  ) \nonumber\\
&& - \sum\nolimits_{l=n+1}^{n+p}  g_t( \mA^t_{ P_{\mH}^t \pi^* Z_j } P_{\mH}^t \pi^* Z_l , U  ) \dt  g_t( \nabla^t_{ P_{\mH}^t \pi^* Z_i } P_{\mH}^t \pi^* Z_k , P_{\mH}^t \pi^* Z_l  ) \nonumber\\
&&= - P_{\mH}^t \pi^* Z_i ({\rm d} \pi^* \alpha_t ( P_{\mH}^t Z_j, P_{\mH}^t Z_k ) )
- \alpha_t(W_i) \cdot U( g_t( \mA^t_{ P_{\mH}^t \pi^* Z_j } P_{\mH}^t \pi^* Z_k ,U ) ) \nonumber\\
&& + \sum\nolimits_{l=n+1}^{n+p} g_t( \nabla^t_{ P_{\mH}^t \pi^* Z_i } P_{\mH}^t \pi^* Z_j , P_{\mH}^t \pi^* Z_l  ) {\rm d} \pi^* \alpha_t ( P_{\mH}^t Z_l, P_{\mH}^t Z_k )  \nonumber\\
&&
+ \sum\nolimits_{l=n+1}^{n+p} g_t( \nabla^t_{ P_{\mH}^t \pi^* Z_i } P_{\mH}^t \pi^* Z_k , P_{\mH}^t \pi^* Z_l  ) {\rm d} \pi^* \alpha_t (P_{\mH}^t Z_j, P_{\mH}^t Z_l )  \nonumber\\
&&= -(\nabla^t_{ P_{\mH}^t \pi^* Z_i } {\rm d} \pi^* \alpha_t  )( P_{\mH}^t \pi^* Z_j , P_{\mH}^t \pi^* Z_k ) \nonumber\\
&& -  g_t( (\nabla^t_U \mA^t )_{ P_{\mH}^t \pi^* Z_j } P_{\mH}^t \pi^* Z_k , U ) \pi^* \alpha_t( P_{\mH}^t \pi^* Z_i ) \nonumber \\
&&= -(\nabla^t_{ P_{\mH}^t \pi^* Z_i } {\rm d} \pi^* \alpha_t  )( P_{\mH}^t \pi^* Z_j , P_{\mH}^t \pi^* Z_k ),
\end{eqnarray}
where we used \eqref{TondeurnablaUAXYU} to obtain the last line.
From \eqref{nablaXYZprojects} 
and $\pi^* \alpha_t$ being basic,
we obtain 
\begin{eqnarray} \label{nablaNAXYZU}
&& (\nabla^t_{ P_{\mH}^t \pi^* Z_i } {\rm d} \pi^* \alpha_t  )( P_{\mH}^t \pi^* Z_i , P_{\mH}^t \pi^* Z_j ) =  P_{\mH}^t \pi^* Z_i ( {\rm d} \pi^* \alpha_t ( P_{\mH}^t \pi^* Z_i , P_{\mH}^t \pi^* Z_j ) ) \nonumber\\
&& - {\rm d} \pi^* \alpha_t ( \nabla^t_{ P_{\mH}^t \pi^* Z_i } P_{\mH}^t \pi^* Z_i , P_{\mH}^t \pi^* Z_j )
- {\rm d} \pi^* \alpha_t ( P_{\mH}^t \pi^* Z_i ,  \nabla^t_{ P_{\mH}^t \pi^* Z_i } P_{\mH}^t \pi^* Z_j ) \nonumber\\
&&= P_{\mH}^t \pi^* Z_i ( {\rm d} \alpha_t ( Z_i , Z_j ) \circ \pi )  \nonumber\\
&& - {\rm d} \pi^* \alpha_t ( P_{\mH}^t \nabla^t_{ P_{\mH}^t \pi^* Z_i } P_{\mH}^t \pi^* Z_i , P_{\mH}^t \pi^* Z_j )
- {\rm d} \pi^* \alpha_t ( P_{\mH}^t \pi^* Z_i , P_{\mH}^t \nabla^t_{ P_{\mH}^t \pi^* Z_i } P_{\mH}^t \pi^* Z_j ) \nonumber\\
&&= \pi_* P_{\mH}^t \pi^* Z_i ( {\rm d} \alpha_t ( Z_i , Z_j ) ) \circ \pi - {\rm d} \alpha_t ( \pi_* P_{\mH}^t \nabla^t_{ P_{\mH}^t \pi^* Z_i } P_{\mH}^t \pi^* Z_i , \pi_* P_{\mH}^t \pi^* Z_j ) \circ \pi \nonumber\\
&& 
- {\rm d} \alpha_t ( \pi_* P_{\mH}^t \pi^* Z_i , \pi_* P_{\mH}^t \nabla^t_{ P_{\mH}^t \pi^* Z_i } P_{\mH}^t \pi^* Z_j ) \circ \pi \nonumber\\
&&= Z_i ( {\rm d} \alpha_t ( Z_i , Z_j ) ) \circ \pi - {\rm d} \alpha_t ( \nabla^N_{ Z_i } Z_i , Z_j ) \circ \pi 
- {\rm d} \alpha_t (  Z_i ,  \nabla^N_{ Z_i } Z_j ) \circ \pi \nonumber\\
&&= (\nabla^N_{ Z_i } {\rm d} \alpha_t)(Z_i, Z_j) \circ \pi
\end{eqnarray}
%
and hence, assuming that for all $X,Y \in \mathfrak{X}_N$ we have $(\nabla^N_X \alpha_t)(X, Y) =0$, 
\begin{eqnarray*}
&&\sec_{(M, g_t)}(P_{\mH}^t \pi^* Z_i, P_{\mH}^t \pi^* Z_j\cos \theta + U \sin \theta) \nonumber\\ 
&&= 4 \| \iota_{Z_i} {\rm d}\alpha_0 \|^2_N t^2 \sin^2 \theta + ( \sec_{(N, g_N)}(Z_i, Z_j) - 3 ( {\rm d}\alpha_0( Z_i , Z_j ) )^2 t^2  ) \cos^2 \theta + O(t^3). \nonumber\\
\end{eqnarray*}
\end{proof} 

\begin{remark}
Assumptions of Theorem \ref{thcurvatureincrease} are satisfied in particular at points where ${\rm d}\alpha_t$ is a K\"ahler form on $(N,g_N)$.
\end{remark}

We give an example of variation by family of metrics with integrable horizontal distributions.

\begin{example} \label{exproduct}
Let $(N,g_N)$ be a closed manifold. Let $\pi : (N \times S^1,g_0) \rightarrow (N,g_N)$ be the projection and let $g_0$ be the product metric. Let $\omega_t = \pi^* \alpha_0$ for all $t \in (-\epsilon, \epsilon)$, where $\alpha_0$ is a harmonic $1$-form on $N$,
and let $\{ g_t, t \in (-\epsilon, \epsilon) \} \subset \Riem(M, \mV, g_0)$ be a variation preserving Riemannian submersion $\pi$, such that \eqref{Bteta} holds. Then  $\mH(t)$ is integrable for all $t \in (-\epsilon, \epsilon)$ by \eqref{dtAXYeta}, and $\pi : (M,g_t) \rightarrow (N,g_N)$ remains a Riemannian submersion with totally geodesic fibers, so every $g_t$ is locally a product metric on $S^1 \times H(t)$, where $H(t)$ is the integral manifold of $\mH(t)$, with $g_t \vert_{H(t)} = \pi^* g_N$. 
Also, 
$(N \times S^1,g_t)$ are not globally diffeomorphic to $(N \times S^1,g_0)$, because \eqref{deltaomegaa} and \eqref{deltaNeta} are satisfied. 
\end{example}


\subsection{K-contact and Sasaki structures} \label{subsecKcontact}

Recall \cite{Blair} that $(\phi_t, U, \eta_t, g_t)$, where $\phi_t$ is a $(1,1)$-tensor field, $U$ is a vector field, $\eta_t$ is a $1$-form and $g_t$ is a Riemannian metric, is called a contact metric structure on a manifold $M$ if the following hold:
\begin{eqnarray}
\label{UReeb}
&&\eta_t(X) = g_t(U,X)  \quad {\rm for \; all \;} X \in \mathfrak{X}_M \\ 
\label{iotaReeb}
&&\iota_U {\rm d} \eta_t=0 \\
\label{detacms}
&& {\rm d}\eta_t(X,Y) = g_t(X, \phi_t Y)  \quad {\rm for \; all \;} X,Y \in \mathfrak{X}_M \\
\label{phitisomH}
&& g_t(\phi_t X, \phi_t Y) = g_t(X,Y) \quad {\rm for \; all \;} X,Y \in \mathfrak{X}_{\mH(t)} \\
\label{phitalmostcomplex}
&& \phi_t^2 = - {\rm Id}_{TM} + \eta_t \otimes U.
\end{eqnarray}
If, additionally, $U$ is a Killing field on $(M,g_t)$, then the contact metric structure is called $K$-contact.

Let $\pi : (M, g_t) \rightarrow (N, g_N)$ be a Riemannian submersion with one-dimensional fibers spanned by a unit Killing field $U$. Let $\eta_t(X) = g_t(U,X)$ for all $X \in \mathfrak{X}_M$. Let $\phi_t$ be the tensor defined by
\begin{equation} \label{definitionphit}
\phi_t(X) = -\mA^t_X U
\end{equation}
for all $X \in \mathfrak{X}_M$, where $\mA^t$ is the O'Neill tensor \eqref{definitionAtensor} of $\pi: (M,g_t) \rightarrow (N,g_N)$.

We will say that $(\pi, g_t , U )$ defines a $K$-contact structure on $M$ if for all $X,Y \in \mathfrak{X}_M$ we have
\begin{equation} \label{Aisometry}
g_t( \mA^t_X U , \mA^t_Y U ) = g_t(P_{\mH}^t X, P_{\mH}^t Y)
\end{equation}
and
\begin{equation} \label{KcontactA}
\mA^t_{  \mA^t_{X} U } U = - X + g_t(X,U)U.
\end{equation} 
Then, in convention of \cite{Blair}, $(\phi_t, U, \eta_t, g_t)$ is a $K$-contact structure on $M$. Indeed, for $U$ being a unit vector field, $\nabla^t_U U =0$ is equivalent to \eqref{iotaReeb}; equation \eqref{definitionphit} is equivalent to \eqref{detacms}; \eqref{Aisometry} is equivalent to \eqref{phitisomH} and \eqref{KcontactA} is equivalent to \eqref{phitalmostcomplex}. Thus, $(\phi_t, U, \eta_t, g_t)$ is a contact metric structure, and since $U$ is assumed to be a Killing field, it is a $K$-contact structure.

\begin{remark} \label{remRScms}
On the other hand, if  $(\phi_t, U, \eta_t, g_t)$ is a contact metric structure and $U$ defines a Riemannian foliation (i.e., $(\mathcal{L}_U g_t)(X,Y)=0$ for all $X,Y \in \mathfrak{X}_{ {\ker} \eta_t }$), then for all $X,Y \in \ker \eta_t$ we have from \eqref{detacms} and \eqref{UReeb}:
\begin{eqnarray*}
&&g_t(\phi_t X, Y) = {\rm d}\eta_t(Y,X) = -{\rm d}\eta_t(X,Y) = \frac{1}{2} \eta_t( [X,Y] ) =  g_t(U , \frac{1}{2}[X,Y])  \nonumber\\
&& = \frac{1}{2} g_t(U, \nabla^t_X Y - \nabla^t_Y X)= g_t(U, \nabla^t_X Y) - \frac{1}{2}g_t(U, \nabla^t_X Y + \nabla^t_Y X)  \nonumber\\
&& =
g_t(U, \nabla^t_X Y) + \frac{1}{2} (\mathcal{L}_U g_t)(X,Y)= g_t(U, \nabla^t_X Y) = - g_t(\nabla^t_X U , Y).
\end{eqnarray*}
From \eqref{iotaReeb} and \eqref{detacms} we have $\phi_t X \in \mathfrak{X}_{\mH(t)}$ for all $X \in \mathfrak{X}_M$ and $\phi_t U =0$, hence $\phi_t X = - P_{\mH}^t \nabla^t_{P_{\mH}^t X} U = - \mA^t_X U$ for the O'Neill tensor $\mA^t$ of a local Riemannian submersion defined by flowlines of $U$ on $(M,g_t)$.
\end{remark}

We will say that $(\pi, g_t , U )$ defines a Sasaki structure on $M$ if for all $X,Y \in \mathfrak{X}_M$ equations \eqref{Aisometry} and \eqref{KcontactA} hold, and additionally we have
\begin{equation} \label{Sasakiphi}
(\nabla^t_X \phi_t)(Y) = g_t(X,Y)U - g_t(U,Y)X.
\end{equation} 
Then, in convention of \cite{Blair}, $(\phi_t, U, \eta_t , g_t)$ is a Sasaki structure on $M$.

\begin{remark}
$\mA^t_{\pi^* Z} U$ is projectable for all $Z \in \mathfrak{X}_N$ if and only if $\mathcal{L}_U \phi_t =0$. 
\end{remark}

The following proposition rephrases in the language of Riemannian submersions and infinitesimal variations known results about ``type II deformations'' of $K$-contact and Sasaki structures \cite{Boyer, Goertsches2008}. Some of the computations in its proof will be useful in examining weak contact metric structures further below. 

\begin{proposition}
Let 
$\pi : (M, g_0) \rightarrow (N, g_N)$ be 
a Riemannian submersion with fibers 
spanned by a unit vertical field $U$, which is Killing on $(M,g_0)$. 
Let $\{ g_t, t \in (-\epsilon, \epsilon) \} \subset \Riem(M, \mV, g_0)$ be a 
variation such that $\pi : (M, g_t) \rightarrow (N, g_N)$ is a Riemannian submersion with totally geodesic fibers for all $t \in (-\epsilon, \epsilon)$. 
If $(\pi, g_0 , U )$ defines a $K$-contact (resp. Sasaki) structure on $M$, then $(\pi, g_t , U )$ defines a $K$-contact (resp. Sasaski) structure on $M$ for all $t \in (-\epsilon, \epsilon)$ if and only if \eqref{Bteta} holds with $\alpha_t$ closed for all $t \in (-\epsilon, \epsilon)$.
\end{proposition}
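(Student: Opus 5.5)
Throughout, Theorem \ref{thS1bundleetaform} applies to any variation in the statement. So \eqref{Bteta} always holds for some family of $1$-forms $\alpha_t$ on $N$, with $V_i = (\alpha_t(W_i)\circ\pi)\,U$. The question therefore reduces to which $\alpha_t$ keep \eqref{Aisometry} and \eqref{KcontactA} (and \eqref{Sasakiphi}) valid. By Theorem \ref{thKillingremains}, $U$ stays Killing for all $g_t$ exactly when $[V_i,U]=0$. This holds automatically, because $\alpha_t(W_i)\circ\pi$ is constant along the flowlines of $U$. Hence the Killing part of the $K$-contact condition is free, and everything hinges on the tensor $\phi_t=-\mA^t_{\cdot}U$.

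I would encode $\mA^t$ on horizontal lifts by the $2$-form $F_t(X,Y)=g_t(\mA^t_{P_{\mH}^t X}P_{\mH}^t Y,U)$. It is basic and equals $\pi^*\beta_t$ for a $2$-form $\beta_t$ on $N$. By \eqref{dtAXYeta}, $\dt\beta_t=-{\rm d}\alpha_t$, so $\beta_t=\beta_0-\int_0^t{\rm d}\alpha_s\,ds$. In the orthonormal frame $\{P_{\mH}^t\pi^*W_i\}$ the endomorphism $\phi_t$ restricted to $\mH(t)$ has the matrix of $\beta_t$ in the frame $\{W_i\}$, up to sign. The identity $g_t(\mA^t_XU,Y)=-g_t(\mA^t_XY,U)$ makes this precise.

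In this frame, \eqref{Aisometry} says the matrix of $\beta_t$ is orthogonal. Equation \eqref{KcontactA} says its square is $-{\rm Id}$ on $\mH(t)$. For a skew matrix these two conditions are equivalent. So $(\pi,g_t,U)$ is $K$-contact iff the skew matrix $J_t$ of $\beta_t$ satisfies $J_t^2=-{\rm Id}$ for all $t$.

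For the "if" direction: if every $\alpha_t$ is closed, then $\beta_t=\beta_0$, so $J_t=J_0$ and the $K$-contact conditions persist. For the converse: differentiating $J_t^2=-{\rm Id}$ gives $J_t\dot J_t+\dot J_tJ_t=0$ with $\dot J_t$ skew. This alone does not force $\dot J_t=0$, and this is the main obstacle. I would combine it with the structure of $\dot J_t=-[{\rm d}\alpha_t]$, which is exact. Then I would use the full $K$-contact identity ${\rm d}\eta_t=g_t(\cdot,\phi_t\cdot)$ from \eqref{detacms}, where $\eta_t=g_t(U,\cdot)$.

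The key computation is that $\dt\eta_t=B_t(U,\cdot)=\pi^*\alpha_t$, so $\dt\,{\rm d}\eta_t=\pi^*{\rm d}\alpha_t$. Since $\pi^*\beta_t=\pm{\rm d}\eta_t$, comparing derivatives with $\dt\beta_t=-{\rm d}\alpha_t$ fixes the sign convention. One then needs the contact relation \eqref{detacms} to hold with $\phi_t$ given by \eqref{definitionphit}, where $\phi_t$ is determined by $J_t$. Combining $J_t^2=-{\rm Id}$ with $\dt(\eta_t\wedge({\rm d}\eta_t)^m)$ and the pointwise compatibility, I expect to deduce $\dot J_t=0$, i.e.\ ${\rm d}\alpha_t=0$.

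If this compatibility argument fails, I would fall back on reading the statement as saying that the structure tensor $\phi_t$, computed via \eqref{definitionphit}, must satisfy \eqref{detacms}. In that reading both sides vary at different rates unless ${\rm d}\alpha_t=0$: $\dt$ of the right-hand side gains the extra term $B_t$, which vanishes on $\mH$, together with $\dt P_{\mH}^t$ terms. That would give closedness directly.

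For the Sasaki case, I would use the standard fact that a $K$-contact structure is Sasaki iff it is normal, and the criterion that $\nabla^t\phi_t$ satisfies \eqref{Sasakiphi}. Via O'Neill, this amounts to $(\nabla^t_X\mA^t)_XY$ in horizontal directions agreeing with the base Kähler condition $\nabla^N J=0$. When $\alpha_t$ is closed, $\beta_t=\beta_0$ stays parallel on $N$, and \eqref{dtnablaAXYZ} gives $\dt g_t((\nabla^t\mA^t)\cdot,U)=-\nabla\,{\rm d}\alpha_t=0$. The vertical components are handled by \eqref{TondeurnablaUAXYU}. So \eqref{Sasakiphi} is preserved, and the converse follows from the $K$-contact case.
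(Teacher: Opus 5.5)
Your reduction is sound. By Theorems \ref{thS1bundleetaform} and \ref{thKillingremains}, $U$ stays Killing. In the $g_t$-orthonormal frame $\{P_{\mH}^t\pi^*W_i\}$, both \eqref{Aisometry} and \eqref{KcontactA} become $J_t^2=-{\rm Id}$ for the skew matrix $J_t$ of $\beta_t=\beta_0-\int_0^t{\rm d}\alpha_s\,ds$. Your ``if'' direction and the Sasaki sketch via \eqref{dtnablaAXYZ} are fine, and more transparent than the paper's route through \eqref{dtAWiU1WjU2} and \eqref{AAXUUt}.

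\textbf{The gap is the converse.} You only expect to get $\dot J_t=0$, and neither tool you name can produce it. First, \eqref{detacms} holds identically for $\phi_t=-\mA^t_{\cdot}U$ once the fibres are geodesics; the paper notes that \eqref{definitionphit} is equivalent to \eqref{detacms}. So your fallback imposes no condition. Second, $\eta_t\wedge({\rm d}\eta_t)^m$ is $t$-independent once $J_t^2=-{\rm Id}$, because the Pfaffian of $J_t$ stays $\pm1$.

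In fact no pointwise argument can work for $t$-dependent $\alpha_t$ when $\dim N\ge4$. Take flat $N=\mathbb{R}^4$, $M=\mathbb{R}^4\times\mathbb{R}$, $U=\partial_z$, and $g_t=(dz+\pi^*\lambda_t)^2+\pi^*g_N$. Here $\lambda_t$ has linear coefficients and ${\rm d}\lambda_t$ is a fixed multiple of $\cos t\,\omega_1+\sin t\,\omega_2$, where $\omega_1,\omega_2$ are the constant K\"ahler forms of two anticommuting orthogonal complex structures $J_1,J_2$. Then:
\begin{itemize}
\item $g_t\in\Riem(M,\mV,g_0)$, and $\pi$ remains a Riemannian submersion with geodesic fibres;
\item \eqref{Bteta} holds with $\alpha_t=\dt\lambda_t$;
\item $(\pi,g_t,U)$ is Sasaki for every $t$, being the Heisenberg structure for the constant complex structure $\cos t\,J_1+\sin t\,J_2$;
\item yet ${\rm d}\alpha_t\neq0$.
\end{itemize}
So the converse, for both the $K$-contact and the Sasaki case, fails as stated.

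The paper argues the converse from \eqref{secAXU} and \eqref{d2tsecZUdform}: $\ddt|\mA^t_{P_{\mH}^t\pi^*Z}U|^2$ is a positive multiple of $\|\iota_Z{\rm d}\alpha\|_N^2$ and must vanish. But \eqref{d2tsecZUdform} is obtained through Lemma \ref{lemmaKZU}, which assumes $\dt V_i=0$, i.e.\ $\alpha_t=\alpha$ independent of $t$. Otherwise \eqref{ddtsecXURSGFdtV} adds the term $-(\dot K_t^TJ_t+J_t^T\dot K_t)$ to $\ddt(J_t^TJ_t)$, where $K_t$ is the matrix of ${\rm d}\alpha_t$. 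So that argument really covers only the $t$-independent case. There your setup finishes in one line: $J_t=J_0-tK$, and the $t^2$-coefficient of $J_t^2+{\rm Id}=0$ is $K^2=-K^TK=0$, so ${\rm d}\alpha=0$. This is exactly the paper's positivity argument.

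Without that hypothesis, the converse survives in two cases.
\begin{itemize}
\item If $\dim N=2$: the only orthogonal complex structures are $\pm J_0$, so continuity forces $J_t=J_0$.
\item If $N$ is closed: your exactness idea then works globally. $\beta_0$ is closed and, being the form of an orthogonal almost complex structure, co-closed ($\star\beta_0=\pm\beta_0^{m-1}/(m-1)!$). With $A_t=\int_0^t\alpha_s\,ds$, $\int_N|\beta_t-\beta_0|^2$ equals $\int_N(|\beta_t|^2-|\beta_0|^2)$ plus a multiple of $\int_N\langle A_t,\delta\beta_0\rangle$, and both vanish.
\end{itemize}
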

\begin{proof}
First we prove that if \eqref{Aisometry} holds for all $t \in (-\epsilon, \epsilon)$, then ${\rm d} \alpha_t=0$. Indeed, from \eqref{secAXU} and \eqref{d2tsecZUdform} we obtain that $\ddt g_t( \mA^t_{ P_{\mH}^t \pi^* W_i } U , \mA^t_{ P_{\mH}^t \pi^* W_i } U )=0$ if and only if ${\rm d} \alpha_t=0$.

Now we assume that ${\rm d} \alpha_t=0$ and $(\pi, g_0 , U )$ defines a $K$-contact structure, then from \eqref{dtAWiU1WjU2} it follows that
\begin{eqnarray} \label{dtAXUAYUetaclosed}
&& \dt g_t( \mA^t_{P_{\mH}^t \pi^* W_i } U , \mA^t_{P_{\mH}^t \pi^* W_l } U )
= \sum\nolimits_{j=n+1}^{n+p} g_t( \mA^t_{ P_{\mH}^t \pi^* W_l }  P_{\mH}^t \pi^* W_j , U ) {\rm d} \alpha_t( W_i , W_j ) \nonumber\\
&& + \sum\nolimits_{j=n+1}^{n+p} g_t( \mA^t_{ P_{\mH}^t \pi^* W_i }  P_{\mH}^t \pi^* W_j , U ) {\rm d} \alpha_t( W_l , W_j ) =0.
\end{eqnarray}
Since $g_0( \mA^0_{ \pi^* W_i } U , \mA^0_{ \pi^* W_l } U ) = \delta_{il}$, 
we see that \eqref{Aisometry} holds for all $t \in (-\epsilon, \epsilon)$. Moreover, it follows from \eqref{dtAXUAYUetaclosed} that $\{ \mA^t_{P_{\mH}^t \pi^* W_i } U \}$, where $i \in \{n+1, \ldots,  n+p\}$, form a $g_t$-orthonormal frame of $\mH(t)$.
Hence, we have
\begin{eqnarray} \label{AAXUUt}
\mA^t_{  \mA^t_{X} U } U &=& \sum\nolimits_{i=n+1}^{n+p} g_t(\mA^t_{X} U , P_{\mH}^t \pi^* W_i ) \mA^t_{ P_{\mH}^t \pi^* W_i } U \nonumber \\
&=& 
- \sum\nolimits_{i=n+1}^{n+p} g_t(\mA^t_{X} P_{\mH}^t \pi^* W_i , U ) \mA^t_{ P_{\mH}^t \pi^* W_i } U \nonumber\\
&=& \sum\nolimits_{i=n+1}^{n+p} g_t(\mA^t_{ P_{\mH}^t \pi^* W_i } X , U ) \mA^t_{ P_{\mH}^t \pi^* W_i } U \nonumber\\
&=& -  \sum\nolimits_{i=n+1}^{n+p} g_t(\mA^t_{ P_{\mH}^t \pi^* W_i } U , X ) \mA^t_{ P_{\mH}^t \pi^* W_i } U \nonumber\\
&=& -P_{\mH}^t X = -X + g_t(X,U)U.
\end{eqnarray}
It follows that if ${\rm d} \alpha_t=0$ then both \eqref{Aisometry} and \eqref{KcontactA} hold and $(\pi, g_t , U )$ defines a $K$-contact structure on $M$ for $t \in (-\epsilon,\epsilon)$.

To prove that if ${\rm d} \alpha_t=0$, then also \eqref{Sasakiphi} holds, we will write it in an equivalent form. Since $g_t(U,U)=1$ and $\nabla^t_U U =0$, we have $\nabla^t_X U = \mA^t_X U$. Hence,
\begin{eqnarray} \label{nablaphiandA}
&&( \nabla^t_X \phi_t) (Y) = \nabla^t_X \phi_t(Y) - \phi_t(\nabla^t_X Y) 
= -\nabla^t_X \mA^t_Y U + \mA^t_{  \nabla^t_X Y} U \nonumber\\
&& = -\nabla^t_X \mA^t_Y U + \mA^t_{  \nabla^t_X Y} U + \mA^t_{ Y}  \nabla^t_X U - \mA^t_{ Y} \mA^t_X U \nonumber\\
&&= -(\nabla^t_X \mA^t)_Y U  - \mA^t_{ Y} \mA^t_X U.
\end{eqnarray}
For $X=Y=U$, \eqref{Sasakiphi} becomes
\[
-(\nabla^t_U \mA^t)_U U  - \mA^t_{ U} \mA^t_U U =0 
\]
which is true for all Riemannian submersions.

For $X \in \mH(t)$ and $Y=U$, by $\mA_U =0$, \eqref{Sasakiphi} becomes
\begin{equation} \label{SasakiXU}
-(\nabla^t_X \mA^t)_U U = -X ,
\end{equation}
which holds by \eqref{AAXUUt} and \cite[5.32 Lemma]{Tondeur} $(\nabla^t_X \mA^t)_U = - \mA^t_{\mA^t_X U}$.

For $X=U$ and $Y \in \mH(t)$, by $\mA_U =0$, \eqref{Sasakiphi} becomes
\[
(\nabla^t_U \mA^t)_Y U = 0,
\]
which is true for Riemannian submersions with geodesic fibers, by $(\nabla^t_U \mA^t)_Y$ being alternating \cite{Tondeur}:
\begin{equation} \label{nablaUAYUU}
g_t( (\nabla^t_U \mA^t)_Y U , U ) = - g_t( (\nabla^t_U \mA^t)_Y U , U )
\end{equation}
and, by \cite[p.52, last formula]{Tondeur}:
\[
g_t( (\nabla^t_U \mA^t)_Y U , Z ) = - g_t( (\nabla^t_U \mA^t)_Y Z , U ) = 0
\]
for all $Z \in \mH(t)$.

Finally, for $X,Y \in \mH(t)$, \eqref{SasakiXU} becomes
\[
-(\nabla^t_X \mA^t)_Y U  - \mA^t_{ Y} \mA^t_X U = g_t(X,Y) U,
\]
which is equivalent to the system
\begin{eqnarray} \label{SasakiXYU}
&& -g_t( (\nabla^t_X \mA^t)_Y U , U)  - g_t( \mA^t_{ Y} \mA^t_X U , U) = g_t(X,Y) , \\
\label{SasakiXYZ}
&& -g_t( (\nabla^t_X \mA^t)_Y U , Z)  - g_t( \mA^t_{ Y} \mA^t_X U , Z) =0
\end{eqnarray}
for all $Z \in \mH(t)$.
Using \eqref{nablaUAYUU} and $\mA^t_Y$ being alternating, we write \eqref{SasakiXYU} as
\[
g_t( \mA^t_X U ,  \mA^t_{ Y} U) = g_t(X,Y),
\]
which holds by \eqref{Aisometry}.
To prove \eqref{SasakiXYZ}, using \eqref{dtnablaAXYZ} we obtain $-g_t( (\nabla^t_X \mA^t)_Y U , Z) = g_t((\nabla^t_X \mA^t)_Y Z , U) = ({\nabla^t_X \rm d \alpha_t})(Y,Z)$ and $g_t( \mA^t_{ Y} \mA^t_X U , Z) = -g_t( \mA^t_X U , \mA^t_{ Y} Z  )=0$ since $\mA^t_X U \in \mH(t)$ and $\mA^t_{ Y} Z \in \mV$.
It follows that \eqref{SasakiXYZ} is equivalent to vanishing of ${\nabla^t \rm d \alpha_t}$, which is a weaker condition than $\alpha_t$ being closed. Hence, if defining a $K$-contact structure by $(\pi, g_t, U)$ is preserved by variations $g_t$, then so is defining a Sasaki structure.
\end{proof}

On $K$-contact or Sasaki manifolds, the infinitesimal variations with $\alpha_t$ closed and basic integrate to ''type II deformations'' considered in \cite{Boyer,Goertsches2008}, 
which are induced by diffeomorphisms of $(M,g_0)$.
For a Riemannian submersion defined by a Killing field, we have the following result. 
\begin{theorem} \label{thdiffequivalence}
Let $\{ g_t, t \in (-\epsilon, \epsilon) \} \subset \Riem(M, \mV, g_0)$ be a 
variation such that $\pi : (M, g_t) \rightarrow (N, g_N)$ is a Riemannian submersion for all $t \in (-\epsilon, \epsilon)$ with geodesic fibers spanned by a unit vertical field $U$, which is Killing on analytic, simply connected manifold $(M, g_0)$. 
If \eqref{Bteta} holds with $\alpha_t$ basic, closed and analytic
then 
for all $t \in (-\epsilon, \epsilon)$ there exists o diffeomorphism $f_t : M \rightarrow M$ such that $g_t = f_t^* g_0$.
\end{theorem}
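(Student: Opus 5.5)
Since $M$ is simply connected and $\pi^*\alpha_t$ is closed, I will first write $\pi^*\alpha_t = {\rm d} h_t$ for a function $h_t$ on $M$, normalized at a base point. Because $\pi^*\alpha_t$ is basic, $U(h_t) = \pi^*\alpha_t(U) = 0$, so $h_t$ is constant along fibers. By analyticity of $\alpha_t$ and of $M$, $h_t$ depends analytically (in particular smoothly) on $t$. The candidate diffeomorphisms are generated by the time-dependent vector field $Y_t = h_t U$, or a constant multiple of it fixed by matching first-order terms. I then define $f_t$ as its flow with $f_0 = {\rm Id}$. Since $U$ is Killing and $h_t$ is fiberwise constant, $Y_t$ is complete: along each fiber it is a constant multiple of the complete Killing field $U$. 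Hence $f_t$ maps each fiber to itself by an isometric rotation, and $\pi \circ f_t = \pi$.

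Next I check that $\tilde g_t := f_t^* g_0$ is again a variation in $\Riem(M,\mV,g_0)$ preserving $\pi$ with geodesic fibers. Since $f_t$ preserves each fiber and restricts to the flow of $U$ there (an isometry), $\tilde g_t$ agrees with $g_0$ on $\mV$. Since $\pi\circ f_t=\pi$ and $\pi$ is Riemannian for $g_0$, the metric $\tilde g_t$ is Riemannian for $\pi$. Fibers stay geodesic because $f_t$ maps geodesic fibers to themselves.

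Now I compute $\tilde B_t = \dt \tilde g_t = f_t^*(\mathcal{L}_{Y_t} g_0)$, or equivalently $\mathcal{L}_{\tilde Y_t}\tilde g_t$ with $\tilde Y_t$ the pulled-back generator, which is again of the form $\tilde h_t U$ because $f_t$ commutes with $U$. Using that $U$ is $\tilde g_t$-Killing, I get
\[
(\mathcal{L}_{hU}\tilde g_t)(U,X) = {\rm d}h(U)\,\tilde g_t(U,X) + {\rm d}h(X)\,\tilde g_t(U,U) = {\rm d}h(X).
\]
So $\tilde B_t(U,X) = \pi^*\tilde\alpha_t(X)$ with $\tilde\alpha_t$ exact, and by Theorem \ref{thS1bundleetaform} this determines $\tilde B_t^\sharp$ on $\mV$ via \eqref{Vieta}.

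The key step is the uniqueness part of Theorem \ref{corbsharpglobal}. The ODE system \eqref{bsharp1}--\eqref{bsharp3} is determined pointwise by $\lambda_{ai}$, and hence by $\alpha_t$. I therefore need to choose $h_t$ so that $\tilde\alpha_t = \alpha_t$ for all $t$, i.e.\ ${\rm d}\tilde h_t = \alpha_t$ where $\tilde h_t$ is the function obtained by transporting $h_t$ through $f_t$. Since $f_t$ preserves fibers and $h_t$ is basic, $\tilde h_t = h_t\circ f_t = h_t$, so the generator is simply $\dt$-related to a primitive of $\alpha_t$. I will set $Y_t = H_t U$ with ${\rm d}H_t = \pi^*\alpha_t$ at each $t$, and then $f_t$ is the time-ordered flow. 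Both $g_t$ and $f_t^*g_0$ then solve the same system with the same $V_i(t)$ and initial data. When $V_i$ depends on $t$ the ODE is non-autonomous, but existence and uniqueness still hold, and I will use this to conclude $g_t = f_t^* g_0$.

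The main obstacle is global existence of the primitive and the smooth, or analytic, dependence on $t$. This is where I plan to use simple connectivity and analyticity: the path-integral primitive $H_t(x)=\int_{x_0}^x \pi^*\alpha_t$ is well defined and inherits analytic dependence on $t$. A second point needing care is the sign and normalization in $\mathcal{L}_{HU}g$ versus the definition $B_t(U,X)=\pi^*\alpha_t(X)$; a factor may have to be absorbed into $H_t$.
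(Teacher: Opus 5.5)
Your proof is correct, and it takes a genuinely different route from the paper's.

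The paper's argument is non-constructive. It observes that $U$ remains a unit Killing field for every $g_t$. It then invokes Sharipov's local equivalence theorem to get a local isometry between $g_t$ and $g_0$. Finally it uses analyticity and simple connectivity (analytic continuation of local isometries, as in Kobayashi--Nomizu) to extend this to a global $f_t$.

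You instead write $f_t$ down explicitly. Take $H_t$ to be a global, fiberwise constant primitive of $\pi^*\alpha_t$; this is the only place simple connectivity enters. Then $f_t$ moves each point along its $U$-orbit by time $\int_0^t H_s\,{\rm d}s$. The metric $f_t^*g_0$ lies in $\Riem(M,\mV,g_0)$ and preserves $\pi$ and the geodesic fibers. It satisfies $\dt (f_t^*g_0)(U,X)=(\mathcal{L}_{H_tU}f_t^*g_0)(U,X)={\rm d}H_t(X)=\pi^*\alpha_t(X)$, so no extra normalisation factor is needed. Uniqueness for \eqref{bsharp1}--\eqref{bsharp3}, with the same $t$-dependent $\lambda_{ai}$, then gives $g_t=f_t^*g_0$.

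Your route yields more than the statement asks. The $f_t$ form a smooth family of fiber-preserving diffeomorphisms that commute with the flow of $U$. It also shows the analyticity hypothesis is an artifact of the paper's continuation argument and is never needed. Smooth dependence of $H_t$ on $t$ comes from smoothness of $\alpha_t$ in $t$. Your claim that it follows from analyticity of $\alpha_t$ and $M$ is not right, but it is also unnecessary.

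Two small points. First, justify completeness of the flow of $U$ by compactness of the circle fibers, not by ``Killing implies complete''. That implication needs $M$ complete, and the paper's continuation argument tacitly needs completeness as well. Second, the uniqueness step can be made trivial. Every metric in $\Riem(M,\mV,g_0)$ for which $\pi$ is a Riemannian submersion equals $\pi^*g_N+\eta_t\otimes\eta_t$ with $\eta_t=g_t(U,\cdot)$. In these terms \eqref{Bteta} reads $\dt\eta_t=\pi^*\alpha_t$. Since $(f_t^*g_0)(U,\cdot)=f_t^*\eta_0=\eta_0+{\rm d}\int_0^t H_s\,{\rm d}s$, both metrics have the same $\eta_t$ and therefore coincide.
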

\begin{proof}
By the assumption ${\rm d} \alpha_t=0$ and by $\{ g_t, t \in (-\epsilon, \epsilon) \} \subset \Riem(M, \mV, g_0)$, we have that $U$ is a Killing field on $g_t$ with $g_t(U,U)=1=g_0(U,U)$.
From \cite[Theorem 7.2.]{Sharipov} for all $t \in (-\epsilon, \epsilon)$ we obtain existence of a local diffeomorphism, which on analytic and simply connected $M$ extends uniquely to a diffeomorphism $f_t : M \rightarrow M$ such that $g_t = f_t^* g_0$ \cite{KobayashiNomizu1}.
\end{proof}

We note that, by Example \ref{exproduct}, the assumption that $(M,g_0)$ is simply connected is necessary in Theorem \ref{thdiffequivalence}.

If $g_t=f_t^* g_0$ for a one-parameter family of diffeomorphisms $f_t : M \rightarrow M$, generated by a vector field $Z$, then $B_t = \dt g_t = \mathcal{L}_Z g_t$. 
We examine some properties of vector fields that define variations $\{ g_t, t \in (-\epsilon, \epsilon) \} \subset \Riem(M, \mV, g_0)$, which preserve a Riemannian submersion with geodesic fibers.

\begin{proposition} \label{propLZgdefinesvariation}
Let $\pi : (M, g_0) \rightarrow (N, g_N)$ be a Riemannian submersion for all with geodesic fibers spanned by a unit vertical field $U$, which is Killing on 
$(M, g_0)$.
Let $Z \in \mathfrak{X}_N$.
Let $\alpha_t$ be a $1$-form on $N$, such that for all $X \in \mathfrak{X}_M$
\begin{equation} \label{alphaLieZ}
\pi^* \alpha_t (X) = (\mathcal{L}_{\pi^* Z}g_0)(U,X) . 
\end{equation} 
Then there exists a variation $\{ g_t, t \in (-\epsilon, \epsilon) \} \subset \Riem(M, \mV, g_0)$ 
such that $\pi : (M, g_t) \rightarrow (N, g_N)$ is a Riemannian submersion with geodesic fibers, such that \eqref{Bteta} holds with $\alpha_t$ given by \eqref{alphaLieZ}.
\end{proposition}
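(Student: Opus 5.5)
The plan is to reduce the statement to Theorem \ref{corbsharpglobal} by checking that the form defined by \eqref{alphaLieZ} really is the pullback of a form on $N$. Its vector-field data then satisfy the Killing-on-fibers hypothesis.

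\textbf{Step 1: $\alpha_t$ is well defined.} Set $\omega = \mathcal{L}_{\pi^* Z} g_0(U,\cdot)$. To show $\omega$ is basic, first evaluate it on $U$:
\[
\omega(U) = (\mathcal{L}_{\pi^*Z}g_0)(U,U) = \pi^*Z(g_0(U,U)) - 2g_0([\pi^*Z,U],U) = -2g_0([\pi^*Z,U],U).
\]
Since $\pi^*Z$ is basic, $[\pi^*Z,U]$ is vertical. Write $[\pi^*Z,U]=fU$. Using that $U$ is unit Killing and the fibers are geodesics, one gets $g_0([\pi^*Z,U],U) = g_0(\nabla^0_{\pi^*Z}U,U) - g_0(\nabla^0_U \pi^*Z,U) = 0 + g_0(\pi^*Z,\nabla^0_UU)=0$. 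Hence $\omega(U)=0$.

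Second, we need $U(\omega(X))=0$ for projectable horizontal $X$, i.e.\ $\iota_U d\omega=0$. Here I will use $\mathcal{L}_U$ commuting with $\mathcal{L}_{\pi^*Z}$ up to $\mathcal{L}_{[U,\pi^*Z]}$, together with $\mathcal{L}_U g_0=0$. This gives
\[
U(\omega(X)) = (\mathcal{L}_U\mathcal{L}_{\pi^*Z}g_0)(U,X) + \omega([U,X]) = (\mathcal{L}_{[U,\pi^*Z]}g_0)(U,X) + \omega([U,X]).
\]
We have $[U,X]$ vertical and $\omega(U)=0$, so the last term vanishes. Since $[U,\pi^*Z]=-fU$ with $f=0$ by the computation above, the first term vanishes too. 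Hence $\omega$ is basic and $\omega=\pi^*\alpha$ for a form $\alpha$ on $N$ (as recalled from \cite{Tondeur}). I will take $\alpha_t=\alpha$ independent of $t$.

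\textbf{Step 2: existence of the variation.} Choose local $g_N$-orthonormal $W_i$ and put $V_i=(\alpha(W_i)\circ\pi)U$, as in \eqref{Vieta}. Each $V_i$ is a fiberwise-constant multiple of the unit Killing field $U$, so its restriction to every fiber is Killing. Applying Theorem \ref{corbsharpglobal} produces a variation in $\Riem(M,\mV,g_0)$ that preserves $\pi$, keeps the fibers totally geodesic, and satisfies \eqref{bsharpinv}.

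The local frames must glue. The expression $\sum_i g_0(V_i,X)\pi^*W_i = g_0(U,X)\,\pi^*(\alpha^\sharp)$ is frame independent, and the theorem's proof already shows the ODE solution is frame independent via \eqref{lambdatransform}. Boundedness of $V_i$ requires $\alpha$ to be bounded. On noncompact $N$ I would shrink to relatively compact sets, or accept the local statement.

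\textbf{Step 3: identifying $B_t$.} By Theorem \ref{thS1bundleetaform}, the resulting variation satisfies \eqref{Bteta} for some $\alpha_t$. Comparing \eqref{bsharpinv} with \eqref{Vieta} then identifies $\alpha_t=\alpha$. Concretely, $B_t(U,X)=g_t(B_t^\sharp U,X)=\sum_i\alpha(W_i)g_t(P^t_{\mH}\pi^*W_i,X)=\pi^*\alpha(X)$, using the orthonormality \eqref{gNWiWj}.

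The main obstacle is Step 1, specifically verifying $\iota_U d\omega=0$. If the commutation argument fails, the fallback is a direct computation of $U(g_0(\nabla^0_X\pi^*Z,U)+g_0(\nabla^0_U\pi^*Z,X))$ with Lemma \ref{lemnablaXY}.
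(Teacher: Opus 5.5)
Your proposal is correct and has the same skeleton as the paper's proof: show that $\omega=(\mathcal{L}_{\pi^*Z}g_0)(U,\cdot)$ is basic, then let Theorem \ref{corbsharpglobal} supply the variation. The paper stops after the basicness check and leaves your Steps 2--3 implicit. Where you differ is the crux, $\iota_U {\rm d}\omega=0$.

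The paper expands $(\mathcal{L}_{\pi^*Z}g_0)(U,\pi^*Y)$ and reduces it to $-g_0(\nabla^0_{\pi^*Z}\pi^*Y-\nabla^0_{\pi^*Y}\pi^*Z,U)$. It then kills the $U$-derivative of this with Lemma \ref{lemnablaXY}, which rests on Tondeur's identity \eqref{TondeurnablaUAXYU}. This is exactly your stated fallback. You instead observe that $[U,\pi^*Z]=0$, which needs only that $U$ is unit with geodesic flow lines. You then use $\mathcal{L}_U\mathcal{L}_{\pi^*Z}g_0=\mathcal{L}_{\pi^*Z}\mathcal{L}_Ug_0+\mathcal{L}_{[U,\pi^*Z]}g_0=0$, i.e.\ the tensor $\mathcal{L}_{\pi^*Z}g_0$ is itself $U$-invariant. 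Your argument is more elementary and self-contained, and it shows transparently why the form is basic. The paper's route has the advantage of reusing a lemma it needs anyway (Proposition \ref{propAUprojectable}, Theorem \ref{thcurvatureincrease}).

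Your hedge about boundedness of $\alpha$, and the frame-gluing discussion, can both be dropped. Since the $V_i$ are $t$-independent, \eqref{dtPHWi} integrates to $P_{\mH}^t\pi^*W_i=\pi^*W_i-t\,\alpha(W_i)U$. This suggests the closed form
\[
g_t=\pi^*g_N+(\eta_0+t\,\pi^*\alpha)\otimes(\eta_0+t\,\pi^*\alpha),\qquad \eta_0=g_0(U,\cdot),
\]
whose properties can be checked directly:
\begin{itemize}
\item it is a metric for every $t\in\mathbb{R}$, with $g_t(U,U)=1$;
\item its horizontal distribution is $\mH(t)=\ker(\eta_0+t\,\pi^*\alpha)$, and $g_t$ restricted to $\mH(t)$ equals $\pi^*g_N$;
\item $\mathcal{L}_Ug_t=0$, because $\mathcal{L}_U\eta_0=0$ and $\pi^*\alpha$ is basic, so the fibers stay geodesic;
\item $B_t(U,\cdot)=\pi^*\alpha$.
\end{itemize}
This gives the variation globally with no boundedness assumption.
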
 
\begin{proof}
We check that 
a $1$-form $\omega$ given by $\omega(X) =(\mathcal{L}_{\pi^* Z}g_0)(U,X)$ for all $X \in \mathfrak{X}_M$ 
is a basic form for Riemannian submersion $\pi: (M,g_t) \rightarrow (N,g_N)$ for all $t \in (-\epsilon, \epsilon)$. Indeed, 
\begin{eqnarray*}
\omega(U) &=& (\mathcal{L}_{\pi^*}g_0)(U,U) = g_0(\nabla^0_U \pi^* Z, U ) \nonumber\\
&=& 
-g_t( \nabla^0_U  U , \pi^* Z ) =0, 
\end{eqnarray*}
by fibers of $\pi: (M,g_0) \rightarrow (N,g_N)$ being geodesics.
Let $X$ be a projectable vector field on $M$. Then $X = \pi^* Y + W$, where $Y \in \mathfrak{X}_N$ and $W \in \mathfrak{X}_{\mV}$. We have
\begin{eqnarray*}
&& \iota_U {\rm d} \omega (X) = U(\omega (X)) = U(\omega (\pi^* Y))
= U( (\mathcal{L}_{\pi^* Z} g_0)(U, \pi^* Y) ) \nonumber\\
&& = U (  \pi^*Z (g_0(U, \pi^* Y) ) - g_0( [\pi^*Z , \pi^* Y] , U ) - g_0([\pi^*Z , U] , \pi^* Y)    ) \nonumber\\
&&= - U ( g_0( \nabla^0_{\pi^*Z} \pi^*Y - \nabla^0_{\pi^*Y} \pi^*Z  ,U ) ) =0
\end{eqnarray*} 
by Lemma \ref{lemnablaXY}. Hence, $\omega= \pi^* \alpha$ for some $1$-form $\alpha$ on $N$.
\end{proof}

\begin{proposition}
Let $\pi : (M, g_0) \rightarrow (N, g_N)$ be a Riemannian submersion for all with geodesic fibers spanned by a unit vertical field $U$, which is Killing on 
$(M, g_0)$.
Let $Z \in \mathfrak{X}_N$, let $f_t : M \rightarrow M$ be the family of one-parameter  diffeomorphisms generated by $\pi^* Z$, and let $g_t=f_t^* g_0$. 
For all $Y \in \mathfrak{X}_N$, let $\phi_0 Y = \pi_* \mA^0_{\pi^* Y} U$.

If $\{ g_t, t \in (-\epsilon, \epsilon) \} \subset \Riem(M, \mV, g_0)$ 
and $\pi : (M, g_t) \rightarrow (N, g_N)$ is a Riemannian submersion with geodesic fibers for all $(-\epsilon, \epsilon)$, then $Z$ is a Killing field on $(N,g_N)$ and ${\rm d} (\phi_0 Z)^\flat =0$, where
$(\phi_0 Z)^\flat (Y) = g_N( \phi_0 Z ,Y)$ for all $Y \in \mathfrak{X}_N$.
\end{proposition}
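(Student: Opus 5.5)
The plan is to extract both conclusions from the first derivative $B_0=\dt g_t\vert_{t=0}=\mathcal{L}_{\pi^*Z}g_0$. Along the way I also use $B_t=\mathcal{L}_{\pi^*Z}g_t$, which holds because $f_t$ is the flow of $\pi^*Z$.

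\emph{$Z$ is Killing.} Since the variation preserves the Riemannian submersion, \eqref{BXY} at $t=0$ gives $B_0(\pi^*X,\pi^*Y)=0$ for all $X,Y\in\mathfrak{X}_N$. For basic fields,
\[
(\mathcal{L}_{\pi^*Z}g_0)(\pi^*X,\pi^*Y)=g_0(\nabla^0_{\pi^*X}\pi^*Z,\pi^*Y)+g_0(\nabla^0_{\pi^*Y}\pi^*Z,\pi^*X).
\]
The horizontal parts of these covariant derivatives project to $\nabla^N$ by O'Neill's formula \eqref{nablaXYZprojects}. Hence the right-hand side equals $(\mathcal{L}_Zg_N)(X,Y)\circ\pi$, so $\mathcal{L}_Zg_N=0$.

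\emph{Identifying $\alpha_0$.} By Proposition \ref{propLZgdefinesvariation}, $B_0(U,\cdot)=\pi^*\alpha_0$. I compute
\[
\alpha_0(Y)=g_0(\nabla^0_U\pi^*Z,\pi^*Y)+g_0(\nabla^0_{\pi^*Y}\pi^*Z,U).
\]
Because $[U,\pi^*Z]$ is vertical, the first term is $g_0(\mA^0_{\pi^*Z}U,\pi^*Y)$. The second term is $g_0(\mA^0_{\pi^*Y}\pi^*Z,U)=g_0(\mA^0_{\pi^*Z}U,\pi^*Y)$, using that $\mA^0$ is alternating on horizontal fields and skew in the sense $g(\mA_XY,U)=-g(\mA_XU,Y)$. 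Therefore $\alpha_0=2(\phi_0Z)^\flat$.

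\emph{Closedness.} Let $\Omega$ be the 2-form on $N$ with $\pi^*\Omega={\rm d}\eta_0$, which exists because ${\rm d}\eta_0$ is basic. One has $\Omega(X,Y)=\pm g_N(\phi_0X,Y)$, the sign and factor following from $\mA^0_XY=\frac12P^0_{\mV}[X,Y]$ and the convention for ${\rm d}$ used in the proof of Theorem \ref{thS1bundleetaform}. Since $\Omega$ is closed, Cartan's formula gives ${\rm d}\alpha_0=2\,{\rm d}\iota_Z(\pm\Omega)=\pm2\mathcal{L}_Z\Omega$.

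I then get a second expression for the same quantity from the flow. Since $U$ is vertical Killing and the fibers are geodesics, $g_0([U,\pi^*Z],U)=0$; as the fibers are one-dimensional, $[U,\pi^*Z]=0$. Hence $f_{t*}U=U$, and so $f_t^*\eta_0=g_t(U,\cdot)=\eta_t$. Differentiating at $t=0$ gives $\mathcal{L}_{\pi^*Z}\eta_0=B_0(U,\cdot)=\pi^*\alpha_0$, and taking ${\rm d}$ gives $\pi^*{\rm d}\alpha_0=\mathcal{L}_{\pi^*Z}{\rm d}\eta_0=\pi^*\mathcal{L}_Z\Omega$. The two relations ${\rm d}\alpha_0=\pm2\mathcal{L}_Z\Omega$ and ${\rm d}\alpha_0=\mathcal{L}_Z\Omega$ are compatible only if both sides vanish. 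This yields ${\rm d}(\phi_0Z)^\flat=0$.

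\emph{Main obstacle and fallback.} The delicate step is the bookkeeping of constants and signs relating ${\rm d}\eta_0$, $\mA^0$ and $\phi_0$; the argument depends on the two expressions genuinely differing by a factor other than $1$, so I will recheck it carefully. If the factors turn out to agree, I will instead use \eqref{dtAXYeta}. Since $f_t$ maps $g_t$-horizontal lifts to $g_0$-horizontal lifts and covers the isometric flow of $Z$, differentiating $g_t(\mA^t_{P^t_{\mH}\pi^*W_l}P^t_{\mH}\pi^*W_j,U)$ gives $-\mathcal{L}_Z\Omega$ in a second way, to be compared with $-{\rm d}\alpha_t$.
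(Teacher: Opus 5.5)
Your first two steps are correct and coincide with the paper's proof. \eqref{BXY} at $t=0$ together with O'Neill's formula gives $\mathcal{L}_Zg_N=0$, and computing $B_0(U,\pi^*Y)$ gives $\alpha_0=2(\phi_0Z)^\flat$. The paper's proof stops at this identification and gives no argument for closedness.

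\textbf{The closedness step has a genuine gap.} The two relations you play against each other are the same identity. The paper normalizes ${\rm d}\omega(X,Y)=\frac12\big(X\omega(Y)-Y\omega(X)-\omega([X,Y])\big)$ (see the proof of Theorem \ref{thS1bundleetaform}). With this normalization, Cartan's formula for the closed form $\Omega$, $\pi^*\Omega={\rm d}\eta_0$, reads $\mathcal{L}_Z\Omega=2\,{\rm d}\big(\Omega(Z,\cdot)\big)$. For horizontal $X,Y$ one has $g_0(\mA^0_XU,Y)=-\frac12\eta_0([X,Y])={\rm d}\eta_0(X,Y)$, so the sign is $+$ and $(\phi_0Z)^\flat=\Omega(Z,\cdot)$. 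Hence ${\rm d}\alpha_0=2\,{\rm d}\big(\Omega(Z,\cdot)\big)=\mathcal{L}_Z\Omega$, which is exactly what the flow gives. Your factor $2$ came from using the unnormalized Cartan formula with the normalized ${\rm d}$.

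This had to happen. Because $\eta_0(\pi^*Z)=0$, Cartan gives $\mathcal{L}_{\pi^*Z}\eta_0=2\,{\rm d}\eta_0(\pi^*Z,\cdot)$, which is just your computation of $\alpha_0$ again, so applying ${\rm d}$ to it cannot produce a constraint. The fallback fails for the same reason. With $h_t$ the flow of $Z$, one has $g_t(\mA^t_{P_{\mH}^t\pi^*W_l}P_{\mH}^t\pi^*W_j,U)=-(h_t^*\Omega)(W_l,W_j)\circ\pi$. Differentiating yields $-(\mathcal{L}_Z\Omega)(W_l,W_j)$, which agrees with \eqref{dtAXYeta} identically.

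\textbf{No argument can close this gap, because closedness does not follow from the hypotheses.} As you observed, $[U,\pi^*Z]=0$, so $f_{t*}U=U$. Consequently $g_t\in\Riem(M,\mV,g_0)$ and geodesic fibers are automatic ($f_t$ is an isometry $(M,g_t)\to(M,g_0)$ preserving fibers). Moreover, $\pi\colon(M,g_t)\to(N,g_N)$ is a Riemannian submersion exactly when $h_t$ is an isometry. So the hypotheses say only that $Z$ is Killing. By contrast, ${\rm d}(\phi_0Z)^\flat=\frac12\mathcal{L}_Z\Omega$ measures whether $Z$ preserves the curvature form of the bundle.

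Here is a counterexample. On $M=T^2\times S^1$ take $g_0={\rm d}x^2+{\rm d}y^2+({\rm d}\theta+\sin x\,{\rm d}y)^2$, $U=\partial_\theta$ and $Z=\partial_x$.
\begin{itemize}
\item Then $\pi^*Z=\partial_x$, and $g_t={\rm d}x^2+{\rm d}y^2+({\rm d}\theta+\sin(x+t)\,{\rm d}y)^2$ satisfies every hypothesis.
\item But $[\pi^*\partial_x,\pi^*\partial_y]=[\partial_x,\partial_y-\sin x\,\partial_\theta]=-\cos x\,\partial_\theta$.
\item Hence $(\phi_0Z)^\flat=\frac12\cos x\,{\rm d}y$, which is not closed.
\end{itemize}
What is actually provable is the Killing property together with ${\rm d}\alpha_0=\mathcal{L}_Z\Omega$. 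The stated closedness needs an extra assumption such as $\mathcal{L}_Z\Omega=0$, e.g.\ $Z$ a holomorphic Killing field when $\Omega$ is a K\"ahler form on $N$.
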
 
\begin{proof}
We have $B_0=\dt g_t \vert_{t=0}= \mathcal{L}_Z g_0$. If  $\pi : (M, g_t) \rightarrow (N, g_N)$ is a Riemannian submersion with geodesic fibers for all $(-\epsilon, \epsilon)$ and $\{ g_t, t \in (-\epsilon, \epsilon) \} \subset \Riem(M, \mV, g_0)$, then from \eqref{BXY} follows for all $X,Y \in \mathfrak{X}_N$:
\begin{eqnarray*} 
0 &=& (\mathcal{L}_{\pi^* Z} g_0)(\pi^* X, \pi^* Y) = g_0(\nabla^0_{\pi^* X}  \pi^* Z , \pi^* Y) + g_0(\nabla^0_{\pi^* Y}  \pi^* Z , \pi^* X) \nonumber\\
&=&  g_N(\nabla^N_{X}  Z , Y) +  g_N(\nabla^N_{Y}  Z , X) = (\mathcal{L}_{Z} g_N)(X, Y).
\end{eqnarray*}
By Proposition \ref{propAUprojectable}, $\phi_0$ is well defined.
If $\{ g_t, t \in (-\epsilon, \epsilon) \} \subset \Riem(M, \mV, g_0)$ then by Theorem \ref{thS1bundleetaform} there exists a family $\alpha_t$ of $1$-forms on $N$ satisfying \eqref{Bteta}.
For all $Y \in \mathfrak{X}_N$ we have $\pi^* \alpha_0( Y ) = - g_0( [\pi^*Z , \pi^* Y] , U ) = 2 g_0( \mA^0_{ \pi^*Z } U , \pi^* Y ) = 2g_0(\phi_0 Z, Y)$. 
\end{proof}

We note that ${\rm d} (\phi_0 Z)^\flat =0$ holds in particular when $Z$ is a Killing vector field on $(N,g_N)$ and $(N,g_N,\phi_0)$ is a K\"ahler manifold \cite[2.130]{Besse}.


\subsection{Weak contact metric structures}  \label{subsetweakcontact}

We consider 
a generalization of contact metric structures \cite{RovenskiAMPA} and find conditions for obtaining them by variations preserving Riemannian submersions with geodesic fibers. $(\phi_t, U, \eta_t, g_t)$, where $\phi_t$ is a $(1,1)$-tensor field, $U$ is a vector field, $\eta_t$ is a $1$-form and $g_t$ is a Riemannian metric, is called a weak contact metric structure on a manifold $M$ if \eqref{UReeb}-\eqref{detacms} hold, together with the following:
\begin{eqnarray}
\label{weakphitonH}
&& g_t(\phi_t X, \phi_t Y) = g_t(X, Q_t Y) - g_t(U,X) g_t(U,Y) \; \; {\rm for \; all \;} X,Y \in \mathfrak{X}_M ,  \\
\label{weakphitQ}
&& \phi_t^2 = -Q_t + \eta_t \otimes U , \\ 
\label{QUU}
&& Q_t U = U,
\end{eqnarray}
where $Q_t$ is a $(1,1)$-tensor positive definite on $\ker \eta_t$, i.e., $g_t(Q_t X,X)>0$ for all $0 \neq X \in \mathfrak{X}_{\ker \eta_t}$.

Let $\pi :(M,g_0) \rightarrow (N,g_N)$ be a Riemannian submersion with unit vertical vector field $U$ and let  $\{ g_t, t \in (-\epsilon, \epsilon) \} \subset \Riem(M, \mV, g_0)$ be a variation preserving the Riemannian submersion $\pi :(M,g_0) \rightarrow (N,g_N)$.
We say that $(\pi,g_t,U)$ defines a weak contact metric structure on $M$ \cite{RovenskiAMPA}, if for $\phi_t$ defined by \eqref{definitionphit} conditions \eqref{UReeb}-\eqref{detacms} and \eqref{weakphitonH}-\eqref{QUU} are satisfied. We note that $\ker \eta_t = \mH(t)$. 

Let ${\tilde Q}_t = Q_t - {\rm Id}_{TM}$, then the following conditions were considered in \cite{RovenskiAMPA}:
\begin{equation} \label{vrcondition1}
(\nabla^t_E {\tilde Q}_t  ) X = 0 \quad {\rm for \; all} \; E \in \mathfrak{X}_M \; {\rm and \; all} \; X \in \ker \eta_t
\end{equation}
and
\begin{equation} \label{vrcondition2}
R_t({\tilde Q}_t X,Y)Z  \in \ker \eta_t \quad  {\rm for \; all} \; X,Y,Z \in \ker \eta_t .
\end{equation}

We note that from Proposition \ref{propAUprojectable} follows that $(\nabla^t_U {\tilde Q}_t  ) Y = 0$ for all $Y \in \mathfrak{X}_{\mH(t)}$.
We show that for weak contact metric structures defined by Riemannian submersions condition \eqref{vrcondition1} can be further weakened to $P_\mH^t ( ( \nabla_X {\tilde Q}_t ) {Y}) =0$ for all $X, Y \in \mathfrak{X}_{\mH(t)}$.

\begin{proposition}
If $\pi :(M,g_t) \rightarrow (N,g_N)$ is a Riemannian submersion with unit vertical vector field $U$, then $P_\mV^t  (( \nabla^t_X {\tilde Q}_t ) Y) =0$ for all $X,Y \in \mathfrak{X}_{\mH(t)}$.
\end{proposition}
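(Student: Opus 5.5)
The plan is to compute the vertical component directly. Since $\mV$ is spanned by the unit field $U$, it suffices to show that $g_t((\nabla^t_X {\tilde Q}_t)Y, U)=0$ for all $X,Y \in \mathfrak{X}_{\mH(t)}$. I would first record the algebraic properties of ${\tilde Q}_t$. From \eqref{QUU} we get ${\tilde Q}_t U = 0$. By \eqref{iotaReeb}, \eqref{detacms} and \eqref{definitionphit}, $\phi_t$ takes values in $\mH(t)$. Hence \eqref{weakphitQ} gives $Q_t Y = -\phi_t^2 Y$ for horizontal $Y$, so ${\tilde Q}_t$ maps $\mH(t)$ into $\mH(t)$. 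Moreover \eqref{weakphitonH} together with skew-symmetry of $\phi_t$ (which follows from \eqref{detacms}, since ${\rm d}\eta_t$ is alternating) shows that $Q_t$, and hence ${\tilde Q}_t$, is $g_t$-symmetric. In particular $g_t({\tilde Q}_t Z, U)=0$ for every $Z \in \mathfrak{X}_M$.

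Next I would expand $(\nabla^t_X {\tilde Q}_t)Y = \nabla^t_X({\tilde Q}_t Y) - {\tilde Q}_t(\nabla^t_X Y)$ and pair it with $U$. The second term drops out by the property just noted. Since ${\tilde Q}_t Y$ is horizontal, the first term gives
$$g_t(\nabla^t_X {\tilde Q}_t Y, U) = g_t(\mA^t_X {\tilde Q}_t Y, U) = -g_t({\tilde Q}_t Y, \mA^t_X U) = g_t({\tilde Q}_t Y, \phi_t X).$$
Equivalently, by symmetry of $\nabla^t_X{\tilde Q}_t$, this equals $-g_t(Y, {\tilde Q}_t \nabla^t_X U) = -g_t(Y,{\tilde Q}_t \mA^t_X U)$. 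So the whole statement reduces to the tensorial identity $g_t({\tilde Q}_t Y, \phi_t X) = 0$ on $\mH(t)$.

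The main obstacle is precisely this last identity. I would try to derive it from the compatibility conditions. First, $Q_t$ commutes with $\phi_t$ on $\mH(t)$, since both $\phi_t Q_t$ and $Q_t\phi_t$ equal $-\phi_t^3$ there. Then I would combine \eqref{weakphitonH} with \eqref{detacms} and $\phi_t = -\mA^t_{\cdot} U$ to rewrite $g_t({\tilde Q}_t Y,\phi_t X)$ as $g_t(Y, \phi_t{\tilde Q}_t X)$, and compare it with its own negative. Here I would use that ${\rm d}\eta_t(X,{\tilde Q}_t Y)$ can be computed in two ways: via \eqref{detacms}, and via the O'Neill tensor, where $2{\rm d}\eta_t(X,Z) = -\eta_t([X,Z]) = -2g_t(\mA^t_X Z, U)$.

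If this sign comparison does not close the argument by itself, I would fall back on the submersion structure. Proposition \ref{propAUprojectable} shows that $\phi_t$, and hence $Q_t$, maps projectable fields to projectable fields. I would then check the identity on $g_t$-basic fields $P_{\mH}^t\pi^*W_i$, where all the quantities involved are constant along fibers by Lemma \ref{lemnablaXY}. I expect the careful bookkeeping of signs and conventions in ${\rm d}\eta_t$ versus $\mA^t$ to be the delicate point of the proof.
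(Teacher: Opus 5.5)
Your reduction is correct, and it reaches the same expression as the paper's proof. For $X,Y\in\mathfrak{X}_{\mH(t)}$,
\[
g_t\big((\nabla^t_X\tilde Q_t)Y,U\big)=g_t(\tilde Q_tY,\phi_tX)=-g_t(\phi_t^2Y,\phi_tX)-g_t(\phi_tX,Y).
\]
The gap is the final identity $g_t(\tilde Q_tY,\phi_tX)=0$, and the weak axioms cannot supply it. From facts you already derived ($\tilde Q_t$ symmetric, $\phi_t$ skew, $\phi_t\tilde Q_t=\tilde Q_t\phi_t$ on $\mH(t)$) you get
\[
g_t(\tilde Q_tY,\phi_tX)=-g_t\big(Y,(\phi_t^3+\phi_t)X\big).
\]
This is a skew-symmetric form in $(X,Y)$, so comparing it with its own negative only reproduces that skew-symmetry. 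Computing ${\rm d}\eta_t(X,\tilde Q_tY)$ through $\mA^t$ gives nothing new either: with $\phi_t=-\mA^t_{\cdot}U$, \eqref{detacms} is just ${\rm d}\eta_t(X,Z)=-g_t(\mA^t_XZ,U)$ rewritten. In fact the form vanishes iff $\phi_t\tilde Q_t=0$ on $\mH(t)$. Since $\phi_t^2=-Q_t$ is invertible there, this happens iff $\tilde Q_t\vert_{\mH(t)}=0$. The identity is pointwise algebraic, so the fallback to basic fields and Lemma~\ref{lemnablaXY} cannot help.

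The paper closes this step by invoking \eqref{phitisomH}, which turns $g_t(\phi_t^2Y,\phi_tX)$ into $g_t(\phi_tY,X)$. But \eqref{phitisomH} is the contact metric condition, not a weak axiom. Combined with \eqref{weakphitonH}, it forces $Q_t=\mathrm{Id}$, and the statement becomes empty.

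For a genuinely weak structure the statement is false. On $\mathbb{R}^3$ let $X_1=\partial_x$, $X_2=\partial_y+\lambda x\,\partial_z$, $U=\partial_z$ be orthonormal, and let $\pi(x,y,z)=(x,y)$ map onto the flat plane. Then:
\begin{itemize}
\item $U$ is a unit Killing field with geodesic flowlines, and $[X_1,X_2]=\lambda U$;
\item $\phi X_1=\frac{\lambda}{2}X_2$, $\phi X_2=-\frac{\lambda}{2}X_1$, and $Q=\frac{\lambda^2}{4}\mathrm{Id}$ on $\mH$;
\item for $\lambda\neq0$, all of \eqref{UReeb}--\eqref{detacms} and \eqref{weakphitonH}--\eqref{QUU} hold.
\end{itemize}
Since $\nabla_{X_1}X_2=\frac{\lambda}{2}U$, we get
\[
(\nabla_{X_1}\tilde Q)X_2=\frac{\lambda}{2}\Big(\frac{\lambda^2}{4}-1\Big)U\neq0\quad\text{for }\lambda\notin\{0,\pm2\}.
\]
This is exactly the setting of Theorem~\ref{thweakcms}. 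The metric $g_t=\pi^*g_N+\eta_t\otimes\eta_t$ with $\eta_t=dz-(2-ct)x\,dy$ is a variation of the Sasakian case $\lambda=2$ satisfying \eqref{Bteta} with basic, $t$-independent $\alpha=c\,x\,dy$. So your argument cannot be completed without the extra hypothesis $\tilde Q_t\vert_{\mH(t)}=0$, and under that hypothesis the claim is trivial.
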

\begin{proof}
We have
\begin{eqnarray*}
&& g_t( ( \nabla^t_X {\tilde Q}_t ) Y , U) \nonumber\\
&& = g_t( \nabla^t_X {\tilde Q}_t Y , U) - g_t(  {\tilde Q}_t \nabla^t_X Y , U) \nonumber\\
&& = g_t( \nabla^t_X Q_t Y , U) - g_t( \nabla^t_X Y , U) - g_t(  Q_t \nabla^t_X Y , U) + g_t(  \nabla^t_X Y , U)  \nonumber\\
&& =  -g_t( \nabla^t_X \phi_t^2 Y , U) + g_t(\nabla^t_X (  g_t(U,Y)U  ) , U  )  \nonumber\\
&& + g_t(  \phi^2 \nabla^t_X Y , U) - g_t( U, \nabla^t_X Y  )g_t(U,U) \nonumber\\
&&= -g_t( \nabla^t_X \phi_t^2 Y , U)  + g_t(  \phi^2 \nabla^t_X Y , U) + g_t( \nabla^t_X U,  Y  ) \nonumber \\
&&= -g_t( ( \nabla^t_X \phi_t^2) Y , U ) - g_t(\phi_t X, Y),
\end{eqnarray*}
where we used \eqref{definitionphit}. 
From \eqref{iotaReeb} and \eqref{detacms} follows $\phi_t U=0$ and hence $\phi_t Z = \phi_t P_{\mH}^t Z$ for all $Z \in \mathfrak{X}_M$.
We can assume that at a point $x \in M$ for $X,Y \in \mH(t)_x$ we have $P_{\mH}^t \nabla^t_X Y =0$, then at $x$ we obtain
\begin{eqnarray*}
&& g_t( ( \nabla^t_X {\tilde Q}_t ) Y , U) \nonumber\\
&&= -g_t( \nabla^t_X \phi^2 Y , U ) + g_t(  \phi^2 P_{\mH}^t \nabla^t_X Y , U) - g_t(\phi_t X, Y) \nonumber\\
&&= g_t(\phi_t^2 Y , \nabla^t_X U ) - g_t(\phi_t X, Y) \nonumber\\
&&= -g_t(\phi_t^2 Y , \phi_t X) - g_t(\phi_t X, Y) \nonumber\\
&&= -g_t(\phi_t Y , X) -  g_t(\phi_t X, Y) =0. 
\end{eqnarray*}
where we used \eqref{phitisomH} and $g_t( \phi_t X,Y) = -g_t( \phi_t Y ,X)$, which follows from \eqref{detacms}.
\end{proof}

\begin{theorem} \label{thweakcms}
Let $\{ g_t, t \in (-\epsilon, \epsilon) \} \subset \Riem(M, \mV, g_0)$ be a 
variation such that $\pi : (M, g_t) \rightarrow (N, g_N)$ is a Riemannian submersion for all $t \in (-\epsilon, \epsilon)$ with geodesic fibers spanned by a unit vertical field $U$, which is Killing on $(M, g_0)$, such that $\mA^t_X U \neq 0$ for all $0 \neq X \in\mathfrak{X}_{\mH(t)}$.
Let ${\rm d} \theta_0$ be the only $2$-form on $N$ such that ${\rm d} \eta_0 = \pi^* ( {\rm d} \theta_0)$. Suppose that \eqref{Bteta} holds with basic $\alpha_t = \alpha$ independent of $t$. 

Then $(\pi,g_t,U)$ defines a weak contact metric structure on $M$ for all $t \in (-\epsilon, \epsilon)$. Moreover, 
\eqref{vrcondition1} holds for all $t \in (-\epsilon, \epsilon)$ if and only if for all $X,Y,Z \in \mathfrak{X}_N$ and every local orthonormal frame $\{ W_{n+1}, \ldots, W_{n+p} \}$ on $N$ we have
\begin{eqnarray}
\label{weakcms0}
&&\sum\nolimits_{i=n+1}^{n+p} \big(g_0( \mA^0_{ \pi^* W_i } Y , U ) g_0( (\nabla^0_{\pi^* Z} \mA)_{ \pi^* X } \pi^* W_i , U ) \nonumber\\
&& + g_0( \mA^0_{ \pi^* W_i } X , U ) g_0( (\nabla^0_{\pi^* Z} \mA)_{ \pi^* Y } \pi^* W_i , U ) \big)=0 , \\
\label{weakcms1}
&& \sum\nolimits_{i=n+1}^{n+p} (
g_0( (\nabla^0_{\pi^* Z} \mA)_{\pi^* W_i }  \pi^* Y , U ) {\rm d}\alpha(X, W_i) +g_0( (\nabla^0_{\pi^* Z} \mA)_{\pi^* W_i }  \pi^* X , U ) {\rm d}\alpha(Y, W_i)\nonumber\\
&&
+ ( \langle \iota_{Y} (\nabla^N_{Z} {\rm d} \alpha ) , \iota_X {\rm d} \theta_0 \rangle_N + \langle \iota_{X} (\nabla^N_{Z} {\rm d} \alpha ) , \iota_Y {\rm d} \theta_0 \rangle_N ) =0 , \\
\label{weakcms2}
&& \langle \iota_Y (\nabla^N_{Z} {\rm d}\alpha) , \iota_X {\rm d} \alpha \rangle_N + \langle \iota_X (\nabla^N_{Z} {\rm d}\alpha) , \iota_Y {\rm d} \alpha \rangle_N =0.
\end{eqnarray} 

Moreover, \eqref{vrcondition2} holds for all $t \in (-\epsilon, \epsilon)$ if and only if for all $X,Y,Z \in \mathfrak{X}_N$ and every local orthonormal frame $\{ W_{n+1}, \ldots, W_{n+p} \}$ on $N$ we have
\begin{eqnarray} \label{Rweakcmst0}
&& \sum\nolimits_{l=n+1}^{n+p}  g_0(\mA^0_{  \pi^* X } U , \mA^0_{ \pi^* W_l } U ) g_0( (\nabla^0_{ \pi^* Z  } \mA^0)_{  \pi^* W_l } \pi^* Y, U) \nonumber\\
&& - g_0((\nabla^0_{ \pi^* Z  } \mA^0)_{ \pi^* X} \pi^*Y , U)  =0 , \\
\label{Rweakcmst1}
&& -\sum\nolimits_{l=n+1}^{n+p}  g_0(\mA^0_{  \pi^* X } U , \mA^0_{ \pi^* W_l } U ) \cdot (\nabla^N_{ Z } {\rm d} \alpha )( W_l , Y ) \nonumber\\
&& + \frac{1}{2} \sum\nolimits_{l=n+1}^{n+p} (  
\langle \iota_{W_l} {\rm d} \theta_0 , \iota_{X}  {\rm d} \alpha \rangle_N + 
\langle \iota_{X} {\rm d} \theta_0 , \iota_{W_l}  {\rm d} \alpha \rangle_N )
\cdot  g_0( (\nabla^0_{ \pi^*Z  } \mA^0)_{  \pi^* W_l } \pi^*Y, U) \nonumber\\
&& + (\nabla^N_{ Z } {\rm d} \alpha )( X , Y )
 =0 , \\
\label{Rweakcmst2}
&& \sum\nolimits_{l=n+1}^{n+p} \langle \iota_{W_l} {\rm d} \alpha , \iota_{X} {\rm d} \alpha \rangle_N g_0( (\nabla^0_{ \pi^*Z  } \mA^0)_{  \pi^* W_l } \pi^*Y, U) \nonumber\\
&& -\frac{1}{2} \sum\nolimits_{l=n+1}^{n+p} ( \langle \iota_{W_l} {\rm d} \theta_0 , \iota_{X}  {\rm d} \alpha \rangle_N  + \langle \iota_{X} {\rm d} \theta_0 , \iota_{W_l}  {\rm d} \alpha \rangle_N )
(\nabla^N_{ Z } {\rm d} \alpha )( W_l , Y ) =0, \\
\label{Rweakcmst3}
&& \sum\nolimits_{l=n+1}^{n+p} \langle \iota_{W_l} {\rm d} \alpha , \iota_{X} {\rm d} \alpha \rangle_N (\nabla^N_{ Z } {\rm d} \alpha )( W_l , Y )=0.
\end{eqnarray}
\end{theorem}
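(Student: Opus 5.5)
The plan has three parts. First, I verify that a weak contact metric structure exists for every $t$. Then I reduce the two conditions \eqref{vrcondition1} and \eqref{vrcondition2} to coefficients of a polynomial in $t$, using Theorem \ref{thconstVidtsec} together with Theorem \ref{thS1bundleetaform}.

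\textbf{Existence of the structure.} Set $\eta_t = g_t(U,\cdot)$ and $\phi_t = -\mA^t_{\cdot} U$. By Theorem \ref{thKillingremains}, $U$ stays Killing, since $V_i = (\alpha(W_i)\circ\pi)U$ and $\alpha$ is basic, so $[V_i,U]=0$. The fibers stay geodesic by Theorem \ref{corbsharpglobal}. Hence \eqref{UReeb} and \eqref{iotaReeb} hold, and \eqref{detacms} follows as in Remark \ref{remRScms}. Next I define $Q_t$ on $\mH(t)$ by
\[
g_t(Q_t X,Y)=g_t(\mA^t_X U,\mA^t_Y U),
\]
and set $Q_tU=U$. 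This is positive definite exactly by the hypothesis $\mA^t_XU\neq0$. Condition \eqref{weakphitonH} then holds by definition. For \eqref{weakphitQ}, skew-symmetry of $\phi_t$ on $\mH(t)$ gives $\phi_t^2 = -\phi_t^{T}\phi_t = -Q_t$ there, and both sides vanish after applying $\eta_t$. The condition is therefore automatic.

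\textbf{Polynomial structure.} By Theorem \ref{thconstVidtsec}, with $V_i$ independent of $t$ and $[V_i,V_j]=0$ because the fibers are one-dimensional and $\alpha$ is basic, the functions $g_t(\mA^t_{P^t_\mH\pi^*W_i}U,\mA^t_{P^t_\mH\pi^*W_l}U)$ are polynomials of degree $\le 2$ in $t$. Using \eqref{dtAXYeta}, $g_t(\mA^t W_i W_j,U)=-{\rm d}\theta_0(W_i,W_j)-t\,{\rm d}\alpha(W_i,W_j)$ is affine in $t$. Consequently the matrix of $Q_t$ in the frame $P^t_\mH\pi^*W_i$ is
\[
\langle\iota_{W_i}{\rm d}\theta_0,\iota_{W_l}{\rm d}\theta_0\rangle_N + t(\cdots) + t^2\langle\iota_{W_i}{\rm d}\alpha,\iota_{W_l}{\rm d}\alpha\rangle_N
\]
up to normalising constants.

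\textbf{Reducing \eqref{vrcondition1}.} By the preceding proposition and Proposition \ref{propAUprojectable}, only the horizontal components with $X,Y,Z$ horizontal basic remain. So \eqref{vrcondition1} is equivalent to
\[
g_t((\nabla^t_{Z}Q_t)X,Y)=0.
\]
I expand this with $\nabla^t_Z$ of the $t$-quadratic coefficients and the Christoffel terms, which project to $\nabla^N$ by \eqref{nablaXYZprojects}. I then use \eqref{dtnablaAXYZ}, giving $\dt g_t((\nabla_Z\mA)_XY,U)=-(\nabla^N_Z{\rm d}\alpha)(X,Y)$, and \eqref{nablaNAXYZU}. The outcome is a polynomial in $t$ whose $t^0$, $t^1$ and $t^2$ coefficients are \eqref{weakcms0}, \eqref{weakcms1} and \eqref{weakcms2}. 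A polynomial vanishes on an interval iff every coefficient vanishes, which gives both directions.

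\textbf{Reducing \eqref{vrcondition2}.} For horizontal $X,Y,Z$, the $U$-component of $R_t(\tilde Q_tX,Y)Z$ is given by O'Neill's formula \cite{O'Neill} for geodesic fibers,
\[
g_t(R(X',Y)Z,U)=g_t((\nabla_Z\mA)_{X'}Y,U),
\]
up to sign and symmetrisation. Substituting $\tilde Q_tX = Q_tX - X$ with the expansion of $Q_t$ and the affine expression of $g_t((\nabla_Z\mA)_{W_l}Y,U)$ in $t$ gives a cubic in $t$. Its four coefficients are \eqref{Rweakcmst0}--\eqref{Rweakcmst3}. The $t^1$ cross term of $Q_t$ is $-\tfrac12(\langle\iota_{W_l}{\rm d}\theta_0,\iota_X{\rm d}\alpha\rangle+\langle\iota_X{\rm d}\theta_0,\iota_{W_l}{\rm d}\alpha\rangle)$, which accounts for the mixed terms in \eqref{Rweakcmst1}--\eqref{Rweakcmst2}.

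\textbf{Main obstacle.} The hard step is the bookkeeping in the two reductions, getting the signs and factors of $2$ right in the expansion of $Q_t$. The point that needs care is showing that the derivatives of the horizontal frames, $\dt P^t_\mH\pi^*W_i=-V_i$, contribute nothing after projection. This is where Lemma \ref{lemnablaXY} and \eqref{TondeurnablaUAXYU} are used: they ensure the $U$-derivative terms vanish.
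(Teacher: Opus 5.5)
Your overall route matches the paper's. You expand $g_t(Q_t\cdot,\cdot)$ as a quadratic in $t$ and $g_t((\nabla^t_Z\mA^t)_XY,U)$ as an affine function of $t$ via \eqref{dtAXYeta}, \eqref{dtnablaAXYZ} and \eqref{nablaNAXYZU}, compare coefficients, and use O'Neill's identity for \eqref{vrcondition2}. Your existence argument is fine.

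The genuine gap is the step ``by the preceding proposition, only the horizontal components remain''. That proposition is proved using \eqref{phitisomH}, i.e.\ $Q_t=\mathrm{Id}$ on $\mH(t)$, which is exactly what a weak structure does not have. Redo it with \eqref{weakphitonH}. Here $\tilde Q_t$ is $g_t$-symmetric, preserves $\mH(t)$, annihilates $U$, and $\nabla^t_XU=\mA^t_XU=-\phi_tX$ for horizontal $X$. So for $X,Y\in\mathfrak{X}_{\mH(t)}$
\[
g_t\big((\nabla^t_X\tilde Q_t)Y,U\big)=-g_t(\tilde Q_tY,\nabla^t_XU)=g_t(\tilde Q_tY,\phi_tX).
\]
Since $\phi_t$ maps $\mH(t)$ onto itself (by the hypothesis $\mA^t_XU\neq0$), the vertical part of \eqref{vrcondition1} alone forces $\tilde Q_t=0$ on $\mH(t)$. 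If this holds for all $t$, then $(\pi,g_0,U)$ is $K$-contact and ${\rm d}\alpha=0$, because the $t^2$-coefficient of $g_t(Q_tP^t_{\mH}\pi^*W_i,P^t_{\mH}\pi^*W_i)$ is $\|\iota_{W_i}{\rm d}\alpha\|_N^2$. That is much stronger than \eqref{weakcms0}--\eqref{weakcms2}.

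Concretely, take the Heisenberg group over $\mathbb{R}^2$ with orthonormal left-invariant $X_1,X_2,U$, $[X_1,X_2]=cU$, $c\notin\{0,\pm2\}$, and $\alpha=0$. Then $Q=\frac{c^2}{4}$ on $\mH$, and $g((\nabla_Z\mA)_XY,U)=0$ for horizontal left-invariant fields, so \eqref{weakcms0}--\eqref{weakcms2} hold. Yet $(\nabla_{X_1}\tilde Q)X_2=\frac{c}{2}\big(\frac{c^2}{4}-1\big)U\neq0$.

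The paper's proof makes the same reduction: it only computes $g_t((\nabla^t_E\tilde Q_t)X,Y)$ with $Y$ horizontal. So following the paper does not close the gap. The \eqref{vrcondition2} part, which only involves the $U$-component, is unaffected.

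A smaller point: the $t^1$ coefficient of $Q_t$ you quote, $-\frac12(\cdots)$, contradicts your own affine formula $g_t(\mA^t_{W_i}W_j,U)=-{\rm d}\theta_0(W_i,W_j)-t\,{\rm d}\alpha(W_i,W_j)$. That formula gives $+\big(\langle\iota_{W_i}{\rm d}\theta_0,\iota_{W_l}{\rm d}\alpha\rangle_N+\langle\iota_{W_l}{\rm d}\theta_0,\iota_{W_i}{\rm d}\alpha\rangle_N\big)$. Derive the mixed terms of \eqref{Rweakcmst1}--\eqref{Rweakcmst2} from this rather than matching the target. The paper's \eqref{gtphit2weakcms} seems to drop the factor $2$ coming from $P^t_{\mV}[X,Y]=2\mA^t_XY$.
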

\begin{proof}
From $\mA^t_X U \neq 0$ for all $0 \neq X \in\mathfrak{X}_{\mH(t)}$ follows that $g_t( \mA^t_X U, \mA^t_X U ) = g_t(\phi_t X, \phi_t Y) >0$, hence 
by \eqref{weakphitQ} we obtain that $Q_t$ is positive definite on $\ker \eta_t$ and $(\pi,g_t,U)$ defines a weak contact metric structure on $M$. 

Let $X,Y \in \mathfrak{X}_{\mH(t)}$ and let $E \in \mathfrak{X}_M$. We have
\begin{eqnarray*}
&& g_t( (\nabla^t_{E} {\tilde Q}_t ) X , Y ) =
g_t( \nabla^t_{E} {\tilde Q}_t X , Y ) - g_t( {\tilde Q}_t \nabla^t_{E} X , Y ) \nonumber\\
&&= g_t( \nabla^t_{E} {Q_t} X , Y ) - g_t( \nabla^t_{E} X , Y ) - g_t( {Q}_t \nabla^t_{E} X , Y ) + g_t( \nabla^t_{E} X , Y ) \nonumber\\
&&= -g_t( \nabla^t_{E} \phi_t^2 X , Y ) + g_t( \nabla^t_{E} (g_t(X, U)U) , Y ) \nonumber\\
&&\quad  + g_t( \phi_t^2 \nabla^t_{E} X , Y ) - g_t( g_t(U, \nabla^t_{E} X)  U,Y ) \nonumber\\
&&=-g_t( \nabla^t_E  \phi_t^2 X , Y ) + g_t( \phi_t^2 \nabla^t_{E} X , Y ).
\end{eqnarray*}
Using \eqref{definitionphit} we obtain $\phi^2_t X = \mA_{ \mA_{X} U } U$.
Assuming $P_{\mH}^t \nabla^t_E X =0$ at a point, where the formulas are computed, we obtain
\begin{eqnarray} \label{nablaQXYZ}
&& g_t( (\nabla^t_{E} {\tilde Q}_t ) X , Y ) =
-g_t( \nabla^t_E  \phi_t^2 X , Y ) + g_t( \phi_t^2 \nabla^t_{E} X , Y ) =-g_t( \nabla^t_E  \mA_{ \mA_{X} U } U , Y )  \nonumber\\
&&= -g_t( (\nabla^t_E \mA)_{\mA_{X} U} U , Y ) -g_t( \mA_{ \nabla^t_E \mA_{X} U} U , Y ) -g_t(\mA_{ \mA_{X} U} \nabla^t_E U , Y ) \nonumber\\
&&= -g_t( (\nabla^t_E \mA)_{\mA_{X} U} U , Y ) -g_t( \mA_{ (\nabla^t_E \mA)_{X} U} U , Y ) -g_t(\mA_{ \mA_{ \nabla^t_E X} U} U , Y) \nonumber\\
&&\quad 
-g_t(\mA_{ \mA_{ X} \nabla^t_E U} U , Y)
 -g_t(\mA_{ \mA_{X} U} \nabla^t_E U , Y ) \nonumber\\
&&= -g_t( (\nabla^t_E \mA)_{\mA_{X} U} U , Y ) -g_t( \mA_{ (\nabla^t_E \mA)_{X} U} U , Y ) ,
%
%
\end{eqnarray}
because by $\nabla^t_E U = P_{\mH}^t \nabla^t_E U$, $\mA_{E} = \mA_{P_{\mH}^t E}$,  $\mA_{E} : \mV^t \rightarrow \mH^t$  and $\mA_{E} : \mH^t \rightarrow \mV^t$ for all $E \in \mathfrak{X}_{M}$ we have
\begin{eqnarray*}
&&g_t(\mA_{ \mA_{ \nabla^t_E X} U} U , Y) = g_t(\mA_{ \mA_{ P_{\mH}^t \nabla^t_E X} U} U , Y) =0, \\
&&g_t(\mA_{ \mA_{ X} \nabla^t_E U} U , Y)=g_t(\mA_{ \mA_{ X} P_{\mV}^t \nabla^t_E U} U , Y)=0,\\
&& g_t(\mA_{ \mA_{X} U} \nabla^t_E U , Y ) = g_t(\mA_{ \mA_{X} U} P_{\mV}^t \nabla^t_E U , Y )=0 .
\end{eqnarray*}
If $E=U$, then from $(\nabla^t_{E_1} \mA^t)_{E_2}$ being alternating for all $E_1,E_2 \in \mathfrak{X}_M$ \cite{Tondeur} and \eqref{TondeurnablaUAXYU} follows 
$g_t( (\nabla^t_U \mA)_{\mA_{X} U} U , Y )=-g_t( (\nabla^t_U \mA)_{\mA_{X} U} Y, U ) = 0$ and $P_{\mH}^t (\nabla^t_U \mA)_{X} U=0$, hence
\begin{eqnarray} \label{nablaQXYU}
&& g_t( (\nabla^t_{U} {\tilde Q}_t ) X , Y ) =
-g_t( (\nabla^t_U \mA)_{\mA_{X} U} U , Y ) -g_t( \mA_{ (\nabla^t_U \mA)_{X} U} U , Y ) \nonumber\\
&& = - g_t( \mA_{ P_{\mH}^t (\nabla^t_U \mA)_{X} U} U , Y ) =0.
\end{eqnarray}

Let $Z \in \mathfrak{X}_{\mH(t)}$. 
Let $\{ W_{n+1}, \ldots, W_{n+p} \}$ be a local orthonormal frame on $(N,g_N)$. From \eqref{nablaQXYZ} and \cite[Lemma 5.34(i)]{Tondeur} we obtain
\begin{eqnarray*}
&& g_t( (\nabla^t_{Z} {\tilde Q}_t ) X , Y ) =  -\sum\nolimits_{i=n+1}^{n+p} \big( g_t( (\nabla^t_Z \mA)_{P_{\mH}^t \pi^* W_i } U , Y )g_t(\mA_{X} U , P_{\mH}^t \pi^* W_i) \nonumber\\
&&\quad + g_t( \mA_{P_{\mH}^t \pi^* W_i} U , Y ) g_t( (\nabla^t_Z \mA)_{X} U , P_{\mH}^t \pi^* W_i) \big) \nonumber\\
&&= - \sum\nolimits_{i=n+1}^{n+p} \big( g_t( (\nabla^t_Z \mA)_{P_{\mH}^t \pi^* W_i } Y , U ) g_t(\mA_{X} P_{\mH}^t \pi^* W_i , U) \nonumber\\
&&\quad
 +  g_t( (\nabla^t_Z \mA)_{X}  P_{\mH}^t \pi^* W_i, U) g_t( \mA_{Y} P_{\mH}^t \pi^* W_i , U ) \big) \nonumber\\
&&=- \sum\nolimits_{i=n+1}^{n+p} \big( g_t( (\nabla^t_Z \mA)_{P_{\mH}^t \pi^* W_i } Y , U ) g_t(\mA_{X} P_{\mH}^t \pi^* W_i , U) \nonumber\\
&&\quad
 -  g_t( (\nabla^t_Z \mA)_{P_{\mH}^t \pi^* W_i}X, U) g_t( \mA_{Y} P_{\mH}^t \pi^* W_i , U ) \big) .
\end{eqnarray*}
Using \eqref{dtAXYeta}, \eqref{dtnablaAXYZ} and \eqref{nablaNAXYZU} we obtain for all $X,Y,Z \in \mathfrak{X}_N$
\begin{eqnarray} \label{dtQZXYfinal}
&& g_t( (\nabla^t_{P_{\mH}^t \pi^* Z} {\tilde Q}_t ) P_{\mH}^t \pi^* X ,P_{\mH}^t \pi^* Y ) \nonumber\\
&&=  
-\sum\nolimits_{i=n+1}^{n+p} \big( 
( g_0( (\nabla^0_{\pi^* Z} \mA)_{\pi^* W_i }  \pi^* Y , U ) - t \cdot (\nabla^N_{Z} {\rm d} \alpha )(W_i, Y) ) \nonumber\\
&&\quad \cdot ( g_0(\mA_{ \pi^* X} \pi^* W_i , U) - t \cdot {\rm d}\alpha(X, W_i)  ) \nonumber\\
&&\quad + ( g_0( (\nabla^0_{\pi^* Z} \mA)_{\pi^* W_i }  \pi^* X , U ) - t \cdot (\nabla^N_{Z} {\rm d} \alpha )(W_i, X) ) \nonumber\\
&&\quad \cdot ( g_0(\mA_{ \pi^* Y} \pi^* W_i , U) - t \cdot {\rm d}\alpha(Y, W_i)  ) \big) .
\end{eqnarray}
From \eqref{UReeb} and \eqref{iotaReeb} it follows that $\eta_0$ is a basic form, hence there exists a unique $1$-form $\theta_0$ such that ${\rm d} \eta_0 = \pi^* ( {\rm d} \theta_0)$. Using
\[
g_0(\mA_{ \pi^* X} \pi^* W_i , U) = -{\rm d} \eta_0( \pi^* X ,  \pi^* W_i ) = -{\rm d} \theta_0( X , W_i )
\]
in \eqref{dtQZXYfinal} and comparing terms of a given order in $t$, we obtain \eqref{weakcms0}-\eqref{weakcms2}.

We have 
\[
{\tilde Q}_t P_{\mH}^t \pi^*X =  -\phi_t^2  P_{\mH}^t \pi^*X + g_t(U,  P_{\mH}^t \pi^*X)U - P_{\mH}^t \pi^*X = -\phi_t^2  P_{\mH}^t \pi^*X - P_{\mH}^t \pi^*X.
\]
From \cite[5.37e]{Tondeur}, we obtain for $X,Y,Z \in \mathfrak{X}_N$:
\begin{eqnarray} \label{Rtweakcms1}
&& g_t( R_t( {\tilde Q}_t P_{\mH}^t \pi^*X  , P_{\mH}^t \pi^*Y )  P_{\mH}^t \pi^*Z , U ) =
g_t((\nabla^t_{ P_{\mH}^t \pi^*Z  } \mA^t)_{ {\tilde Q}_t P_{\mH}^t \pi^*X} P_{\mH}^t \pi^*Y, U) \nonumber\\
&&= - g_t((\nabla^t_{ P_{\mH}^t \pi^*Z  } \mA^t)_{ \phi_t^2 P_{\mH}^t \pi^*X} P_{\mH}^t \pi^*Y, U) 
- g_t((\nabla^t_{ P_{\mH}^t \pi^*Z  } \mA^t)_{ P_{\mH}^t \pi^*X} P_{\mH}^t \pi^*Y, U)  \nonumber\\
&&= - \sum\nolimits_{l=n+1}^{n+p} g_t(\phi_t^2 P_{\mH}^t \pi^*X ,  P_{\mH}^t \pi^* W_l ) g_t((\nabla^t_{ P_{\mH}^t \pi^*Z  } \mA^t)_{ P_{\mH}^t \pi^* W_l } P_{\mH}^t \pi^*Y, U) \nonumber\\
&&\quad 
- g_t((\nabla^t_{ P_{\mH}^t \pi^*Z  } \mA^t)_{ P_{\mH}^t \pi^*X} P_{\mH}^t \pi^*Y, U) . 
\end{eqnarray}
From \eqref{dtAWiU1WjU2}, \eqref{d2tAWiU1AWlU2} and \eqref{d3tAWiU1AWlU2} it follows that
\begin{eqnarray} \label{gtphit2weakcms}
&& -g_t(\phi_t^2 P_{\mH}^t \pi^* W_i ,  P_{\mH}^t \pi^* W_l ) =
 g_t(\phi_t P_{\mH}^t \pi^* W_i , \phi_t P_{\mH}^t \pi^* W_l ) \nonumber\\
&&=  g_t(\mA^t_{ P_{\mH}^t \pi^* W_i } U , \mA^t_{ P_{\mH}^t \pi^* W_l } U ) \nonumber\\
&&= g_0(\mA^0_{  \pi^* W_i } U , \mA^0_{ \pi^* W_l } U ) - \frac{1}{2}
t \cdot \sum\nolimits_{j=n+1}^{n+p} \big( ( g_0( \mA^0_{  \pi^*W_l  } \pi^*W_j , U  ) {\rm d} \alpha (W_i, W_j) ) \nonumber\\
&&\quad
+  ( g_0( \mA^0_{  \pi^*W_i  } \pi^*W_j , U  ) {\rm d} \alpha (W_l, W_j) )
\big) 
+ t^2 \cdot \sum\nolimits_{j=n+1}^{n+p} {\rm d} \alpha (W_l, W_j) \cdot {\rm d} \alpha (W_i, W_j) . \nonumber\\ 
\end{eqnarray}
Using \eqref{gtphit2weakcms}, \eqref{dtnablaAXYZ} and \eqref{nablaNAXYZU} in \eqref{Rtweakcms1}, we obtain
\begin{eqnarray*} 
&& g_t( R_t( {\tilde Q}_t P_{\mH}^t \pi^*W_i  , P_{\mH}^t \pi^*W_k )  P_{\mH}^t \pi^*W_r , U ) =
g_t((\nabla^t_{ P_{\mH}^t \pi^*W_r  } \mA^t)_{ {\tilde Q}_t P_{\mH}^t \pi^*W_i} P_{\mH}^t \pi^*W_k, U) \nonumber\\
&&= - \sum\nolimits_{l=n+1}^{n+p} g_t(\phi_t^2 P_{\mH}^t \pi^*W_i ,  P_{\mH}^t \pi^* W_l ) g_t((\nabla^t_{ P_{\mH}^t \pi^*W_r  } \mA^t)_{ P_{\mH}^t \pi^* W_l } P_{\mH}^t \pi^*W_k, U) \nonumber\\
&&\quad 
- g_t((\nabla^t_{ P_{\mH}^t \pi^*W_r  } \mA^t)_{ P_{\mH}^t \pi^*W_i} P_{\mH}^t \pi^*W_k, U)  \nonumber\\
&&= \sum\nolimits_{l=n+1}^{n+p} \big(  g_0(\mA^0_{  \pi^* W_i } U , \mA^0_{ \pi^* W_l } U ) - \frac{1}{2}
t \cdot \sum\nolimits_{j=n+1}^{n+p} \big( ( g_0( \mA^0_{  \pi^*W_l  } \pi^*W_j , U  ) {\rm d} \alpha (W_i, W_j) ) \nonumber\\
&&\quad
+  ( g_0( \mA^0_{  \pi^*W_i  } \pi^*W_j , U  ) {\rm d} \alpha (W_l, W_j) )
\big) 
+ t^2 \cdot \sum\nolimits_{j=n+1}^{n+p} {\rm d} \alpha (W_l, W_j) \cdot {\rm d} \alpha (W_i, W_j) \big) \nonumber\\
&&\quad \cdot ( g_0( (\nabla^0_{ \pi^*W_r  } \mA^0)_{  \pi^* W_l } \pi^*W_k, U) - t \cdot (\nabla^N_{ W_r } {\rm d} \alpha )( W_l , W_k ) ) \nonumber\\
&&\quad - g_0((\nabla^0_{ \pi^*W_r  } \mA^0)_{ \pi^*W_i} \pi^*W_k, U) + t \cdot (\nabla^N_{ W_r } {\rm d} \alpha )( W_i , W_k ) .
\end{eqnarray*}
Comparing terms of a given order in $t$, we obtain that 
\[ 
g_t( R_t( {\tilde Q}_t P_{\mH}^t \pi^*W_i  , P_{\mH}^t \pi^*W_k )  P_{\mH}^t \pi^*W_r , U )=0
\]
if and only if \eqref{Rweakcmst0}-\eqref{Rweakcmst3} hold.
\end{proof} 

\begin{corollary}
Let $\{ g_t, t \in (-\epsilon, \epsilon) \} \subset \Riem(M, \mV, g_0)$ be a 
variation such that $\pi : (M, g_t) \rightarrow (N, g_N)$ is a Riemannian submersion for all $t \in (-\epsilon, \epsilon)$ with geodesic fibers spanned by a unit vertical field $U$, which is Killing on $(M, g_0)$, such that $\mA^t_X U \neq 0$ for all $0 \neq X \in\mathfrak{X}_{\mH(t)}$. 
Let $(\phi_0, U, \eta_0, g_0)$ be a Sasaki structure and 
let ${\rm d} \theta_0$ be the only $2$-form on $N$ such that ${\rm d} \eta_0 = \pi^* ( {\rm d} \theta_0)$. Suppose that \eqref{Bteta} holds with basic $\alpha_t = \alpha$ independent of $t$. 

Then $(\pi,g_t,U)$ defines a weak contact metric structure on $M$, such that for all $t \in (-\epsilon, \epsilon)$ condition \eqref{vrcondition1} holds 
if and only if for all $X,Y,Z \in \mathfrak{X}_N$
\begin{eqnarray}
\label{weakcms1s}
&&
\langle \iota_{Y} (\nabla^N_{Z} {\rm d} \alpha ) , \iota_X {\rm d} \theta_0 \rangle_N + \langle \iota_{X} (\nabla^N_{Z} {\rm d} \alpha ) , \iota_Y {\rm d} \theta_0 \rangle_N =0 , \\
\label{weakcms2s}
&& \langle \iota_Y (\nabla^N_{Z} {\rm d}\alpha) , \iota_X {\rm d} \alpha \rangle_N + \langle \iota_X (\nabla^N_{Z} {\rm d}\alpha) , \iota_Y {\rm d} \alpha \rangle_N =0 ,
\end{eqnarray}
and \eqref{vrcondition2} holds if and only if for all $X,Y,Z \in \mathfrak{X}_N$ and every local orthonormal frame $\{ W_{n+1}, \ldots, W_{n+p} \}$ on $N$ we have
\begin{eqnarray}
\label{Rweakcmst2s}
&&\sum\nolimits_{l=n+1}^{n+p} ( \langle \iota_{W_l} {\rm d} \theta_0 , \iota_{X}  {\rm d} \alpha \rangle_N + \langle \iota_{X} {\rm d} \theta_0 , \iota_{W_l}  {\rm d} \alpha \rangle_N ) \nonumber\\
&&\quad \cdot
(\nabla^N_{ Z } {\rm d} \alpha )( W_l , Y ) =0, \\
\label{Rweakcmst3s}
&& \sum\nolimits_{l=n+1}^{n+p} \langle \iota_{W_l} {\rm d} \alpha , \iota_{X} {\rm d} \alpha \rangle_N \cdot (\nabla^N_{ Z } {\rm d} \alpha )( W_l , Y )=0.
\end{eqnarray}
\end{corollary}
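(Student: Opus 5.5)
This corollary specializes Theorem~\ref{thweakcms} to the case where $(\phi_0,U,\eta_0,g_0)$ is Sasaki. The plan is to show that the Sasaki condition kills every term involving $(\nabla^0_{\pi^*Z}\mA^0)$ applied to horizontal arguments and paired with $U$. Then \eqref{weakcms0}--\eqref{weakcms2} reduce to \eqref{weakcms1s}--\eqref{weakcms2s}, and \eqref{Rweakcmst0}--\eqref{Rweakcmst3} reduce to \eqref{Rweakcmst2s}--\eqref{Rweakcmst3s}. The first assertion, that $(\pi,g_t,U)$ defines a weak contact metric structure, is taken verbatim from Theorem~\ref{thweakcms}.

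The key step is the identity
\[
g_0\big((\nabla^0_{X}\mA^0)_{Y} Z, U\big)=0 \quad \text{for all } X,Y,Z\in\mathfrak{X}_{\mH(0)}.
\]
To obtain it, I take the proof of the preceding Proposition on $K$-contact and Sasaki structures. There, \eqref{Sasakiphi} for horizontal $X,Y$ was split into \eqref{SasakiXYU} and \eqref{SasakiXYZ}. The second term of \eqref{SasakiXYZ}, $g_t(\mA^t_Y\mA^t_XU,Z)$, vanishes for any Riemannian submersion, because $\mA^t_XU$ is horizontal and $\mA^t_YZ$ is vertical. Hence \eqref{SasakiXYZ} at $t=0$ states precisely that $g_0((\nabla^0_X\mA^0)_YU,Z)=0$ for all horizontal $X,Y,Z$. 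Since $(\nabla^0_X\mA^0)_Y$ is skew-adjoint (see \cite{Tondeur}), this equals $-g_0((\nabla^0_X\mA^0)_YZ,U)$, which gives the claimed identity. Applying it with $X=\pi^*Z$, horizontal lifts in the other slots, and $W_i$ in either slot, every factor of the form $g_0((\nabla^0_{\pi^*Z}\mA)_{\cdot}\,\cdot\,,U)$ in Theorem~\ref{thweakcms} vanishes.

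Substituting this into the conditions of Theorem~\ref{thweakcms} proceeds as follows. Equation \eqref{weakcms0} holds identically. In \eqref{weakcms1} the first two summands drop, leaving \eqref{weakcms1s}. Equation \eqref{weakcms2} is unchanged and gives \eqref{weakcms2s}. On the curvature side, \eqref{Rweakcmst0} holds identically. In \eqref{Rweakcmst2} the first sum drops, leaving \eqref{Rweakcmst2s} up to the factor $-\tfrac12$. Equation \eqref{Rweakcmst3} is \eqref{Rweakcmst3s}.

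The main obstacle is \eqref{Rweakcmst1}. After its middle sum is removed, it reads
\[
-\sum_l g_0(\mA^0_{\pi^*X}U,\mA^0_{\pi^*W_l}U)(\nabla^N_Z{\rm d}\alpha)(W_l,Y)+(\nabla^N_Z{\rm d}\alpha)(X,Y)=0.
\]
By \eqref{Aisometry}, which holds for $K$-contact and hence for Sasaki structures, we have $g_0(\mA^0_{\pi^*X}U,\mA^0_{\pi^*W_l}U)=g_N(X,W_l)$. The sum therefore collapses to $(\nabla^N_Z{\rm d}\alpha)(X,Y)$, and the condition becomes trivial.

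The remaining care is to confirm the order-by-order bookkeeping in $t$. The conditions in Theorem~\ref{thweakcms} were obtained by comparing coefficients of powers of $t$, and after the reduction each surviving coefficient is exactly one of the four stated equations. No extra constraint arises, so both equivalences follow.
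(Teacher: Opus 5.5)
Your proposal is correct and follows the paper's proof. The paper also uses the contact-metric identity $g_0(\mA^0_{\pi^*W_i}U,\mA^0_{\pi^*W_l}U)=\delta_{il}$ together with the Sasaki consequence $g_0((\nabla^0_{\pi^*Z}\mA^0)_{\pi^*X}\pi^*Y,U)=0$, obtained from \eqref{Sasakiphi} and \eqref{nablaphiandA} just as you do, and then substitutes both into Theorem~\ref{thweakcms}. The only difference is that you write out the term-by-term reduction, including the cancellation that makes \eqref{Rweakcmst1} trivial, which the paper leaves implicit.
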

\begin{proof}
If $(\phi_0, U, \eta_0, g_0)$ is a contact metric structure, then  $g_0(\mA^0_{  \pi^* W_i } U , \mA^0_{ \pi^* W_l } U ) = \delta_{il}$. If it is a Sasaki structure, then by \eqref{Sasakiphi} and \eqref{nablaphiandA} we also have 
\[
g_0((\nabla^0_{ \pi^* Z  } \mA^0)_{  \pi^*X} \pi^*Y, U)=0
\]
for all $X,Y,Z \in \mathfrak{X}_N$. The proof then follows from Theorem \ref{thweakcms}.
\end{proof}
We note that in particular \eqref{weakcms1s}-\eqref{Rweakcmst3s} hold for $\nabla^N {\rm d} \alpha=0$.


\section{Higher dimensional fibers} \label{sechighdim}

\subsection{Fat bundles} \label{subsecfatbundles}

We say that a Riemannian submersion $\pi: (M,g_t) \rightarrow (N,g_N)$ with totally geodesic fibers is fat if for all non-zero $X \in \mathfrak{X}_{\mH(t)}$, $U \in \mathfrak{X}_{\mV}$ we have $\sec_{(M,g_t)}(X,U)>0$, i.e., every vertizontal curvature is positive. Then $\pi : (M,g_t) \rightarrow (N,g_N)$ is called a fat bundle \cite{ZillerFatness}. Most of known examples of fat submersions have all vertizontal curvatures constant along the fibers \cite[Problem 1]{ZillerFatness}. We show how variations of metric preserving Riemannian submersion with totally geodesic fibers 
allow to 
produce fat bundles with vertizontal curvature non-constant along fibers.

\begin{proposition} \label{corpressecUXexboth}
Let $\dim N \geq 2$, let $\pi : (M, g_0) \rightarrow (N, g_N)$ be a Riemannian submersion with totally geodesic fibers. 
Let $y \in N$. If there exist $g_N$-orthonormal $X,Y \in T_y N$, $U \in \mathfrak{X}_{\mF_y}$ and a vector field $\xi \in \mathfrak{X}_{\mV}$ whose restriction to fiber $\mF_x$ is a Killing vector field on $(\mF_x , g_0 \vert_{\mF_x})$ for all $x \in N$, such that function
\[
g_0( [ \pi^* X,  \pi^* Y], U ) \cdot g_0( \xi ,  U )
\]
is not constant on the fiber $\mF_y$,
then there exists a metric $g_{s}$ such that $\pi : (M, g_s) \rightarrow (N, g_N)$ is a Riemannian submersion with totally geodesic fibers and sectional curvature $\sec_M( P_{\mH}^s \pi^* X , U)$ non-constant on the fiber $\mF_y$.
\end{proposition}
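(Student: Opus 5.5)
We construct an explicit variation with Theorem \ref{corbsharpglobal} and use the first-derivative formula \eqref{dtsecXURSGF} of Theorem \ref{thd2tsecXUgeq0}. Extend $X,Y$ to a local $g_N$-orthonormal frame $\{W_{n+1},\ldots,W_{n+p}\}$ near $y$, with $W_{n+1}=X$ and $W_{n+2}=Y$; this uses $\dim N\ge 2$. Choose a bump function $\varphi$ on $N$ with support in the frame domain and $\varphi\equiv 1$ near $y$. Set $V_{n+2}=(\varphi\circ\pi)\,\xi$ and $V_i=0$ for $i\neq n+2$, so only the pair $(V_{n+2},W_{n+2}=Y)$ is nonzero. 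Then $V_{n+2}$ is vertical, and its restriction to each fiber is Killing, because it is a constant multiple of $\xi$ there. Boundedness may need a cutoff, or we can work locally in a relatively compact neighbourhood of $\mF_y$. If $\mF_y$ is noncompact, we can scale $\xi$ down, since only the non-constancy matters. Theorem \ref{corbsharpglobal} then gives a variation $g_t$, independent of the frame, that preserves $\pi$ and keeps the fibers totally geodesic.

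Next, compute $\dt \sec_M(P_{\mH}^t\pi^*X,U)$ at $t=0$ along $\mF_y$ from \eqref{dtsecXURSGF} with $i=n+1$. Since $V_{n+1}=0$, the terms involving $V_i$ drop out. Near $y$ the remaining factor is
\[
\sum_k g_N([X,W_j],W_k)\,g_0(V_k,U)-g_0([\pi^*X,V_j],U).
\]
It vanishes for $j\neq n+2$. For $j=n+2$ it equals $g_N([X,Y],Y)\,g_0(\xi,U)-g_0([\pi^*X,\xi],U)$. So the derivative at $t=0$ equals
\[
\tfrac12\, g_0([\pi^*X,\pi^*Y],U)\Big(g_N([X,Y],Y)\,g_0(\xi,U)-g_0([\pi^*X,\xi],U)\Big).
\]
The hypothesis concerns the product $g_0([\pi^*X,\pi^*Y],U)\,g_0(\xi,U)$, so the extra term has to be handled. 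We can arrange $g_N([X,Y],Y)(y)=1$ by choosing the frame suitably, which is possible since only the values at $y$ of $X,Y$ are prescribed and the frame's first-order jet is free. We then still need to remove or control $g_0([\pi^*X,\xi],U)$. One option is to replace $\xi$ by a vertical field $\tilde\xi$ that agrees with $\xi$ on $\mF_y$ but is transported along horizontal lifts of $X$, e.g.\ obtained by pulling back along the flow of $\pi^*X$ so that $[\pi^*X,\tilde\xi]=0$ near $\mF_y$. That flow maps fibers to fibers isometrically when the fibers are totally geodesic, by Hermann's theorem, so $\tilde\xi$ is still fiberwise Killing. With this choice the derivative becomes $\tfrac12\, g_0([\pi^*X,\pi^*Y],U)\,g_0(\xi,U)$ up to the normalization above, and by hypothesis this is non-constant on $\mF_y$.

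Finally, $s\mapsto \sec_M(P_{\mH}^s\pi^*X,U)$ at two points $p,q\in\mF_y$ where the derivative differs is smooth. If the curvatures at $p$ and $q$ agreed for all small $s$, their derivatives at $0$ would coincide, which is a contradiction. Hence for some small $s$ the curvature is non-constant on $\mF_y$. Here $U$ is understood as a fixed vertical field along $\mF_y$, and for small $s$ the vector $P_{\mH}^s\pi^*X$ is the $g_s$-horizontal lift of $X$. The main obstacle is the normalization step, that is, eliminating $g_0([\pi^*X,\xi],U)$ and fixing the coefficient $g_N([X,Y],Y)$. If the flow-transport idea fails, the fallback is to keep that term and instead vary the choice of $\xi$ among fiberwise-Killing fields with the same restriction to $\mF_y$, which forces non-constancy.
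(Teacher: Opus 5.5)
Your route differs from the paper's, and one step fails as written. In \eqref{dtsecXURSGF} with $i=n+1$, the factor attached to $j$ is $\sum_k g_N([X,W_j],W_k)\,g_0(V_k,U)-g_0([\pi^*X,V_j],U)$. Since $V_{n+2}=(\varphi\circ\pi)\xi\neq 0$, the first sum equals $g_N([X,W_j],Y)\,g_0(\xi,U)$ for \emph{every} $j$, not only for $j=n+2$. So your claim that the factor vanishes for $j\neq n+2$ is false as soon as $\dim N\ge 3$. After your normalizations the derivative on $\mF_y$ is actually
\[
\tfrac12\, g_0(\xi,U)\, g_0([\pi^*X,\pi^*Z],U), \qquad Z=Y+\sum\nolimits_{j\ge n+3} g_N([X,W_j],Y)(y)\,W_j(y),
\]
and the hypothesis says nothing about its non-constancy when $Z\neq Y$.

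The repair stays inside your framework. Set $\Gamma_{ab}(E)=g_N(\nabla^N_E W_a,W_b)$. Then $g_N([X,W_j],Y)=\Gamma_{j,n+2}(X)-\Gamma_{n+1,n+2}(W_j)$ and $g_N([X,Y],Y)=-\Gamma_{n+1,n+2}(Y)$. The skew-symmetric connection forms of an orthonormal frame can be prescribed arbitrarily at $y$, so you can impose $g_N([X,Y],Y)(y)=1$ and $g_N([X,W_j],Y)(y)=0$ for $j\ge n+3$ simultaneously. For the transport step, take a hypersurface $S\ni y$ transversal to $X$ and flow $\xi$ restricted to $\pi^{-1}(S)$ along $\pi^*X$. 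This gives a genuine fiberwise Killing field $\tilde\xi$ on a neighbourhood of $\mF_y$ with $[\pi^*X,\tilde\xi]=0$. With both fixes, your final formula $\tfrac12 g_0([\pi^*X,\pi^*Y],U)\,g_0(\xi,U)$ is correct and the concluding argument goes through.

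The paper instead attaches $(f\circ\pi)\xi$ to $X$ itself and normalizes $[W_i,W_j](y)=0$, so no structure-function terms survive. The target product then appears with coefficient $Y(f)$, while the remaining term is proportional to $f(y)$ and independent of $Y(f)$. Choosing two values of $Y(f)$ then forces non-constancy, without eliminating anything.

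The same device simplifies your version. On $\mF_y$, neither $g_0([\pi^*X,\xi],U)$ nor the $j\ge n+3$ terms depend on $\Gamma_{n+1,n+2}(Y)$. That quantity enters only through the coefficient $g_N([X,Y],Y)(y)$ of the target product. So varying that single number already yields non-constancy, and both the flow transport and the extra normalization become unnecessary.

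Separately, rescaling $\xi$ by a constant cannot make an unbounded field bounded, so that remark does not work; the paper leaves boundedness unaddressed as well.
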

\begin{proof}
Let ${\cal O}$ be an open neighbourhood of $y$ on which there exist $g_N$-orthonormal vector fields $\{W_{n+1}, \ldots , W_{n+p}\}$ such that at the point $y$ we have: $W_{n+1} = X$, $W_{n+2} = Y$ and $[W_i, W_j]=0$ for all $i,j \in \{n+1, \ldots , n+p\}$.
Let $f \in C^\infty(N)$ be such that 
$Y(f) \neq 0$ and $W_{j}(f) = 0$ at $y$ for all $j \in \{n+1, n+3, \ldots, n+p\}$.

Let $V_{n+1} = (f \circ \pi) \cdot \xi$ and let $V_{n+2} = \ldots = V_{n+p} =0$ on ${\cal O}$.
Let $\{ {\tilde g}_t, t \in (-\epsilon, \epsilon) \} \subset \Riem(M, \mV, g_0)$ be a 
variation such that $\pi : (\pi^{-1}({\cal O}), {\tilde g}_t) \rightarrow ({\cal O}, g_N)$ is a Riemannian submersion for all $t \in (-\epsilon, \epsilon)$ and \eqref{bsharpinv} holds on $\pi^{-1}({\cal O})$, with 
$\{W_{n+1}, \ldots , W_{n+p}\}$ and $\{V_{n+1}, \ldots , V_{n+p} \}$ as above.
Let $\rho \in C^\infty(N)$ be a non-zero, compactly supported in ${\cal O}$ function, such that 
$\rho=1$ on some neighbourhood of $y$. From now on we will consider the variation $g_t$ with $\dt g_t = (\rho \circ \pi) \cdot \dt {\tilde g}_t$. Then $\pi : (M, g_t) \rightarrow (N, g_N)$ is a Riemannian submersion with totally geodesic fibers for all $t \in (-\epsilon, \epsilon)$, by Theorem \ref{corbsharpglobal}. 

We note that
\begin{eqnarray*}
g_0( [ \xi , P_{\mH}^0 \pi^* W_j  ] , U ) = (\mathcal{L}_{\xi} g_0)(P_{\mH}^0 \pi^* W_j , U) ,
\end{eqnarray*}
since $\xi( g_0 ( P_{\mH}^0 \pi^* W_j  , U ) )=0$ by $U$ being vertical and
$g_0([\xi, U] , P_{\mH}^0 \pi^* W_j )=0$ by the vertical distribution being integrable.

From \eqref{dtsecXURSGF} we obtain on $\mF_y$:
\begin{eqnarray} \label{dtsecXURSGFex}
&& \dt \sec_M (P_{\mH}^t \pi^* X , U) \vert_{t=0} \nonumber \\ &&= - \frac{1}{2} \sum\nolimits_{j=n+2}^{n+p} g_0( [P_{\mH}^0 \pi^* X , P_{\mH}^0 \pi^* W_j], U ) \cdot g_0( [V_{n+1} , P_{\mH}^0 \pi^* W_j  ] , U ) \nonumber \\
&&= - \frac{1}{2} \sum\nolimits_{j=n+2}^{n+p} g_0( [ \pi^* X,  \pi^* W_j], U ) \cdot g_0( [ (f \circ \pi) \cdot \xi ,  \pi^* W_j  ] , U ) \nonumber \\
&&= - \frac{1}{2} (f \circ \pi) \cdot \sum\nolimits_{j=n+2}^{n+p} g_0( [ \pi^* X,  \pi^* W_j], U ) \cdot g_0( [ \xi ,  \pi^* W_j  ] , U ) \nonumber \\
&& \quad + \frac{1}{2} \sum\nolimits_{j=n+2}^{n+p} g_0( [ \pi^* X,  \pi^* W_{j}], U ) \cdot g_0( \xi ,  U ) \cdot ( \pi^* W_{j} ) (f \circ \pi) \nonumber \\
&& = - \frac{1}{2} (f \circ \pi) \sum\nolimits_{j=n+2}^{n+p} g_0( [ \pi^* X,  \pi^* W_j], U ) \cdot (\mathcal{L}_{\xi} g_0)( \pi^* W_j , U) \nonumber \\
&& \quad + \frac{1}{2} g_0( [ \pi^* X,  \pi^* Y], U ) \cdot g_0( \xi ,  U ) \cdot Y (f ) .
\end{eqnarray}
With the assumptions about $\xi$, by adjusting $Y(f)$, we can make the right-hand side of
\eqref{dtsecXURSGFex} 
not constant on $\mF_y$.
Then there exists $s \in (-\epsilon, \epsilon)$ for which function $\sec_{(M,g_s)} (P_{\mH}^s \pi^* X, U )$ is not constant on $\mF_y$.
\end{proof}

\begin{theorem} \label{thpressecUXexboth}
Let $\dim N \geq 2$, let $\pi : (M, g_0) \rightarrow (N, g_N)$ be a Riemannian submersion with totally geodesic fibers. 
Let $y \in N$. If there exist $g_N$-orthonormal $X,Y \in T_y N$ and $U \in \mathfrak{X}_{\mF_y}$ such that either
\begin{itemize} 
\item[1$^\circ$] $U$ is a unit Killing field on $(\mF_y, g_0 \vert_{\mF_y})$ and $g_0( [ \pi^* X, \pi^* Y ] , U )$ is not constant on $\mF_y$, or
\item[2$^\circ$] $g_0( [ \pi^* X, \pi^* Y ] , U ) \neq 0$ is constant 
on the fiber $\mF_y$ and there exists a Killing vector field $\zeta$ on $(\mF_y, g_0 \vert_{\mF_y})$ such that 
$g_0(U, \zeta)$ is not constant on $\mF_y$,
\end{itemize} 
then there exists a metric $g_{s}$ such that $\pi : (M, g_s) \rightarrow (N, g_N)$ is a Riemannian submersion with totally geodesic fibers and sectional curvature $\sec_M( P_{\mH}^s \pi^* X , U)$ non-constant on the fiber $\mF_y$.
\end{theorem}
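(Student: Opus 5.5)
The plan is to reduce both cases to Proposition \ref{corpressecUXexboth}. That proposition needs a vertical field $\xi$, Killing on every fiber, such that $g_0([\pi^*X,\pi^*Y],U)\cdot g_0(\xi,U)$ is not constant on $\mF_y$. So in each case I only have to build a suitable $\xi$ on all of $M$, or on $\pi^{-1}(\mathcal O)$ for a neighbourhood $\mathcal O$ of $y$, since the proof of Proposition \ref{corpressecUXexboth} already localizes with the cutoff $\rho$.

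In case 1$^\circ$, I would take $\xi$ to be an extension of $U$. Then on $\mF_y$ we have $g_0(\xi,U)=g_0(U,U)=1$, and the product reduces to $g_0([\pi^*X,\pi^*Y],U)$, which is non-constant by hypothesis. In case 2$^\circ$, I would take $\xi$ to be an extension of $\zeta$. Then the product equals a nonzero constant times $g_0(U,\zeta)$, which is non-constant on $\mF_y$.

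The main obstacle is extending a Killing field given on the single fiber $\mF_y$ to a vertical field on $\pi^{-1}(\mathcal O)$ that is Killing on every nearby fiber. I would use the structure of Riemannian submersions with totally geodesic fibers. Over a small neighbourhood $\mathcal O$ of $y$, holonomy along horizontal lifts of radial geodesics from $y$ gives isometries between fibers; this is Hermann's theorem, valid for totally geodesic fibers. I would define $\xi$ on $\mF_x$ as the pushforward of the Killing field by the holonomy isometry $h_x:\mF_y\to\mF_x$. Pushforwards of Killing fields by isometries are Killing, and smooth dependence on $x$ comes from smooth dependence of the flows on parameters.

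If the fibers are not compact, completeness of the horizontal lifts is an issue, but I would handle it locally: the flows along radial curves in a small ball exist because the fibers are totally geodesic and the holonomy maps are global isometries. In a rigorous account one would restrict to a relatively compact piece of the fiber if needed. Boundedness of $\xi$, needed in Theorem \ref{corbsharpglobal}, is achieved after multiplying by the cutoff $\rho\circ\pi$; I would note that it holds whenever $U$, or $\zeta$, is bounded on $\mF_y$, and if necessary restrict attention to a compact part of the fiber. With $\xi$ in hand, Proposition \ref{corpressecUXexboth} gives the metric $g_s$ directly.
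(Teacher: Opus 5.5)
Your proposal is correct and follows the paper's proof. In each case you build $\xi$ by transporting $U$ (case $1^\circ$) or $\zeta$ (case $2^\circ$) to nearby fibers through the fiber isometries induced by horizontal lifts, multiply by $\rho\circ\pi$, and apply Proposition \ref{corpressecUXexboth}; the paper uses flows of horizontal lifts of vector fields on $N$ instead of radial holonomy, which makes no real difference.

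Your argument that the lifts exist over all of a non-compact fiber is circular: the holonomy maps being global isometries presupposes that the lifts exist. The paper's proof assumes the same fact without proof, citing Gromoll--Walschap, so on this point it is no more rigorous than yours.
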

\begin{proof}
If case $1^{\circ}$ holds, let $\xi_y = U$ on $\mF_y$ and if case $2^{\circ}$ holds, let $\xi_y = \zeta$ on $\mF_y$. 

Flows of horizontal lifts of vector fields from $N$ preserve the vertical distribution and, since fibers are totally geodesic, induce isometries between fibers \cite{GromollWalschap}. Using these flows, 
we extend $\xi_y$ to a vertical field $\xi_0$ on $\pi^{-1}({\cal O})$, for some open set ${\cal O} \subset N$, such that restriction of $\xi_0$ to a fiber $\mF_x$ is a Killing field on $(\mF_x, g_0 \vert_{\mF_x})$ for all $x \in \pi^{-1}({\cal O})$. For a function $\rho \in C^\infty(N)$ that has compact support in ${\cal O}$, we extend $(\rho \circ \pi) \cdot \xi_0$ by zero to $M \setminus \pi^{-1}({\cal O})$, obtaining a smooth vertical field $\xi$ on $M$, whose restriction to every fiber is a Killing field on that fiber.
Then
we apply Proposition \ref{corpressecUXexboth}.
\end{proof} 

\begin{example} \label{exampleHopf}
Each of the assumptions of Corollary \ref{thpressecUXexboth} can be satisfied 
for the 
Hopf fibration
$\pi : (S^7, g_0) \rightarrow (S^4, g_N)$, 
with $g_0$ being the round metric on $S^7$, and orthonormal vertical Killing vector fields $E_1, E_2, E_3$ on $(S^7, g_0)$. 

Let $X,Y \in T_y S^4$ be orthonormal, then the function $g(  [ \pi^* X, \pi^* Y ] , E_a )$ is not constant for any $a \in \{1,2,3\}$. Hence, $U=E_a$ satisfies assumption $1^\circ$ of Theorem \ref{thpressecUXexboth}.

On the other hand, vector field $P_{\mV}^t [ \pi^* X, \pi^* Y ]$ is a Killing field of constant norm on the fiber $(\mF_y, g_0 \vert_{\mF_y})$, and hence $U=P_{\mV}^t [ \pi^* X, \pi^* Y ]$ together with $\zeta = E_a$, where $a \in \{1,2,3\}$, satisfies assumption $2^\circ$ of Theorem \ref{thpressecUXexboth}. 

Clearly, similar choices can be made for all Hopf fibrations $\pi_m : (S^{4m+3}, g_0) \rightarrow (\mathbb{H}P^m, g_N)$, $m>1$, and hence we can deform round metric on every sphere $S^{4m+3}$ to obtain a fat bundle with non-constant vertizontal curvature.
We note that manifolds obtained this way are all 
not Sasakian.
\end{example}

\subsection{Homogeneous submersions} \label{subsechomogenous}

From Theorem \ref{thKillingremains} it follows that in general, for variations discussed in Example \ref{exampleHopf}, vector fields $E_a$ will not be Killing for $t \neq 0$. However, we can find also variations that keep all vertical fields $\{E_1,E_2,E_3\}$ Killing, while making some vertizontal curvatures non-constant. We will do it in a more general setting of isometric group action on a manifold.

Let $\mu : G \times M \rightarrow M$ be an isometric action of a compact Lie group $G$ on a Riemannian manifold $(M,g_0)$, whose orbits are principal (i.e., there exists a $G$-equivariant diffeomorphism between any two of them).
Let $N$ be the space of orbits of the action $\mu$ and let $\pi : (M, g_0) \rightarrow (N, g_N)$ be the induced Riemannian submersion. We recall some results from \cite{GromollWalschap} about such submersions.

For $X,Y \in \mathfrak{X}_N$ the vector field $P_{\mV}^0 [\pi^* X, \pi^* Y]$ is a Killing field on every fiber $(\mF_x,  g_0 \vert_{\mF_x} )$, that is \emph{left-invariant}, i.e., $j_{\alpha*} P_{\mV}^0 [\pi^* X, \pi^* Y] = P_{\mV}^0 [\pi^* X, \pi^* Y] \circ j_\alpha$ for all $\alpha \in G$, where $j_\alpha(x) = \alpha \cdot x \equiv \mu(\alpha,x)$. 

The action $\mu$ induces 
\emph{fundamental} vertical Killing fields on $(M,g_0)$, being images of \emph{right-invariant} vector fields on 
$G$. More precisely, for $U(e) \in T_e G$, let ${\tilde U}(x) = \iota_{x*} U(e)$, for all $x \in M$, where $\iota_x(\alpha) = \mu(\alpha,x)$ for all $\alpha \in G$. Then \cite[Proposition 2.3.3]{GromollWalschap}, such ${\tilde U}$ satisfies
\[
{\tilde U}( \mu(\alpha,x) ) = \iota_{x*} {\bar U}(\alpha)
\]
where ${\bar U}(e) = U(e)$ and ${\bar U}$ is a right-invariant field on $G$.

\begin{theorem} \label{thGaction}
Let $\mu : G \times M \rightarrow M$ be an isometric action on $(M,g_0)$ of a compact, connected Lie group $G$  with trivial center, whose orbits are principal and totally geodesic.
Let $N$ be the space of orbits of the action $\mu$ and let $\pi : (M, g_0) \rightarrow (N, g_N)$ be the induced Riemannian submersion, with totally geodesic fibers and 
non-integrable horizontal distribution.
Then there exists a one-parameter family of metrics $\{ g_t, t \in (-\epsilon, \epsilon) \} \subset \Riem(M, \mV, g_0)$ such that $\pi : (M, g_t) \rightarrow (N, g_N)$ is a Riemannian submersion with totally geodesic fibers for all $t \in (-\epsilon, \epsilon)$, and sectional curvature $\sec_{(M,g_s)}( P_{\mH}^s \pi^* X , U)$ non-constant on the fiber $\mF_y$ for some $s \in (-\epsilon,\epsilon)$, $X \in T_y N$ and $U \in \mathfrak{X}_{\mF_y}$. Moreover, $\mu$ is an isometric action of $G$ on $(M, g_t)$ for all $t \in (-\epsilon, \epsilon)$.
\end{theorem}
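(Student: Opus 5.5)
The plan is to build the variation from the left-invariant vertical fields $P_{\mV}^0[\pi^*X,\pi^*Y]$, not from fundamental fields. The construction mirrors Proposition \ref{corpressecUXexboth}, with one extra point to settle: that $\mu$ stays isometric.

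\textbf{Step 1: choice of $\xi$.} Since $\mH(0)$ is non-integrable, there are a point $y\in N$ and $g_N$-orthonormal $X,Y\in T_yN$ with $P_{\mV}^0[\pi^*X,\pi^*Y]\neq 0$ on $\mF_y$. I extend $X,Y$ to local orthonormal fields $W_{n+1},W_{n+2},\dots$ as in the proof of Proposition \ref{corpressecUXexboth}. I then set
\[
\xi=(\rho\circ\pi)\,P_{\mV}^0[\pi^*W_{n+1},\pi^*W_{n+2}]
\]
with a cutoff $\rho$. By the facts recalled from \cite{GromollWalschap}, $\xi$ restricts to a Killing field on every fiber and is left-invariant, i.e. $j_{\alpha*}\xi=\xi\circ j_\alpha$ for all $\alpha\in G$. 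I define the variation by Theorem \ref{corbsharpglobal} with $V_{n+1}=(f\circ\pi)\xi$, $V_j=0$ for $j\ge n+2$, and the cutoff as before. The fibers stay totally geodesic and $\pi$ stays a Riemannian submersion.

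\textbf{Step 2: $G$-invariance of $g_t$.} Each $j_\alpha$ is a $g_0$-isometry preserving fibers and covering the identity on $N$. Hence it maps $g_0$-basic lifts $\pi^*W_i$ to themselves. Because $V_i$ is left-invariant and $f\circ\pi$, $\rho\circ\pi$ are constant on orbits, the data $\{V_i,W_i\}$ are $j_\alpha$-invariant. So $j_\alpha^*g_t$ is also a variation in $\Riem(M,\mV,g_0)$ preserving $\pi$ and satisfying \eqref{bsharpinv} with the same data. By the uniqueness in Theorem \ref{corbsharpglobal}, $j_\alpha^*g_t=g_t$, so $\mu$ is $g_t$-isometric.

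The delicate point here is that \eqref{bsharpinv} contains $P_{\mH}^t$. I would therefore argue uniqueness at the level of the ODE system \eqref{bsharp1}--\eqref{bsharp3}: its coefficients $\lambda_{ai}$ transform equivariantly under the $j_\alpha$-pushed frame, exactly as in the frame-independence argument in that proof.

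\textbf{Step 3: non-constancy.} I apply \eqref{dtsecXURSGFex} with $U$ a fiber field to be chosen. The term $\tfrac12 g_0([\pi^*X,\pi^*Y],U)\,g_0(\xi,U)\,Y(f)$ becomes $\tfrac12 g_0(\xi,U)^2Y(f)$ on $\mF_y$. The other term carries the factor $f(y)$, which I may set to $0$ by choosing $f$ with $f(y)=0$ and $Y(f)\neq0$.

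I then need $U$ with $g_0(\xi,U)^2$ non-constant on $\mF_y$. I take $U$ to be a fundamental (right-invariant) Killing field $\tilde U$. Since $\xi$ is left-invariant and nonzero, $g_0(\xi,\tilde U)$ is non-constant on the orbit unless $\operatorname{Ad}$-type invariance forces it to be constant. Because $G$ has trivial center, no nonzero left-invariant field is also right-invariant, and I expect this to yield a $\tilde U$ for which the inner product varies. Then $\dt\sec$ at $t=0$ is non-constant, so $\sec_{(M,g_s)}(P_{\mH}^s\pi^*X,U)$ is non-constant for some small $s$.

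\textbf{Main obstacle.} The justification in Step 3 is the hardest part. In the orbit identification $\mF_y\cong G$, the function $g_0(\xi,\tilde U)$ at $\alpha$ reads $\langle \operatorname{Ad}_{\alpha^{-1}}Z,U\rangle$ for a bi-invariant-type inner product. This is constant for all $U$ only if $Z$ is $\operatorname{Ad}$-fixed, i.e. central, and that is excluded by the trivial center. If the orbit metric is not bi-invariant, the argument must be adapted using the $\operatorname{Ad}$-equivariance of the fiber metric. Step 2's equivariance is the second place requiring care.
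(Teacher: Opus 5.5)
Your argument is correct, and Steps 1 and 3 are the paper's own. Both use the same pairing of $\xi=P_{\mV}^0[\pi^*X,\pi^*Y]$ with a fundamental field $U$ inside the construction of Proposition \ref{corpressecUXexboth}. Your choice $f(y)=0$, $Y(f)\neq 0$ is just a cleaner form of the paper's ``adjusting $Y(f)$''.

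The real difference is Step 2. The paper argues infinitesimally. A fundamental field $\tilde U$ commutes with the left-invariant $\xi$, hence with every $V_i=(f\circ\pi)\xi$, so by Theorem \ref{thKillingremains} it stays $g_t$-Killing. The paper then uses that the flows $j_{\exp(sU)}$ generate the compact connected group $G$.

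You argue globally by equivariance and uniqueness, and this works. Work in the frame $\{(j_\alpha^{-1})_*E_a,(j_\alpha^{-1})_*\ee_i\}$. Since $j_{\alpha*}V_i=V_i\circ j_\alpha$ and $j_{\alpha*}\pi^*W_i=\pi^*W_i\circ j_\alpha$, the coefficients defined by \eqref{deflambdaai} satisfy $\lambda'_{ai}(x)=\lambda_{ai}(j_\alpha x)$. By frame-independence of the solution of \eqref{bsharp1}--\eqref{bsharp3}, you get $j_\alpha^*g_t=g_t$.

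Your route needs neither connectedness of $G$ nor the commutation of left- and right-invariant fields. The paper's route additionally shows that all fundamental fields remain Killing. It does so via the criterion $[V_i,K]=0$, which it reuses later.

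The point you leave open in Step 3 closes without assuming bi-invariance:
\begin{itemize}
\item Since $j_\alpha\circ\iota_x=\iota_x\circ L_\alpha$, the orbit metric pulled back by $\iota_x$ is left-invariant.
\item $\xi$ corresponds to a left-invariant $Z\neq0$, and $\tilde U$ to a right-invariant field. Hence $g_0(\xi,\tilde U)(\alpha\cdot x)=\langle Z,\operatorname{Ad}_{\alpha^{-1}}U\rangle$, where $\langle\cdot,\cdot\rangle$ is the inner product at $e$. Note that $\operatorname{Ad}$ acts on $U$, not on $Z$.
\item Suppose this were constant in $\alpha$ for every $U$. Differentiating at $e$ along $\exp(sW)$ gives $\langle Z,[W,U]\rangle=0$ for all $W,U$.
\item A compact connected $G$ with trivial center has $\mathfrak z(\mathfrak g)=0$. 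So $\mathfrak g$ is semisimple and $[\mathfrak g,\mathfrak g]=\mathfrak g$, which forces $Z=0$, a contradiction.
\item On the connected fiber, non-constancy of $g_0(\xi,\tilde U)$ gives non-constancy of its square.
\end{itemize}

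This also fills in the paper's one-line justification. That justification passes from ``constant inner products with all fundamental fields'' to ``$\xi$ is a constant-coefficient combination of fundamental fields''. In doing so it tacitly assumes their Gram matrix is constant along the fiber, which holds only for a bi-invariant orbit metric.
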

\begin{proof}
Let $X,Y \in T_y N$ be such that $P_{\mV}^0 [\pi^* X, \pi^* Y] \neq 0$ at some $x \in \pi^{-1}(y)$.
Recall that $j_{\alpha*} P_{\mV}^0 [\pi^* X, \pi^* Y] = P_{\mV}^0 [\pi^* X, \pi^* Y] \circ j_\alpha$ for all $\alpha \in G$.
Since for every $\alpha \in G$, $j_\alpha$ is an isometry, it follows that $g_0 (P_{\mV}^0 [\pi^* X, \pi^* Y] , P_{\mV}^0 [\pi^* X, \pi^* Y])$ is constant along fibers. 

If $G$ has trivial center, no vector field on it is both left-invariant and right-invariant. Therefore there exists a fundamental field $U$ on $(M,g_0)$ such that function $g_0( P_{\mV}^0 [\pi^* X, \pi^* Y] , U )^2$ is not constant on a fiber $\mF_y$ (otherwise $P_{\mV}^0 [\pi^* X, \pi^* Y]$ would be a combination of fundamental fields with constant coefficients, hence a fundamental field itself, and would correspond to both a left- and a right-invariant field on $G$).
Taking the fundamental field $U$ and $\xi = P_{\mV}^0 [\pi^* X, \pi^* Y]$ and applying construction from the proof of Proposition \ref{corpressecUXexboth}, 
we find a variation $g_t \in \Riem(M, \mV, g_0)$ that makes $\sec_{(M,g_t)} (P_{\mH}^t \pi^* X , U )$ non-constant on $\mF_y$.
We will show that for all metrics $g_t$ of this variation all fundamental fields of action $\mu$ remain Killing fields on $(M,g_t)$.

Let ${\tilde U}$ be a fundamental field on $(M,g_0)$.
Since $P_{\mV}^0 [\pi^* X, \pi^* Y](x) = \iota_{x*} \circ V \circ \iota^{-1}_x$ for the left-invariant field $V$ on $G$ such that $V(e) = \iota_{x*}^{-1} P_{\mV}^0 [\pi^* X, \pi^* Y]$ \cite[below Definition 2.3.2.]{GromollWalschap}, we have for all $x \in M$
\begin{eqnarray*}
[ {\tilde U} , P_{\mV}^0 [\pi^* X, \pi^* Y] ] (x) &=& [ \iota_{x*} {\bar U}(e) , \iota_{x*} \circ V \circ \iota^{-1}_x (x) ] \\
&=& \iota_{x*} [ {\bar U}(e) , V(e) ] ,
\end{eqnarray*}
where ${\bar U}$ is the right-invariant field on $G$ with ${\bar U}(e) = \iota_{x*}^{-1} {\tilde U}(x)$.
Since left-invariant and right-invariant vector fields on $G$ commute, it follows that $[ {\tilde U} , P_{\mV}^0 [\pi^* X, \pi^* Y] ]=0$. 
%
Since  $V_{n+1} = (f \circ \pi) \cdot P_{\mV}^0 [\pi^* X, \pi^* Y]$ for some function $f \in C^\infty(N)$, and $V_{n+2}= \ldots = V_{n+p}=0$, we have $[V_i , {\tilde U}] =0$ for all $i \in {n+1, \ldots, n+p}$.
By Theorem \ref{thKillingremains}, ${\tilde U}$ remains a vertical Killing field on all $(M, g_t)$, where $t \in (-\epsilon, \epsilon)$.
By the proof of \cite[Proposition 2.3.3.]{GromollWalschap}, the flow $\varphi_t$ of vertical field ${\tilde U}$ corresponding to $U \in \mathfrak{g}$ is given by $j_{\exp(tU)}$. Since ${\tilde U}$ is Killing, $j_{\exp(tU)}$ is one-parameter group of isometries. Since every element of a compact, connected Lie group is a finite product of elements from $\exp(\mathfrak{g})$, it follows that $j_\alpha$ is an isometry of $(M, g_t)$ for every $\alpha \in G$ and hence $\mu$ is an isometric action on $(M,g_t)$. 
\end{proof} 
\begin{remark}
In particular case of Hopf fibrations $\pi: (S^{4n+3}, g_0) \rightarrow (N , g_N)$, with $g_0$ being a round metric on the sphere, Theorem \ref{thGaction} indicates existence of families of positive sectional curvature metrics on a sphere that make the Hopf fibration a homogeneous Riemannian submersion with totally geodesic fibers. A metric obtained as in Theorem \ref{thGaction} has a vertizontal curvature non-constant along a fiber, hence is not of the form $f^* g_0$ for a diffeomorphism $f$ of $S^{4n+3}$.
\end{remark}

\subsection{SU(2) actions and 3-Sasaki structure} \label{subsecsu2actions}

For a particular case of isometric $SU(2)$ action, in this section we obtain the 
variational formula for 
components of $\mA^t$ tensor, which
determine horizontal and vertizontal curvatures.

In what follows, $\eps_{abc}=0$ if $a=b$, or $b=c$, or $c=a$, otherwise $\eps_{abc}$ is the sign of permutation $(a,b,c)$ of $(1,2,3)$.

\begin{lemma}
Let $\pi : (M,g_0) \rightarrow (N,g_N)$ be a Riemannian submersion with totally geodesic fibers, spanned by $g_0$-orhtonormal vector fields $E_1,E_2,E_3$ such that $[E_a,E_b] = 2 \sum\nolimits_{c=1}^n \eps_{abc} E_c$ for all $a,b \in \{1,2,3\}$.
Let $\{ g_t, t \in (-\epsilon, \epsilon) \} \subset \Riem(M, \mV, g_0)$ be a 
variation such that $\pi : (M, g_t) \rightarrow (N, g_N)$ is a Riemannian submersion with totally geodesic fibers, and \eqref{Viomega} holds with 
$\dt \omega^a_t=0$, i.e., $\omega^a_t = \omega^a$, for all $a \in \{1,2,3\}$, and for all $t \in (-\epsilon, \epsilon)$. Then
\begin{eqnarray} \label{dtAXYomega}
&& 2 \dt g_t( \mA^t_{ P_{\mH}^t \pi^* W_l } P_{\mH}^t \pi^* W_j , U ) \nonumber\\
&&=\sum\nolimits_{a=1}^n g_0(U, E_a) \big(
- 2 {\rm d} \omega^a ( P_{\mH}^t \pi^* W_l , P_{\mH}^t \pi^* W_j) \nonumber\\
&&\quad + 4t \sum\nolimits_{b,c=1}^n \eps_{bca} \omega^b \wedge \omega^c ( \pi^* W_l , \pi^* W_j ) \nonumber\\
&&\quad +t \sum\nolimits_{b=1}^n \omega^b( \pi^* W_l)
E_b( \omega^a( \pi^* W_j ) ) \nonumber\\
&&\quad
 - t\sum\nolimits_{b=1}^n \omega^b( \pi^* W_j) E_b( \omega^a( \pi^* W_l ) )
\big) ,
\end{eqnarray}
where
\[
\omega^b \wedge \omega^c (X , Y) = \frac{1}{2} ( \omega^b( X ) \omega^c( Y ) - \omega^b( Y ) \omega^c( X ) )
\]
for all $X,Y \in \mathfrak{X}_M$.
\end{lemma}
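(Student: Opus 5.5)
Since $\mA^t_{P_{\mH}^t\pi^*W_l}P_{\mH}^t\pi^*W_j$ is vertical, $2g_t(\mA^t_{P_{\mH}^t\pi^*W_l}P_{\mH}^t\pi^*W_j,U)=\sum_a g_0(U,E_a)\,g_t([P_{\mH}^t\pi^*W_l,P_{\mH}^t\pi^*W_j],E_a)$. It therefore suffices to treat $U=E_a$. I will start from formula \eqref{dtAWiWjU} in the proof of Lemma \ref{lemmaAWiWjU}, with $V_i=\sum_b\omega^b(\pi^*W_i)E_b$ as in \eqref{Viomega}. The main work is to express each term as a polynomial in $t$, using $\dt\omega^a=0$ and \eqref{dtPHWi}, i.e.\ $P_{\mH}^t\pi^*W_i=\pi^*W_i-tV_i$. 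This last identity holds exactly, because $V_i$ is independent of $t$ and $P_{\mH}^0\pi^*W_i=\pi^*W_i$.

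\textbf{Bracket term.} Substituting into \eqref{dtAWiWjU} with $U=E_a$, the last term is $-g_0([V_l,\pi^*W_j-tV_j]+[\pi^*W_l-tV_l,V_j],E_a)$, where I use $g_t=g_0$ on vertical vectors and $[V,\pi^*W]$ vertical. This splits into a $t^0$ part and a $t^1$ part.

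The $t^0$ part is $-g_0([V_l,\pi^*W_j]+[\pi^*W_l,V_j],E_a)$. Expand $V_l=\sum_b\omega^b(\pi^*W_l)E_b$. Write $[E_b,\pi^*W_j]$ via the Killing property and the fact that $\pi^*W_j$ is projectable, as in the argument before \eqref{dtsecXURSGFex}. This produces the derivative terms $\pi^*W_j(\omega^a(\pi^*W_l))$ together with structure terms.

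The $t^1$ part is $2t\,g_0([V_l,V_j],E_a)$. Expanding with $[E_b,E_c]=2\sum\eps_{bcd}E_d$ gives $4t\sum\eps_{bca}\omega^b\wedge\omega^c(\pi^*W_l,\pi^*W_j)$ (after antisymmetrizing), plus $t\sum_b(\omega^b(\pi^*W_l)E_b(\omega^a(\pi^*W_j))-\omega^b(\pi^*W_j)E_b(\omega^a(\pi^*W_l)))$. These are exactly the $t$-terms in \eqref{dtAXYomega}.

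\textbf{First term.} The first term of \eqref{dtAWiWjU} is $\sum_k g_N([W_l,W_j],W_k)\omega^a(\pi^*W_k)$, which equals $\omega^a(P_{\mH}^t[P_{\mH}^t\pi^*W_l,P_{\mH}^t\pi^*W_j])$ by \eqref{liebracketWiWj}. To combine it with the $t^0$ derivative terms into $-2{\rm d}\omega^a(P_{\mH}^t\pi^*W_l,P_{\mH}^t\pi^*W_j)$, I will redo the $t^0$ calculation with $P_{\mH}^t\pi^*W$ in place of $\pi^*W$. In this form the derivative terms read $P_{\mH}^t\pi^*W_j(\omega^a(P_{\mH}^t\pi^*W_l))$, and $\omega^a(\cdot)=\omega^a(P_{\mH}^t\cdot)$ by Theorem \ref{thBomega}. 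Doing this reorganizes the $t$-dependence. The consistent bookkeeping is to write $g_t([V_l,P_{\mH}^t\pi^*W_j],E_a)=-P_{\mH}^t\pi^*W_j(\omega^a(\pi^*W_l))+\sum_b\omega^b(\pi^*W_l)g_0([E_b,P_{\mH}^t\pi^*W_j],E_a)$.

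\textbf{Main obstacle.} The hardest step is evaluating $g_0([E_b,P_{\mH}^t\pi^*W_j],E_a)$. For $t=0$ it vanishes by the Killing property of $E_b$ and totally geodesic fibers, since $(\mathcal L_{E_b}g_0)(\pi^*W_j,E_a)=0$ and $g_0(\pi^*W_j,\nabla_{E_a}E_b)=0$. For general $t$ the $-tV_j$ part contributes $-t\sum_c\omega^c(\pi^*W_j)[E_b,E_c]$, which produces the $\eps$ terms.

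The $E_b(\omega^a)$ terms arise because $\omega^a(\pi^*W)$ need not be constant along fibers when $n>1$. They come from $tV_j(\omega^a(\pi^*W_l))$ inside $P_{\mH}^t\pi^*W_j(\omega^a(\pi^*W_l))$, once the ${\rm d}\omega^a$ evaluated on $\pi^*W$ is rewritten on $P_{\mH}^t\pi^*W$.

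I expect the difficulty to be reconciling the signs and factors of $2$ between these two sources of the $\eps$ terms. I will check this by comparing the coefficients of $t$ directly on both sides.
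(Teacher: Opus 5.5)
Your route is essentially the paper's. Both arguments start from \eqref{dtAWiWjU} with $V_i=\sum_b\omega^b(\pi^*W_i)E_b$ and reduce to evaluating $g_0([E_b,P_{\mH}^t\pi^*W_j],E_a)$. The paper obtains this from Theorem \ref{thKillingremains} and Remark \ref{remd2tLKg}, since $(\mathcal L_{E_b}g_t)(P_{\mH}^t\pi^*W_j,E_a)$ is affine in $t$ with slope $g_t([E_b,V_j],E_a)$. Your exact identity $P_{\mH}^t\pi^*W_i=\pi^*W_i-tV_i$ gives the same thing more directly. The gap is in the bookkeeping of the terms containing $E_b(\omega^a(\pi^*W_j))$. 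Both accounts you give of these terms are wrong, and you leave the reconciliation open, so as written the proposal does not yet yield \eqref{dtAXYomega}.

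Write $f^b_i=\omega^b(\pi^*W_i)$, so $V_i=\sum_b f^b_iE_b$ and $P_{\mV}^t\pi^*W_i=tV_i$.

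\emph{First scheme (full expansion in $t$).} The $t^0$ part together with the first term of \eqref{dtAWiWjU} is exactly $-2{\rm d}\omega^a(\pi^*W_l,\pi^*W_j)$; there are no structure terms, since $g_0([E_b,\pi^*W_j],E_a)=0$. The $t^1$ part is
\[
2t\,g_0([V_l,V_j],E_a)=4t\sum_{b,c}\eps_{bca}\,\omega^b\wedge\omega^c(\pi^*W_l,\pi^*W_j)+2t\big(V_l(f^a_j)-V_j(f^a_l)\big),
\]
so the derivative terms come with coefficient $2t$, not $t$; they are not ``exactly the $t$-terms''. The surplus is removed only by the conversion
\[
2{\rm d}\omega^a(P_{\mH}^t\pi^*W_l,P_{\mH}^t\pi^*W_j)=2{\rm d}\omega^a(\pi^*W_l,\pi^*W_j)-t\big(V_l(f^a_j)-V_j(f^a_l)\big),
\]
which holds because the $f^a_j$ need not be constant along fibers.

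\emph{Second scheme (keeping $P_{\mH}^t\pi^*W$ intact, as the paper does).} Here the first term plus $P_{\mH}^t\pi^*W_j(f^a_l)-P_{\mH}^t\pi^*W_l(f^a_j)$ is already exactly $-2{\rm d}\omega^a(P_{\mH}^t\pi^*W_l,P_{\mH}^t\pi^*W_j)$, so no rewriting occurs. Instead,
\[
g_0([E_b,P_{\mH}^t\pi^*W_j],E_a)=-t\Big(2\sum_c\eps_{bca}f^c_j+E_b(f^a_j)\Big).
\]
The $E_b(f^a_j)$ piece comes from the part $-t\sum_cE_b(f^c_j)E_c$ of $[E_b,-tV_j]$, which you omitted. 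It is the only source of the $E_b(\omega^a)$ terms in this scheme.

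So the $\eps$ terms have a single source in either scheme; your worry about reconciling two sources of them is misplaced. What must be kept straight is where the $E_b(\omega^a)$ terms come from. Evaluating the brackets as in the second scheme while attributing these terms to the ${\rm d}\omega$-rewriting of the first would lose or double-count them. For the same reason, do not use the shortcut ${\rm d}\omega^a(P_{\mH}^t\pi^*X,P_{\mH}^t\pi^*Y)={\rm d}\omega^a(\pi^*X,\pi^*Y)$ stated at the start of the paper's proof. By the second display it fails unless $\omega^a(\pi^*Y)$ is constant along fibers. The paper's final formula does not depend on that shortcut, since it keeps ${\rm d}\omega^a$ evaluated on $P_{\mH}^t\pi^*W$ throughout.
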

\begin{proof}
Let \eqref{Viomega} hold. 
By Theorem \ref{thBomega}, for all $X,Y \in \mathfrak{X}_N$, from $\omega^a( P_{\mH}^t \pi^* X ) = \omega^a( \pi^* X )$ and, as $\mV$ is integrable and $g_0$-horizontal lifts are projectable, we have
\[
P_{\mH}^t [P_{\mH}^t \pi^* X , P_{\mH}^t \pi^* Y] = P_{\mH}^t [ \pi^* X - P_{\mV}^t \pi^* X ,  \pi^* Y - P_{\mV}^t \pi^* Y ] = P_{\mH}^t [ \pi^* X ,  \pi^* Y].
\]
Hence, it follows that ${\rm d} \omega^a(  P_{\mH}^t \pi^* X,   P_{\mH}^t \pi^* Y ) = {\rm d} \omega^a( \pi^* X, \pi^* Y )$.

For $g_N$-orthonormal set of local fields $\{W_{n+1} , \ldots, W_{n+p}\}$ and $U \in \mathfrak{X}_\mV$ we have
\begin{eqnarray} \label{dtAXYomega1}
&&\sum\nolimits_{m=n+1}^{n+p} g_N( [ W_l , W_j ] , W_m  )g_0(V_m ,U)  - g_t( [V_l , P_{\mH}^t \pi^* W_j ] + [P_{\mH}^t \pi^* W_l, V_j ] , U ) \nonumber \\
&& = \sum\nolimits_{a=1}^n g_0(U, E_a) \big( \sum\nolimits_{m=n+1}^{n+p} g_t( [ P_{\mH}^t \pi^* W_l , P_{\mH}^t \pi^* W_j ] , P_{\mH}^t \pi^* W_m  ) \omega^a(P_{\mH}^t \pi^* W_m) \nonumber\\
&& - g_t( [ \sum\nolimits_{b=1}^n \omega^b(P_{\mH}^t \pi^* W_l) E_b , P_{\mH}^t \pi^* W_j ] + [P_{\mH}^t \pi^* W_l, \sum\nolimits_{b=1}^n \omega^b(P_{\mH}^t \pi^* W_j) E_b  ] , E_a ) \big) \nonumber\\
&& = \sum\nolimits_{a=1}^n g_0(U, E_a) \big( \sum\nolimits_{m=n+1}^{n+p} g_t( [ P_{\mH}^t \pi^* W_l , P_{\mH}^t \pi^* W_j ] , P_{\mH}^t \pi^* W_m  ) \omega^a(P_{\mH}^t \pi^* W_m) \nonumber\\
&& +
P_{\mH}^t \pi^* W_j ( \omega^a( P_{\mH}^t \pi^* W_l ) ) - P_{\mH}^t \pi^* W_l ( \omega^a( P_{\mH}^t \pi^* W_j )) \nonumber\\
&& - \sum\nolimits_{b=1}^n \omega^b(P_{\mH}^t \pi^* W_l)g_t([E_b , P_{\mH}^t \pi^* W_j] , E_a) \nonumber\\
&& - \sum\nolimits_{b=1}^n \omega^b(P_{\mH}^t \pi^* W_j)g_t([ P_{\mH}^t \pi^* W_l , E_b] , E_a)
\big) \nonumber\\
&&= \sum\nolimits_{a=1}^n g_0(U, E_a) \big( \omega^a( [ P_{\mH}^t \pi^* W_l , P_{\mH}^t \pi^* W_j ] ) - P_{\mH}^t \pi^* W_j ( \omega^a( \pi^* W_l ) ) \nonumber\\
&& - P_{\mH}^t \pi^* W_l (  \omega^a( \pi^* W_j ) ) 
+ \sum\nolimits_{b=1}^n \omega^b(P_{\mH}^t \pi^* W_l) (\mathcal{L}_{E_b} g_t)( P_{\mH}^t \pi^* W_j,  E_a ) \nonumber\\
&& - \sum\nolimits_{b=1}^n \omega^b(P_{\mH}^t \pi^* W_j)(\mathcal{L}_{E_b}
g_t)( P_{\mH}^t \pi^* W_l,  E_a ) \big) \nonumber\\
&&= \sum\nolimits_{a=1}^n g_0(U, E_a) \big(
- 2 {\rm d} \omega^a (P_{\mH}^t \pi^* W_l , P_{\mH}^t \pi^* W_j) \nonumber\\
&& + \sum\nolimits_{b=1}^n \omega^b(P_{\mH}^t \pi^* W_l) (\mathcal{L}_{E_b} g_t)( P_{\mH}^t \pi^* W_j,  E_a ) \nonumber\\
&& - \sum\nolimits_{b=1}^n \omega^b(P_{\mH}^t \pi^* W_j)(\mathcal{L}_{E_b}
g_t)( P_{\mH}^t \pi^* W_l,  E_a )
\big)
\end{eqnarray}
From \eqref{Viomega} and $\dt \omega^a_t=0$, we obtain $\dt V_i =0$ for all $i \in \{n+1, \ldots, n+p\}$. Then, by Theorem \ref{thKillingremains} and Remark \ref{remd2tLKg} we have, using $[E_b,E_c] = 2 \sum\nolimits_{a=1}^n \eps_{bca}E_a$,
\begin{eqnarray} \label{LEbgtWjEa}
&& (\mathcal{L}_{E_b} g_t)( P_{\mH}^t \pi^* W_j,  E_a ) =
(\mathcal{L}_{E_b} g_0)( P_{\mH}^0 \pi^* W_j,  E_a ) + g_t([E_b, V_j], E_a) \cdot t \nonumber\\
&&= g_t([E_b, \sum\nolimits_{c=1}^n \omega^c( P_{\mH}^t \pi^* W_j ) E_c], E_a) \cdot t \nonumber\\
&&= \sum\nolimits_{c=1}^n \omega^c( P_{\mH}^t \pi^* W_j ) g_t([E_b, E_c] ,E_a) t + E_b( \omega^a( P_{\mH}^t \pi^* W_j ) ) t \nonumber\\
&& = 2 \sum\nolimits_{c=1}^n \eps_{bca} \omega^c( P_{\mH}^t \pi^* W_j ) t + E_b( \omega^a( P_{\mH}^t \pi^* W_j ) ) t.
\end{eqnarray}
From \eqref{LEbgtWjEa}, \eqref{dtAWiWjU} and \eqref{dtAXYomega1} follows \eqref{dtAXYomega}.
\end{proof}

We consider a family of variations $\{ g_t, t \in (-\epsilon, \epsilon) \} \subset \Riem(M, \mV, g_0)$ defined by fundamental fields on $(M,g_0)$, noting that in general these vector fields 
stop being Killing for $t \neq 0$ by Theorem \ref{thKillingremains}. We examine some properties of the variations on $3$-Sasaki manifolds.

Recall \cite{Blair} that $\{ (\phi^a_t, \xi_a, \eta^a_t,g_t), a \in \{1,2,3\} \}$, where $g_t$ is a Riemannian metric, $\phi^a_t$ is a $(1,1)$-tensor, $\xi_a$ is a nowhere vanishing vector field and $\eta^a_t$ is a $1$-form on $M$ is called an almost contact metric 3-structure if the following hold for all $a,b \in \{1,2,3\}$:
\begin{eqnarray} \label{almostcontact3structure1}
&& \iota_{\xi_a} \eta^b_t = \delta_{ab} , \\&&
\label{almostcontact3structure2}
\iota_{\xi_a} {\rm d} \eta^a_t =0 , \\&&
\label{almostcontact3structure3}
\phi^a_t \phi^a_t = -{\rm Id}_{TM} + \eta^a_t \otimes \xi_a,  \\&&
\label{almostcontact3structure4}
g(\phi^a_t X, \phi^a_t Y) = g_t(X,Y) - \eta^a_t(X)\eta^a_t(Y)
\end{eqnarray} 
and for an even permutation $(a,b,c)$ of $(1,2,3)$ we have:
\begin{eqnarray} \label{almostcontact3structure5}
&& \xi_c = \phi^a_t \xi_b = -\phi^b_t \xi_a,  \\
\label{almostcontact3structure6}
&&
\phi^c_t = \phi^a_t \phi^b_t - \eta^b_t \otimes \xi_a = -\phi^b_t \phi^a_t +  \eta^a_t \otimes \xi_b, \\&&
\label{almostcontact3structure7}
\eta^c_t = \eta^a_t \circ \phi^b_t = -\eta^b_t \circ \phi^a_t
\end{eqnarray}
By \cite{Kashiwada} (\cite[Theorem 14.1]{Blair}), if additionally every $(\phi^a_t, \xi_a, \eta^a_t , g_t)$ is a contact metric structure, i.e., ${\rm d}\eta^a_t(X,Y) = g_t(X , \phi^a Y)$ for all $X,Y \in \mathfrak{X}_M$, then every $(\phi^a_t, \xi_a, \eta^a_t , g_t)$ a Sasaki structure and $\{ (\phi^a_t, \xi_a, \eta^a_t,g_t), a \in \{1,2,3\} \}$ is called a 3-Sasaki structure.

\begin{proposition}
Let $(M,g_0)$ be a $3$-Sasaki manifold. Let $\{ g_t, t \in (-\epsilon, \epsilon) \} \subset \Riem(M, \mV, g_0)$ be a variation preserving Riemannian submersion with totally geodesic fibers $\pi : (M, g_t) \rightarrow (N, g_N)$, such that \eqref{Btomega} holds with all $\omega^a$ basic.

Let $\phi^a_t X = -\nabla^t_{X} \xi_a$ and $\eta^a_t(X) = g_t(X,\xi^a)$ for all $X \in \mathfrak{X}_M$ and all $a \in \{1,2,3\}$.
Then $\{ (\phi^a_t, \xi_a, \eta^a_t,g_t), a \in \{1,2,3\} \}$ is an almost contact metric 3-structure if and only if all $\omega^a$ are closed and for all $b,c \in \{1,2,3\}$ we have $\omega^b \wedge \omega^c =0$.
Moreover, $\{ (\phi^a_t, \xi_a, \eta^a_t,g_t), a \in \{1,2,3\} \}$ is not a 3-Sasaki structure for some $0 \neq t \in (-\epsilon, \epsilon)$.
\end{proposition}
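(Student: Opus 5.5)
Throughout, $\xi_a = E_a$ are the three unit vertical Killing fields of the $3$-Sasaki structure on $(M,g_0)$, with $[E_a,E_b]=2\sum_c \eps_{abc}E_c$. By \eqref{Btomega} and Theorem \ref{thBomega}, \eqref{bsharpinv} holds with $V_i=\sum_a \omega^a(\pi^*W_i)E_a$. Since the $\omega^a$ are basic, we have $E_b(\omega^a(\pi^*W_j))=0$. Also $g_t(E_a,E_b)=\delta_{ab}$, so $\eta^a_t(\xi_b)=\delta_{ab}$ holds for all $t$, and $\eta^a_t = g_0(\cdot,E_a)\circ P_{\mV}^t$ annihilates $\mH(t)$.

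The plan is to write every axiom \eqref{almostcontact3structure1}--\eqref{almostcontact3structure7} in terms of the O'Neill tensor $\mA^t$. We have $\phi^a_t X = -\nabla^t_X E_a$. On vertical vectors this equals $-P_{\mV}^0\nabla^0_X E_a$, which is unchanged in $t$ by \eqref{dtnablaXYZproof4} and totally geodesic fibers. On horizontal vectors it equals $-\mA^t_X E_a$, together with a vertical part $-P_{\mV}^t\nabla^t_X E_a$. Requiring $\iota_{\xi_a}{\rm d}\eta^a_t=0$ amounts to $\nabla^t_{E_a}E_a=0$, which holds automatically. The vertical part of \eqref{almostcontact3structure3}--\eqref{almostcontact3structure7} involves only $g_0$ on fibers and so holds at every $t$. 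What remains are the horizontal conditions, which are quadratic identities in $g_t(\mA^t_{P_{\mH}^t\pi^*W_l}P_{\mH}^t\pi^*W_j,E_a)$. Examples are $g_t(\mA^t_XE_a,\mA^t_YE_a)=g_t(X,Y)$ and $\mA^t_{\mA^t_XE_b}E_a=\pm\mA^t_XE_c$ (the horizontal form of $\phi^a\phi^b=\phi^c$). There is also the requirement that the $E_a$ stay Killing enough for $\nabla^t_XE_a$ to be $\mA^t$-valued; by Theorem \ref{thKillingremains} this involves $[V_i,E_a]$.

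Next, compute the $t$-dependence. With $\omega^a$ independent of $t$, we have $\dt V_i=0$, so Lemma 7.? \eqref{dtAXYomega} gives an explicit polynomial. Because $\omega$ is basic, the $E_b(\omega^a)$ terms drop out. Also $\ddt$ of the $\mA$-components is constant by Theorem \ref{thconstVidtsec}. Hence
\[
g_t(\mA^t_{l}W_j,E_a)=g_0(\mA^0_lW_j,E_a)-t\,{\rm d}\omega^a(W_l,W_j)+2t^2\sum\nolimits_{b,c}\eps_{bca}\,\omega^b\wedge\omega^c(W_l,W_j).
\]
Substitute this into the horizontal identities, which hold at $t=0$ by the $3$-Sasaki assumption, and compare coefficients of $t$, $t^2$, $t^3$, $t^4$.

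For the forward direction, first look at the single-index isometry condition, as in the $K$-contact proposition. The $t^2$ coefficient of $|\mA^t_XE_a|^2$ contains $\|\iota_X({\rm d}\omega^a-2\sum\eps_{bca}\omega^b\wedge\omega^c)\|^2$ plus cross terms with $\mA^0$. The top coefficient $t^4$ is $4\|\iota_X\sum\eps_{bca}\omega^b\wedge\omega^c\|^2$, and it must vanish. This forces $\omega^b\wedge\omega^c=0$, after which the $t^2$ term forces ${\rm d}\omega^a=0$. For the converse, if both vanish then $\mA^t$-components are constant in $t$ in the adapted frames $P_{\mH}^t\pi^*W_i$, which are $g_t$-orthonormal. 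Every axiom therefore transports from $t=0$, as in \eqref{dtAXUAYUetaclosed} and \eqref{AAXUUt}. The main obstacle is controlling the cross terms involving $\mA^0$ and the non-Killing correction from $[V_i,E_a]=\sum_c\omega^c(\pi^*W_i)[E_c,E_a]\neq 0$. I would handle this using $[E_b,E_c]=2\eps E$ explicitly, and isolate the leading order in $t$ as above.

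For the ``moreover'' part, argue by contradiction. If the structure were $3$-Sasaki for all $t$, then each $(\phi^a_t,\xi_a,\eta^a_t,g_t)$ would be Sasaki, and in particular $E_a$ would be Killing for $g_t$. Theorem \ref{thKillingremains} then forces $[V_i,E_a]=0$ for every $a$. This gives $\sum_c\omega^c(\pi^*W_i)\eps_{cab}=0$ for all $a,b$, so every $\omega^c=0$. Hence a nontrivial variation fails to be $3$-Sasaki for some $t\neq 0$.
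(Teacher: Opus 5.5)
Your forward direction and your ``moreover'' argument via Theorem \ref{thKillingremains} are sound, and they match the paper, which uses $\partial_t^4$ and then $\partial_t^2$ through \eqref{d4tAWiU1AWlU2} and \eqref{d2tAWiU1AWlU2}. Two small corrections there. By \eqref{dtAXYomega}, the $t^2$-coefficient of the $\mA^t$-components is $\sum_{b,c}\eps_{bca}\,\omega^b\wedge\omega^c$, not twice that. And \eqref{almostcontact3structure4} constrains $g_t(\phi^a_tX,\phi^a_tX)$, not only $g_t(\mA^t_X\xi_a,\mA^t_X\xi_a)$; the extra vertical contribution is $4t^2\sum_{c\neq a}\omega^c(X)^2\geq 0$, so your $t^4$ and $t^2$ conclusions survive.

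The genuine gap is the converse. You note that for $X\in\mH(t)$, $\phi^a_tX=-\nabla^t_X\xi_a$ has a vertical component $-P^t_{\mV}\nabla^t_X\xi_a$, call it the main obstacle, and then assert that every axiom transports from $t=0$ because the $\mA^t$-components are constant. That step fails, because this component does not vanish under your hypotheses. For $X=P^t_{\mH}\pi^*W$, \eqref{dtPHWi} gives $X=\pi^*W-tV_W$ with $V_W=\sum_c\omega^c(\pi^*W)\xi_c$. The coefficients of $V_W$ are constant on fibers because the $\omega^c$ are basic. Also $g_0([\pi^*W,\xi_a],\xi_b)=0$, since $\xi_a$ is $g_0$-Killing and the fibers are totally geodesic. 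Using totally geodesic fibers once more,
\[
g_t(\nabla^t_X\xi_a,\xi_b)=g_t([X,\xi_a],\xi_b)=-t\,g_0([V_W,\xi_a],\xi_b)=-2t\sum\nolimits_{c}\eps_{cab}\,\omega^c(\pi^*W).
\]
This is exactly $(\mathcal{L}_{\xi_a}g_t)(X,\xi_b)$. Your ``moreover'' argument needs this quantity to be nonzero, so it cannot also be negligible in the converse.

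Here is a concrete failure. Take $\omega^1=\pi^*{\rm d}f$ with $f$ non-constant and $\omega^2=\omega^3=0$; such a variation exists by Theorem \ref{corbsharpglobal} with $V_i=\omega^1(\pi^*W_i)\xi_1$. All forms are basic and closed, all $\omega^b\wedge\omega^c$ vanish, and the $\mA^t$-components are constant. Yet for unit $X\in\mH(t)$:
\begin{itemize}
\item $\phi^2_tX=-\mA^t_X\xi_2+2t\,\omega^1(X)\,\xi_3$;
\item hence $g_t(\phi^2_tX,\phi^2_tX)=1+4t^2\omega^1(X)^2$, contradicting \eqref{almostcontact3structure4};
\item and $-\eta^3_t(\phi^2_tX)=-2t\,\omega^1(X)\neq 0=\eta^1_t(X)$, contradicting \eqref{almostcontact3structure7}.
\end{itemize}
More generally, for every even permutation $(a,b,c)$, \eqref{almostcontact3structure7} on $\mH(t)$ requires $\eta^a_t(\phi^b_tX)=-2t\,\omega^c(X)$ to equal $\eta^c_t(X)=0$. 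So at any $t\neq0$ all $\omega^c$ must vanish.

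With $\phi^a_t=-\nabla^t\xi_a$ as in the statement, the ``if'' direction is therefore false for every nonzero admissible choice of the $\omega^a$, and no argument can close your converse. Your transport argument does go through if $\phi^a_t$ is replaced on $\mH(t)$ by $-\mA^t_{(\cdot)}\xi_a=-P^t_{\mH}\nabla^t_{(\cdot)}\xi_a$.

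Do not repair it the way the paper does. Its \eqref{xibphiaX} removes the vertical component using $g_t(\xi_b,\phi^a_tX)=-g_t(X,\phi^a_t\xi_b)$. That identity is skew-symmetry of $\nabla^t\xi_a$, i.e.\ it presupposes $\xi_a$ is $g_t$-Killing---exactly what the ``moreover'' part shows is lost.
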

\begin{proof}
That $\{ (\phi^a_t, \xi_a, \eta^a_t,g_t), a \in \{1,2,3\} \}$ is not a 3-Sasaki structure for some $t \neq 0$, follows from Theorem \ref{thKillingremains}, by which 
there exist $t \neq 0$, $X \in \mathfrak{X}_{\mH(t)}$ and $a,b \in \{1,2,3\}$ such that
\[
{\rm d}\eta^a_t( \xi_b, X ) = -\frac{1}{2} g_t(\xi^a , [\xi_b, X]) = \frac{1}{2} (\mathcal{L}_{\xi_b}g_t)(X, \xi^a) \neq 0.
\]
On the other hand, for all $a,b \in \{1,2,3\}$ and all $X \in \mathfrak{X}_{\mH(t)}$ we have
\begin{eqnarray} \label{xibphiaX}
&& g_t(\xi_b, \phi^a_t X) = -g_t( X , \phi_a^t \xi_b) = g_t( X, \nabla^t_{\xi_b} \xi_a) \nonumber\\
&&= \frac{1}{2} g_t( [\xi_b , \xi_a] , X) +
\frac{1}{2} g_t( \nabla^t_{\xi_b} \xi_a + \nabla^t_{\xi_a} \xi_b , X ) =0, 
\end{eqnarray}
since $\mV$ is integrable and totally geodesic on $(M,g_t)$ for all $t \in (-\epsilon, \epsilon)$.

For all $a,b,c \in \{1,2,3\}$ and for all $X \in \mathfrak{X}_{\mH(t)}$ we have $\dt g_t( \nabla^t_{\xi_a} \xi_b , \xi_c )=0$, which together with \eqref{xibphiaX} implies that \eqref{almostcontact3structure1} and \eqref{almostcontact3structure5} are satisfied,  \eqref{almostcontact3structure3}-\eqref{almostcontact3structure4} and \eqref{almostcontact3structure6} hold for all vertical vectors,
and \eqref{almostcontact3structure7} holds for all vectors.

Since $\mV$ is totally geodesic on $(M,g_t)$ for all $t \in (-\epsilon, \epsilon)$ and $\{ g_t, t \in (-\epsilon, \epsilon) \} \subset \Riem(M, \mV, g_0)$ we have
\begin{eqnarray*}
\iota_{\xi_a} {\rm d} \eta^a_t (X) &=& -\frac{1}{2} g_t( \xi_a , [\xi_a, X] ) = -\frac{1}{2} g_t( \xi_a,  \nabla^t_{\xi_a} X - \nabla^t_X \xi_a  ) \nonumber\\ &=& \frac{1}{2} g_t(\nabla^t_{\xi_a} \xi_a , X ) + \frac{1}{4} X( g_0(\xi_a,\xi_a) ) =0
\end{eqnarray*}
and since $\iota_{\xi_a} {\rm d} \eta^a_t (\xi_b) = -\frac{1}{2} g_0(\xi_a , [\xi_a, \xi_b]) =0$, we see that \eqref{almostcontact3structure2} holds.

From \eqref{xibphiaX} we obtain
\begin{equation} \label{defphiSU2H}
\phi^a_t X = - P_{\mH}^t \nabla^t_{X} \xi_a = - \mA^t_{X} \xi_a \quad {\rm for \; all \;} X \in \mathfrak{X}_{\mH(t)}.
\end{equation}  

Let $\{W_{n+1}, \ldots,  W_{n+p}\}$ be a local $g_N$-orthonormal frame on $N$. We will show that if \eqref{almostcontact3structure4} holds for all $t \in (-\epsilon,\epsilon)$, then 
$\omega^b \wedge \omega^c=0$ for all $a,b,c \in \{1,2,3\}$.
If \eqref{almostcontact3structure4} holds for all $t \in (-\epsilon,\epsilon)$, then for all $i,j \in \{n+1, \ldots, n+p\}$ and $a \in \{1,2,3\}$ we have
\begin{eqnarray} \label{dtphi2almost3contact}
&& \dt  g_t( \phi^a_t {P_{\mH}^t \pi^* W_i } , \phi^a_t {P_{\mH}^t \pi^* W_l} ) \nonumber\\
&& =
\dt g_t(P_{\mH}^t \pi^* W_i , P_{\mH}^t \pi^* W_l ) - \dt ( \eta^a_t( P_{\mH}^t \pi^* W_i ) \eta^a_t(P_{\mH}^t \pi^* W_l ) ) \nonumber\\
&& = \dt g_N( W_i, W_l ) - \dt ( g_t( \xi_a , P_{\mH}^t \pi^* W_i ) g_t( \xi_a , P_{\mH}^t \pi^* W_l ) ) =0.
\end{eqnarray} 
From \eqref{Viomega} and $\omega^a$ being basic, it follows that for all $i,j \in \{n+1, \ldots, n+p\}$ and $a \in \{1,2,3\}$
\begin{eqnarray} \label{SU2ViVjEa}
&& g_t([V_i, V_j] , E_a) = \sum\nolimits_{b,c=1}^n g_t( [ \omega^b( P_{\mH}^t \pi^* W_i ) E_b , \omega^c( P_{\mH}^t \pi^* W_j ) E_c  ] , E_a ) \nonumber\\
&& = 2 \sum\nolimits_{b,c=1}^n \eps_{bca} \omega^b( P_{\mH}^t \pi^* W_i ) \omega^c( P_{\mH}^t \pi^* W_j ) \nonumber\\
&& = \sum\nolimits_{b,c=1}^n \eps_{bca} \omega^b( P_{\mH}^t \pi^* W_i ) \omega^c( P_{\mH}^t \pi^* W_j ) + \sum\nolimits_{b,c=1}^n \eps_{cba} \omega^c( P_{\mH}^t \pi^* W_i ) \omega^b( P_{\mH}^t \pi^* W_j )  \nonumber\\
&& = \sum\nolimits_{b,c=1}^n \eps_{bca} ( \omega^b( P_{\mH}^t \pi^* W_i ) \omega^c( P_{\mH}^t \pi^* W_j ) - \omega^c( P_{\mH}^t \pi^* W_i ) \omega^b( P_{\mH}^t \pi^* W_j ) ) \nonumber\\
&& = 2 \sum\nolimits_{b,c=1}^n \eps_{bca} (\omega^b \wedge \omega^c)( P_{\mH}^t \pi^* W_i  , P_{\mH}^t \pi^* W_j ).
\end{eqnarray}
From \eqref{defphiSU2H}, \eqref{d4tAWiU1AWlU2} and \eqref{SU2ViVjEa} we obtain
\begin{eqnarray*}
&& 4 \frac{\partial^4}{\partial t^4} g_t( \phi^a_t {P_{\mH}^t W_i } , \phi^a_t {P_{\mH}^t W_l} ) = 4 \frac{\partial^4}{\partial t^4} g_t( \mA^t_{P_{\mH}^t W_i } E_a , \mA^t_{P_{\mH}^t W_l} E_a ) \nonumber\\
&&= 24 \sum\nolimits_{j=n+1}^{n+p} g_t( [V_l, V_j], E_a  ) g_t( [V_i, V_j], E_a  ) \nonumber\\
&&= 96 \sum\nolimits_{j=n+1}^{n+p} \sum\nolimits_{b,c=1}^n
\eps_{bca}  (\omega^b \wedge \omega^c)( P_{\mH}^t \pi^* W_l  , P_{\mH}^t \pi^* W_j ) \nonumber\\
&&\cdot \sum\nolimits_{d,e=1}^n \eps_{dea}  (\omega^d \wedge \omega^e)( P_{\mH}^t \pi^* W_i  , P_{\mH}^t \pi^* W_j ) .
\end{eqnarray*}
Using $\sum\nolimits_{a=1}^n \eps_{bca} \eps_{dea} = \delta_{bd} \delta_{ce} - \delta_{be} \delta_{cd}$, we obtain
\begin{eqnarray*} 
&& 4 \sum\nolimits_{a=1}^n \frac{\partial^4}{\partial t^4} g_t( \phi^a_t {P_{\mH}^t W_i } , \phi^a_t {P_{\mH}^t W_l} )  \nonumber\\
&&= 96 \sum\nolimits_{b,c=1}^n \sum\nolimits_{j=n+1}^{n+p} \big(  (\omega^b \wedge \omega^c)( P_{\mH}^t \pi^* W_l  , P_{\mH}^t \pi^* W_j ) (\omega^b \wedge \omega^c)( P_{\mH}^t \pi^* W_i , P_{\mH}^t \pi^* W_j ) \nonumber\\
&&\quad - (\omega^b \wedge \omega^c)( P_{\mH}^t \pi^* W_l  , P_{\mH}^t \pi^* W_j ) (\omega^c \wedge \omega^b)( P_{\mH}^t \pi^* W_i , P_{\mH}^t \pi^* W_j )   \big) \nonumber\\
&&= 192 \sum\nolimits_{b,c=1}^n (\omega^b \wedge \omega^c)( P_{\mH}^t \pi^* W_l  , P_{\mH}^t \pi^* W_j ) (\omega^b \wedge \omega^c)( P_{\mH}^t \pi^* W_i , P_{\mH}^t \pi^* W_j ) .\nonumber\\
&&
\end{eqnarray*}
Let $\Omega^{bc}$ by a $p \times p$ matrix with entries $\Omega^{bc}_{ij} =  (\omega^b \wedge \omega^c)( P_{\mH}^t \pi^* W_i , P_{\mH}^t \pi^* W_j )$, then $\Omega^{bc}_{ij} = - \Omega^{bc}_{ji}$ and 
\begin{equation} \label{d4tphi}
4 \sum\nolimits_{a=1}^n \frac{\partial^4}{\partial t^4} g_t( \phi^a_t {P_{\mH}^t W_i } , \phi^a_t {P_{\mH}^t W_l} ) = - 192 \sum\nolimits_{b,c=1}^n \sum\nolimits_{j=n+1}^{n+p} \Omega^{bc}_{ij} \Omega^{bc}_{jl}.
\end{equation}
Since $n=3$ and $\Omega^{aa}=\Omega^{bb}=0$, from \eqref{d4tphi} we obtain
\begin{equation} \label{d4tphi2}
4 \sum\nolimits_{a=1}^n \frac{\partial^4}{\partial t^4} g_t( \phi^a_t {P_{\mH}^t W_i } , \phi^a_t {P_{\mH}^t W_l} ) = - 384 \sum\nolimits_{j=n+1}^{n+p} \Omega^{\beta \gamma}_{ij} \Omega^{\beta \gamma}_{jl},
\end{equation}
where $\beta<\gamma$ are the two indices from  $\{1,2,3\}$ that are different from $a$.
Since for an antisymmetric matrix $\Omega^{\beta \gamma}$ we have $(\Omega^{\beta \gamma})^2=0$ if and only if $\Omega^{\beta \gamma}=0$, from \eqref{dtphi2almost3contact} and \eqref{d4tphi} it follows that 
\begin{equation} \label{omegabc0}
\omega^b \wedge \omega^c=0 \quad {\rm for \; all} \; b,c \in \{1,2,3\}.
\end{equation}

Similarly, we will show that if \eqref{almostcontact3structure4} holds for all $t \in (-\epsilon,\epsilon)$, then 
$\omega^a$ is closed for all $a \in \{1,2,3\}$. Indeed, from \eqref{d2tAWiU1AWlU2}, \eqref{dtAXYomega}, \eqref{SU2ViVjEa} and \eqref{omegabc0} we obtain
\begin{eqnarray} \label{dtphiadomega}
&& 4\ddt g_t( \phi^a_t P_{\mH}^t W_i , \phi^a_t P_{\mH}^t W_l ) = 4 \ddt g_t( \mA^t_{P_{\mH}^t W_i } \xi_a , \mA^t_{P_{\mH}^t W_l} \xi_a ) \nonumber\\
&&=
-8 \sum\nolimits_{i=n+1}^{n+p} {\rm d} \omega^a(P_{\mH}^t W_i , P_{\mH}^t W_j ) \cdot
{\rm d} \omega^a(P_{\mH}^t W_l , P_{\mH}^t W_j ).
\end{eqnarray}
Defining antisymmetric $p \times p$ matrix $\Omega^a$ with entries $\Omega^a_{ij}= {\rm d} \omega^a(P_{\mH}^t W_i , P_{\mH}^t W_j )$, from \eqref{dtphi2almost3contact} and \eqref{dtphiadomega}  we obtain $(\Omega^a)^2 =0$ and hence ${\rm d}\omega^a=0$ for all $a \in \{1,2,3\}$.

On the other hand, if ${\rm d}\omega^a=0$ and $\omega^b \wedge \omega^c=0$ for all $a,b,c \in \{1,2,3\}$, from 
\eqref{dtAXYomega} it follows that $g_t(\phi^a_t P_{\mH}^t \pi^* W_i , P_{\mH}^t \pi^* W_j )= -g_t( \mA^t_{ P_{\mH}^t \pi^* W_i }  \xi_a, P_{\mH}^t \pi^* W_j ) = g_t( \mA^t_{ P_{\mH}^t \pi^* W_i } P_{\mH}^t \pi^* W_j,  \xi_a ) =g_0( \mA^0_{  \pi^* W_i }  \pi^* W_j,  \xi_a )= g_0(\phi^a_0  \pi^* W_i , \pi^* W_j )$. Hence, for an even permutation $\{a,b,c\}$ of $\{1,2,3\}$, we have
\begin{eqnarray*}
&& - g_t( \phi^b_t \phi^a_t P_{\mH}^t \pi^* W_i , P_{\mH}^t \pi^* W_l ) =
g_t( \phi^a_t P_{\mH}^t \pi^* W_i, \phi^b_t P_{\mH}^t \pi^* W_l ) \nonumber\\
&& = \sum\nolimits_{j=n+1}^{n+p} g_t( \phi^a_t P_{\mH}^t \pi^* W_i,  P_{\mH}^t \pi^* W_j) g_t( \phi^b_t P_{\mH}^t \pi^* W_l  ,  P_{\mH}^t \pi^* W_j) \nonumber\\
&& =  \sum\nolimits_{j=n+1}^{n+p} g_0( \phi^a_0 \pi^* W_i,  \pi^* W_j) g_0( \phi^b_0 \pi^* W_l  ,  \pi^* W_j)  \nonumber\\
&& = g_0( \phi^a_0 \pi^* W_i, \phi^b_0 \pi^* W_l ) = -g_0( \phi^b_0 \phi^a_0 \pi^* W_i,  \pi^* W_l ) \nonumber\\
&&=  g_0( \phi^c_0 \pi^* W_i,  \pi^* W_l ) = g_t( \phi^c_t P_{\mH}^t \pi^* W_i , P_{\mH}^t \pi^* W_l ).
\end{eqnarray*}
Therefore, $\phi^b_t \phi^a_t P_{\mH}^t \pi^* W_i = \phi^c_t P_{\mH}^t \pi^* W_i$ and it follows that \eqref{almostcontact3structure6} holds on $\mH(t)$, which is spanned by $\{ P_{\mH}^t \pi^* W_{n+1}, \ldots , P_{\mH}^t \pi^* W_{n+p} \}$. Similarly we obtain that \eqref{almostcontact3structure3}-\eqref{almostcontact3structure4} hold on $\mH(t)$.
\end{proof}

\begin{theorem} \label{thdifeo3Sasaki}
Let $(M,g_0)$ be an analytic, simply connected $3$-Sasaki manifold. Let $\{ g_t, t \in (-\epsilon, \epsilon) \} \subset \Riem(M, \mV, g_0)$ be a variation preserving Riemannian submersion with totally geodesic fibers $\pi : (M, g_t) \rightarrow (N, g_N)$, such that \eqref{Btomega} holds with all $\omega^a$ basic, closed and analytic, and for all $b,c \in \{1,2,3\}$ we have $\omega^b \wedge \omega^c =0$.

Then there exists a family of diffeomorphisms $f_t : M \rightarrow M$ such that $g_t = f^* g_0$ for all $t \in (-\epsilon, \epsilon)$.
\end{theorem}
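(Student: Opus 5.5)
The plan is to mirror the proof of Theorem \ref{thdiffequivalence}. First we show that the hypotheses force the variation to preserve the full $3$-Sasaki data up to isometry. Then we invoke a local rigidity result of Sharipov type, which produces local isometries $(M,g_t)\to(M,g_0)$. Finally, analyticity and simple connectedness extend these to global diffeomorphisms $f_t$, via the usual continuation argument for analytic local isometries \cite{KobayashiNomizu1}.

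\textbf{Step 1: the $3$-structure survives.} By the preceding Proposition, since all $\omega^a$ are closed and $\omega^b\wedge\omega^c=0$, the tensors $\phi^a_t=-\nabla^t\xi_a$, $\eta^a_t=g_t(\cdot,\xi_a)$ form an almost contact metric $3$-structure. From \eqref{dtAXYomega}, with ${\rm d}\omega^a=0$ and the wedge terms vanishing, the quantities $g_t(\mA^t_{P_{\mH}^t\pi^*W_l}P_{\mH}^t\pi^*W_j,\xi_a)$ can change only through the terms $t\,\omega^b(\pi^*W_l)E_b(\omega^a(\pi^*W_j))$. These terms vanish because the $\omega^a$ are basic, so $E_b$ annihilates $\omega^a(\pi^*W_j)$. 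Hence every $\mA^t$ component equals its value at $t=0$. Since $\pi$ stays a Riemannian submersion with totally geodesic fibers and $g_t|_{\mV}=g_0|_{\mV}$, O'Neill's formulas then show that the curvature tensors of $g_t$ and $g_0$ coincide componentwise in the adapted frames $\{\xi_a, P_{\mH}^t\pi^*W_i\}$ and $\{\xi_a,\pi^*W_i\}$. The same holds for all covariant derivatives, which in this frame are expressed through $g_N$, the $\mA$ components, and their derivatives along basic directions. Concretely, \eqref{dtnablaAXYZ} together with \eqref{nablaNAXYZU}, applied to each $\omega^a$, shows $\dt\nabla^t\mA^t$ vanishes when ${\rm d}\omega^a=0$.

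\textbf{Step 2: the fields $\xi_a$ and contact conditions.} We need either that the $\xi_a$ stay Killing for $g_t$, or at least that $g_t$ admits a frame in which the structure functions agree with those of $g_0$. The frame is the linear map $\Phi_t$ sending $\pi^*W_i\mapsto P_{\mH}^t\pi^*W_i$ and $\xi_a\mapsto\xi_a$; it is a $g_0$-to-$g_t$ isometry pointwise. The Lie brackets in the new frame differ from the old ones by $[V_i,\cdot]$-type terms with $V_i=\sum_a\omega^a(\pi^*W_i)\xi_a$. Using $\omega^b\wedge\omega^c=0$, all the $V_i$ at a point are proportional to a single vertical vector, so $[V_i,V_j]$ vanishes (cf.\ \eqref{SU2ViVjEa}). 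Closedness of $\omega^a$ handles the horizontal part, exactly as in the circle-bundle case. So I would show the frame structure functions $c^\lambda_{\mu\nu}$ are $t$-independent up to a gauge that Sharipov's theorem \cite[Theorem 7.2.]{Sharipov} absorbs. That theorem yields a local diffeomorphism mapping $g_0$-frames to $g_t$-frames whenever the frame invariants and their derivatives coincide.

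\textbf{Main obstacle.} The hard part is Step 2. Unlike the case $n=1$, the $\xi_a$ generally stop being Killing for $g_t$ by Theorem \ref{thKillingremains}, so the argument of Theorem \ref{thdiffequivalence} does not transfer verbatim. I would first try to reduce to a circle case: on the open set where some $\omega^a\neq0$, the condition $\omega^b\wedge\omega^c=0$ gives $\omega^a=h^a\theta$ for one basic $1$-form $\theta$. Closedness then forces ${\rm d}h^a\wedge\theta=0$, and locally one can write $\theta={\rm d}u$ with the $h^a$ functions of $u$. The variation is then generated by the flow of a vector field $Z$ combining $\pi^*\nabla^N u$ with a vertical field built from $h^a(u)\xi_a$. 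We would check $\mathcal{L}_Zg_t=B_t$ directly, using Proposition \ref{propLZgdefinesvariation} as the model. Where all $\omega^a$ vanish, the flow is trivial. Analyticity glues these local flows across the degenerate set, and simple connectedness removes monodromy, giving the global $f_t$ with $g_t=f_t^*g_0$.
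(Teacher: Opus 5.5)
There is a genuine gap, and it is the step you flag yourself. Steps 1--2 do not yet amount to an argument.

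First, \eqref{dtnablaAXYZ} was derived for one-dimensional fibers. There the vertical-derivative term $V_i\bigl(g_t(\mA^t_XY,U)\bigr)$ is killed by \eqref{TondeurnablaUAXYU}. On the $3$-dimensional fibers of a $3$-Sasaki manifold, however, $\xi_b\bigl(g_0(\mA^0_{\pi^*X}\pi^*Y,\xi_a)\bigr)\neq0$ for $a\neq b$, so the formula does not transfer.

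More decisively, the two frames you want to match have different structure functions. Since $\omega^a$, and hence $V_i=\sum_a\omega^a(\pi^*W_i)\xi_a$, does not depend on $t$, \eqref{dtPHWi} gives $P_{\mH}^t\pi^*W_i=\pi^*W_i-tV_i$. Because $\omega^b(\pi^*W_i)$ is constant along fibers, this yields
\[
[\xi_a,P_{\mH}^t\pi^*W_i]=[\xi_a,\pi^*W_i]-2t\sum\nolimits_{b,c=1}^{3}\eps_{abc}\,\omega^b(\pi^*W_i)\,\xi_c ,
\]
whereas $[\xi_a,\pi^*W_i]=0$. So for $t\neq0$, where some $\omega^b\neq0$, no diffeomorphism carries $\{\xi_a,P_{\mH}^t\pi^*W_i\}$ onto $\{\xi_a,\pi^*W_i\}$. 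The ``gauge that Sharipov's theorem absorbs'' is therefore the entire content of the theorem, and it is left unidentified. Note also that in Theorem \ref{thdiffequivalence} the paper applies \cite{Sharipov} only to a metric with a unit Killing field, which you do not have for $g_t$.

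Your fallback points in a better direction but fails as stated.

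\begin{itemize}
\item The generator cannot contain $\pi^*\nabla^Nu$. That component contributes $2(\nabla^N{\rm d}u)(X,Y)\circ\pi$ to $(\mathcal{L}_Zg_0)(\pi^*X,\pi^*Y)$, whereas $B_0$ vanishes on $\mH(0)\times\mH(0)$ by \eqref{BXY}. The gauge has to be vertical.
\item An autonomous vertical flow only matches to first order. For $Z=\sum_a(H^a\circ\pi)\xi_a$ with ${\rm d}(H^a\circ\pi)=\omega^a$ one does get $\mathcal{L}_Zg_0=B_0$. But the bracket formula above yields
\[
(\mathcal{L}_Zg_t-B_t)(\xi_c,P_{\mH}^t\pi^*W_i)=2t\sum\nolimits_{a,b=1}^{3}\eps_{abc}\,(H^a\circ\pi)\,\omega^b(\pi^*W_i).
\]
This vanishes for all $t$, for some choice of the additive constants in $H^a$, only when $(\omega^1,\omega^2,\omega^3)$ has constant direction, i.e.\ $\omega^a=c^a\theta$ with constant $c^a$.
\item The gluing ``by analyticity across the degenerate set'' is also not justified.
\end{itemize}

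The paper instead reduces to a single Killing field. It rotates $\xi_1,\xi_2,\xi_3$ so that $\omega^2=\omega^3=0$. Then $[V_i,\xi_1]=0$, so $\xi_1$ stays Killing by Theorem \ref{thKillingremains}. Moreover $\omega^1$ is basic and closed for the local circle submersion along $\xi_1$, so Theorem \ref{thdiffequivalence} applies.

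Only a constant rotation keeps $\xi_1$ Killing, so this route also covers just the constant-direction case. In that case your purely vertical $Z=(H\circ\pi)\sum_ac^a\xi_a$ works directly and avoids \cite{Sharipov}. Neither route covers, e.g., $\omega^1=\pi^*{\rm d}u$, $\omega^2=\pi^*(\sin u\,{\rm d}u)$, $\omega^3=0$, which satisfies all the hypotheses.

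What handles every case is a non-autonomous fiberwise gauge $f_t(x)=\gamma_t(\pi(x))\cdot x$ for the $SU(2)$ (or $SO(3)$) action generated by the $\xi_a$. Let $\xi_a$ be the fundamental field of $e_a$, and view $\omega=\sum_a\omega^ae_a$ as an $\mathfrak{su}(2)$-valued form on $N$.
\begin{itemize}
\item $f_t$ maps $\mH(t)$ onto $\mH(0)$ if and only if $\gamma_t^{-1}{\rm d}\gamma_t=t\,\omega$, up to sign conventions.
\item The Maurer--Cartan integrability condition $t\,{\rm d}\omega+\frac{t^2}{2}[\omega\wedge\omega]=0$, required for all $t$, is exactly ${\rm d}\omega^a=0$ together with $\omega^b\wedge\omega^c=0$.
\item Since $f_t$ covers ${\rm id}_N$ and is an isometry on fibers, this gives $g_t=f_t^*g_0$. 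It does so globally when $N$ is simply connected, which holds when $\pi_1(M)=0$ and the fibers are connected, and it does not use analyticity.
\end{itemize}
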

\begin{proof}
Since for all $b,c \in \{1,2,3\}$ we have $\omega^b \wedge \omega^c =0$, all forms $\omega^b$ are linearly dependent. We assume that not all those forms are zero. 
Then there exists an orthonormal frame of vertical Killing fields $\xi_1, \xi_2 , \xi_3$ on $M$, where $\xi_1$ is unit and orthogonal to $\bigcap_{a=1}^3 \ker \omega^a$, and $[\xi_a, \xi_b] = 2 \sum\nolimits_{c=1}^n \eps_{abc} \xi_c$. In this frame, the following analogue of \eqref{Btomega} holds:
\[
B_t(\xi_a ,X)  = \omega^a(X) , \quad {\rm for \, all \,} X \in \mathfrak{X}_M, a \in \{1,2,3\}
\]
with $\omega^2=\omega^3=0$, and $\omega^1$ basic and closed.

Since $\xi_1$ is a Killing field on $(M,g_t)$ for all $t \in (-\epsilon, \epsilon)$, for all $x\in M$ there exists an open set $M_x \subset M$ and a Riemannian submersion $\pi_1 :(M_x , g_t) \rightarrow (N_1,g_{N_1})$, with geodesic fibers along $U=\xi_1$. 
Clearly, variation $g_t$ preserves Riemannian submersion with geodesic fibers  $\pi_1 :(M_x , g_t) \rightarrow (N_1,g_{N_1})$, we will show that for this variation \eqref{Bteta} holds with ${\rm d} \alpha_t=0$. To do this, we will show that $\omega^1$, which by assumption is basic with respect to $\pi$, is basic also with respect to $\pi_1$. 

Since $\omega^1$ is basic with respect to $\pi$, we have $\iota_{\xi_1}\omega^1=0$. We have
\begin{eqnarray*}
2 \iota_{\xi_1} {\rm d} \omega^1 (\xi_2) &=& 2 {\rm d} \omega^1 (\xi_1,\xi_2) = \xi_1( \omega^1 (\xi_2) ) - \xi_2( \omega^1 (\xi_1) ) - \omega^1([\xi_1,\xi_2]) = 0,
\end{eqnarray*}
since $\omega^1$ vanishes on $\xi_1,\xi_2,\xi_3$. Similarly, $\iota_{\xi_1} {\rm d} \omega^1 (\xi_3)=0$.
Let $y \in M_x$ and let $X_y \in T_y M$ be horizontal with respect to $\pi$. Then there exists a vector field $X \in \mathfrak{X}_N$ such that $\pi^* X = X_y$ at $y$ and we have ${\rm d} \omega^1 (\xi_1, \pi^* X) = 0$ by $\omega^1$ being basic with respect to $\pi$. It follows that $\iota_{\xi_1} {\rm d} \omega^1=0$, and hence $\omega^1$ is basic with respect to $\pi_1$. By the assumption $\omega^1$ is also closed, hence $\omega_1 = \pi_1^* \alpha_t$ for some $1$-form $\alpha_t$ on $N_1$ such that ${\rm d} \alpha_t=0$. 
From Theorem \ref{thdiffequivalence} follows existence for every $t \in (-\epsilon, \epsilon)$ of a local diffeomorphism, that extends to a global one $f_t : M \rightarrow M$, such that $g_t = f_t^* g_0$.
\end{proof}

\end{document}